\newcommand{\stackem}[2]{\genfrac{}{}{0pt}{}{#1}{#2}}
\newcommand{\mc}[1]{\mathcal{#1}}
\newcommand{\mb}[1]{\mathbb{#1}}
\newcommand{\mr}[1]{\mathrm{#1}}
\newcommand{\mbf}[1]{\mathbf{#1}}
\newcommand{\mit}[1]{\mathit{#1}}
\newcommand{\abs}[1]{\lvert #1 \rvert}
\newcommand{\norm}[1]{\lVert #1 \rVert}
\newcommand{\bra}[1]{\langle #1 \rangle}
\newcommand{\ul}[1]{\underline{#1}}
\newcommand{\br}[1]{\overline{#1}}
\newcommand{\td}[1]{\widetilde{#1}}
\newcommand{\tdd}[1]{\widetilde{\widetilde{#1}}}
\newcommand{\ZZ}{\mathbb{Z}}
\newcommand{\RR}{\mathbb{R}}
\newcommand{\FF}{\mathbb{F}}
\newcommand{\DD}{\mathbb{D}}
\newcommand{\Sq}{\mathrm{Sq}}
\newcommand{\Tr}{\mathit{Tr}}
\newcommand{\Id}{\mathrm{Id}}
 \newtheorem{thm}[equation]{Theorem}
 \newtheorem{cor}[equation]{Corollary}
 \newtheorem{lem}[equation]{Lemma}
 \newtheorem{prop}[equation]{Proposition}
\newtheorem*{thm*}{Theorem}
\newtheorem*{cor*}{Corollary}
\newtheorem*{lem*}{Lemma}
\newtheorem*{prop*}{Proposition}
\theoremstyle{definition}
 \newtheorem{ex}[equation]{Example}
 \newtheorem{rmk}[equation]{Remark}
\newtheorem{ques}[equation]{Question}
\newtheorem{conj}[equation]{Conjecture}
\newtheorem*{defn*}{Definition}
\newtheorem*{ex*}{Example}
\newtheorem*{exs*}{Examples}
\newtheorem*{rmk*}{Remark}
\newtheorem*{claim*}{Claim}
\newtheorem*{Ack}{Acknowledgments}
\numberwithin{section}{chapter}
\numberwithin{equation}{section}
\numberwithin{figure}{section}
\DeclareMathOperator{\Aut}{Aut}
\DeclareMathOperator{\im}{im}
\DeclareMathOperator{\Top}{Top}
\DeclareMathOperator{\Sp}{Sp}
\begin{document}

\frontmatter

\title{The Goodwillie tower and the EHP sequence}
\author{Mark Behrens}
\address{Department of Mathematics, Massachusetts Institute of Technology, 
Cambridge, MA 02139}
\email{mbehrens@math.mit.edu}
\thanks{The author was partially supported by the NSF, a grant from the Sloan
foundation, and an NEC fund}

\date{September 27, 2010}

\subjclass[2010]{Primary 55Q40; \\ Secondary 55Q15, 55Q25, 55S12}
\keywords{unstable homotopy groups of spheres, Goodwillie calculus, EHP sequence, Dyer-Lashof operations}

\begin{abstract}
We study the interaction between the EHP sequence and the Goodwillie tower of the identity evaluated at spheres at the prime $2$.  Both give rise to spectral sequences (the EHP spectral sequence and the Goodwillie spectral sequence, respectively) which compute the unstable homotopy groups of spheres.  We relate the Goodwillie filtration to the $P$ map, and the Goodwillie differentials to the $H$ map.  Furthermore, we study an iterated Atiyah-Hirzebruch spectral sequence approach to the homotopy of the layers of the Goodwillie tower of the identity on spheres.  We show that differentials in these spectral sequences give rise to differentials in the EHP spectral sequence.  We use our theory to re-compute the $2$-primary unstable stems through the Toda range (up to the $19$-stem).  We also study the homological behavior of the interaction between the EHP sequence and the Goodwillie tower of the identity.  This homological analysis involves the introduction of Dyer-Lashof-like operations associated to M.~Ching's operad structure on the derivatives of the identity.  These operations act on the mod $2$ stable homology of the Goodwillie layers of any functor from spaces to spaces.
\end{abstract}

\maketitle

\setcounter{page}{4}

\tableofcontents

\chapter*{Introduction}

All spaces and spectra in this book are implicitly localized at the prime $2$.  
The purpose of this book is to describe the relationship between two machines for computing the $2$-primary unstable homotopy groups of spheres.

The first of these machines is the EHP sequence.  James \cite{James} constructed fiber sequences
$$ \Omega^2 S^{2n+1} \xrightarrow{P} S^{n} \xrightarrow{E} \Omega S^{n+1} \xrightarrow{H} \Omega S^{2n+1}. $$
Applying $\pi_*$ to the resulting filtered space
\begin{equation}\label{eq:EHPfilt}
 \cdots \rightarrow \Omega^n S^n \rightarrow \Omega^{n+1} S^{n+1} \rightarrow \cdots \rightarrow QS^0
\end{equation}
yields the EHP spectral sequence (EHPSS)
$$ E^1_{m,t} = \pi_{t+m+1} S^{2m+1} \Rightarrow \pi_t^s. $$
By truncating the filtration (\ref{eq:EHPfilt}) we also get truncated EHPSS's
$$
E^1_{m,t}(n) = \begin{cases}
\pi_{t+m+1} S^{2m+1}, & m < n, \\
0, & \text{otherwise} 
\end{cases}
\Rightarrow \pi_{t+n}(S^{n}).
$$
The Curtis algorithm (see, for instance, \cite[Sec.~1.5]{Ravenel}) gives an inductive technique for using the various truncated EHPSS's to compute the $2$-primary unstable stems, provided one is able to compute differentials in the EHPSS.  Unfortunately, as the differentials come from the $P$ map, they are closely related to Whitehead products, and therefore are difficult to compute.  

The second computational machine is the Goodwillie tower of the identity.  Let $\mr{Top}_*$ denote the category of pointed topological spaces.  Goodwillie calculus associates to a reduced finitary homotopy functor 
$$ F : \mr{Top}_* \rightarrow \mr{Top}_* $$
(i.e. a functor which preserves weak equivalences and filtered homotopy colimits, and satisfies $F(\ast) \simeq \ast$) 
a tower $P_i(F)$ of $i$-excisive approximations \cite{G1}, \cite{G2}, \cite{G3}.
$$ 
\xymatrix{
& D_3(F) \ar[d]
& D_2(F) \ar[d] 
& D_1(F) \ar@{=}[d]
\\
\cdots \ar[r]
& P_3(F) \ar[r] 
& P_2(F) \ar[r] 
& P_1(F) 
}
$$
The fibers of the tower take the form of infinite loop spaces $D_i(F) = \Omega^\infty \mb{D}_i(F)$, where
$$ \mb{D}_i(F)(X) \simeq (\partial_i(F) \wedge X^{\wedge i})_{h\Sigma_i} $$
for a $\Sigma_i$-spectrum $\partial_i(F)$ (the $i$th derivative of $F$).
Under favorable conditions ($F$ analytic, $X$ sufficiently highly connected), $F(X)$ is the homotopy inverse limit of the tower $P_i(F)(X)$.
The Goodwillie tower of the identity functor
$$ P_i(X) := P_i(\mr{Id})(X) $$
converges for $X$ connected and $\ZZ$-complete.  Applying $\pi_*$ to the tower $P_i(X)$ gives the Goodwillie spectral sequence (GSS)
$$ E_1^{i,t}(X) = \pi_t \mb{D}_i(X) \Rightarrow \pi_t(X). $$
For $X = S^n$, a ($2$-local) sphere,  Arone and Mahowald showed that $\mb{D}_{i}(S^n) \simeq \ast$ unless $i = 2^k$ \cite{AroneMahowald}.  In this case the GSS may be re-indexed to take the form
$$ E_1^{k,t}(S^n) = \pi_t \mb{D}_{2^k}(S^n) \Rightarrow \pi_t(S^n). $$
Arone and Dwyer \cite{AroneDwyer} identified the layers as
\begin{equation}\label{eq:AroneDwyer}
 \mb{D}_{2^k}(S^n) \simeq \Sigma^{k-n} L(k)_n
\end{equation} 
where $L(k)_n$ is the stable summand of the Thom complex $(B\FF_2^k)^{n \bar{\rho}}$ associated to the Steinberg idempotent, studied by Kuhn, Mitchell, Priddy, and Takayasu \cite{KMP}, \cite{Takayasu}.  Thus the GSS computes the unstable homotopy groups of spheres from the stable homotopy groups of some well-understood spectra.  The differentials in the GSS are mysterious.

In this book we shall demonstrate that when the EHPSS and GSS are computed in tandem, well understood aspects of each sheds light on the more mysterious aspects of the other.  
\begin{enumerate}
\item
\ul{Goodwillie filtration.} Elements in Goodwillie filtration $2^k$
decompose as $k$ iterates of desuspensions of images of the $P$
homomorphism (Whitehead product) in the EHP sequence.

\item
\ul{EHP differentials.}
Goodwillie
filtration preserving differentials in the EHP sequence may be computed
from the differentials in Atiyah-Hirzebruch type spectral sequences for 
$L(k)_n$.

\item
\ul{Goodwillie differentials.}
The differentials in the Goodwillie spectral sequence are given by the $H$
homomorphism (Hopf invariant) in the EHP sequence.
\end{enumerate}

To demonstrate the effectiveness of the approach advocated in this book, we apply these observations to reproduce the computation of the EHPSS, and compute the GSS's through the Toda range (the first $20$ unstable stems).

The the homotopical analysis of the interaction between the Goodwillie tower and the EHP sequence that is undertaken in this book is predicated by a homological analysis of this interaction.  Key to this homological analysis is an understanding of the Arone-Mahowald computation of $H_*(\DD_{2^k}(S^n))$ as a module over the Steenrod algebra.  In order to understand the effect of the natural transformations in the EHP sequence on this homology, we must re-interpret their computation in terms of natural Dyer-Lashof-like operations which act on the stable (mod $2$) homology of the layers of any functor $F$ from spaces to spaces. Using M.~Ching's operad structure on $\partial_*(\Id)$ \cite{Ching}, we construct natural homology operations
$$ \bar{Q}^j: H_*(\DD_{i}(F)(X)) \rightarrow H_{*+j-1}(\DD_{2i}(F)(X)). $$
The construction of these operations should be of independent interest.

Most of the computations of this book are performed with sequences of spectral sequences.  Iterated spectral sequences can complicate calculations, as the accumulated indeterminacy of the associated graded represented by the $E_\infty$-terms can exacerbate the computation of differentials in any  successive spectral sequence.  These difficulties are circumvented through the use of transfinite spectral sequences, as defined by P.~Hu \cite{Hu}.

We outline the contents of this book:  

In Chapter~\ref{sec:Ching}, we briefly review Ching's operad structure, as well as some well known aspects of the homology of symmetric groups.  We then construct the operations $\bar{Q}^i$, and re-interpret the Arone-Mahowald stable homology calculations in terms of these operations.

In Chapter~\ref{sec:EHP}, we show that the EHP sequence induces fiber sequences which relate the layers of the Goodwillie tower of the identity evaluated at spheres.  Using the naturality of the operations $\bar{Q}^i$, we compute the effect of these fiber sequences on stable homology.  The EHP fibrations give rise to fiber sequences between the $L(k)_n$-spectra, which were previously constructed by S.~Takayasu \cite{Takayasu} and N.~Kuhn \cite{Kuhnloop}.  These fiber sequences give an iterated Atiyah-Hirzebruch spectral sequence approach to the homotopy groups of $L(k)_n$, and allow these homotopy groups to be computed inductively in $k$.  These iterated spectral sequences are formulated as \emph{transfinite Atiyah-Hirzebruch spectral sequences} (TAHSS's).  The TAHSS's compute the $E_1$-term of the GSS: taken in succession these yield the \emph{transfinite Goodwillie spectral sequence} (TGSS).

Chapter~\ref{sec:Gfilt} analyzes the meaning of the filtration on the unstable stems arising from the Goodwillie tower.  Another filtration (by \emph{degree of instability}) is defined on the unstable stems.  This filtration allows one to trace through the EHP sequence and see how an unstable element is recursively built up: the unstable element is eventually traced forward to a stable element, and the record of this ancestry is defined to be the \emph{lineage} of an unstable element.  The $P$ and $E$ maps are observed to induce maps on the TAHSS and TGSS.  Under favorable circumstances, the lineage of an element can be read off of the name of a detecting element in the $E^1$-term of the TGSS.

 In Chapter~\ref{sec:Ghopf}, we prove theorems which compute differentials in the TGSS.   Often, these differentials are given by Hopf invariants.  The Goodwillie $d_1$-differentials are related to stable Hopf invariants.  In the presence of an unstable Hopf invariant, we introduce the notion of a \emph{generalized Hopf invariant} (GHI). 
The generalized Hopf invariant is defined by means of a transfinite spectral sequence which concatenates the TGSS with the EHPSS: the resulting refinement of the EHPSS is called the \emph{transfinite EHP spectral sequence} (TEHPSS).  Generalized Hopf invariants give rise to longer differentials in the GSS.  TGSS differentials not arising from Hopf invariants are considered exotic.  Two forms of exotic differentials are identified: those arising from the \emph{geometric boundary effect}, and the \emph{bizarre differentials}.  The geometric boundary effect differentials arise from Hopf invariants through a geometric boundary type theorem derived from the interaction of the TGSS with the EHP sequence.  The bizarre differentials defy explanation, but must exist for the TGSS of $S^1$ to converge to the known values of $\pi_*S^1$.  The Goodwillie-Whitehead conjecture, as proposed by Arone, Kuhn, Lesh, Mahowald and others, predicts that the GSS for $S^1$ collapses at $E_2$.  The relationship of this conjecture to our $d_1$'s is briefly discussed.

In Chapter~\ref{sec:EHPdiff}, we explain how to lift differentials from the TAHSS and the TGSS to the EHPSS (or more precisely, the TEHPSS).  We explain how TEHPSS names are closely related to EHPSS names via the notion of lineage.  In the Toda range, all but one differential is obtained from lifting corresponding differentials from the TAHSS or TGSS.  This one exception gives rise to an example of a deviation between degree of instability and Goodwillie filtration, and a deviation between TGSS names and lineage.

Chapter~\ref{sec:calculations} gives a complete computation of the TAHSS's, TGSS's and TEHPSS in the Toda range (through the $19$-stem).  These computations demonstrate that the tools of this book can be used to give an independent treatment of the unstable $2$-primary computations of \cite{Toda}.  However, unlike \cite{Toda}, we do not solve the additive and multiplicative extension problems presented by these spectral sequences.

Appendix~\ref{sec:Hu} reviews Hu's notion of a transfinite spectral sequence, and describes the transfinite spectral sequence associated to a transfinite tower.  We only work under a strong locally finite hypothesis which guarantees excellent convergence properties of the transfinite spectral sequences --- this local finiteness hypothesis is satisfied by all of the examples of transfinite spectral sequences in this book.  We then prove a general geometric boundary type theorem, which explains the behavior of the transfinite spectral sequences associated to a fiber sequence of transfinite towers.  This geometric boundary theorem specializes to give the geometric boundary effect TGSS differentials, as well as the lifting of TAHSS and TGSS differentials to the TEHPSS.

\begin{Ack}
This author benefited greatly from conversations with Greg Arone, Michael Ching, Nick Kuhn, Katherine Lesh, Mark Mahowald, and Haynes Miller.  
\end{Ack}

\aufm{Mark Behrens}
\vfill

\pagebreak

\section{Conventions}\label{conventions}
As stated at the beginning of the introduction, all spaces and spectra are implicitly localized at the prime $2$.  Homology and cohomology is implicitly taken with $\FF_2$-coefficients.  Mod $2$ binomial coefficients $\binom{a}{b} \in \FF_2$ are defined for all $a, b \in \ZZ$ by
$$ \binom{a}{b} = \text{coefficient of $t^b$ in $(1+t)^a$.} $$
We will denote the connectivity of a space $Y$ by $\mr{conn}(Y)$.  For a spectrum $E$, we denote its Spanier-Whitehead dual by $E^\vee$.  We shall use $S^n$ to denote the $n$-sphere as a space, and use $\ul{S}^n$ to denote the $n$-sphere as a suspension spectrum.  We will use $S$ to denote the sphere spectrum $\ul{S}^0$.

Throughout this book we will be dealing with \emph{completely unadmissible sequences} of integers (which we shall sometimes refer to briefly as ``CU sequences'').  A CU sequence is a sequence of integers
$$ (j_1, \ldots, j_k) $$
with $j_s \ge 2j_{s+1}+1$.  We shall often refer to a CU sequence by a single capital letter (e.g. $J = (j_1, \ldots, j_k)$).  We associate to CU sequences the following quantities:
\begin{itemize}
\item Length: $\abs{J} := k$,
\item Degree: $\norm{J} := j_1+\cdots+j_k$,
\item Excess: $e(J) := j_k$.
\end{itemize}
The empty sequence $\emptyset$ is regarded as a sequence of length zero, degree zero, and excess $\infty$.
We introduce a total ordering on CU sequences of all lengths: we declare that 
$$ (j_1, \ldots, j_k) \le (j'_1, \ldots, j'_{k'}) $$
if either $k > k'$ or, for $k = k'$, if $J \le J'$ with respect to right lexicographical ordering.  For CU sequences
\begin{align*}
J & = (j_1, \ldots, j_k) \\
J' & = (j'_1, \ldots, j'_{k'})
\end{align*}
with $j_k \ge 2j'_1+1$, we shall let $[J, J']$ denote the CU sequence
$$ (j_1, \ldots, j_k, j'_1, \ldots, j'_{k'}). $$

\mainmatter

\chapter{Dyer-Lashof operations and the identity functor}\label{sec:Ching}

Johnson's computation \cite{Johnson} of the derivatives of the identity $\partial_i(\mr{Id})$ admit a description in terms of the poset of partitions of the set $\ul{i}$ \cite{AroneK}, \cite{AroneMahowald}.
Ching observed that this description yields an equivalence
$$ \partial_i(\mr{Id}) \simeq B(1,\mr{Comm}, 1)^\vee_i $$
where $B(1, \mr{Comm}, 1)_i$ is the $i$th space of the operadic bar construction on the commutative operad in spectra \cite{Ching}.  Using this description, Ching gives $\partial_*(\mr{Id})$ the structure of an operad, and therefore the layers 
$\partial_*(\mr{Id}) \wedge \Sigma^\infty X^{\wedge *}$
form a left module over $\partial_*(\mr{Id})$.  More generally, the work of Arone and Ching \cite{AroneChing} gives $\partial_*(F)$ the structure of a bimodule over $\partial_*(\mr{Id})$ for any reduced finitary homotopy functor $F : \mr{Top}_* \rightarrow \mr{Top}_*$.

In Section~\ref{sec:opbar} we review Ching's topological model for the operadic bar construction.  In Section~\ref{sec:cooperad} we recall Ching's cooperad structure on this bar construction.  Dualizing this gives Ching's operadic structure on $\partial_*(\Id)$, as explained in Section~\ref{sec:IDoperad}.

We then use this model to define Dyer-Lashof-like operations $\bar{Q}^j$ on $H_*(\mb{D}_*(F)(X))$, and relate these to the actual Dyer-Lashof operations which appear in the Arone-Mahowald computation of $H_*(\mb{D}(S^n))$.    
We review some well-known aspects of the homology of extended powers in Section~\ref{sec:extendedpowers}.  In Section~\ref{sec:DLlikeops} we construct the operations $\bar{Q}^j$, and explain how the Arone-Mahowald computation of the stable homology of the layers of the Goodwillie tower of the identity evaluated on spheres is given by these operations.

\section{The operadic bar construction}\label{sec:opbar}

Our symmetric sequences shall be regarded as functors
$$ \mr{Fin} \rightarrow \mr{Top}_* $$
where $\mr{Fin}$ is the category of finite sets and bijections.
Let $\Phi$ be a reduced operad in $\mr{Top}_*$.  Ching gives a topological model for the realization of the operadic bar construction
$$ B(\Phi)  := B(1, \Phi, 1). $$
For a finite set $A$, a point in $B(\Phi)(A)$ is given by a tuple
$$ (T, \alpha, (x_v), l) $$
consisting of:
\begin{enumerate}
\item A rooted tree $T$ with one root and $\abs{A}$ leaves.  Each $v$ in $V(T)$, the set of internal vertices, has a set $I(v)$ of incoming edges, with $\abs{I(v)} \ge 2$, and a single outgoing edge.  A vertex $v$ is the source $s(e)$ of its outgoing edge $e$, and is the target $t(e')$ of each of its incoming edges $e'$.
\item A labeling of the leaves, given by a bijection $\alpha$ from the set of leaves to the set $A$.
\item A labeling of the vertices, which assigns to each vertex $v \in V(T)$ a point $x_v$ in $\Phi(I(v))$.
\item A metric on the edges, which assigns to each edge $e$ a non-negative length $l(e)$, such that the distance from each leaf to the root is $1$. 
\end{enumerate}
These tuples are subject to the following identifications:
\begin{itemize}
\item If $l(e) = 0$ for any external edge $e$, then 
$$(T, \alpha, (x_v), l) \sim \ast. $$
\item If $x_v = \ast$ for some $v$, then 
$$ (T, \alpha, (x_v), l) \sim \ast. $$
\item If $l(e) = 0$ for an internal edge $e$, then
$$ (T, \alpha, (x_v) , l) \sim (T/e, \alpha', (x'_v), l') $$
where $T/e$ is the tree with edge $e$ collapsed, and vertices $s(e)$ and $t(e)$ identified.  The leaf labeling $\alpha'$ is induced by $\alpha$ under the natural bijection between the leaves of $T$ and $T/e$.  The vertex labeling $(x'_v)$ is given by
$$
x'_v = 
\begin{cases}
x_{t(e)} \circ_{e} x_{s(e)}, & v \: \text{is the image of $s(e) \sim t(e)$}, \\
x_v & \text{otherwise}.
\end{cases}
$$
The metric $l'$ assigns to the edges $e'$ in $T/e$ their lengths in $T$.
\end{itemize}

As the operadic bar construction arises as the geometric realization of a simplicial symmetric sequence in $\Top_*$
$$ B(\Phi) = \abs{B_\bullet (\Phi)}, $$
the spaces $B(\Phi)(i)$ admit decompositions as simplicial spaces, as is described in \cite[Sec.~4.2]{Ching}.  Let   
$\mc{P}_s(i)$ be a sequence of refining partitions
$$ \pmb{\lambda} = (\lambda_0 \le \lambda_1  \le \cdots \le \lambda_{s+1}) $$
of $\ul{i}$ with 
\begin{align*}
\lambda_{0} & = \{1, 2, \cdots, i \}, \\
\lambda_{s+1} & = \{1\}\{2\} \cdots \{i\}. 
\end{align*}
Associated to $\pmb{\lambda}$ is a rooted tree $(T_{\pmb{\lambda}}, \alpha_{\pmb{\lambda}})$ with $k+2$ levels and leaves labeled by $\ul{i}$. (See Figure~7 of \cite[Sec.~4.2]{Ching}, but our partition indexing is in the opposite order of Ching's.)  
Given a labeling $(x_v)$ of the internal vertices $T_{\pmb{\lambda}}$ by points $x_v \in \Phi(\abs{I(v)})$, we get a map 
$$
\sigma_{\pmb{\lambda}, (x_v)}: \Delta^s \rightarrow B(\Phi)(i)
$$
which assigns to a point
$$ \mbf{t} = (t_0, t_1, \ldots, t_s) \in \Delta^s $$
with $t_j \ge 0$ and $t_0 + \cdots + t_s = 1$, the point
$$ (T_{\pmb{\lambda}}, \alpha_{\pmb{\lambda}}, (x_v), l_\mbf{t}) \in  B(\Phi)(i) $$
where $l_\mbf{t}(e) = t_j$ for $e$ an edge going from level $j+1$ to level $j$. 

\section{The cooperadic structure on $B(\Phi)$}\label{sec:cooperad}

Ching showed that $B(\Phi)$ is a cooperad in $\Top_*$.  
The cooperadic structure map
$$ \circ_{a} : B(\Phi)(A \cup_a B) \rightarrow B(\Phi)(A) \wedge B(\Phi)(B)
$$
sends a point $(T, \alpha, (x_v), l)$
to the point
$$
\begin{cases}
(T', \alpha', (x'_v), l') \wedge (T'', \alpha'', (x''_v), l'') & \text{if $(T,\alpha)$ is obtained by grafting $(T'', \alpha'')$} \\
& \quad \text{onto the leaf labelled by $a$ of $(T', \alpha')$}, \\
\ast, & \text{otherwise},
\end{cases}
$$
where:
\begin{itemize}
\item The labellings $(x'_v)$ and $(x''_v)$ arise from the bijection $V(T) \cong V(T') \amalg V(T'')$. 
\item We have 
$$
l'(e) = 
\begin{cases}
h(T''),  & s(e) = a, \\
l(e), & \text{otherwise},
\end{cases}
$$
where $h(T'')$ is the height of $T''$, when viewed as a subtree of the metric tree $(T,l)$.
\item The metric $l''$ is given by
$$ l''(e) = l(e)/h(T''). $$
\end{itemize}

\section{Operad structure on $\partial_*(\mr{Id})$}\label{sec:IDoperad}

Let $\mr{Comm}$ be the commutative operad in $\Top_*$, with 
$$ \mr{Comm}(i) = S^0. $$ 
Ching observes that the partition poset description of $\partial_i(\mr{Id})$ gives rise to an equivalence
$$ \partial_i(\mr{Id}) \simeq \Sigma^\infty B(\mr{Comm})(i)^\vee $$ 
of $\Sigma_i$-spectra.  As $B(\mr{Comm})$ is a cooperad in $\Top_*$, Ching deduces that $\partial_*(\mr{Id})$ is an operad in spectra.  Arone and Ching also show that, for any reduced finitary homotopy functor $F: \Top_* \rightarrow \Top_*$, the derivatives $\partial_*(F)$ are a bimodule over $\partial_*(\mr{Id})$ \cite{AroneChing}.  In the case where $F = \mr{Id}$, this bimodule structures is the one given by letting $\partial_*(\mr{Id})$ act on itself on both sides.

Our Dyer-Lashof-like operations will rely on a good understanding of $\partial_2(\mr{Id})$.
There is only one rooted tree with $2$ leaves
$$
\xymatrix@C-1em@R-1em{
*=0{\bullet} \ar@{-}[dr] && *=0{\bullet} \ar@{-}[dl] \\
& *=0{\bullet} \ar@{-}[d] \\
& *=0{\bullet}
}
$$
and the space of weightings on this tree is homeomorphic to an interval $I$.  Taking the identifications into account, we deduce that $B(\mr{Comm})(2)$ is homeomorphic to $I/\partial I = S^1$, and thus
$$ \partial_2(\mr{Id}) = S^{-1} $$
(with trivial $\Sigma_2$-action).

\section{Homology of extended powers}\label{sec:extendedpowers}

Let $Y$ be a spectrum.
We review the structure of the homology of the extended powers $Y^{\wedge 2}_{h\Sigma_2}$.

Lemma~1.3 of \cite{May} and Corollary~I.2.3 of \cite{Hinfty} imply the following.

\begin{lem}
Let $\{ y_i \}_{i \in I}$ be an ordered basis of $H_*(Y)$.  Then
$$ H_*(Y^{\wedge 2}_{h \Sigma_2}) = \FF_2\{ e_k \otimes y_i \otimes y_i\}_{i,k} \oplus \FF_2 \{ e_0 \otimes y_{i_1} \otimes y_{i_2} \}_{i_1 < i_2}. $$ 
\end{lem}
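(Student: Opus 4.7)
The plan is to prove this via the homotopy orbit spectral sequence (equivalently, the Serre spectral sequence for the Borel construction $E\Sigma_2 \ltimes_{\Sigma_2} Y^{\wedge 2}$):
$$ E^2_{p,q} = H_p(\Sigma_2; H_q(Y^{\wedge 2})) \Longrightarrow H_{p+q}(Y^{\wedge 2}_{h\Sigma_2}). $$
First I would use the Künneth theorem (over $\FF_2$ there is no $\Tor$ to worry about for a field) to identify $H_*(Y^{\wedge 2}) = H_*(Y) \otimes H_*(Y)$, where $\Sigma_2$ acts by swapping tensor factors (up to the usual Koszul sign, which is invisible mod $2$).

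Next, I would decompose $H_*(Y)^{\otimes 2}$ as an $\FF_2[\Sigma_2]$-module using the ordered basis $\{y_i\}$. The submodule spanned by diagonal tensors $y_i \otimes y_i$ has trivial $\Sigma_2$-action, while the complementary submodule spanned by the off-diagonal tensors $y_{i_1} \otimes y_{i_2}$ with $i_1 \neq i_2$ is a free $\FF_2[\Sigma_2]$-module with basis $\{y_{i_1} \otimes y_{i_2} : i_1 < i_2\}$. Feeding this into the standard computation
$$ H_*(\Sigma_2;\FF_2) = \FF_2\{e_k : k \ge 0\}, \qquad H_*(\Sigma_2; \FF_2[\Sigma_2]) = \FF_2 \text{ concentrated in degree } 0, $$
one obtains exactly the $E^2$-term
$$ E^2 \;=\; \FF_2\{e_k \otimes y_i \otimes y_i\}_{i,k} \;\oplus\; \FF_2\{e_0 \otimes y_{i_1} \otimes y_{i_2}\}_{i_1 < i_2}. $$

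The step that actually needs input is collapse at $E^2$: all higher differentials must vanish. This is the main obstacle but it is precisely the content of \cite[Lem.~1.3]{May} and \cite[Cor.~I.2.3]{Hinfty}, which identify the classes $e_k \otimes y_i \otimes y_i$ as the Dyer-Lashof/Steenrod operations applied to $y_i$ (in particular, as permanent cycles) and observe that the transferred off-diagonal classes also survive. So I would invoke these references for collapse rather than reprove it.

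Finally, because we are working over the field $\FF_2$, the filtration on $H_*(Y^{\wedge 2}_{h\Sigma_2})$ splits additively and there are no extension problems to resolve: reading off $E^\infty = E^2$ gives exactly the stated $\FF_2$-basis. The most subtle point to guard against is functoriality in the ordered basis — the off-diagonal generator depends on the choice of order, but fixing the order $i_1 < i_2$ once and for all (as assumed in the statement) makes this unambiguous.
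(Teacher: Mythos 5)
Your proposal is correct, and it actually supplies more argument than the paper does: the paper offers no proof at all, simply asserting that the lemma follows from Lemma~1.3 of May and Corollary~I.2.3 of the $H_\infty$ book. Your route through the homotopy-orbit (Borel) spectral sequence, with the $\Sigma_2$-module splitting of $H_*(Y)^{\otimes 2}$ into the trivial diagonal part and the free off-diagonal part, correctly reproduces the $E^2$-term, and over $\FF_2$ there are indeed no extension issues. The one caveat is at the step you flag yourself: Corollary~I.2.3 of the $H_\infty$ reference is essentially the statement of the lemma, so citing it to get ``collapse'' is very close to citing the result outright rather than an independent input --- acceptable here only because the paper does exactly that. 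It is worth knowing that those references do not argue via spectral sequence collapse at all; they compute $H_*\bigl(W \otimes_{\Sigma_2} (C \otimes C)\bigr)$ directly at the chain level, where $W$ is the standard free $\Sigma_2$-resolution and $C$ may be taken to be $H_*(Y)$ with zero differential (quasi-isomorphism invariance of homotopy orbits). That chain-level computation produces the classes $e_k \otimes y_i \otimes y_i$ and $e_0 \otimes y_{i_1} \otimes y_{i_2}$ in one stroke and sidesteps the collapse question entirely, which is why it is the standard proof; your spectral-sequence version buys a cleaner conceptual picture of where the two families of classes live (the $p=0$ column versus the diagonal towers) at the cost of needing that external input for the higher differentials.
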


Following standard conventions, for $y \in H_d(Y)$ and $j \ge d$, we define the Dyer-Lashof operation $Q^j$ by
$$ Q^j(y) := e_{j-d} \otimes y \otimes y \in H_{d+j}(Y^{\wedge 2}_{h\Sigma_2}). $$
If a sequence $(j_1, \ldots, j_k)$ is \emph{allowable} (i.e. $j_s \ge j_{s+1}+\cdots + j_k +d$ for all $s$, or equivalently, $(j_1, \ldots, j_k)$ has \emph{excess} $\ge d$) then iterating the extended power construction gives elements
$$ Q^{j_1} \wr \cdots \wr Q^{j_k} y \in H_{d+j_1+\cdots +j_k}(Y^{\wedge 2^k}_{h\Sigma_2^{\wr k}}). $$
Define $\td{\mc{R}}_n(k)$ to be the $\FF_2$-module
$$ \td{\mc{R}}_n(k) := \FF_2\{\td{Q}^J  \: : \: \text{$J = (j_1, \ldots, j_k)$ has excess $\ge n$} \}$$
where
$$ \td{Q}^J := Q^{j_1} \wr \cdots \wr Q^{j_k}. $$
The homology of the iterated extended power $H_*(Y^{\wedge 2^k}_{h\Sigma_2^{\wr k}})$ contains a summand given by
$$ \td{\mc{R}}(k) \, \widehat{\otimes} \, H_*(Y) := \FF_2\{\td{Q}^J y_i \: : \:  i \in I, \: J = (j_1, \ldots, j_k), \: Q^J y_i \: \text{allowable} \} $$
(so $\td{\mc{R}}_n(k) = \td{\mc{R}}(k) \, \widehat{\otimes}\, H_*(S^n)$).
The summand $\td{\mc{R}}_n(k) \, \widehat{\otimes} \, H_*(Y)$ is closed under the dual action of the Steenrod algebra, and the dual Steenrod action is computed by the Nishida relations
$$ \Sq^r_* Q^s = \sum_t \binom{s-r}{r-2t} Q^{s-r+t} \Sq^t_*. $$ 

For $t \ge 0$, the diagonal map $S^t \rightarrow S^{t} \wedge S^{t}$ induces a suspension map
$$ E^t: (\Sigma^{-t} Y )^{\wedge 2}_{h\Sigma_2} \rightarrow \Sigma^{-t} Y^{\wedge 2}_{h\Sigma_2} $$ 
(see, for example, \cite[Sec.~3]{Hinfty}).  The following lemma follows from \cite[Lem.~II.5.6]{Hinfty}.

\begin{lem}\label{lem:E}
The induced map on homology
$$ E^t_*: H_*((\Sigma^{-t} Y)^{\wedge 2}_{h\Sigma_2}) \rightarrow H_*(\Sigma^{-t} Y^{\wedge 2}_{h\Sigma_2}) $$
satisfies 
$$ E^t_* Q^j \sigma^{-t} y =
\begin{cases}
\sigma^{-t} Q^j y, & j \ge \abs{y}, \\
0, & \text{otherwise}.
\end{cases}
$$
\end{lem}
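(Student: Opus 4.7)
The plan is first to reduce to the case $t = 1$, and then to read off $E^1_*$ on Dyer--Lashof classes from the cellular model of the extended square. For the reduction, observe that the diagonal $\Delta_t : S^t \to S^t \wedge S^t$ used to construct $E^t$ factors (up to the relevant $\Sigma_2$-equivariance) as $t$ successive applications of the diagonal $\Delta_1 : S^1 \to S^1 \wedge S^1$. This yields a factorization
$$ E^t \simeq (\Sigma^{-(t-1)} E^1) \circ (\Sigma^{-(t-2)} E^1) \circ \cdots \circ (\Sigma^{-1} E^1) \circ E^1, $$
so that, provided the $t = 1$ case is known, an immediate induction on $t$ gives the general statement: at each stage the surviving classes $\sigma^{-k} Q^j y$ (those with $j \ge \abs{y}$) pass through the next $\Sigma^{-k}E^1$, while any class of the form $Q^j \sigma^{-(k+1)} y$ with $j < \abs{y}$ is killed at the $(k+1)$-st step.

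For the case $t = 1$: the class $Q^j \sigma^{-1}y \in H_*((\Sigma^{-1}Y)^{\wedge 2}_{h\Sigma_2})$ is represented by $e_{j - \abs{y} + 1} \otimes \sigma^{-1}y \otimes \sigma^{-1}y$ in the cellular filtration of the extended square coming from $E\Sigma_2 \simeq S^\infty$. The map $E^1$ is, by construction, induced by the $\Sigma_2$-equivariant diagonal $\Delta_1$ (trivial action on the source, swap on the target) smashed with the identity on $Y^{\wedge 2}$. The resulting effect on the cellular filtration is precisely what is computed in \cite[Lem.~II.5.6]{Hinfty}: the $(j - \abs{y} + 1)$-cell on the source is identified, after absorbing one suspension coordinate into the smash factor, with the $(j - \abs{y})$-cell on the target whenever $j \ge \abs{y}$, producing $\sigma^{-1} Q^j y$; the lone additional class $Q^{\abs{y} - 1} \sigma^{-1} y$, coming from the $0$-cell, has no counterpart on the target.

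The substantive obstacle is accounting for the vanishing of this boundary class $Q^{\abs{y} - 1} \sigma^{-1} y$, the unique ``unstable'' Dyer--Lashof class on the source with no analogue on the target. Conceptually, it represents the square $(\sigma^{-1} y)^2$ in the $e_0$-stratum of the extended power, and its vanishing under $E^1_*$ reflects the fact that the reduced diagonal $S^1 \to S^1 \wedge S^1$ is stably null-homotopic. Once this boundary case is absorbed into the $t = 1$ statement via \cite[Lem.~II.5.6]{Hinfty}, the general $t$ case follows formally from the factorization above.
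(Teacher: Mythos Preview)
Your proposal is correct and aligns with the paper's treatment: the paper does not give an argument at all, stating only that the lemma ``follows from \cite[Lem.~II.5.6]{Hinfty}.'' Your write-up is thus a strict elaboration of the paper's one-line citation, adding the (entirely standard) reduction to $t=1$ via the factorization of the diagonal on $S^t$, and then invoking the same reference for the base case.
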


For $n \ge 0$, let $\mc{R}_n$ denote the Dyer-Lashof algebra over $\FF_2$, generated by $Q^j$ for $j \ge n$ and subject to the relations
\begin{align*}
Q^{j_1} \ldots Q^{j_k} & = 0 \qquad \text{if $j_1 < j_2+\cdots j_k +n$}, \\
Q^{r} Q^s & = \sum_{t} \binom{t+s-r}{2t-r} Q^{r+s-t} Q^t \qquad \text{(Adem relations)}.
\end{align*}
A sequence $(j_1, \ldots, j_k)$ is \emph{admissible} if $j_s \le 2j_{s+1}$ for all $s$.  The Adem relations express every monomial in $\mc{R}_n$ of length $k$ into a unique sum of admissible monomials, each of which also has length $k$.  Writing $$ \mc{R}_n = \bigoplus_{k \ge 0} \mc{R}_n(k), $$
where $\mc{R}_n(k)$ is the summand of $\mc{R}_n$ spanned by monomials of length $k$, we have
$$ \mc{R}_n(k) = \FF_2\{Q^{J} \: : \: \text{$J = (j_1, \ldots, j_k)$ admissible, and of excess $\ge n$}\}. $$
The homology of the extended power $Y^{\wedge 2^k}_{h\Sigma_{2^k}}$ contains a summand 
$$ \mc{R}(k) \, \widehat{\otimes} \,  H_*(Y) := \FF_2\{ Q^J y_i \: : \: i \in I, \: \text{$J = (j_1, \ldots, j_k)$ is admissible of excess $\ge \abs{y_i}$} \} $$
(so $\mc{R}(k)_n = \mc{R}(k) \, \widehat{\otimes} \, H_*(S^n)$).
This summand is closed under the dual action of the Steenrod algebra, given by the Nishida relations.  The restriction map
$$ H_*(Y^{\wedge 2^k}_{h\Sigma_2^{\wr k}}) \rightarrow H_*(Y^{\wedge 2^k}_{h\Sigma_{2^k}}) $$
acts on the length $k$ summand by
$$ \td{Q}^J y \mapsto Q^J y. $$

Suppose that $Y = \Sigma^\infty X$ is a suspension spectrum.  The effect of the transfer
$$ \Tr : H_*(Y^{\wedge 4}_{h\Sigma_4}) \rightarrow H_* (Y^{\wedge 2}_{h\Sigma_2 \wr \Sigma_2}) $$
on length $2$ summands is given by 
$$ Q^r Q^s y \mapsto Q^r \wr Q^s y+ \sum_t \left[ \binom{s-r+t}{s-t} + \binom{s-r+t}{2t-r} \right] Q^{r+s-t} \wr Q^{t} y $$
(see Theorem~7.1 and Example 7.5 of \cite{Kuhn}).
The following lemma is an observation of Priddy, which relates the above formula to a computation of Kahn-Priddy \cite{Priddy}.

\begin{lem}\label{lem:Priddy}
Suppose that $s < r \le 2s$.  Then we have 
\begin{align*}
\Tr(Q^r Q^s y) & = \sum_{t} \binom{s-r+t}{s - t} Q^{r+s-t} \wr Q^t y \\
& = Q^r \wr Q^s y + \sum_{\ell = 0}^{r-s-1} \binom{2s-r+1+2\ell}{\ell}Q^{2s+1+\ell} \wr Q^{r-s-1-\ell} y.
\end{align*}
In particular
$$ \Tr(Q^rQ^s y) = Q^r \wr Q^s y + \text{terms $Q^{r'}Q^{s'}y$ with $r' \ge 2s'+1$}. $$
\end{lem}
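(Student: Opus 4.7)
The plan is to derive both equalities by manipulating the Kuhn transfer formula quoted immediately above the lemma, using only mod-$2$ binomial arithmetic. The key identity is the symmetry $\binom{n}{k} \equiv \binom{n}{n-k} \pmod 2$ for $n \ge 0$. I would apply this to the pair $(s-t,\, 2t-r)$, which sum to $s - r + t$, to conclude that the two binomials inside the bracket of Kuhn's formula are equal (and hence cancel mod $2$) whenever $s - r + t \ge 0$. When $s - r + t < 0$, i.e.\ $t < r - s$, the hypothesis $r \le 2s$ forces $r - s \le r/2$, so $2t - r < 0$ and the second binomial $\binom{s-r+t}{2t-r}$ vanishes by the stated convention; only the first binomial survives, computed as the coefficient of $X^{s-t}$ in the formal power series $(1+X)^{s-r+t}$.

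Combining these observations, Kuhn's formula collapses mod $2$ to
$$\Tr(Q^rQ^sy) = Q^r \wr Q^s y + \sum_{0 \le t < r-s} \binom{s-r+t}{s-t} Q^{r+s-t} \wr Q^t y.$$
Absorbing the leading $Q^r \wr Q^s y$ as the $t = s$ summand (for which $\binom{2s-r}{0} = 1$ since $r \le 2s$) yields the first equality of the lemma. For the second equality I would reindex via $\ell = r - s - 1 - t$, so $\ell$ ranges over $\{0, \ldots, r-s-1\}$; the exponents transform as $r+s-t = 2s+1+\ell$ and $t = r-s-1-\ell$, and the binomial becomes $\binom{-1-\ell}{2s-r+1+\ell}$. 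Applying the identity $\binom{-n}{k} \equiv \binom{n+k-1}{k} \pmod 2$ (immediate from $(1+X)^{-n}(1+X)^n = 1$ over $\FF_2$) followed by the symmetry $\binom{N}{K} = \binom{N}{N-K}$ converts this to $\binom{2s-r+1+2\ell}{\ell}$, matching the stated expression.

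The ``in particular'' conclusion then reads off the second form: each summand $Q^{2s+1+\ell} \wr Q^{r-s-1-\ell} y$ is CU because $2s+1+\ell \ge 2(r-s-1-\ell)+1$ reduces to $4s + 3\ell + 2 \ge 2r$, which holds for all $\ell \ge 0$ given $r \le 2s$. The main obstacle in carrying out this plan is the careful case analysis on the signs of $s-r+t$, $s-t$, and $2t-r$, together with the convention for mod-$2$ binomial coefficients with negative upper index, needed to justify the cancellation step cleanly and to identify precisely which values of $t$ survive. Once that bookkeeping is in place, both equalities and the CU statement follow by direct substitution.
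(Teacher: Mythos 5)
Your route to the \emph{second} displayed equality is correct and, in its essentials, the same as the paper's: the vanishing of $\binom{s-r+t}{2t-r}$ for $t<r-s$, the reindexing $\ell=r-s-1-t$, the negative-binomial identity $\binom{-n}{k}\equiv\binom{n+k-1}{k}$ followed by the symmetry of binomial coefficients, and the check that $2s+1+\ell\ge 2(r-s-1-\ell)+1$ under $r\le 2s$ all match the paper's manipulations; your pairwise cancellation of the two bracketed binomials for $t\ge r-s$ (complementary bottom entries for the nonnegative top $s-r+t$) is a clean way to collapse Kuhn's formula directly to that second form, and it also gives the ``in particular'' statement.

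The gap is in your derivation of the \emph{first} equality. After your collapse you have $\Tr(Q^rQ^sy)=Q^r\wr Q^sy+\sum_{0\le t<r-s}\binom{s-r+t}{s-t}Q^{r+s-t}\wr Q^ty$, and you pass to the full sum $\sum_t\binom{s-r+t}{s-t}Q^{r+s-t}\wr Q^ty$ by ``absorbing'' $Q^r\wr Q^sy$ as the $t=s$ summand. But the full sum also contains the summands with $r-s\le t\le s-1$ (equivalently $\lceil r/2\rceil\le t\le s-1$), and their coefficients are not zero termwise: for $r=4$, $s=3$, $t=2$ one gets $\binom{s-r+t}{s-t}=\binom{1}{1}=1$, so your collapsed expression and the asserted first equality differ by the class $Q^{5}\wr Q^{2}y$, and nothing in your argument shows such intermediate terms may be added for free. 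In other words, the content of the first equality is precisely that the $t\ge r-s$, $t\ne s$ portion of the sum contributes nothing, and this cannot be pure mod-$2$ binomial bookkeeping. This is exactly where the paper brings in a different input: it recognizes $\sum_t\binom{s-r+t}{2t-r}Q^{r+s-t}\wr Q^ty$ (which, by your symmetry observation, is the $t\ge r-s$ part of the other sum) as the Adem-relation expansion attached to the pair $(r,s)$, and uses admissibility of $(r,s)$, i.e.\ $s<r\le 2s$, to conclude that this expansion yields only the single term $t=s$, namely $Q^r\wr Q^sy$. Your symmetric-cancellation shortcut bypasses that Adem-relation step, so as written the proposal establishes the second equality and the CU consequence but not the lemma's first identity.
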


\begin{proof}
In the formula
$$ Q^r \wr Q^s y+ \sum_t \binom{s-r+t}{s-t} Q^{r+s-t} \wr Q^{t} y + \sum_t \binom{s-r+t}{2t-r} Q^{r+s-t} \wr Q^t y $$
the last sum is the Adem relation in the Dyer-Lashof algebra.  As $(r,s)$ is admissible, this sum yields only the term $Q^r \wr Q^s y$.  This establishes the first equality of the lemma:
$$ \Tr(Q^r Q^s y)  = \sum_{t} \binom{s-r+t}{s - t} Q^{r+s-t} \wr Q^t y $$
For the second equality we observe that if $t \ge r-s$ then 
$$ \binom{s-r+t}{s-t} = \binom{s-r+t}{2t-r}. $$
As this is again the binomial coefficient appearing in the Adem relation, and $(r,s)$ is admissible, we only get a contribution for $t = s$ (note that $s \ge r-s$).  Thus we can write
$$ \sum_{t} \binom{s-r+t}{s - t} Q^{r+s-t} \wr Q^t y = Q^r \wr Q^s y + \sum^{r-s-1}_{t = 0}  \binom{s-r+t}{s - t} Q^{r+s-t} \wr Q^t y. $$
We have (letting $\ell = r-s-1-t$)
\begin{align*}
\sum^{r-s-1}_{t = 0}  \binom{s-r+t}{s - t} Q^{r+s-t} \wr Q^t y 
& = \sum^{r-s-1}_{t = 0}  \binom{r -1-2t}{s - t} Q^{r+s-t} \wr Q^t y \\
& = \sum^{r-s-1}_{t = 0}  \binom{r -1-2t}{r-s-1 -t} Q^{r+s-t} \wr Q^t y \\
& = \sum^{r-s-1}_{t = 0}   \binom{2s-r+1+2\ell}{\ell}Q^{2s+1+\ell} \wr Q^{r-s-1-\ell} y. \\
\end{align*}
In particular, we have
$$ 2s+1+\ell \ge r +1+\ell > r-2-2\ell \ge 2r -2s-2-2\ell. $$
\end{proof}

For $n \ge 1$ let $\bar{\mc{R}}_n$ be the free $\FF_2$-algebra generated by $\bar{Q}^j$ for $j \ge 0$, subject to the relations
\begin{enumerate}
\item $$ \bar{Q}^{j_1} \cdots \bar{Q}^{j_k} = 0 \: \text{if $j_1 < j_2+\cdots +j_k +n$, and} $$ 
\item $$ \bar{Q}^r \bar{Q}^s = \sum_{\ell = 0}^{r-s-1} \binom{2s-r+1+2\ell}{\ell}\bar{Q}^{2s+1+\ell} \bar{Q}^{r-s-1-\ell} \: \text{ if $s < r \le 2s$.} $$
\end{enumerate}
For a sequence $J = (j_1, \ldots, j_k)$ we define
$$ \bar{Q}^J := \bar{Q}^{j_1} \cdots \bar{Q}^{j_k}. $$
We shall say that a monomial $\bar{Q}^J$ is CU if the sequence $J$ is a CU sequence (see \ref{conventions}).
The relations amongst the $\bar{Q}^j$ express every monomial in $\bar{\mc{R}}_n$ of length $k$ into a unique sum of CU monomials, each of which also has length $k$.  We give the algebra $\bar{\mc{R}}_n$ an internal grading by setting
$$ \abs{\bar{Q}^j} = j-1. $$
(The reason we have chosen this to be $j-1$ instead of $j$ will be made clear in Section~\ref{sec:DLlikeops}.)
 
Writing 
$$ \bar{\mc{R}}_n = \bigoplus_{k \ge 0} \bar{\mc{R}}_n(k), $$
where $\bar{\mc{R}}_n(k)$ is the summand of $\bar{\mc{R}}_n$ spanned by monomials of length $k$, we have
$$ \bar{\mc{R}}_n(k) = \FF_2\{\bar{Q}^{J} \: : \: \text{$J$ CU, $\abs{J} = k$, $e(J) \ge n$}\}. $$
(Here, $e(J)$ is the \emph{excess} of the CU sequence $J$, as defined in \ref{conventions}.)
The formula for the transfer gives a topological raison d'\^{e}tre for these modules:  
$$ \Sigma^k \bar{\mc{R}}_n(k)\{\iota_n\} = \im \left( \mc{R}_n(k)\{\iota_n\} \hookrightarrow H_*((S^n)^{\wedge 2^k}_{h\Sigma_{2}^{\wr k}}) \rightarrow \frac{H_*((S^n)^{\wedge 2^k}_{h\Sigma_{2}^{\wr k}})}{\sum_i \Tr(H_*((S^n)^{2^k}_{h\td{P}_i}))} \right) $$
where
\begin{equation}\label{eq:Pi} 
\td{P}_i =  \Sigma_2^{\wr (i-1)} \wr \Sigma_4 \wr \Sigma_{2}^{\wr (k-i-1)} \subseteq \Sigma_{2^k} 
\end{equation}
(see the proof of Theorem 3.16 of \cite{AroneMahowald}).  The $\FF_2$-vector space $\bar{\mc{R}}_n(k)$ has an induced action of the dual Steenrod operations, given by combining the Nishida relations with the relations in the algebra $\bar{\mc{R}}_n$.

\begin{rmk}
It is well known that as modules over the opposite of the Steenrod algebra (c.f. \cite[Prop.~3.20]{KuhnWhitehead}, \cite[Thm.~3.17]{AroneDwyer}), there is an isomorphism
$$ \bar{\mc{R}}_0(k) \cong \left( \frac{F_k \mc{A}}{F_{k+1} \mc{A}} \right)^*. $$
Here $\{ F_k \mc{A} \}$ is the length filtration on the Steenrod algebra.  However, despite any misconceptions the reader might infer from the proofs of \cite[Prop.~3.20]{KuhnWhitehead} and \cite[Thm.~3.17]{AroneMahowald}, for $k \ge 2$ we do \emph{not} have a correspondence
$$ \bar{Q}^J \leftrightarrow (\Sq^{J+1})^* $$
(such a correspondence \emph{does} hold for $k=1$). 
Rather, it is the case that 
$$ \bar{Q}^J = (\Sq^{J+1})^* + \text{terms $(\Sq^{J'+1})^*$ with $J' < J$}.  $$
For instance:
$$ \bar{Q}^6 \bar{Q^2} = (\Sq^{7} \Sq^3)^*+(\Sq^8 \Sq^2)^*. $$
\end{rmk}

\section{Dyer-Lashof-like operations}\label{sec:DLlikeops}

Consider the maps $\xi_i$ given by the composites
\begin{align*}
\xi_i: \Sigma^{-1} \mb{D}_{i}(F)(X)^{\wedge 2}_{h\Sigma_2} & \simeq (\partial_2(\mr{Id}) \wedge \partial_i(F)^{\wedge 2} \wedge \Sigma^{\infty} X^{\wedge 2i})_{h\Sigma_2 \wr \Sigma_i} \\
& \rightarrow (\partial_{2i}(F) \wedge X^{\wedge 2i})_{h\Sigma_{2i}} \\
& \simeq \mb{D}_{2i}(F)(X)
\end{align*}
induced by the $\partial_*(\mr{Id})$-module structure on $\partial_*(F)$.  Define, for $j \ge d$, operations
$$ \bar{Q}^j: H_d(\mb{D}_i(F)(X)) \rightarrow H_{d+j-1}(\mb{D}_{2i}(F)(X)) $$
as follows.
For $x \in H_d(\mb{D}_{i}(F)(X))$, define
$$ \bar{Q}^j x := (\xi_i)_* \sigma^{-1} Q^j x. $$

Our main observation is the following reinterpretation of the Arone-Mahowald computation of $H_*(\mb{D}_{2^k}(S^n))$.

\begin{thm}\label{thm:AroneMahowald}
As an module over the operators $\bar{Q}^j$, we have
$$ \bigoplus_{k \ge 0} H_*(\mb{D}_{2^k}(S^n)) = \bar{\mc{R}}_n \{ \iota_n \}. $$
where
$$ \iota_n \in H_n(\mb{D}_1(S^n)) \cong \td{H}_n(S^n) $$
is the fundamental class.
In particular, we have
$$ H_*(\mb{D}_{2^k}(S^n)) =  \FF_2 \{ \bar{Q}^J \iota_n \: : \: J \: \text{is CU}, \: \abs{J} = k, \: e(J) \ge n \}  = \bar{\mc{R}}_n(k) \{\iota_n\} $$
and the operators $\bar{Q}^j$ act according to the relations in the algebra $\bar{\mc{R}}_n$.
If $J$ is a CU sequence of length $k$ with $e(J) \ge n$, then under the isomorphism of \cite[Thm.~3.16]{AroneMahowald}, we have
$$ Q^J \iota_n \mapsto \sigma^k \bar{Q}^J \iota_n. $$ 
\end{thm}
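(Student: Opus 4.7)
The plan is to reduce the theorem to the Arone-Mahowald calculation of $H_*(\mb{D}_{2^k}(S^n))$ by identifying the operadic operations $\bar{Q}^j$ with the algebraic generators of $\bar{\mc{R}}_n$. I would proceed by induction on $k$. The base case $k=0$ is immediate, since $\mb{D}_1(S^n) \simeq \Sigma^\infty S^n$ and $\bar{\mc{R}}_n(0) = \FF_2\{\iota_n\}$.

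For the inductive step, the heart of the proof is computing the effect on $\FF_2$-homology of the operadic structure map
$$ \xi_{2^{k-1}}: \Sigma^{-1} \mb{D}_{2^{k-1}}(S^n)^{\wedge 2}_{h\Sigma_2} \to \mb{D}_{2^k}(S^n). $$
I would unwind this as follows: using $\partial_2(\Id) \simeq S^{-1}$ and the operad composition $\partial_2(\Id) \wedge \partial_{2^{k-1}}(\Id)^{\wedge 2} \to \partial_{2^k}(\Id)$, after smashing with $(S^n)^{\wedge 2^k}$ and taking the appropriate coinvariants $\xi_{2^{k-1}}$ becomes a desuspended wreath-to-symmetric transfer-type map
$$ \Sigma^{-1}(S^n)^{\wedge 2^k}_{h\Sigma_2\wr \Sigma_{2^{k-1}}} \to \mb{D}_{2^k}(S^n). $$
Applying Lemma~\ref{lem:E} shows that the operation $\sigma^{-1}Q^j$ converts a wreathed class $\td{Q}^j y$ into the honest Dyer-Lashof class $Q^j y$ exactly when the excess condition $j\ge \abs{y}$ is met, explaining where the hypothesis $e(J)\ge n$ enters. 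Postcomposing with the restriction to $\Sigma_{2^k}$-coinvariants, the resulting homology map on length-$k$ summands is, up to the suspension shift $\sigma^k$, the projection $\mc{R}_n(k)\{\iota_n\} \twoheadrightarrow \mc{R}_n(k)\{\iota_n\}/\sum_i\Tr(H_*((S^n)^{\wedge 2^k}_{h\td{P}_i}))$ appearing in the description of $\Sigma^k\bar{\mc{R}}_n(k)\{\iota_n\}$ just before Lemma~\ref{lem:Priddy}.

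Given this identification of $(\xi_{2^{k-1}})_*$, the assertion $Q^J\iota_n \mapsto \sigma^k\bar{Q}^J\iota_n$ follows inductively: each new application of $\bar{Q}^{j} = (\xi)_*\sigma^{-1}Q^j$ prepends a $Q^j$, and passing to the transfer quotient rewrites any admissible-but-non-CU pair $Q^rQ^s$ into the CU form $\bar{Q}^r\bar{Q}^s + \text{(CU correction)}$ by precisely the Adem-like relations of $\bar{\mc{R}}_n$ supplied by Lemma~\ref{lem:Priddy}. The description of $H_*(\mb{D}_{2^k}(S^n))$ as the free $\bar{Q}^j$-module on $\iota_n$, and the claim that the $\bar{Q}^j$-action satisfies the defining relations of $\bar{\mc{R}}_n$, then fall out of the identification of $\Sigma^k\bar{\mc{R}}_n(k)\{\iota_n\}$ with the Arone-Mahowald quotient established in Section~\ref{sec:extendedpowers}.

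The main technical obstacle will be showing that the operad composition $\partial_2(\Id)\wedge\partial_{2^{k-1}}(\Id)^{\wedge 2} \to \partial_{2^k}(\Id)$ does induce, on $\FF_2$-homology after smashing with $(S^n)^{\wedge 2^k}$, the transfer (modulo lower filtration) from $\Sigma_2\wr \Sigma_{2^{k-1}}$-coinvariants to $\Sigma_{2^k}$-coinvariants. This demands a careful unpacking of Ching's topological bar construction model for $\partial_*(\Id)$ and its cooperadic structure map from Sections~\ref{sec:opbar}--\ref{sec:IDoperad}, to match the equivariant combinatorics fed into the Kuhn transfer formula used in Lemma~\ref{lem:Priddy}. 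Once this geometric-combinatorial matching is established, the rest of the proof is a routine bookkeeping of suspensions and Nishida-compatible algebraic rewriting.
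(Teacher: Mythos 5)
Your overall reduction---induct on $k$, dispose of $k=0$ trivially, and identify the homological effect of the module structure map $\xi_{2^{k-1}}$, feeding the excess condition through Lemma~\ref{lem:E} and the CU rewriting through Lemma~\ref{lem:Priddy} and the Arone--Mahowald identification---has the same shape as the paper's proof. But the step you set aside as ``the main technical obstacle'' is the entire content of the theorem, and the statement you propose to prove there is not the right one. First, rewriting the source of $\xi_{2^{k-1}}$ as $\Sigma^{-1}(S^n)^{\wedge 2^k}_{h\Sigma_2\wr\Sigma_{2^{k-1}}}$ silently discards the factor $\partial_2(\Id)\wedge\partial_{2^{k-1}}(\Id)^{\wedge 2}$; since $\partial_{2^{k-1}}(\Id)$ is not a sphere with trivial action once $2^{k-1}\ge 4$, the smash power $\mb{D}_{2^{k-1}}(S^n)^{\wedge 2}_{h\Sigma_2}$ is not an extended power of $S^n$, and by induction you only know $H_*(\mb{D}_{2^{k-1}}(S^n))$ as a subquotient of $H_*((\ul{S}^n)^{\wedge 2^{k-1}}_{h\Sigma_2^{\wr(k-1)}})$ through the Arone--Mahowald zigzag $f_{k-1},g_{k-1}$ built from the skeleta of Ching's bar construction. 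So what must actually be proved is that $\xi$ is compatible with that zigzag. Second, the claim that Ching's composition induces ``the transfer (modulo lower filtration) from $\Sigma_2\wr\Sigma_{2^{k-1}}$- to $\Sigma_{2^k}$-coinvariants'' is not what is true (note also that transfers run from larger-group to smaller-group coinvariants, so a wreath-to-symmetric transfer points the wrong way); transfers enter the theorem only through the quoted Arone--Mahowald description of $H_*(\mb{D}_{2^k}(S^n))$ as the allowable classes modulo the $\Tr_i$, and through the purely algebraic Lemma~\ref{lem:Priddy} converting admissible monomials into CU monomials. Attempting to match the operad composition itself to Kuhn's transfer formula is the point where your plan would fail.

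What fills this gap in the paper is Lemma~\ref{lem:technicallemma}: the cooperadic structure map of $B(\mr{Comm})$ carries the $(k{+}1)$-skeleton into the smash of the $1$-skeleton with two copies of the $k$-skeleton, and on the simplex indexed by the full binary tree $T_{k+1}$ it is the map $\beta_k$ whose simplex coordinate $\alpha_k$ is the suspension of the diagonal $\Sigma S^k\rightarrow \Sigma(S^k\wedge S^k)$. Dualizing, this intertwines $\xi_{2^k}$, through the maps $f_k,g_k$, with the suspension map $E^k$ on the iterated-wreath extended powers $(\ul{S}^n)^{\wedge 2^k}_{h\Sigma_2^{\wr k}}$ (the relevant group is $\Sigma_2^{\wr k}$, not $\Sigma_2\wr\Sigma_{2^{k-1}}$), and then Lemma~\ref{lem:E} yields $(f_{k+1})_*(\bar{Q}^j\bar{Q}^J\iota_n)=(g_{k+1})_*(\sigma^{-k-1}Q^jQ^J\iota_n)$, including the vanishing below the excess bound; no transfer computation is needed at this stage. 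Without this skeletal/diagonal analysis of Ching's model, or a genuine substitute for it, your argument does not yet establish the inductive step.
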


\begin{figure}
$$
\xymatrix@C-2em@R-1em{
*=0{\bullet} \ar@{-}[dr] && 
*=0{\bullet} \ar@{-}[dl] && 
*=0{\bullet} \ar@{-}[dr]  && 
*=0{\bullet}  \ar@{-}[dl] && 
*=0{\bullet} \ar@{-}[dr]  && 
*=0{\bullet}  \ar@{-}[dl] && 
*=0{\bullet} \ar@{-}[dr]  && 
*=0{\bullet} \ar@{-}[dl]
\\
& *=0{\bullet} \ar@{-}[drr] &&&&
*=0{\bullet} \ar@{-}[dll] &&&&
*=0{\bullet} \ar@{-}[drr]  &&&&
*=0{\bullet} \ar@{-}[dll] 
\\
&&& *=0{\bullet} \ar@{-}[drrrr] &&&&&&&&
*=0{\bullet} \ar@{-}[dllll] 
\\
&&&&&&& *=0{\bullet} \ar@{-}[d]
\\
&&&&&&& *=0{\bullet}
}
$$
\caption{The tree $T_3$.}\label{fig:tree}
\end{figure}

Let $T_{k}$ denote the unique tree with $k+2$ levels, $2^k$ leaves, and $\abs{I(v)} = 2$ for all $v$ (see Figure~\ref{fig:tree}).  The tree $T_{k}$ has $2^k-1$ internal vertices, its automorphism group is given by
$$ \Aut(T_{k}) \cong \Sigma_2^{\wr k} $$
and the set of isomorphism classes of leaf labellings is in bijective correspondence with  $\Sigma_{2^k}/\Sigma_2^{\wr k}$.
The proof of Theorem~\ref{thm:AroneMahowald} rests on the following technical lemma.

\begin{lem}\label{lem:technicallemma}
Let $\Phi$ be an operad in $\Top_*$, and let
$$ \abs{B_\bullet(\Phi)(i)}^{[s]} \subseteq \abs{B_\bullet(\Phi)(i)} = B(\Phi)(i) $$
denote the $s$th skeletal filtration of the geometric realization of the simplicial space $B_\bullet(\Phi)(i)$.  Then there are  factorizations of the cooperadic structure maps:
$$ 
\xymatrix{
\abs{B_\bullet(\Phi)(2^{k+1})}^{[k+1]} \ar@{.>}[r] \ar@{^{(}->}[d] & 
\abs{B_{\bullet}(\Phi)(2)}^{[1]} \wedge  \abs{B_\bullet(\Phi)(2^k)}^{[k]} \wedge \abs{B_\bullet(\Phi)(2^k)}^{[k]}  \ar@{^{(}->}[d]
\\
B(\Phi)(2^{k+1}) \ar[r] & 
B(\Phi)(2) \wedge  B(\Phi)(2^k) \wedge B(\Phi)(2^k)  
}
$$
Furthermore, the following diagrams of pointed $\Sigma_2 \wr \Sigma_{2^k}$ spaces commute
$$
\xymatrix@C-1em{
B(\Phi)(2^{k+1}) \ar[r] & 
B(\Phi)(2) \wedge B(\Phi)(2^k) \wedge B(\Phi)(2^k)
\\ 
\abs{B_\bullet(\Phi)}^{\lbrack k+1\rbrack} 
 \ar[d]_{p_{T_{{k+1}}}} 
\ar[r] \ar@{^{(}->}[u] &
\abs{B_\bullet(\Phi)(2)}^{[1]} \wedge \abs{B_\bullet(\Phi)(2^k)}^{[k]} \wedge \abs{B_\bullet(\Phi)(2^k)}^{[k]} 
\ar@{^{(}->}[u] \ar[d]^{p_{T_1} \wedge p_{T_{k}} \wedge p_{T_{k}}} 
\\
\Sigma_{2^{k+1}} \underset{\Sigma_2^{\wr {(k+1)}}}{\times} \frac{\Delta^{k+1}}{\partial\Delta^{k+1}} \wedge \Phi(2)^{2^{k+1}-1} \ar[r]_-{\beta_k} &
\Sigma_2 \wr \Sigma_{2^k} \underset{\Sigma_2^{\wr {(k+1)}}}{\times} \frac{\Delta^1 \times \Delta^k \times \Delta^k}{\partial(\Delta^1 \times \Delta^k \times \Delta^k)} \wedge \Phi(2)^{2^{k+1}-1}
}
$$
Here, $p_{T}$ is the projection onto the space of simplices corresponding to the tree $T$, the map $\beta_k$ is given by
$$
\beta_k[\sigma, (t_0, \ldots, t_{k+1}) , (x_v)] = 
\begin{cases}
[\sigma, \alpha_k(t_0, \ldots, t_{k+1}), (x_v)], & \sigma \in \Sigma_2 \wr \Sigma_{2^k} \subset \Sigma_{2^{k+1}}. \\
\ast, & \text{otherwise},
\end{cases}
$$
where
$$ {\alpha}_k : \frac{\Delta^{k+1}}{\partial \Delta^{k+1}} \rightarrow \frac{\Delta^1 \times \Delta^k \times \Delta^k}{\partial (\Delta^1 \times \Delta^k \times \Delta^k)} $$
is the map given by
\begin{multline*}
\alpha_k(t_0, \cdots , t_{k+1}) = 
(t_0, t_1 + \cdots + t_{k+1}) \times \left( \frac{t_1}{t_1 + \cdots + t_{k+1}}, \ldots , \frac{t_{k+1}}{t_1 + \cdots + t_{k+1}} \right) \\
\times \left(\frac{t_1}{t_1 + \cdots + t_{k+1}}, \ldots , \frac{t_{k+1}}{t_1 + \cdots + t_{k+1}} \right),
\end{multline*}
and the group $\Sigma_2^{\wr(k+1)}$ acts trivially on $\frac{\Delta^{k+1}}{\partial \Delta^{k+1}}$ and permutes the two factors of $\Delta^k$ in $\frac{\Delta^1 \times \Delta^k \times \Delta^k}{\partial (\Delta^1 \times \Delta^k \times \Delta^k)}$.
\end{lem}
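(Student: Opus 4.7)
My plan is to verify both parts of the lemma by direct computation on the tree data of $B(\Phi)$, using the combinatorial model and the cooperadic structure formulas from Sections~\ref{sec:opbar}--\ref{sec:cooperad}. The iterated cooperadic map $B(\Phi)(2^{k+1}) \to B(\Phi)(2) \wedge B(\Phi)(2^k) \wedge B(\Phi)(2^k)$ can be viewed as simultaneously undoing the grafting of two copies of $T_k$ onto the two leaves of $T_1$.

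For the skeletal factorization, I would note that a point in $\abs{B_\bullet(\Phi)(2^{k+1})}^{[k+1]}$ is represented by a simplex $\sigma_{\pmb{\lambda}, (x_v)}$ with $\pmb{\lambda} \in \mc{P}_s(2^{k+1})$ for $s \le k+1$, corresponding to a tree $T_{\pmb{\lambda}}$ with at most $k+3$ levels. Under the iterated cooperadic cut, this tree either maps to $\ast$ (if its shape is incompatible with the grafted decomposition) or splits into a bottom tree with two leaves, hence at most $3$ levels, and two top trees with $2^k$ leaves each, hence at most $k+2$ levels. Either way, the image lies in the stated smash of skeletal pieces.

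Next I would restrict to the projection $p_{T_{k+1}}$ onto the summand corresponding to trees of shape $T_{k+1}$ and trace through the iterated cut. A tree of shape $T_{k+1}$ with tier-$j$ edge length $t_j$ (summing to $1$) decomposes as $T_1$ with two copies of $T_k$ grafted on its leaves. Applying the formulas $l'(e) = h(T'')$ and $l''(e) = l(e)/h(T'')$: both top $T_k$ subtrees have height $h = t_1 + \cdots + t_{k+1}$ in the metric of $T_{k+1}$, so the rescaling sends their edge lengths to the normalized coordinates $(t_1/h, \ldots, t_{k+1}/h) \in \Delta^k$, contributing the two $\Delta^k$ factors in $\alpha_k$. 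The bottom $T_1$ retains its root edge of length $t_0$, and its two leaf-edges each acquire length $h$, yielding the $\Delta^1$ coordinate $(t_0, t_1 + \cdots + t_{k+1})$. For a leaf labelling $\sigma \in \Sigma_{2^{k+1}}$, the iterated cooperadic map respects the grafted decomposition precisely when $\sigma$ preserves the partition of the leaves into the left and right halves of $T_{k+1}$, i.e.~when $\sigma \in \Sigma_2 \wr \Sigma_{2^k}$; otherwise the decomposition collapses to $\ast$. This matches the case distinction in the definition of $\beta_k$.

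I expect the main obstacle to be the careful bookkeeping of edge lengths and rescaling factors under the iterated cooperadic cut, in particular confirming that the appropriate convention for the subtree $T''$ and its height $h(T'')$ produces the coefficient $t_1 + \cdots + t_{k+1}$ in the $\Delta^1$ factor of $\alpha_k$ (as opposed to, say, $t_2 + \cdots + t_{k+1}$, depending on whether $T''$'s root-edge is included). Once the convention is correctly pinned down, the rest of the argument reduces to direct substitution verifying commutativity of the diagram and the equivariance of $\beta_k$.
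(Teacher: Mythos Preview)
Your approach is the same as the paper's, and the second part of your argument (tracing the cooperadic cut on $T_{k+1}$ to recover the formula for $\alpha_k$) is correct and matches the paper's computation exactly; your worry about the height convention is unfounded, since $h(T'')$ is the height of $T_k$ viewed as a subtree of the full metric tree, which is indeed $t_1+\cdots+t_{k+1}$.

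There is, however, a genuine gap in your argument for the skeletal factorization. You write that the top pieces are ``two top trees with $2^k$ leaves each, hence at most $k+2$ levels.'' The leaf count does \emph{not} bound the level count: a tree in $B(\Phi)(2^k)$ can have up to $2^k-1$ internal vertices, and for a generic metric these sit at pairwise distinct heights, giving as many as $2^k+1$ levels. (Your parallel claim for the bottom piece, ``two leaves, hence at most $3$ levels,'' happens to be correct only because $T_1$ is the \emph{unique} tree with two leaves.) The correct argument, which the paper gives, runs in the other direction: if $(T,l)$ is obtained by grafting $(T',l')$ and $(T'',l'')$ onto the two limbs of $(T_1,l''')$, then the root level of $(T,l)$ is strictly below every level of $T'$ and $T''$, so
\[
\#\{\text{levels of }(T,l)\} \ge \max(k',k'') + 3
\]
where $T'$ has $k'+2$ levels and $T''$ has $k''+2$ levels. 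Hence if $(T,l)$ lies in the $(k+1)$-skeleton, i.e.\ has at most $k+3$ levels, then $k',k''\le k$ and the top pieces land in the $k$-skeleta. Once you replace your leaf-count reasoning with this level-count inequality, the proof goes through.
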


\begin{proof}
Observe that a point
$$ [T, \alpha, (x_v), l] \in B(\Phi)(i) $$
lies in the subspace $\abs{B_\bullet(\Phi)(i)}^{[k]}$ if the metric tree $(T,l)$ has at most $k+2$ distinct levels.  (By a \emph{level}, we mean a height $h \in [0,1]$ for which there exists at least one (possibly external) vertex of distance $h$ from the root, see Figure~\ref{fig:levels}).

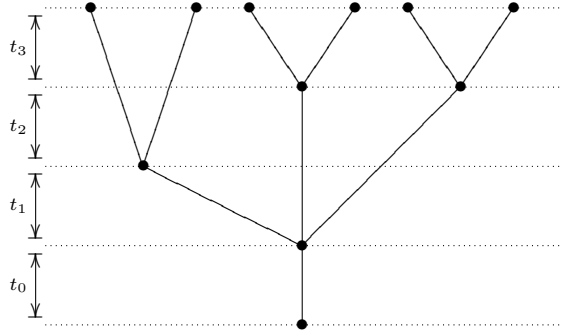
\begin{figure}
$$
\xymatrix@C-1em@R-0em{
\, \ar@{.}[r] \ar@{|<->|}[d]_{t_3} & 
*=0{\bullet} \ar@{-}[ddr]  \ar@{.}[rr] && 
*=0{\bullet} \ar@{-}[ddl] \ar@{.}[r]  & 
*=0{\bullet} \ar@{-}[dr] \ar@{.}[rr] &&
*=0{\bullet} \ar@{-}[dl] \ar@{.}[r] &
*=0{\bullet} \ar@{-}[dr] \ar@{.}[rr] && 
*=0{\bullet}  \ar@{-}[dl] \ar@{.}[r] &
\\
\, \ar@{|<->|}[d]_{t_2} \ar@{.}[rrrrr] &&&&&
*=0{\bullet} \ar@{-}[dd] \ar@{.}[rrr] &&&
*=0{\bullet} \ar@{-}[ddlll] \ar@{.}[rr] &&
\\
\, \ar@{|<->|}[d]_{t_1} \ar@{.}[rr] && 
*=0{\bullet} \ar@{-}[drrr] \ar@{.}[rrrrrrrr] &&&&&&&&
\\
\, \ar@{|<->|}[d]_{t_0} \ar@{.}[rrrrr] &&&&&
*=0{\bullet} \ar@{-}[d] \ar@{.}[rrrrr] &&&&&
\\
\, \ar@{.}[rrrrr] &&&&&
*=0{\bullet} \ar@{.}[rrrrr] &&&&&
}
$$
\caption{A metric tree with $5$ levels}\label{fig:levels}
\end{figure}

Since there is only one tree with two leaves (the tree $T_1$), and this tree has $3$ levels, we have
$$ B(\Phi)(2) = \abs{B_\bullet(\Phi)(2)}^{[1]}. $$
Furthermore, if a metric tree $(T,l)$ is obtained by grafting the metric trees $(T', l')$ and $(T'', l'')$ onto each of the limbs of $(T_1, l''')$, then if $(T', l')$ has $k'+2$ levels, and $(T'', l'')$ has $k''+2$ levels, then 
$$ \# \text{levels of $(T,l)$} \ge \max (k',k'')+3. $$
So if $(T, l)$ has at most $k+3$ levels, then $k' \le k$ and $k'' \le k$.  This proves the desired factorization of the first part of the lemma.

The commutativity of the diagram in the second part of the lemma is a direct consequence of the explicit description of the cooperad structure on $B(\Phi)$ given in Section~\ref{sec:cooperad}, together with the fact that the metric tree $(T_{{k+1}}, l_\mbf{t})$ with $k+3$ levels, with
$$ \mbf{t} = (t_0, \ldots, t_{k+1}) \in \Delta^{k+1}$$
is obtained by grafting two copies of the metric tree $(T_{{k}}, l_{\mbf{t}''})$, with
$$ \mbf{t}'' = \left( \frac{t_1}{t_1+\cdots+t_{k+1}} , \ldots, \frac{t_{k+1}}{t_1+\cdots+t_{k+1}} \right) $$
onto the limbs of $(T_1, l_{\mbf{t}'})$, with
$$ \mbf{t}' = (t_0, t_1+\cdots+t_{k+1}). $$
\end{proof}

\begin{proof}[Proof of Theorem~\ref{thm:AroneMahowald}]
The Arone-Mahowald calculations imply that, applying homology to the zigzag
$$ \mb{D}_{2^{k}}(S^n) \xrightarrow{f_k}  F(\abs{B_\bullet(\Phi)(2^{k})}^{[k]}, \ul{S}^n)_{h\Sigma_{2^{k}}}
\xleftarrow{g_k} \Sigma^{-k} (\ul{S}^n)^{\wedge 2^{k}}_{h \Sigma_2^{\wr k}}
$$
the map $(f_k)_*$ is injective, and $\im (f_k)_* = \im (g_k)_*$.  The summand
$$  \Sigma^{-k} \td{\mc{R}}\{\iota_n\} = \FF_2\{ \sigma^{-k} \td{Q}^J \iota_n \: : \: J = (j_1, \ldots, j_k), \: \text{$Q^J \iota_n$ allowable} \} \subseteq H_*(\Sigma^{-k} (\ul{S}^n)^{\wedge 2^{k}}_{h \Sigma_2^{\wr k}}) $$
surjects onto $\im (f_k)_*$ under the map $(g_k)_*$, with kernel spanned by the elements
$$ \sigma^{-k}\Tr_i(Q^{j_1} \wr \cdots \wr Q^{j_{i-1}} \wr Q^{j_i} Q^{j_{i+1}} \wr Q^{j_{i+2}} \wr \cdots \wr Q^{j_k} ) $$
where $\Tr_i$ denotes the transfer associated to the subgroup $P_i \subseteq \Sigma_{2^k}$ (\ref{eq:Pi}).

We prove the theorem by induction on $k$.  The cases of $k = 0$ is trivial.  Assume the result for $k$: for $J = (j_1, \ldots, j_k)$ a CU sequence with $j_k \ge n$ we assume that
$$ (f_k)_*(\bar{Q}^{J}  \iota_n) = (g_k)_*(\sigma^{-k}Q^J \iota_n). $$
   Observe that the map $\alpha_k$ of Lemma~\ref{lem:technicallemma} is homeomorphic to the suspension of the diagonal
$$ \Sigma \Delta: \Sigma S^k \rightarrow \Sigma S^k \wedge S^k. $$
Therefore the second diagram of 
Lemma~\ref{lem:technicallemma}, when applied to $\Phi = \mr{Comm}$ and dualized, gives rise to a diagram
$$
\xymatrix{
\Sigma^{-1} \mb{D}_{2^k}(S^n)^{\wedge 2}_{h \Sigma_2} \ar[r]^{\xi_{2^k}} \ar[d]_{f_k^{\wedge 2}} &
\mb{D}_{2^{k+1}}(S^n) \ar[d]^{f_{k+1}} 
\\
\Sigma^{-1} F(\abs{B_\bullet(\Phi)(2^k)}^{[k]}, \ul{S}^n)^{\wedge 2}_{h\Sigma_2 \wr \Sigma_{2^k}} \ar[r]^{\td{\xi}_{2^k}} &
F(\abs{B_\bullet(\Phi)(2^{k+1})}^{[k+1]}, \ul{S}^n)_{h\Sigma_{2^{k+1}}} 
\\
\Sigma^{-1} \left( \Sigma^{-k}(\ul{S}^n)^{\wedge 2^k}_{h\Sigma_2^{\wr k}} \right)^{\wedge 2}_{h\Sigma_2} \ar[r]_{E^k} \ar[u]^{g_k^{\wedge 2}} &
\Sigma^{-k-1} (\ul{S}^n)^{\wedge 2^{k+1}}_{h \Sigma_2^{\wr (k+1)}} \ar[u]_{g_{k+1}} 
}
$$
We therefore have
\begin{align*}
(f_{k+1})_*(\bar{Q}^{j} \bar{Q}^J \iota_n) 
& = (f_{k+1})_* (\xi_{2^k})_* (\sigma^{-1} Q^{j} \bar{Q}^J \iota_n) \\
& = (\td{\xi}_{2^k})_*  (\sigma^{-1} Q^{j} (f_{k})_* \bar{Q}^J \iota_n) \\
& = (\td{\xi}_{2^k})_*  (\sigma^{-1} Q^{j} (g_{k})_* \sigma^{-k} Q^J \iota_n) \\
& = (g_{k+1})_* E^k_* (\sigma^{-1} Q^{j}  \sigma^{-k} Q^J \iota_n) \\
& = (g_{k+1})_* (\sigma^{-k-1} Q^{j} Q^J \iota_n).
\end{align*}
\end{proof}

\chapter{The Goodwillie tower of the EHP sequence}\label{sec:EHP}

In this chapter we explain how the Goodwillie tower interacts with the EHP sequence.  In Section~\ref{sec:EHPfiberseq}, we show that the EHP sequence induces fiber sequences on the layers of the Goodwillie tower of the identity when evaluated on spheres.  The homological behavior of these fiber sequences is described in Section~\ref{sec:HEHPfiberseq}.  The transfinite Atiyah-Hirzebruch spectral sequences (TAHSS's) which inductively compute $\pi_* L(k)_n$ are constructed in Section~\ref{sec:TAHSS}.  The transfinite Goodwillie spectral sequence (TGSS) is constructed in Section~\ref{sec:TGSS}.

\section{Fiber sequences associated to the EHP sequence}\label{sec:EHPfiberseq}
The EHP sequence arises from the sequence of functors and natural transformations
\begin{equation}\label{eq:EHP}
\Id \xrightarrow{E} \Omega \Sigma \xrightarrow{H}  \Omega \Sigma \mr{Sq}. 
\end{equation} 
Here $\mr{Sq}: \mr{Top}_* \rightarrow \mr{Top}_*$ is the squaring functor
$$ \mr{Sq}(X) = X \wedge X $$
and $H$ is the adjoint to the projection onto the second summand of the James splitting
$$ \td{H}: \Sigma \Omega \Sigma X \simeq \Sigma \bigvee_i X^{\wedge i} \rightarrow \Sigma X^{\wedge 2}. $$
There exist models for the map $\td{H}$ which are natural in $X$ (see, for instance, \cite{CohenMayTaylor}). 
$2$-locally, the sequence (\ref{eq:EHP}) yields fiber sequences when evaluated on spheres.  

\begin{lem}\label{lem:EHPfiber}
The EHP sequence induces fiber sequences
\begin{gather*}
P_i(\mr{Id})(S^n) \xrightarrow{E} P_i(\Omega \Sigma)(S^n) \xrightarrow{P} P_i(\Omega \Sigma \mr{Sq})(S^n), \\
\mb{D}_i(\mr{Id})(S^n) \xrightarrow{E} \mb{D}_i(\Omega \Sigma)(S^n) \xrightarrow{P} \mb{D}_i(\Omega \Sigma \mr{Sq})(S^n). 
\end{gather*}
\end{lem}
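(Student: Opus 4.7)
The plan is to extract from (\ref{eq:EHP}) a natural map $\alpha: \mr{Id} \to F$, where $F := \mr{hofib}(H)$, show that Goodwillie's $P_i$ preserves the homotopy fiber defining $F$, and then verify that $P_i(\alpha)$ is a natural equivalence by reducing to a derivative-level computation.

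First, the James splitting $\Sigma\Omega\Sigma X \simeq \bigvee_i \Sigma X^{\wedge i}$, natural in $X$, realizes $E$ as adjoint to the inclusion of the first summand and $\td{H}$ as projection onto the second; this yields a natural null-homotopy of $HE$ and hence a natural factorization $\mr{Id} \xrightarrow{\alpha} F \to \Omega\Sigma$. By classical James theory, $\alpha_{S^m}$ is a weak equivalence for every $m \ge 1$. Since $P_i$ is built from pointwise finite homotopy limits $T_i(G)(X) = \holim_{\emptyset \ne U \subseteq \{0,\ldots,i\}} G(X \ast U)$, it preserves pointwise homotopy fibers of natural transformations, so $P_i(F) \simeq \mr{hofib}(P_i(H))$ naturally, and the first claimed fiber sequence reduces to showing that $P_i(\alpha): P_i(\mr{Id}) \to P_i(F)$ is a natural weak equivalence.

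I would prove this by induction on $i$; the case $i = 0$ is trivial. For the inductive step, the fiber sequence $\mb{D}_i \to P_i \to P_{i-1}$ reduces the problem to showing that $\mb{D}_i(\alpha)$ is a natural equivalence. Via the formula $\mb{D}_i(G)(X) = \Omega^\infty(\partial_i(G) \wedge X^{\wedge i})_{h\Sigma_i}$, this is in turn equivalent to $\partial_i(\alpha): \partial_i(\mr{Id}) \to \partial_i(F)$ being a $\Sigma_i$-equivariant equivalence of spectra. Since Goodwillie derivatives are themselves computed by cross-effects (pointwise homotopy limits), they commute with pointwise homotopy fibers, producing a fiber sequence of $\Sigma_i$-spectra $\partial_i(F) \to \partial_i(\Omega\Sigma) \to \partial_i(\Omega\Sigma\mr{Sq})$. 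Combining the James-splitting descriptions of $\partial_*(\Omega\Sigma)$ and $\partial_*(\Omega\Sigma\mr{Sq})$ with the Johnson/Ching description of $\partial_*(\mr{Id})$ recalled in Chapter~\ref{sec:Ching} identifies $\partial_i(\alpha)$ as an equivalence. The fiber sequence on the layers $\mb{D}_i$ then follows from the fiber sequences on $P_i$ and $P_{i-1}$ by applying the $3 \times 3$-lemma for fiber sequences to the natural transformation $P_i \to P_{i-1}$ evaluated at each of $\mr{Id}$, $\Omega\Sigma$, and $\Omega\Sigma\mr{Sq}$.

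The hard part will be the derivative-level comparison: explicitly checking that the map $\partial_i(H)$ induced by the James-Hopf invariant has a $\Sigma_i$-equivariant fiber equivalent to $\partial_i(\mr{Id})$ via $\alpha$. This is delicate because the equivariant structures on all three derivative spectra must be matched simultaneously; the formal parts (preservation of fibers by $P_i$ and the $3 \times 3$-lemma) are standard Goodwillie calculus.
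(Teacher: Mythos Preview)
Your approach is genuinely different from the paper's, and the step you flag as ``hard'' is where all the content lies. The paper sidesteps it entirely via Weiss's orthogonal calculus. For fixed $n$, each functor $F$ in the EHP sequence determines a functor $G_F \colon \mr{Vect} \to \Top_*$ by $G_F(V) = \Omega^V F(\Sigma^V S^n)$; since $\Sigma^V S^n$ is a sphere for every $V$, the James $2$-local fiber sequence applies pointwise and one obtains a fiber sequence of functors on $\mr{Vect}$. Orthogonal calculus preserves pointwise fiber sequences, and the identification $P_i^{\mr{orth}}(G_F)(V) \simeq \Omega^V P_{i+1}(F)(\Sigma^V S^n)$ transports the result back to the Goodwillie tower. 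The argument only ever evaluates the EHP sequence on single spheres, which is exactly the given input.

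Your route instead aims to prove that $\partial_i(\alpha)$ is a $\Sigma_i$-equivariant equivalence. That is strictly stronger than what the lemma asserts: it would force the Taylor towers of $\Id$ and $F$ to agree naturally, hence (by analyticity) the $2$-local EHP sequence to be a fiber sequence on \emph{all} sufficiently connected spaces, not only spheres. Even if that is true, matching $\partial_i(\Id)$ (the dual partition complex) $\Sigma_i$-equivariantly against the fiber of $\partial_i(H)$ is not bookkeeping --- the phrase ``combine the James-splitting descriptions with the Johnson/Ching description'' is exactly where the difficulty hides. Nor can you retreat to a pointwise argument inside Goodwillie calculus: the iterated $T_i$ construction of $P_i(-)(S^n)$ evaluates the functors on joins $S^n * U$, which for $\lvert U\rvert \ge 3$ are wedges of spheres rather than spheres, so you would still need the EHP fiber sequence beyond the single-sphere case. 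The orthogonal-calculus maneuver is precisely what circumvents this.
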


\begin{proof}
Given a topologically enriched functor $F: \rm{Top}_* \rightarrow \rm{Top}_*$, and a pointed space $X$, 
one gets an associated functor $G: \mr{Vect} \rightarrow \mr{Top}_*$ with
$$ G(V) := \Omega^V F(\Sigma^V X). $$
(Here, $\mr{Vect}$ is the category of finite dimensional real inner product spaces and isometries.)
Weiss's orthogonal calculus \cite{Weiss} yields a sequence of polynomial approximations $P^\mr{orth}_i(G)(V)$ with fibers which deloop to give spectra $\mb{D}^\mr{orth}_i(G)(V)$.  Furthermore, we have (see \cite[Lem.~1.2]{Arone}, \cite[Ex.~5.7]{Weiss})
\begin{align*}
P^{\mr{orth}}_i(G)(V) & \simeq \Omega^V P_{i+1} (F)(\Sigma^V X), \\
\mb{D}_i^{\mr{orth}}(G)(V) & \simeq \Omega^V \mb{D}_{i+1} (F)(\Sigma^V X).
\end{align*}
As the functors and natural transformations in the EHP sequence are enriched in topological spaces, 
we have fiber sequences
$$ \Omega^V S^{V+ m} \xrightarrow{E} \Omega^{V} \Omega \Sigma S^{V + m} + \Omega^{V} \Omega \Sigma \mr{Sq} S^{V+m} $$
of functors $\mr{Vect} \rightarrow \mr{Top}_*$.  The lemma follows from applying orthogonal calculus to these fiber sequences.
\end{proof}

\begin{lem}\label{lem:Sq}
Let $F: \mr{Top}_* \rightarrow \mr{Top}_*$ be a reduced finitary homotopy functor which is stably $i$-excisive for all $i$.
Then there are equivalences
\begin{align*}
P_i(F \mr{Sq})(X) & \simeq P_{\lfloor i/2 \rfloor} (F)(X \wedge X), \\
\mb{D}_i(F \mr{Sq})(X) & \simeq 
\begin{cases}
\mb{D}_{i/2} (F)(X \wedge X), & i \: \text{even}, \\
\ast, & i \: \text{odd}.
\end{cases}
\end{align*}
\end{lem}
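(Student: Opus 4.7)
The plan is to exploit the fact that $\mr{Sq}$ is $2$-homogeneous — it is $2$-excisive, reduced, and satisfies $P_1(\mr{Sq})\simeq \ast$ — so that composition with $\mr{Sq}$ roughly ``doubles excision degree''. Explicitly, I will verify that $P_{\lfloor i/2\rfloor}(F)(X\wedge X)$ satisfies the universal property characterising $P_i(F\mr{Sq})(X)$; the second equivalence will then fall out by a fibre-sequence argument.

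Concretely, set $G_j(X) := P_j(F)\circ \mr{Sq}(X) = P_j(F)(X\wedge X)$ and consider the natural map $F\mr{Sq}\to G_{\lfloor i/2\rfloor}$ induced by $F\to P_{\lfloor i/2\rfloor}(F)$. The first step is to show $G_j$ is $2j$-excisive. This is an instance of the general fact that the composite of an $m$-excisive functor with an $n$-homogeneous functor is $(mn)$-excisive, a result proved by Goodwillie in \cite{G3} and repackaged by the Arone--Ching chain rule, which gives $\partial_k(G_j) = 0$ for $k > 2j$ because $\partial_r(P_j F) = 0$ for $r > j$ and $\partial_s(\mr{Sq}) = 0$ for $s\neq 2$. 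The second step is to show $F\mr{Sq}\to G_j$ is a $P_{2j+1}$-equivalence. This reduces to checking that the fibre $K := \mr{fib}(F\to P_j F)$ satisfies $P_{2j+1}(K\circ \mr{Sq})\simeq \ast$, which follows from the same chain-rule reasoning: $K$ has vanishing derivatives through degree $j$, so $K\circ \mr{Sq}$ has vanishing derivatives through degree $2j+1$.

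With $j = \lfloor i/2\rfloor$ one has $2j \le i \le 2j+1$, so $G_j$ is $i$-excisive and $F\mr{Sq}\to G_j$ is a $P_i$-equivalence; by universality this gives $P_i(F\mr{Sq})(X)\simeq P_{\lfloor i/2\rfloor}(F)(X\wedge X)$. For the second equivalence I would take the fibre of $P_i(F\mr{Sq})(X)\to P_{i-1}(F\mr{Sq})(X)$ and rewrite it, using what has just been proved, as the fibre of $P_{\lfloor i/2\rfloor}(F)(X\wedge X)\to P_{\lfloor (i-1)/2\rfloor}(F)(X\wedge X)$. When $i = 2k$ the two indices are $k$ and $k-1$, and the fibre is $\mb{D}_k(F)(X\wedge X) = \mb{D}_{i/2}(F)(X\wedge X)$; when $i$ is odd the indices agree and the fibre is contractible.

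The main obstacle is promoting the two chain-rule facts from statements about derivatives to statements about the polynomial approximations themselves — i.e., passing from $\partial_k(H)\simeq \ast$ for $k\le n$ to $P_n(H)\simeq \ast$. This is precisely the convergence issue that the stably $i$-excisive-for-all-$i$ hypothesis on $F$ is designed to resolve; once one can freely interchange the two viewpoints, the remainder of the argument is bookkeeping.
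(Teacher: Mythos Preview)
Your proposal is correct and follows the same two-step strategy as the paper: verify that $P_{\lfloor i/2\rfloor}(F)\circ\mr{Sq}$ is $i$-excisive and that $F\mr{Sq}\to P_{\lfloor i/2\rfloor}(F)\circ\mr{Sq}$ agrees to order~$i$, then read off the layer statement from the fibre sequence.  The paper's implementation is a bit more elementary.  For excisiveness it writes the layers of $P_{\lfloor i/2\rfloor}(F)(X\wedge X)$ explicitly as
\[
\Omega^\infty\bigl([(\Sigma_{2j})_+\wedge_{\Sigma_j}\partial_j(F)]\wedge X^{\wedge 2j}\bigr)_{h\Sigma_{2j}},
\]
which are visibly $2j$-excisive (hence $i$-excisive) in~$X$, rather than citing the general composition theorem.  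For order-$i$ agreement it uses the connectivity estimate directly: stable $\lfloor i/2\rfloor$-excision gives that $F(Y)\to P_{\lfloor i/2\rfloor}(F)(Y)$ is $\bigl(-c+(\lfloor i/2\rfloor+1)\,\mr{conn}(Y)\bigr)$-connected, and setting $Y=X\wedge X$ doubles the slope to $\ge -c+(i+1)\,\mr{conn}(X)$.  This is exactly where the stably-$i$-excisive hypothesis enters, and it avoids the Arone--Ching chain rule entirely.

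One clarification: your stated ``main obstacle'' is not actually an obstacle.  The implication $\partial_k(H)\simeq\ast$ for all $k\le n$ $\Rightarrow$ $P_n(H)\simeq\ast$ holds for any reduced finitary $H$, since $D_k(H)\simeq\Omega^\infty(\partial_k(H)\wedge X^{\wedge k})_{h\Sigma_k}$ and $P_n(H)$ is assembled from $D_1(H),\dots,D_n(H)$ by fibre sequences; no analyticity is needed in that direction.
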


\begin{proof}
The second equivalence follows from the first.  By \cite{G3}, to prove the first equivalence it suffices to prove (1) that $P_{\lfloor i/2 \rfloor}(F)\mr{Sq}$ is $i$-excisive, and (2) that $F\rm{Sq}$ and $P_{\lfloor i/2 \rfloor}(F)\mr{Sq}$ agree to order $i$.

The Goodwillie tower for $P_{\lfloor i/2 \rfloor}(F)$, when evaluated at $X \wedge X$, has fibers of the form
$$ \Omega^{\infty} (\partial_j(F) \wedge (X \wedge X)^{\wedge j})_{h\Sigma_j} \simeq \Omega^{\infty} ([(\Sigma_{2j})_+ \wedge_{\Sigma_j} \partial_j(F)] \wedge X^{2j})_{h\Sigma_{2j}}  $$
for $j \le \lfloor i/2 \rfloor$.  These layers are all $i$-excisive in $X$.  This establishes (1).

For (2), observe that since the functors $F$ and $P_{\lfloor i/2 \rfloor}(F)$ agree to order $\lfloor i/2 \rfloor$ under the natural transformation
$$ F \rightarrow P_{\lfloor i/2 \rfloor}(F) $$
there exists a $c$ so that for $Y$ sufficiently highly connected, the map
$$ F(Y) \rightarrow P_{\lfloor i/2 \rfloor}(F)(Y) $$
is $(-c+(\lfloor i/2 \rfloor +1)\mr{conn}(Y))$-connected.  Setting $Y = X \wedge X$ for $X$ sufficiently highly connected, we deduce that
$$ F(\mr{Sq}(X)) \rightarrow P_{\lfloor i/2 \rfloor}(F)(\mr{Sq}(X)) $$
is $(-c+2(\lfloor i/2 \rfloor +1)\mr{conn}(X))$-connected.  Therefore it is $(-c+(i+1)\mr{conn}(X))$-connected, and we have established (2).
\end{proof}

\begin{cor}\label{cor:EHPfiber}
The fiber sequences of Lemma~\ref{lem:EHPfiber} are equivalent to the following fiber sequences.
\begin{gather*}
P_{2m}(S^n) \xrightarrow{E} \Omega P_{2m}(S^{n+1}) \xrightarrow{H} \Omega P_{m}(S^{2n+1}) \\
P_{2m+1}(S^n) \xrightarrow{E} \Omega P_{2m+1}(S^{n+1}) \xrightarrow{H} \Omega P_{m}(S^{2n+1}) \\
\DD_{2m}(S^n) \xrightarrow{E} \Sigma^{-1} \DD_{2m}(S^{n+1}) \xrightarrow{H} \Sigma^{-1} \DD_{m}(S^{2n+1}) \\
\DD_{2m+1}(S^n) \xrightarrow{E} \Sigma^{-1} \DD_{2m+1}(S^{n+1}) \xrightarrow{H} \ast
\end{gather*}
\end{cor}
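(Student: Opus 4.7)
My plan is to identify each of the three terms in the Goodwillie fiber sequences of Lemma~\ref{lem:EHPfiber} via standard Goodwillie-calculus manipulations, and then pass to the layers. The identification $P_i(\mr{Id})(S^n) = P_i(S^n)$ is purely notational.

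For the middle term I would first establish the general ``suspension--loop shift''
$$P_i(\Omega F \Sigma)(X) \simeq \Omega P_i(F)(\Sigma X)$$
for any reduced finitary homotopy functor $F$. Because $\Omega$ preserves both finite homotopy limits and filtered homotopy colimits, it commutes with the Goodwillie construction $P_i$, so this reduces to showing $P_i(F \circ \Sigma)(X) \simeq P_i(F)(\Sigma X)$. The right-hand side is $i$-excisive in $X$ (since $\Sigma$, being a left adjoint, preserves strongly cocartesian cubes, and $P_i(F)$ is $i$-excisive), it receives the evident natural transformation from $F \circ \Sigma$, and its derivatives in each degree $j \le i$ match those of $F \circ \Sigma$ via the Arone--Ching chain-rule formula $\partial_j(F \circ \Sigma) \simeq \partial_j(F) \wedge (S^1)^{\wedge j}$ combined with the identity $(\Sigma X)^{\wedge j} = (S^1)^{\wedge j} \wedge X^{\wedge j}$ as $\Sigma_j$-spectra. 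Specializing to $F = \mr{Id}$ and $X = S^n$ produces $P_i(\Omega\Sigma)(S^n) \simeq \Omega P_i(S^{n+1})$. For the third term I then apply Lemma~\ref{lem:Sq} to $F = \Omega\Sigma$, which is reduced, finitary, and stably $i$-excisive for all $i$ (its Goodwillie tower converges on connected spaces, a consequence of the James/Snaith splitting), obtaining
$$P_i(\Omega\Sigma \circ \mr{Sq})(X) \simeq P_{\lfloor i/2\rfloor}(\Omega\Sigma)(X \wedge X);$$
substituting the shift identity and setting $X = S^n$ gives $\Omega P_{\lfloor i/2\rfloor}(\mr{Id})(\Sigma S^{2n}) = \Omega P_{\lfloor i/2\rfloor}(S^{2n+1})$.

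The layer version follows by applying homotopy fibers to the tower maps: $\Omega$ preserves homotopy fibers, and $\Omega \mb{D}_j \simeq \Sigma^{-1} \mb{D}_j$ at the level of the underlying spectra, which accounts for the $\Sigma^{-1}$ shifts in the last two fiber sequences of the corollary. The vanishing of the fiber when $i$ is odd comes from the second clause of Lemma~\ref{lem:Sq}, which gives $\mb{D}_i(F\mr{Sq}) \simeq \ast$ in that case. I expect the main obstacle to be the derivation of the shift identity $P_i(F \circ \Sigma)(X) \simeq P_i(F)(\Sigma X)$: although intuitively just a matter of ``substituting $\Sigma X$ for $X$,'' it is the only non-formal step and demands careful bookkeeping of the $\Sigma_j$-equivariance on the various copies of $S^j$ arising from $(\Sigma X)^{\wedge j}$.
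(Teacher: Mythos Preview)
Your proposal is correct and follows essentially the same approach as the paper: use Lemma~\ref{lem:EHPfiber}, Lemma~\ref{lem:Sq} (applied with $F = \Omega\Sigma$), and the shift identities $P_i(\Omega F) \simeq \Omega P_i(F)$ and $P_i(F\Sigma)(X) \simeq P_i(F)(\Sigma X)$. The paper simply records these four identities (and their $\DD_i$ analogues) as standard facts without further justification, whereas you expend some effort deriving the suspension shift via derivative matching and the Arone--Ching chain rule; this is valid but heavier than necessary, since the identity follows directly from the universal property of $P_i$ once one observes that $\Sigma$ preserves strongly cocartesian cubes and that $F \to P_i(F)$ agrees to order $i$.
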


\begin{proof}
The corollary follows immediately from Lemma~\ref{lem:EHPfiber}, Lemma~\ref{lem:Sq}, and the identities
\begin{align*}
P_i(\Omega F)(X) & \simeq \Omega P_i(F)(X), \\
P_i(F\Sigma)(X) & \simeq P_i(F)(\Sigma X), \\
\DD_i(\Omega F)(X) & \simeq \Sigma^{-1} \DD_i(F)(X), \\
\DD_i(F\Sigma)(X) & \simeq \DD_i(F)(\Sigma X).
\end{align*}
\end{proof}

\begin{rmk}
Corollary~\ref{cor:EHPfiber} gives an amusing alternative proof of the $2$-primary case of Theorem~3.13 of \cite{AroneMahowald}: if $i = s2^k$ for $s$ odd, then if $s \ne 1$, 
$$ \DD_n(S^n) \simeq \ast. $$
This can be deduced by induction on $k$.  For $k = 0$, Corollary~\ref{cor:EHPfiber} implies that the suspension
$$ E: \DD_{s}(S^n) \rightarrow \Sigma^{-1} \DD_{s}(S^{n+1}) $$
is an equivalence.  Taking the colimit of these maps, we deduce that there are equivalences
$$ \DD_s(S^n) \xrightarrow{\simeq} \DD_1(\DD_s)(S^n) \simeq \ast $$
(since $s \ne 1$).  Suppose inductively that
$$ \DD_{s2^k}(S^n) \simeq \ast $$
for $s \ne 1$ odd.  Then the inductive hypothesis, together with Corollary~\ref{cor:EHPfiber}, implies that the suspension
$$ E: \DD_{2^{k+1}s}(S^n) \rightarrow \Sigma^{-1} \DD_{2^{k+1}s}(S^{n+1}) $$
is an equivalence, and, as in the base case, this implies that $\DD_{2^{k+1}s}(S^n) \simeq \ast$.
\end{rmk}

Specializing Corollary~\ref{cor:EHPfiber} to the case of $i = 2^k$, and applying the equivalences (\ref{eq:AroneDwyer}), we recover fiber sequences of the same form as those discovered by Kuhn \cite[Prop.~A.7]{Kuhnloop} and Takayasu  \cite{Takayasu}.

\begin{cor}\label{cor:Takayasu}
There are fiber sequences:
$$
\Sigma^n {L(k-1)}_{2n+1} \xrightarrow{P} L(k)_n \xrightarrow{E} L(k)_{n+1}.  
$$
\end{cor}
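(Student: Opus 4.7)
The plan is to deduce the corollary directly from Corollary~\ref{cor:EHPfiber} together with the Arone--Dwyer identification (\ref{eq:AroneDwyer}). First I would specialize the even-index, spectra-level fiber sequence of Corollary~\ref{cor:EHPfiber} to $i = 2^k$ (so $m = 2^{k-1}$), yielding the fiber sequence of spectra
$$ \DD_{2^k}(S^n) \xrightarrow{E} \Sigma^{-1}\DD_{2^k}(S^{n+1}) \xrightarrow{H} \Sigma^{-1}\DD_{2^{k-1}}(S^{2n+1}). $$
Since this is a fiber sequence of spectra, I would rotate it one step to the left so as to exhibit the connecting map $P = \Omega H$, obtaining
$$ \Sigma^{-2}\DD_{2^{k-1}}(S^{2n+1}) \xrightarrow{P} \DD_{2^k}(S^n) \xrightarrow{E} \Sigma^{-1}\DD_{2^k}(S^{n+1}). $$

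Next, I would substitute the Arone--Dwyer equivalence (\ref{eq:AroneDwyer}) into each term and then desuspend the entire fiber sequence by the common factor needed to reduce the middle term to $L(k)_n$. A direct bookkeeping of suspensions shows that the left term becomes $\Sigma^{n}L(k-1)_{2n+1}$ and the right becomes $L(k)_{n+1}$, producing the asserted fiber sequence. The base case $k=1$ is handled by the same argument using the identification $\DD_{1}(S^{2n+1}) \simeq \ul{S}^{2n+1}$, which corresponds (after the relevant desuspension) to $\Sigma^{n} L(0)_{2n+1}$.

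The argument is essentially a bookkeeping exercise in suspension indices; I do not expect any real obstacle beyond combining the two already-established ingredients. The one implicit point worth flagging is that, under the Arone--Dwyer equivalence, the natural transformations $E$ and $P$ between Goodwillie layers are identified with the standard suspension and transfer-type maps between Steinberg summands of Takayasu and Kuhn. This compatibility is built into the naturality of (\ref{eq:AroneDwyer}) in the sphere $S^n$, so it requires no separate verification here.
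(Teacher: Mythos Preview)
Your proof is correct and follows exactly the approach the paper takes: specialize Corollary~\ref{cor:EHPfiber} to $i=2^k$, rotate the resulting fiber sequence of spectra, substitute (\ref{eq:AroneDwyer}), and cancel the common suspension. The paper's proof is just the one-sentence lead-in to the corollary, and your write-up simply unpacks that sentence.

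One point to correct, however: your final paragraph asserts that under the Arone--Dwyer equivalence the maps $E$ and $P$ are identified with the standard Takayasu/Kuhn maps between Steinberg summands, and that this ``is built into the naturality of (\ref{eq:AroneDwyer}).'' This is an overclaim. The corollary only asserts the existence of fiber sequences of the displayed \emph{form}; the maps labeled $P$ and $E$ here are by definition those induced from the EHP sequence on Goodwillie layers, and the paper explicitly disclaims (in the remark immediately following the corollary) any assertion that these agree with the fiber sequences of \cite{Kuhnloop} and \cite{Takayasu}. Naturality of (\ref{eq:AroneDwyer}) in $n$ does not by itself pin down the maps up to the Takayasu/Kuhn construction. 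You should simply delete that paragraph; it is not needed for the corollary and it claims something that would require separate justification.
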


\begin{rmk}
We are \emph{not} asserting that the fiber sequences of Corollary~\ref{cor:Takayasu} are equivalent to those of \cite{Kuhnloop} and \cite{Takayasu}, but it is likely that they are.  Indeed, Proposition~\ref{prop:HTakayasu} will demonstrate that both fiber sequences have the same homological behavior.  Our reason for using the $L(k)_n$ notation instead of the $\DD_{2^k}(S^n)$ notation is that the indexing in the fiber sequences is more compelling in the $L(k)_n$-notation.
\end{rmk}

\section{Homological behavior of the fiber sequences}\label{sec:HEHPfiberseq}

We wish to understand the homological behavior of the fiber sequences of Corollary~\ref{cor:Takayasu}.
Our first task is to identify the operations $\bar{Q}^j$ on $H_*(\DD_{2^k}(\Omega \Sigma))$ and $H_*(\DD_{2^k}(\Omega \Sigma \Sq))$.  We make some general observations in the next sequence of lemmas.  

We make use of the Arone-Ching chain rule \cite{AroneChing}:  if $F, G: \Top_* \rightarrow \Top_*$ are reduced finitary  homotopy functors, there is an equivalence in the homotopy category of symmetric sequences of spectra
$$ \partial_*(FG) \simeq B(\partial_* (F), \partial_*(\Id), \partial_*(G)). $$
The technology of Arone-Ching (see, for example, the discussion after Proposition~0.4 of \cite{AroneChing}) also gives a simple method to compute the derivatives of any finitary reduced homotopy functor $F: \Top_* \rightarrow \Top_*$ in terms of the dual derivatives $\partial^*(\Sigma^\infty F)$ of the associated functor $\Sigma^\infty F: \Top_* \rightarrow \Sp$: 
\begin{equation}\label{eq:ACformula}
 \partial_*(F) \simeq B(1, \mr{Comm}, \partial^*(\Sigma^\infty F))^\vee. 
\end{equation}
This is an equivalence of left $\partial_*(\Id)$-modules, under the left action of 
$$ \partial_*(\Id) \simeq B(1, \mr{Comm}, 1)^\vee $$
induced by the left coaction of $B(1, \mr{Comm}, 1)$ on $B(1, \mr{Comm}, \partial^*(\Sigma^\infty F))$. 


\begin{lem}\label{lem:Hsusp}
Under the isomorphisms
$$ \mr{susp}_* : H_*(\DD_i(F)(\Sigma X)) \xrightarrow{\cong} H_*(\DD_i(F\Sigma)(X)) $$
we have
$$ \bar{Q}^j \mr{susp}_*(x) = \mr{susp}_* (\bar{Q}^j x). $$
\end{lem}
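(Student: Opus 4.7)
The plan is to reduce the lemma to the commutativity of a naturality square for the module-action map $\xi_i$. Since $F\Sigma$ evaluated at $X$ is by definition $F$ evaluated at $\Sigma X$, the isomorphism $\mr{susp}_*$ arises from a genuine equivalence of spectra $\DD_i(F\Sigma)(X) \simeq \DD_i(F)(\Sigma X)$ induced by the natural equivalence of $\Sigma_i$-spectra $\partial_i(F\Sigma) \simeq \partial_i(F) \wedge (S^1)^{\wedge i}$ (with $\Sigma_i$ permuting the sphere factors).

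The first step is to record that, under this identification, the bimodule-structure map
$$ \partial_2(\Id) \wedge \partial_i(F\Sigma)^{\wedge 2} \rightarrow \partial_{2i}(F\Sigma) $$
is the smash product of the bimodule-structure map for $\partial_*(F)$ with the identity on $(S^1)^{\wedge 2i}$. This is the content of the Arone--Ching chain rule (\ref{eq:ACformula}) specialized to the composition $F \circ \Sigma$: since the only nonzero derivative of $\Sigma$ sits in arity $1$ and equals $S^1$, the bar construction $B(\partial_*(F), \partial_*(\Id), \partial_*(\Sigma))$ collapses arity-wise to $\partial_i(F) \wedge (S^1)^{\wedge i}$, with the residual left action of $\partial_*(\Id)$ being inherited from its action on $\partial_*(F)$.

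The second step is to deduce the commutativity of the square
$$
\xymatrix@C-1em{
\Sigma^{-1} \DD_i(F)(\Sigma X)^{\wedge 2}_{h\Sigma_2} \ar[r]^-{\xi_i} \ar[d]_{\simeq}^{\mr{susp}^{\wedge 2}} & \DD_{2i}(F)(\Sigma X) \ar[d]^{\simeq}_{\mr{susp}} \\
\Sigma^{-1} \DD_i(F\Sigma)(X)^{\wedge 2}_{h\Sigma_2} \ar[r]^-{\xi_i} & \DD_{2i}(F\Sigma)(X)
}
$$
on the level of spectra, which is immediate from step one once both $\xi_i$ maps are unfolded using the definitions in Section~\ref{sec:DLlikeops}. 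Applying homology and combining with the fact that the Dyer--Lashof operation $Q^j$ for spectra commutes with the canonical suspension isomorphism on $H_*$, the identity $\bar{Q}^j x = (\xi_i)_* \sigma^{-1} Q^j x$ then transports across $\mr{susp}_*$ to give the claimed equality.

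The main obstacle is making the Arone--Ching chain rule identification of $\partial_*(F\Sigma)$ precise enough at the level of $\partial_*(\Id)$-bimodules that the commutativity in step two is manifest; formula~(\ref{eq:ACformula}) is stated in terms of left modules and dual derivatives of $\Sigma^\infty F$, so a small amount of bookkeeping is required. A cleaner workaround, which I would adopt if the bookkeeping becomes cumbersome, is to bypass the chain rule entirely: the map $\xi_i$ is constructed functorially in $F$ from Ching's operad structure on $\partial_*(\Id)$ (and the canonical bimodule structure on $\partial_*(F)$), so the naturality of $\xi_i$ with respect to the natural transformation $(-)\circ \Sigma$ of endofunctors of $\Top_*$ immediately yields the needed commutative square without invoking the full chain rule.
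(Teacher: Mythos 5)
Your overall architecture is the paper's: identify $\partial_*(F\Sigma) \simeq \partial_*(F) \wedge S^*$ as left $\partial_*(\Id)$-modules via the Arone--Ching chain rule, deduce the commuting square relating $\xi_i$ for $F\Sigma$ at $X$ to $\xi_i$ for $F$ at $\Sigma X$, and apply homology. But your justification of the key step is wrong. The derivatives of the \emph{space-level} suspension functor $\Sigma: \Top_* \to \Top_*$ are not concentrated in arity $1$: one has $\partial_i(\Sigma) \simeq \partial_i(\Id) \wedge S^i$ for every $i$ (this is the Arone--Ching computation the paper cites), so $\Sigma$ has nontrivial derivatives in all arities. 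What \emph{is} concentrated in arity $1$ is $\partial^*(\Sigma^\infty \Sigma)$, the dual derivatives of the linear functor $\Sigma^\infty\Sigma: \Top_* \to \Sp$, which is the input to formula~(\ref{eq:ACformula}); you have conflated the two. Moreover, even granting your premise, the collapse you assert would not follow: if $\partial_*(\Sigma)$ were a symmetric sequence concentrated in arity $1$, then $B(\partial_*(F), \partial_*(\Id), \partial_*(\Sigma))$ would be a genuine derived relative composition product and would not reduce to $\partial_*(F) \wedge S^*$. The actual reason the bar construction collapses is the opposite phenomenon: $\partial_*(\Sigma) \simeq \partial_*(\Id) \wedge S^*$ is the \emph{induced} (free) left $\partial_*(\Id)$-module on the arity-$1$ symmetric sequence $S^1$, so $B(\partial_*(F), \partial_*(\Id), \partial_*(\Id) \wedge S^*) \simeq \partial_*(F) \wedge S^*$ by the standard extra-degeneracy argument, with the residual left action coming from that of $\partial_*(\Id)$ on $\partial_*(F)$. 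This is exactly how the paper proceeds, citing the identification of $\partial_*(\Sigma)$ \emph{as a left $\partial_*(\Id)$-module} from Arone--Ching and then collapsing the chain-rule bar construction.

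Your proposed ``cleaner workaround'' does not repair this. Precomposition $F \mapsto F\Sigma$ is not a natural transformation between two fixed endofunctors of $\Top_*$, so naturality of $\xi_i$ in $F$ (which concerns natural transformations $F \to G$) says nothing about the comparison of $\DD_i(F\Sigma)(X)$ with $\DD_i(F)(\Sigma X)$. The statement that the Arone--Ching bimodule structure on $\partial_*(F\Sigma)$ corresponds, under the equivalence $\partial_*(F\Sigma) \simeq \partial_*(F) \wedge S^*$, to the structure induced from $\partial_*(F)$ is precisely the content to be established, and it is supplied by the chain-rule computation above rather than by any formal naturality; as stated, the workaround assumes what it needs to prove. (A smaller point: no suspension isomorphism of homology is involved here --- $\mr{susp}_*$ is degree-preserving, induced by an equivalence of spectra, and the only homological input needed after the square commutes is naturality of $Q^j$ on $H_*\bigl(Y^{\wedge 2}_{h\Sigma_2}\bigr)$ in the spectrum $Y$.)
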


\begin{proof}
The derivatives of the functor
$$ \Sigma: \Top_* \rightarrow \Top_* $$
are given by \cite[Examples~19.4]{AroneChing}
$$ \partial_i(\Sigma) \simeq \partial_i(\Id) \wedge S^i $$ 
(where $\Sigma_i$ acts on $S^i = (S^1)^{\wedge i}$ by permuting the factors).  The resulting equivalence of symmetric sequences
$$ \partial_*(\Sigma) \simeq \partial_*(\Id) \wedge S^* $$
is an equivalence of left $\partial_*(\Id)$-modules, where $\partial_*(\Id) \wedge S^*$ is given the left module structure induced from the left module structure of $\partial_*(\Id)$ on itself.

It follows that there are equivalences of left $\partial_*(\Id)$-modules
$$ \partial_*(F\Sigma) \simeq B(\partial_*(F), \partial_*(\Id), \partial_*(\Sigma)) \simeq \partial_*(F) \wedge S^*. $$
 Therefore,  
the following diagram commutes.
$$
\xymatrix{
\partial_2(\Id) \wedge_{h\Sigma_2} \DD_i(F\Sigma)(X)^{\wedge 2} \ar[r] \ar[d]_\simeq  
& \DD_{2i}(F\Sigma)(X) \ar[d]^\simeq \\
\partial_2(\Id) \wedge_{h\Sigma_2} \DD_i(F)(\Sigma X)^{\wedge 2} \ar[r]   
& \DD_{2i}(F)(\Sigma X)
}
$$
The result follows by applying homology to the above diagram.
\end{proof}

\begin{lem}\label{lem:Homega}
Under the isomorphisms
$$ \omega_*: H_*(\DD_i(\Omega F)(X)) \xrightarrow{\cong} H_*(\Sigma^{-1}\DD_i(F)(X)) $$
we have 
$$ \bar{Q}^j \omega_*(x) = \omega_*(\bar{Q}^j x). $$
\end{lem}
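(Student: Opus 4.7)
The plan is to follow the template of Lemma~\ref{lem:Hsusp}, identifying $\partial_*(\Omega F)$ as a left $\partial_*(\Id)$-module and extracting a commutative diagram that compares the structure maps $\xi_i^{\Omega F}$ and $\xi_i^F$. The novelty compared to the $\Sigma$-case is the appearance of a suspension mismatch in the extended-power construction that is absorbed by the comparison map $E^1$ of Lemma~\ref{lem:E}.

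First I would invoke the Arone-Ching chain rule together with the fact that $\Omega$ is $1$-excisive with $\partial_1(\Omega) \simeq S^{-1}$ and $\partial_k(\Omega) \simeq \ast$ for $k\ge 2$ to establish an equivalence $\partial_i(\Omega F) \simeq \Sigma^{-1} \partial_i(F)$ of $\Sigma_i$-spectra, and hence a layer equivalence $\mb{D}_i(\Omega F)(X) \simeq \Omega \mb{D}_i(F)(X)$. The substantive content is the assertion that the $\partial_2(\Id)$-action, transferred across this identification, fits into a commutative diagram
$$
\xymatrix@C+1em{
\partial_2(\Id) \wedge \mb{D}_i(\Omega F)(X)^{\wedge 2}_{h\Sigma_2} \ar[r]^-{\xi_i^{\Omega F}} \ar[d]_\simeq
& \mb{D}_{2i}(\Omega F)(X) \ar[d]^\simeq
\\
\partial_2(\Id) \wedge (\Omega \mb{D}_i(F)(X))^{\wedge 2}_{h\Sigma_2} \ar[r]^-{(\Omega \xi_i^F) \circ (\partial_2(\Id) \wedge E^1)}
& \Omega \mb{D}_{2i}(F)(X),
}
$$
where $E^1$ is the comparison map of Lemma~\ref{lem:E} applied with $Y = \mb{D}_i(F)(X)$ and $t=1$. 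The $E^1$-factor is conceptually forced: on the level of spectra, $(\Sigma^{-1} Y)^{\wedge 2}_{h\Sigma_2}$ carries a $\Sigma_2$-twist on the $\Sigma^{-2}$ factor that differs from the trivial action on $\Sigma^{-1} Y^{\wedge 2}_{h\Sigma_2}$, and the diagonal $S^1 \to S^1 \wedge S^1$ is precisely what reconciles these.

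Given the diagram, the lemma follows by applying $H_*$ and chasing. For $x \in H_d(\mb{D}_i(\Omega F)(X))$, write $\omega_*(x) = \sigma^{-1} y$ with $y \in H_{d+1}(\mb{D}_i(F)(X))$. Then the definition $\bar{Q}^j x = (\xi_i^{\Omega F})_*(\sigma^{-1} Q^j x)$ together with the diagram yields
$$
\omega_*(\bar{Q}^j x)
= (\Omega \xi_i^F)_*(\partial_2(\Id) \wedge E^1)_*(\sigma^{-1} Q^j \sigma^{-1} y)
= \sigma^{-1}(\xi_i^F)_*(\sigma^{-1} Q^j y)
= \sigma^{-1} \bar{Q}^j y,
$$
where the middle equality uses Lemma~\ref{lem:E} (and produces $0$ in the edge case $j = d$, consistent with $\bar{Q}^d y$ being inallowable). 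Since the right-hand side is by definition the value of $\bar{Q}^j$ on $\omega_*(x) = \sigma^{-1} y$ under the identification, this is the asserted identity.

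The main obstacle is the verification of commutativity in the first step: one must carefully extract from the chain rule the precise form of the transferred bimodule structure. Because $\partial_*(\Omega)$ is concentrated in a single arity, the bar construction $B(\partial_*(\Omega), \partial_*(\Id), \partial_*(F))$ simplifies enough for this to be a tractable computation, but the suspension bookkeeping --- in particular, pinpointing the appearance of $E^1$ rather than a naively applied desuspension of $\xi_i^F$ --- is the delicate point.
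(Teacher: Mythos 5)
Your overall architecture (mirror Lemma~\ref{lem:Hsusp}, identify the transferred $\partial_*(\Id)$-module structure, and absorb the resulting suspension mismatch in homology via the map $E^1$ of Lemma~\ref{lem:E}) is exactly the intended argument, and your final homology chase is fine. But the way you propose to obtain the key identification contains a genuine error: $\Omega:\Top_*\to\Top_*$ is \emph{not} $1$-excisive, and its derivatives do not vanish in arities $\ge 2$. Since $\Omega$ commutes with the constructions $P_k$ and $D_k$, one has $D_k(\Omega)(X)\simeq \Omega D_k(\Id)(X)$, hence $\partial_k(\Omega)\simeq \Sigma^{-1}\partial_k(\Id)$ for every $k$; in particular $\partial_2(\Omega)\simeq S^{-2}\not\simeq \ast$. (A direct check that $\Omega$ is not $1$-excisive: applying $\Omega$ to the pushout square $\ast\leftarrow S^0\rightarrow\ast$ gives $\Omega S^1\simeq \ZZ$, not the contractible pullback.) With your stated premise, the chain rule would give $B(\partial_*(\Omega),\partial_*(\Id),\partial_*(F))\simeq \Sigma^{-1}B(1,\partial_*(\Id),\partial_*(F))$, since an arity-one right module can only act through the augmentation; this bar construction computes a derived ``indecomposables'' object and is not $\Sigma^{-1}\partial_*(F)$ in general, so the route you describe does not deliver the equivalence you need.

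The correct input, and the one the paper uses, is the Arone--Ching identification (their Examples~19.4) of $\partial_*(\Omega F)$ with the mapping object $F(S^1,\partial_*(F))$ \emph{as a left $\partial_*(\Id)$-module}, where the module structure is induced from that of $\partial_*(F)$ together with the diagonal on $S^1$. Aritywise this does give $\partial_i(\Omega F)\simeq\Sigma^{-1}\partial_i(F)$ and $\DD_i(\Omega F)(X)\simeq\Sigma^{-1}\DD_i(F)(X)$, and the diagonal on $S^1$ is precisely what produces your map $E^1$ in the commutative square comparing $\xi_i^{\Omega F}$ with $\xi_i^F$; so once this citation (or an honest verification of the twisted module structure) replaces your chain-rule step, the rest of your proposal goes through and coincides with the paper's proof. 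Note also that you flag the module-structure verification as ``tractable'' but never carry it out --- in the paper this is exactly the point that is settled by the reference, not by a computation, so as written your proposal both rests on a false premise and leaves its one substantive claim unestablished.
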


\begin{proof}
The proof follows the sames lines as the proof of Lemma~\ref{lem:Hsusp}, and relies on the observation that there is an equivalence of left $\partial_*(\Id)$-modules \cite[Examples~19.4]{AroneChing}
$$ \partial_*(\Omega F) \simeq F(S^1, \partial_*(F)), $$
where the left $\partial_*(\Id)$-structure on the right-hand side of this equivalence is induced by the left module structure on $\partial_*(F)$, together with the diagonal on $S^1$.
\end{proof}

\begin{lem}\label{lem:HSq}
Under the isomorphisms of Lemma~\ref{lem:Sq}
$$ \mr{sqrt}_*: H_*(\DD_{2i}(F\Sq)) \xrightarrow{\cong} H_*(\DD_{i}(F)) $$
we have 
$$ \bar{Q}^j \mr{sqrt}_*(x) = \mr{sqrt}_* (\bar{Q}^j x). $$
\end{lem}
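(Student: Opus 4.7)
The plan is to follow the same strategy as the proofs of Lemmas~\ref{lem:Hsusp} and \ref{lem:Homega}: upgrade the equivalence of Lemma~\ref{lem:Sq} to an equivalence of left $\partial_*(\mr{Id})$-modules, and then deduce the compatibility of $\bar{Q}^j$ with $\mr{sqrt}_*$ from a commutative square of extended powers.

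First I would pin down $\partial_*(\mr{Sq})$. Since $\mr{Sq}$ is already a $2$-homogeneous functor with $\mb{D}_2(\mr{Sq})(X) = X \wedge X$, its derivatives are concentrated in arity $2$, with $\partial_2(\mr{Sq}) \simeq \Sigma^\infty (\Sigma_2)_+$. Feeding this into the Arone--Ching chain rule
$$ \partial_*(F\mr{Sq}) \simeq B(\partial_*(F), \partial_*(\mr{Id}), \partial_*(\mr{Sq})), $$
the bar construction collapses (because $\partial_*(\mr{Sq})$ is concentrated in a single arity) to produce equivalences of left $\partial_*(\mr{Id})$-modules
$$ \partial_{2i}(F\mr{Sq}) \simeq (\Sigma_{2i})_+ \wedge_{\Sigma_i \wr \Sigma_2} \partial_i(F), \qquad \partial_{2i+1}(F\mr{Sq}) \simeq \ast, $$
where $\Sigma_i \wr \Sigma_2$ acts on $\partial_i(F)$ through its projection onto $\Sigma_i$. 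This realizes the equivalence of Lemma~\ref{lem:Sq} at the level of derivatives, now with the left $\partial_*(\mr{Id})$-module structure visible.

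Given this identification, precisely the same diagram chase as in Lemma~\ref{lem:Hsusp} produces a commuting square
$$
\xymatrix{
\partial_2(\mr{Id}) \wedge_{h\Sigma_2} \mb{D}_{2i}(F\mr{Sq})(X)^{\wedge 2} \ar[r] \ar[d]_-{\simeq}
& \mb{D}_{4i}(F\mr{Sq})(X) \ar[d]^-{\simeq} \\
\partial_2(\mr{Id}) \wedge_{h\Sigma_2} \mb{D}_{i}(F)(X \wedge X)^{\wedge 2} \ar[r]
& \mb{D}_{2i}(F)(X \wedge X)
}
$$
in which the vertical maps are induced by $\mr{sqrt}$ and the horizontal maps are the structure maps $\xi$ defining $\bar{Q}^j$. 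Applying homology and unwinding the definition of $\bar{Q}^j$ on both sides yields $\bar{Q}^j \mr{sqrt}_*(x) = \mr{sqrt}_*(\bar{Q}^j x)$.

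The main obstacle is the first step: Lemma~\ref{lem:Sq} was proved abstractly via Goodwillie's characterization of $P_i$, so the identification of $\partial_*(F\mr{Sq})$ as a $\partial_*(\mr{Id})$-module is not available for free. The substance of the proof is therefore in unpacking enough of the Arone--Ching chain rule, together with the arity-$2$ concentration of $\partial_*(\mr{Sq})$, to verify that the equivalence of Lemma~\ref{lem:Sq} refines to an equivalence of left $\partial_*(\mr{Id})$-modules; once that is done, the Dyer--Lashof-like compatibility is formal.
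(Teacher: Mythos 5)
Your overall architecture --- identify $\partial_*(F\Sq)$ as a left $\partial_*(\Id)$-module, deduce the commuting square of $\xi$-maps, apply homology --- is exactly the paper's, but the key computational input is stated incorrectly, and the collapse you invoke would not go through for the reason you give. The space-level functor $\Sq \colon \Top_* \rightarrow \Top_*$ is \emph{not} $2$-homogeneous and its derivatives are \emph{not} concentrated in arity $2$: Lemma~\ref{lem:Sq} itself, applied with $F = \Id$, gives $P_2(\Sq)(X) \simeq Q(X \wedge X) \not\simeq X \wedge X$ and $\DD_{2i}(\Sq)(X) \simeq \DD_i(X \wedge X)$, which is nontrivial for every $i = 2^k$ (e.g.\ for $X$ a sphere), so $\partial_{2i}(\Sq) \not\simeq \ast$ for all such $i$. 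What is concentrated in arity $2$ is the symmetric sequence $2_*$ of \emph{dual} derivatives of the spectrum-valued functor $\Sigma^\infty \Sq$. Moreover, even granting arity-$2$ concentration, your stated reason for the collapse fails: a symmetric sequence concentrated in arity $2$ can only carry the trivial left $\partial_*(\Id)$-module structure, and $B(\partial_*(F), \partial_*(\Id), M)$ for such a trivial module does not reduce to $\partial_*(F) \circ M$ (already for $F = \Id$ the bar resolution would return $2_*$ itself, contradicting Lemma~\ref{lem:Sq}).

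The mechanism that actually collapses the bar construction is freeness, not arity concentration, and this is where the paper does its real work: by (\ref{eq:ACformula}), $\partial_*(\Sq) \simeq B(1, \mr{Comm}, 2_*)^\vee \simeq B(1, \mr{Comm}, 1)^\vee \circ 2_* \simeq \partial_*(\Id) \circ 2_*$, i.e.\ $\partial_*(\Sq)$ is the \emph{induced} (free) left $\partial_*(\Id)$-module on $2_*$. The chain rule then gives $\partial_*(F\Sq) \simeq B(\partial_*(F), \partial_*(\Id), \partial_*(\Id) \circ 2_*) \simeq \partial_*(F) \circ 2_*$ as left $\partial_*(\Id)$-modules, whence $\partial_{2i}(F\Sq) \simeq (\Sigma_{2i})_+ \wedge_{\Sigma_i} \partial_i(F)$. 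Note also that your displayed formula drops the free factor coming from $2_*$: one smashes the block wreath product against $\partial_i(F) \wedge (\Sigma_2^i)_+$, not against $\partial_i(F)$ with trivial $\Sigma_2^i$-action; without that factor the associated layer is not $\DD_i(F)(X \wedge X)$. Once the module identification is corrected along these lines, your commuting square and the passage to homology coincide with the paper's conclusion of the proof.
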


\begin{proof}
The functor
$$ \Sigma^\infty \Sq: \Top_* \rightarrow \Sp $$
clearly has dual derivatives
$$ \partial^i(\Sigma^\infty \Sq) \simeq 
\begin{cases}
(\Sigma_2)_+, & i = 2, \\
\ast, & i \ne 2.
\end{cases}
$$
We shall denote this symmetric sequence $2_*$.  The left action of $\mr{Comm}$ is trivial: there is an equivalence of left $\mr{Comm}$-modules
$$ 2_* \simeq 1_* \circ 2_*. $$
Using (\ref{eq:ACformula}), we have
\begin{align*}
 \partial_{*}(\Sq)  
& \simeq B(1, \mr{Comm}, 2)^\vee \\
& \simeq B(1, \mr{Comm}, 1 \circ 2)^\vee \\
& \simeq B(1, \mr{Comm}, 1 )^\vee \circ 2_* \\
& \simeq \partial_*(\Id) \circ 2_*. 
\end{align*}
We deduce, using the chain rule, that there is an equivalence of left $\partial_*(\Id)$-modules
$$ \partial_*(F \Sq) \simeq \partial_*(F) \circ 2_* $$
where the left $\partial_*(\Id)$-module structure on the right-hand side is induced from the left $\partial_*(\Id)$-module structure on $\partial_*(F)$.
In particular
\begin{align*}
\partial_{2i}(F\Sq) & \simeq (\Sigma_{2i})_+ \wedge_{\Sigma_i \wr \Sigma_2^i} \partial_i(F) \wedge (\Sigma_2^i)_+ \\
& \simeq (\Sigma_{2i})_+ \wedge_{\Sigma_i} \partial_i(F).
\end{align*}
(This gives yet another computation of $\DD_{2i}(F\Sq)(X)$.)  We deduce that the following diagram commutes.
$$
\xymatrix{
\partial_2(\Id) \wedge_{h\Sigma_2} \DD_{2i}(F\Sq)(X)^{\wedge 2} \ar[r] \ar[d]_\simeq  
& \DD_{4i}(F\Sq)(X) \ar[d]^\simeq \\
\partial_2(\Id) \wedge_{h\Sigma_2} \DD_{i}(F)(X \wedge X)^{\wedge 2} \ar[r]   
& \DD_{2i}(F)(X \wedge X)
}
$$
The lemma follows from applying homology.
\end{proof}

Write
$$ H_*(L(k)_n) = \FF_2\{\sigma^{k-n}\bar{Q}^J \iota_n \: : \: \text{$J$ CU, $\abs{J} = k$, $e(J) \ge n$} \}. $$
It will be convenient to define
$$ [j_1, \ldots, j_k] := \sigma^{k-n}\bar{Q}^{j_1} \cdots \bar{Q}^{j_k} \iota_n \in H_*(L(k)_n). $$

\begin{prop}\label{prop:HTakayasu}
The fiber sequences in Corollary~\ref{cor:Takayasu} induce short exact sequences in homology, and we have
\begin{align*}
P_*([j_1, \ldots, j_{k-1}]) & = [j_1, \ldots, j_{k-1}, n], \\
E_*([j_1, \ldots, j_k]) & = 
\begin{cases}
0, & j_k = n, \\
[j_1, \ldots, j_k], & j_k > n.  
\end{cases}
\end{align*}
\end{prop}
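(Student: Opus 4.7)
The strategy is to compute $E_*$ using naturality of the operations $\bar{Q}^j$, deduce short exactness, and then pin down $P_*$ by combining a rank count with the operadic structure.

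To compute $E_*$: by Corollary~\ref{cor:EHPfiber}, the $E$-map in the Takayasu fiber sequence is, up to the Arone--Dwyer identification, the natural map $\DD_{2^k}(S^n) \to \Sigma^{-1}\DD_{2^k}(S^{n+1})$ induced by the unit $\Id \to \Omega\Sigma$ together with the isomorphisms in Lemmas~\ref{lem:Hsusp} and \ref{lem:Homega}. Those same lemmas show that $E_*$ commutes with the $\bar{Q}^j$-operations. Hence by Theorem~\ref{thm:AroneMahowald}, $E_*$ is determined by its effect on $\iota_n$ at the $\DD_1$-level, where both sides reduce (after identification) to $S^n$ and the $E$-map becomes the identity. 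It follows that $E_*(\bar{Q}^J\iota_n)$ is identified with $\bar{Q}^J\iota_{n+1}$ in $H_*(\DD_{2^k}(S^{n+1}))$, which by Theorem~\ref{thm:AroneMahowald} is nonzero precisely when $e(J) \geq n+1$. Translating to the $[\,\cdot\,]$-notation gives the asserted formula for $E_*$.

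From this explicit description, $E_*$ is surjective, since every basis element of $H_*(L(k)_{n+1})$ (whose basis consists of $[J]$ with $e(J) \geq n+1$) is the image of the identically-indexed basis element of $H_*(L(k)_n)$. The long exact sequence of the fiber sequence therefore collapses into the claimed short exact sequences, and $P_*$ is injective with $\im P_* = \ker E_* = \FF_2\{[j_1,\ldots,j_{k-1},n]\}$, indexed by CU sequences $(j_1,\ldots,j_{k-1})$ with $j_{k-1} \geq 2n+1$. These basis elements are in degree-preserving bijection with the basis $\{\Sigma^n[j_1,\ldots,j_{k-1}]\}$ of $H_*(\Sigma^n L(k-1)_{2n+1})$.

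To conclude that $P_*$ acts precisely by $\Sigma^n[J'] \mapsto [J',n]$ (rather than by some triangular substitution among basis elements of the same internal degree), I would invoke naturality. The connecting map $P$ in Lemma~\ref{lem:EHPfiber} is built, via Weiss orthogonal calculus and the Arone--Ching chain rule, from operadic composition with the generator of $\partial_2(\Id) \simeq S^{-1}$. This is precisely the map $\xi$ used in Section~\ref{sec:DLlikeops} to define $\bar{Q}^n$. Under the Arone--Mahowald identification of Theorem~\ref{thm:AroneMahowald}, this forces $P_*$ to act by appending $\bar{Q}^n$ on the right of $J'$, producing the stated formula.

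The main obstacle is this final step: a rank count alone leaves $P_*$ ambiguous up to a triangular change of basis, so the exact formula must be pinned down by an explicit comparison between the connecting map of the orthogonal-calculus fiber sequence and the operadic composition $\xi$ that defines $\bar{Q}^n$. I expect this comparison to proceed by an analog of Lemma~\ref{lem:technicallemma}, tracking the cooperadic comultiplication on $B(\mr{Comm})$ through the $P$-fiber rather than through the $E$-fiber, in the same spirit as the proof of Theorem~\ref{thm:AroneMahowald}.
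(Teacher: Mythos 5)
Your computation of $E_*$, and the deduction of short exactness from it, matches the paper's own argument: naturality of the $\bar{Q}^j$'s through Lemmas~\ref{lem:Hsusp} and \ref{lem:Homega} reduces $E_*$ to its effect on $\iota_n$ at the $\DD_1$-level, and the excess relation $\bar{Q}^{j}\iota_{n+1}=0$ for $j<n+1$ kills the $j_k=n$ classes. The genuine gap is in your final step, the determination of $P_*$ on basis elements, which is the part of the proposition a rank count cannot see. The assertion that the connecting map $P$ ``is precisely the map $\xi$ used to define $\bar{Q}^n$'' is not correct as stated: the domain of $\xi_{2^{k-1}}$ is $\Sigma^{-1}\DD_{2^{k-1}}(S^n)^{\wedge 2}_{h\Sigma_2}$, whereas the domain of $P$ is (equivalent to) $\Sigma^{-2}\DD_{2^{k-1}}(S^{2n+1})$; these are different spectra with different homology, so one needs at minimum a comparison map and a proof that it intertwines the Weiss-calculus connecting map with operadic composition. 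That comparison is exactly what you defer to a hoped-for analog of Lemma~\ref{lem:technicallemma} and never construct, so the formula $P_*([j_1,\ldots,j_{k-1}])=[j_1,\ldots,j_{k-1},n]$ remains unproved in your write-up.

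The paper closes this gap without any new bar-construction analysis. It sets $\mr{fib}_E(X)=\mr{fiber}(X\xrightarrow{E}\Omega\Sigma X)$ and uses the natural transformation $\rho\colon \mr{fib}_E\to\Omega^2\Sigma\Sq$, so that $P$ is induced by the natural transformation $i\colon \mr{fib}_E\to\Id$ together with the equivalence $\DD_{2^k}(\mr{fib}_E)(S^n)\simeq \Sigma^{-2}\DD_{2^{k-1}}(S^{2n+1})$. Since the $\bar{Q}^j$ are natural in the functor variable, and since $\rho_*$, $\mr{sqrt}_*$, $\mr{susp}_*$, $\omega_*$ all commute with the operations (Lemmas~\ref{lem:Hsusp}, \ref{lem:Homega}, and \ref{lem:HSq} --- note your argument never invokes Lemma~\ref{lem:HSq}, which is where the squaring functor is handled), one pulls $\bar{Q}^{j_1}\cdots\bar{Q}^{j_{k-1}}$ through $P_*$ and reduces to the bottom class $\sigma^{-2}\iota_{2n+1}$. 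The $k=1$ case is then computed directly: the fiber sequence $\DD_2(\mr{fib}_E)(S^n)\to\DD_2(\Id)(S^n)\to\DD_2(\Omega\Sigma)(S^n)$ is identified with $\ul{S}^{2n-1}\to\Sigma^{n-1}P_n^\infty\to\Sigma^{n-1}P_{n+1}^\infty$, so $P_*(\sigma^{-2}\iota_{2n+1})=\bar{Q}^n\iota_n$ is forced by the bottom cell of the stunted projective space. To salvage your route you would have to replace the slogan ``$P=\xi$'' either by this naturality-plus-base-case argument or by actually constructing and analyzing the cooperadic comparison you allude to; nothing already proved in Sections~\ref{sec:opbar}--\ref{sec:DLlikeops} identifies the connecting map of the EHP fiber sequence with an operadic structure map.
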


\begin{proof}
Since on $\DD_1$ the natural transformation $E$ is an equivalence, the associated map in homology
$$
H_*(\DD_1(\Id)(S^n)) \xrightarrow[E_*]{} H_*(\DD_1(\Omega \Sigma)(S^n)) \xrightarrow[\omega_* \mr{susp}_*]{\cong} H_*(\Sigma^{-1} \DD_1(S^{n+1}))
$$
is given by:
$$ \omega_* \mr{susp}_* E_*(\iota_n) = \sigma^{-1} \iota_{n+1}. $$
By naturality of the operations $\bar{Q}^j$, together with Lemmas~\ref{lem:Hsusp} and \ref{lem:Homega}, 
we deduce that
\begin{align*}
\omega_* \mr{susp}_* E_*(\bar{Q}^{j_1} \cdots \bar{Q}^{j_k} \iota_n) 
& =  \bar{Q}^{j_1} \cdots \bar{Q}^{j_k} \omega_* \mr{susp}_* E_* \iota_n \\
& = \begin{cases}
0, & j_k = n, \\
\sigma^{-1}\bar{Q}^{j_1} \cdots \bar{Q}^{j_k} \iota_{n+1}, & j_k > n.  
\end{cases}
\end{align*}
(Here we are using the excess relation in $\bar{\mc{R}}_n$: $\bar{Q}^j \iota_{n+1} = 0$ for $j < n+1$.)

Define
$$ \mr{fib}_{E}: \Top_* \rightarrow \Top_* $$
by 
$$ \mr{fib}_E(X) := \mr{fiber}(X \xrightarrow{E} \Omega \Sigma X). $$
As the composite
$$ \Id \xrightarrow{E} \Omega \Sigma \xrightarrow{H} \Omega \Sigma \Sq $$
is \emph{naturally} null homotopic,  there is an induced natural transformation
$$ \mr{fib}_E \xrightarrow{\rho} \Omega^2 \Sigma \Sq. $$ 
The EHP sequence implies that this natural transformation gives equivalences
$$ \mr{fib}_E(S^n) \xrightarrow[\rho]{\simeq} \Omega^2 S^{2n+1} $$
and Lemma~\ref{lem:EHPfiber} implies that there are induced equivalences 
$$ \DD_{2^k}(\mr{fib}_E)(S^n) \xrightarrow[\rho]{\simeq} \DD_{2^k}(\Omega^2 \Sigma \Sq)(S^n) \xrightarrow[\omega^2 \mr{susp} \, \mr{sqrt}]{\simeq} \Sigma^{-2} \DD_{2^{k-1}}(S^{2n+1}) $$ 
on the layers.
As $\partial_2(\Id) \simeq S^{-1}$, in the case of $k = 1$, the fiber sequence
$$ \DD_{2}(\mr{fib}_E)(S^n) \xrightarrow{i} \DD_2(\Id)(S^n) \rightarrow \DD_2(\Omega \Sigma)(S^n) $$
is equivalent to the fiber sequence
$$
\begin{array}{ccccc}
\ul{S}^{2n-1} & \rightarrow & \Sigma^{-1} (\ul{S}^n)^{\wedge 2}_{h \Sigma_2} & \xrightarrow{E} & \Sigma^{-2} (\ul{S}^{n+1})^{\wedge 2}_{h\Sigma_2} \\
& & \Vert & & \Vert \\
& & \Sigma^{n-1}P_n^\infty & & \Sigma^{n-1}P_{n+1}^\infty
\end{array}
$$
Therefore we deduce
$$ P_*(\sigma^{-2} \iota_{2n+1}) = \bar{Q}^n \iota_n. $$
Again naturality of the operations $\bar{Q}^j$, combined with Lemmas~\ref{lem:Hsusp}, \ref{lem:Homega}, and \ref{lem:HSq}, implies
\begin{align*}
 P_*(\sigma^{-2} \bar{Q}^{j_1} \cdots \bar{Q}^{j_{k-1}} \iota_{2n+1})  
& = i_* \rho^{-1}_* \mr{sqrt}^{-1}_{*} s^{-1}_{*}\omega^{-2}_{*} \bar{Q}^{j_1} \cdots \bar{Q}^{j_{k-1}} \sigma^{-2} \iota_{2n+1} \\
& = \bar{Q}^{j_1} \cdots \bar{Q}^{j_{k-1}} i_* \rho^{-1}_* \mr{sqrt}^{-1}_{*} s^{-1}_{*}\omega^{-2}_{*}  \sigma^{-2}\iota_{2n+1} \\
& = \bar{Q}^{j_1} \cdots \bar{Q}^{j_{k-1}} P_*(\sigma^{-2}\iota_{2n+1}) \\
& = \bar{Q}^{j_1} \cdots \bar{Q}^{j_{k-1}} \bar{Q}^n \iota_n.
\end{align*}
It is clear from these formulas that the sequence
$$ 0 \rightarrow H_*(\Sigma^{n} L(k)_{2n+1}) \xrightarrow{P_*} H_*(L(k)_n) \xrightarrow{E_*} H_*(L(k)_{n+1}) \rightarrow 0. $$
is exact.
\end{proof}

\section{Transfinite Atiyah-Hirzebruch spectral sequences}\label{sec:TAHSS}

Define $L(k)_n^m$ to be the fiber
$$ L(k)_n^m := \mr{fiber}\left( L(k)_n \xrightarrow{E^{m-n+1}} L(k)_{m+1} \right). $$
Note that $L(1)_n^m \simeq P_n^m$.
The homology of these spectra is easily computed with Proposition~\ref{prop:HTakayasu}.

\begin{prop}
We have
$$ H_*(L(k)_n^m) := \FF_2\{ [J] \: : \: J \: \mr{CU}, \abs{J} = k, \: n \le e(J) \le m \}. $$
\end{prop}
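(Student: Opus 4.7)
The plan is to compute $H_*(L(k)_n^m)$ directly from the defining cofiber sequence of spectra
$$ L(k)_n^m \to L(k)_n \xrightarrow{E^{m-n+1}} L(k)_{m+1} $$
using Proposition~\ref{prop:HTakayasu}, with no further induction required. The key observation is that Proposition~\ref{prop:HTakayasu} determines $E_*$ completely on the named basis of $H_*(L(k)_\bullet)$, so the computation reduces to bookkeeping on indices.

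First, I would iterate the formula
$$ E_*[j_1,\ldots,j_k] = \begin{cases} [j_1,\ldots,j_k], & j_k > n, \\ 0, & j_k = n, \end{cases} $$
from Proposition~\ref{prop:HTakayasu} along the telescope $L(k)_n \to L(k)_{n+1} \to \cdots \to L(k)_{m+1}$. Each application of $E_*$ kills exactly the basis classes whose last index equals the current bottom sphere, and leaves the rest unchanged. After $m-n+1$ iterations the surviving classes are precisely those $[J]$ with $e(J)=j_k \ge m+1$; equivalently,
$$ \ker\bigl( E^{m-n+1}_* : H_*(L(k)_n) \to H_*(L(k)_{m+1}) \bigr) = \FF_2\{\,[J] : J\ \mathrm{CU},\ \abs{J}=k,\ n \le e(J) \le m\,\}. $$

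Next, the same formula shows that each $E_*$ is surjective (every basis element $[J]$ of $H_*(L(k)_{n+1})$, having $e(J)\ge n+1 > n$, is the image of itself from $H_*(L(k)_n)$), and hence so is the composite $E^{m-n+1}_*$. Since we are working with spectra, the defining cofiber sequence yields a long exact sequence in $\FF_2$-homology; surjectivity of $E^{m-n+1}_*$ forces the connecting map to vanish, leaving the short exact sequence
$$ 0 \to H_*(L(k)_n^m) \to H_*(L(k)_n) \xrightarrow{E^{m-n+1}_*} H_*(L(k)_{m+1}) \to 0. $$
Combining this with the kernel computation above identifies $H_*(L(k)_n^m)$ with $\FF_2\{[J] : J\ \mathrm{CU},\ \abs{J}=k,\ n\le e(J)\le m\}$, as claimed.

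I do not expect a serious obstacle: Proposition~\ref{prop:HTakayasu} has already done the real work. The only things that need to be checked carefully are (i) the iterated formula for $E^{m-n+1}_*$ on basis elements and (ii) that surjectivity at each stage collapses the long exact sequence to a short one, both of which are routine once one unwinds the excess bookkeeping $n \le e(J) \le m$.
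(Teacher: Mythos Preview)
Your proof is correct and is exactly the intended argument: the paper does not spell out a proof at all, merely stating that the result ``is easily computed with Proposition~\ref{prop:HTakayasu},'' and your iteration of $E_*$ together with the resulting short exact sequence is precisely that computation. The only quibble is terminological: $L(k)_n^m$ is defined as a \emph{fiber}, not a cofiber, but since we are in spectra this makes no difference to the long exact sequence.
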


The spectra $L(k)_n^m$ endow $L(k)_n$ with an increasing filtration:
\begin{gather*}
\cdots \rightarrow L(k)_n^m \rightarrow L(k)_n^{m+1} \rightarrow \cdots \rightarrow L(k)_n, \\
\varinjlim_m L(k)_n^m \simeq L(k)_n. 
\end{gather*}
The filtration quotients are given  by the cofiber sequences
$$ L(k)_n^{m-1} \rightarrow L(k)_n^m \rightarrow \Sigma^{m} L(k-1)_{2m+1}. $$
Thus, the $L(k)_n$ spectra are built out of the spectra $L(k-1)_{2m+1}$, and there are associated Atiyah-Hirzebruch-type spectral sequences
$$ E^1_{*,t} = \bigoplus_{m \ge n} \pi_t(\Sigma^{m}L(k-1)_{2m+1}) \Rightarrow \pi_t(L(k)_n). $$
These spectral sequences give an inductive means of computing $\pi_*(L(k)_n)$, starting with $\pi_*(L(0)) = \pi^s_*$.
One way to think about this is that there is a sequence of spectral sequences
\begin{equation*}
\bigoplus_{\stackem{(j_1, \ldots, j_k) \: CU}{j_k \ge n}} \pi_t(\ul{S}^{j_1+\cdots + j_k}) \Rightarrow 
\bigoplus_{\stackem{(j_2, \ldots, j_{k}) \: CU}{j_k \ge n}} \pi_t(\Sigma^{j_2+\cdots +j_k} L(1)_{2j_2+1}) \Rightarrow 
\cdots
\Rightarrow \pi_t(L(k)_n).
\end{equation*}

Alternatively, one can view this as a single transfinite spectral sequence (in the sense of \cite{Hu}). 
As in Section~\ref{sec:Groth}, let $\mc{G}(\omega^k)$ be the Grothendieck group of ordinals less than $\omega^k$:
$$ \mc{G}(\omega^k) = \{ j_1 + j_2 \omega + \cdots + j_k \omega^{k-1} \: : \: j_s \in \ZZ \}. $$ 
Then we define a degreewise finite $\mc{G}(\omega^k)$-indexed tower under $L(k)_n$ (Section~\ref{sec:towers}) 
$$
\{  [L(k)_n]_{j_1 + j_2 \omega + \cdots + j_k \omega^{k-1}} \} = \{ [L(k)_n]_{j_1, \ldots, j_k} \} 
$$
as follows.
\begin{align*}
[L(k)_n]_{0,\ldots, 0, j_k} & = 
\begin{cases}
L(k)_{j_k} & j_k > n, \\
L(k)_n, & \text{else},
\end{cases} 
\\
[L(k)_n]_{0, \ldots, 0, j_{k-1}, j_k} & =
\begin{cases}
\mr{cofiber} \left( \Sigma^{j_{k}} L(k-1)^{j_{k-1}-1}_{2j_k+1} \rightarrow [L(k)_{n}]_{0, \ldots, 0, j_k} \right), & 
j_k \ge n, \\
& j_{k-1} > 2j_{k}+1, \\
\\
F_{0, \ldots, 0,  j_k} L(k)_n, & \text{else},
\end{cases}
\\
& \vdots
\\
[L(k)_n]_{j_1, \ldots, j_k} & = 
\begin{cases}
\mr{cofiber} \left(  \Sigma^{j_2+\cdots+j_k} L(1)^{j_{1}-1}_{2j_2+1} \rightarrow [L(k)_{n}]_{0, j_2, \ldots, j_k} \right), & j_1 > 2j_{2}+1, \\
& j_s \ge 2j_{s+1}+1 \\
& \mr{for} \: 2 \le s \le k-1, \\
& j_k \ge n, \\
\\
[L(k)_n]_{0, j_2, \ldots, j_k}, & \text{else}.
\end{cases}
\end{align*}
We have
$$ H_*([L(k)_n]_{J}) = \FF_2 \{ [T] \: : \: T \: \text{CU}, \: \abs{T} = k, \: e(T) \ge n, \: T \ge J \}. $$
The filtration induced by this tower implies that cells $e^{\norm{J}}$, associated to CU sequences $J$ of length $k$, only attach to cells corresponding to lesser sequences.  The quotient complex $[L(k)_n]_{J}$ is obtained from $L(k)_n$ by collapsing out all cells corresponding to sequences $T < J$.

For CU sequences $J = (j_1, \ldots, j_k)$, $j_k \ge n$, and 
$$ \mu(J) := j_1 + j_2 \omega + \cdots + j_k \omega^{k-1} \in \mc{G}(\omega^k), $$
there are fiber sequences
$$ \ul{S}^{\norm{J}} \rightarrow [L(k)_n]_{\mu(J)} \rightarrow [L(k)_{n}]_{\mu(J)+1}. $$
For all other $\mu \in \mc{G}(\omega^k)$ we have $[L(k)_n]_\mu = [L(k)_n]_{\mu+1}$.
The $\mc{G}(\omega^{k})$-indexed transfinite spectral sequence associated to this tower (Section~\ref{sec:towers}) takes the form
$$ E^1_{t, \mu}(L(k)_n) \Rightarrow \pi_{t}(L(k)_n) $$
with
$$
E^1_{t, \mu} (L(k)_{n})
= 
\begin{cases}
\pi_t(\ul{S}^{\norm{J}}), & \mu = \mu(J), \: J \: \text{CU}, \: \abs{J} = k, \: e(J) \ge n, \\
0, & \text{else}.
\end{cases}
$$

We shall refer to this as the \emph{transfinite Atiyah-Hirzebruch spectral sequence} (TAHSS).  
As the only important $\mc{G}(\omega^k)$ indices for this spectral sequence are those of the form $\mu(J)$, we will write
$E^1_{t,J}(L(k)_n)$ for $E^1_{t,\mu(J)}(L(k)_n)$.
Elements in the $E^1_{t,J}$-term of the TAHSS will be denoted
$$ \alpha[J] $$
for $\alpha \in \pi_t(\ul{S}^{\norm{J}})$.  
Differentials in the TAHSS take the form
$$ d^{L(k)_n}_{\mu}(\alpha[J]) = \alpha'[J'] $$
where $J' < J$,  
$$ \mu = j_1-j'_1+(j_2-j_2')\omega+ \cdots (j_{k-1}-j'_{k-1})\omega^{k-1}, $$
and 
$$ \abs{\alpha'} + \norm{J'} = \abs{\alpha} + \norm{J} -1. $$
As the length $\mu$ of the above differential is somewhat cumbersome to write as an element of $\mc{G}(\omega^k)$, and is completely determined by $J$ and $J'$, it will typically be omitted from the notation.

Differentials in the TAHSS, as in any Atiyah-Hirzebruch-type spectral sequence, can be effectively computed from a sound understanding of the attaching map structure in the CW-spectrum $L(k)_n$.  The cells $e^{\norm{J}}$ associated to CU sequences $J$ are in bijective correspondence with the homology elements
$$ [J] \in H_*(L(k)_n). $$
Often, these attaching maps can be determined from the action of the dual Steenrod operations on
$$ H_*(L(k)_n) \cong \bar{\mc{R}}_n $$
given by the Nishida relations.  This will be our main technique for determining TAHSS differentials in the sample calculations of Chapter~\ref{sec:calculations}.

\section{Transfinite Goodwillie spectral sequence}\label{sec:TGSS}

Combining the Goodwillie spectral sequence with the TAHSS gives a sequence of spectral sequences
\begin{equation*}
\bigoplus_{k \ge 0} \bigoplus_{\stackem{(j_1, \ldots, j_k) \: CU}{j_k \ge n}} \pi_t(\ul{S}^{j_1+\cdots + j_k}) \Rightarrow  \cdots \Rightarrow \bigoplus_{k \ge 0} \pi_t(L(k)_n) \Rightarrow \pi_{t+n-k}(S^n).
\end{equation*}
which we again wish to regard as a transfinite spectral sequence: the transfinite Goodwillie spectral sequence (TGSS).
Since the TAHSS's are indexed on $\mc{G}(\omega^k)$ for varying $k$, we must index the TGSS on $\mc{G}(\omega^\alpha)$ for an ordinal $\alpha$ larger than $\omega^{\omega}$.

To accomplish this, define a degreewise finite $\mc{G}(\omega^{\omega+1})$-indexed tower $\{ [S^n]_\mu \}$
on $S^n$ as follows.  There are fiber sequences
$$ P_{2^k}(S^n) \rightarrow P_{2^{k-1}}(S^n) \rightarrow \Omega^{\infty} \Sigma^{n-k+1}L(k)_n. $$
We define
\begin{align*}
[S^n]_0 & := P_1(S^n), \\
[S^n]_\mu & := \ast, \quad \mu > 0.
\end{align*}
For $k \ge 1$, and $\mu$ of the form
$$ \mu = j_1+j_2\omega+\cdots+j_k\omega^{k-1}-k\omega^\omega, $$
we define $[S^n]_\mu$ to be the fiber
$$ [S^n]_{\mu} \rightarrow P_{2^{k-1}}(S^n) \rightarrow \Omega^\infty \Sigma^{n-k+1} [L(k)_n]_{j_1+\cdots+j_k \omega^{k-1}}. $$
In particular, we have
$$ [S^n]_{-k\omega^\omega} = P_{2^k}(S^n). $$
We fill in the rest by setting 
$$ [S^n]_\mu := P_{2^{k-1}}(S^n) $$
for $k \ge 1$ and either
$$ \omega^k -k\omega^\omega \le \mu  < -(k-1) \omega^{\omega} $$
or $\mu$ in the right subset of the $(-k\omega^\omega, \omega^k-k\omega^\omega)$-gap (see \cite[Def.~1]{Hu}).

For $k \ge 0$, $J = (j_1, \ldots, j_k)$ a CU sequence with $j_k \ge n$, and
$$ \mu[J] := j_1 + j_2\omega + \cdots + j_k \omega^{k-1} - k\omega^\omega, $$
there are fiber sequences
$$ Q S^{n-k+\norm{J}} \rightarrow [S^n]_{\mu[J]} \rightarrow [S^n]_{\mu[J]+1}. $$
For all other $\mu \in \mc{G}(\omega^{\omega+1})$, we have $[S^n]_{\mu} = [S^n]_{\mu+1}$.
The TGSS is the $\mc{G}(\omega^{\omega+1})$-indexed transfinite spectral sequence associated to this tower under $S^n$; it takes the form
$$ E^1_{t,\mu}(S^n) \Rightarrow \pi_t(S^n) $$
with
$$
E^1_{t,\mu}(S^n) =
\begin{cases}
\pi_{t}(\ul{S}^{n-\abs{J}+\norm{J}}), & \mu = \mu[J], \: J \: \mr{CU}, \: \abs{J} \ge 0, \: e(J) \ge n, \\
0, & \text{else}.
\end{cases}
$$ 
This spectral sequence
computes unstable homotopy groups of spheres from stable homotopy groups of spheres.

As the only relevant terms in this spectral sequence are those of the form $E^1_{t,\mu[J]}$ for $J$ a CU sequence with $e(J) \ge n$, we will often write
$E^1_{t,J}(S^n)$ for $E^1_{t,\mu[J]}(S^n).$
Elements in the $E^1_{t,J}$-term of the TGSS will be denoted
$$ \alpha[J] $$
for $\alpha \in \pi_t(\ul{S}^{n-\abs{J}+\norm{J}})$.
If $J = \emptyset$, the empty sequence of length zero, we shall simply write
$$ \alpha = \alpha[\emptyset] \in E^1_{t, \emptyset}(S^n). $$
Differentials in the TGSS spectral sequence take the form
$$ d^{S^n}_{\mu}(\alpha[J]) = \alpha'[J'] $$
where $J' < J$, where $\mu$ is given by
$$ \mu = j_1-j'_1+(j_2-j_2')\omega+ \cdots + (j_{k}-j'_{k})\omega^{k-1} -j'_{k+1}\omega^{k}-\cdots - j_{k'}\omega^{k'-1}+(k'-k)\omega^\omega, $$
and 
$$ \abs{\alpha'} + \norm{J'} - \abs{J'} = \abs{\alpha} + \norm{J} - \abs{J} - 1. $$
As the expression for $\mu$ is rather cumbersome, and completely determined by $J$ and $J'$, we will often omit it from the notation, expressing $d^{S^n}_\mu$ simply as $d^{S^n}$. 

\chapter{Goodwillie filtration and the $P$ map}\label{sec:Gfilt}

The purpose of this chapter is to contemplate the meaning of the Goodwillie filtration, and the nature of detecting elements in the TGSS.  The notion of Goodwillie filtration is reviewed in Section~\ref{sec:Goodfilt}.  Section~\ref{sec:DOI} introduces the notion of the degree of instability of an element, and assigns elements of the unstable stems stable names through a tracking of its ``lineage'' in the EHP sequence.  In Section~\ref{sec:EPTAHSS} we show the $E$ and $P$ maps give well behaved maps of TAHSS's.  In Section~\ref{sec:EPTGSS} we show that these maps of TAHSS's extend to maps of TGSS's.  In Section~\ref{sec:detectionTGSS} we observe that these maps of spectral sequences imply a close connection between Goodwillie filtration and degree of instability, and furthermore the lineage of unstable elements is often encoded in the names of detecting elements in the TGSS. In Section~\ref{sec:Whiteprod} we note that the relationship of the $P$ map to Whitehead products gives yet another interpretation of Goodwillie filtration.

\section{Goodwillie filtration}\label{sec:Goodfilt}

We shall say that a non-zero element $\beta$ in $\pi_t(S^n)$ is \emph{of Goodwillie filtration $2^k$} if its image in  $\pi_t(P_{2^{k-1}}(S^n))$ is null, but its image in $\pi_t(P_{2^{k}}(S^n))$ is non-zero.  

The elements of Goodwillie filtration $2^k$ are precisely the elements which are detected by elements of the form
$$ \alpha[j_1, \ldots, j_k] $$
in the TGSS.  

It is well known that the elements of Goodwillie filtration $1$ are precisely those homotopy elements which are stably non-trivial.  We shall refer to such homotopy elements as \emph{stable elements}.  The elements of Goodwillie filtration greater than $1$ are the unstable elements.  It stands to reason that the Goodwillie filtration is a measure of the degree of instability.  

\section{The genealogy of unstable elements}\label{sec:DOI}

The EHP sequence supplies another measure of instability, which we will now explain.
If $\beta \in \pi_t(S^n)$ is unstable and nonzero, let $r_1$ be minimal such that
$$ E^{r_1+1}(\beta) = 0 \in \pi_{t+r_1+1}(S^{n+r_1+1}) $$
(i.e. $\beta$ dies on $S^{n+r_1+1}$).
By exactness of the EHP sequence, there exists a (necessarily non-trivial) element $\alpha_1 \in \pi_{t+r_1+2}(S^{2(n+r_1)+1})$ such that
$$ E^{r_1} (\beta) = P(\alpha_1). $$
We will refer to such an element $\alpha_1$ as a \emph{child} of $\beta$ and write
$$ \beta \in E^{-r_1}P(\alpha_1). $$
There are two possibilities: $\alpha_1$ is either stable or unstable.  If $\alpha_1$ is unstable, then we can repeat the above procedure, and express
$$ \beta \in E^{-r_1}PE^{-r_2}P(\alpha_2) $$
for a child $\alpha_2$ of $\alpha_1$.
Continuing in this manner, if we have
$$ \beta \in E^{-r_1}P \cdots E^{-r_\ell}P (\alpha_\ell) $$
we shall say that $\alpha_\ell$ is a \emph{($\ell$th generation) descendant} of $\beta$.
 We shall say that $\beta$ is \emph{unstable of degree $k$} if $k$ is maximal such that there exists a $k$th generation descendant $\alpha_k$ of $\beta$: 
$$ \beta \in E^{-r_1}P \cdots E^{-r_k}P (\alpha_k). $$
The homotopy element $\alpha_k$ in the above expression is necessarily stable, and there is a zig-zag:
$$
\xymatrix@C+0em{
\alpha \in \pi_*^s &
\pi_*S^{2j_1+1} \ni \alpha_k \ar[l]^-{E^\infty} \ar@<-2em>[d]^{P}
\\
& \pi_* S^{j_1} \phantom{\ni \alpha_k} &
\pi_*S^{2j_2+1} \ni \alpha_{k-1} \ar[l]^-{E^{r_k}}  \ar@<-2em>[d]^{P}&
\\
&& \pi_*S^{j_2} \phantom{\ni \alpha_{k-1}}  &
\\
& 
&& \pi_*S^{j_k} \ar@{}[ul]|{\ddots} & \pi_*S^{n} \ni \beta
\ar[l]^-{E^{r_1}}
\\
} $$
We call such a sequence $(\alpha_1, \ldots, \alpha_k)$ a \emph{lineage} of $\beta$, and write
$$ \beta \in \alpha\bra{j_1, j_2, \ldots, j_k}. $$
Note that the sequence $(j_1, \ldots, j_k)$ is necessarily CU of excess greater than or equal to $n$.

We wish to compare the notion of degree of instability with the Goodwillie filtration, and relate detection in the TGSS with the notion of lineage.  To prepare for this analysis, we will need to understand the maps of TAHSS's and TGSS's induced by the maps $P$ and $E$.  This will be accomplished in the next two sections.

\section{Behavior of the E and P maps in the TAHSS}\label{sec:EPTAHSS}

\begin{lem}\label{lem:Pfilt1}
For all $\mu \in \mc{G}(\omega^{k-1})$, there are maps of towers
$$
\xymatrix{
\Sigma^n L(k-1)_{2n+1} \ar[r]^P \ar[d] &
L(k)_n \ar[d] 
\\
\Sigma^n[L(k-1)_{2n+1}]_{\mu} \ar[r] &
[L(k)_n]_{\mu+n\omega^{k-1}}  
}
$$
\end{lem}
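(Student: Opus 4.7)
The plan is to induct on $\mu \in \mc{G}(\omega^{k-1})$ (in its natural well-order), using the inductive cofiber definitions of both filtrations from Section~\ref{sec:TAHSS}. The key computational input is Proposition~\ref{prop:HTakayasu}: the $P$ map acts on homology by $P_*[T] = [T, n]$. Combining this with the order-theoretic observation that the right-lexicographic ordering on length-$(k-1)$ CU sequences is preserved under appending $n$ (i.e.\ $T \le J$ iff $[T, n] \le [J, n]$, since the appended coordinates agree), and with the fact that $T$ being CU with $e(T) \ge 2n+1$ forces $[T, n]$ to be CU with $e([T, n]) = n \ge n$, identifies precisely which cells of $L(k)_n$ the $P$ map hits: those indexed by sequences of the form $[T, n]$ for $T$ a CU cell of $L(k-1)_{2n+1}$. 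Under the correspondence $\mu \leftrightarrow J$, this matches the cell filtration on $[L(k)_n]_{\mu + n\omega^{k-1}}$.

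For the base case $\mu = 0$, both filtrations coincide with the full spectra (note $[L(k)_n]_{0,\ldots,0,n} = L(k)_n$ because $j_k = n$ falls in the ``else'' clause of the definition), so the horizontal map is just $P$. For the inductive step, I would consider the cofiber sequence defining $[L(k-1)_{2n+1}]_{\mu}$ from $[L(k-1)_{2n+1}]_{\mu-1}$: it collapses a shifted $L(\ell)^{a}_{b}$-summand whose cells are indexed by CU sequences $T < J$ sharing a fixed tail with $J = J(\mu)$. By the combinatorial discussion above, $P$ carries these cells to cells $[T, n]$ of $L(k)_n$ with $[T, n] < [J, n]$, which are precisely the cells already killed in forming $[L(k)_n]_{\mu + n\omega^{k-1}}$. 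Hence the composite $\Sigma^n [L(k-1)_{2n+1}]_{\mu - 1} \to [L(k)_n]_{\mu + n\omega^{k-1}}$ annihilates the subspectrum to be collapsed, factoring through the next stage and extending the tower map.

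The main obstacle is the precise interpretation of ``$P$ carries the cell $T$ to the cell $[T, n]$,'' since a priori Proposition~\ref{prop:HTakayasu} only controls $P_*$ on homology. The cleanest way around this is to realize that the filtrations $[L(k-1)_{2n+1}]_\mu$ and $[L(k)_n]_{\mu + n\omega^{k-1}}$ are themselves built by iteratively applying the cofiber sequences of Corollary~\ref{cor:Takayasu} in lower $k$ and higher $n$; the $P$ map of our lemma is literally one of the input arrows in the construction of the larger filtration. Thus I would present both filtrations as iterated pullback/pushout constructions from the same natural transformation $P$ (and its $E$-companion), so that the commuting squares of the tower map arise tautologically from the construction rather than from a post hoc cell-by-cell verification. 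Compatibility as $\mu$ varies, and across the limit ordinals in $\mc{G}(\omega^{k-1})$, then reduces to the (degreewise finite) naturality of the defining cofiber sequences; the cases where $\mu$ is not of the form $\mu(J)$ for a CU $J$ are automatic, since both filtrations are constant there.
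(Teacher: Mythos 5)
Your concluding strategy --- realizing both filtrations as iterated cofibers along the same $P$ (and $E$) maps, so that the tower maps are produced stage by stage as induced maps of cofibers --- is exactly the paper's proof, which runs a downward induction on the levels of the nested cofiber definition of $[L(k)_n]_{j_1,\ldots,j_k}$ and at each level takes the map of cofibers induced by the identity on the collapsed subspectrum $\Sigma^{j_{s+1}+\cdots+j_{k-1}+n}L(s)^{j_s-1}_{2j_s+1}$. The preliminary cell-by-cell homology sketch is not what carries the argument (as you note yourself, homology-level vanishing alone would not supply the null-homotopy needed to factor through a cofiber), but the ``cleanest way'' you propose in its place is precisely the construction-level argument the paper gives.
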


\begin{proof}
The maps are produced by a downward induction on $s$:   Given maps
$$ \Sigma^n [L(k-1)_{2n+1}]_{j_{s+1}\omega^s+\cdots+j_{k-1} \omega^{k-2}} \rightarrow 
[L(k)_n]_{j_{s+1}\omega^s+\cdots+j_{k-1} \omega^{k-2}+n\omega^{k-1}} $$ 
we obtain an induced map of cofibers:
$$
\xymatrix{
\Sigma^{j_{s+1}+\cdots+j_{k-1}+n} L(s)^{j_s-1}_{2j_{s}+1} \ar@{=}[r] \ar[d] &
\Sigma^{j_{s+1}+\cdots+j_{k-1}+n} L(s)^{j_s-1}_{2j_{s}+1} \ar[d] 
\\
\Sigma^n[L(k-1)_{2n+1}]_{j_{s+1}\omega^s+\cdots+j_{k-1} \omega^{k-2}} \ar[d] \ar[r] &
[L(k)_n]_{j_{s+1}\omega^s+\cdots+j_{k-1} \omega^{k-2}+n\omega^{k-1}} \ar[d]
\\
\Sigma^n [L(k-1)_{2n+1}]_{j_{s}\omega^{s-1}+\cdots+j_{k-1} \omega^{k-2}} \ar@{.>}[r] &
[L(k)_n]_{j_{s}\omega^{s-1}+\cdots+j_{k-1} \omega^{k-2}+n\omega^{k-1}}
}
$$
\end{proof}

\begin{cor}\label{cor:PTAHSS}
The $P$ map induces a map of TAHSS's
$$
\xymatrix{
E^1_{t-n,J}(L(k-1)_{2n+1}) \ar@{=>}[r] \ar[d]_{P_*} &
\pi_t \Sigma^n L(k-1)_{2n+1} \ar[d]_P \\
E^1_{t,[J,n]}(L(k)_n) \ar@{=>}[r] &
\pi_t L(k)_n
}
$$
which on $E^1$-terms is given by
$$ P_*(\alpha[J]) = \alpha[J, n]. $$
\end{cor}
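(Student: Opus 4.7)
The plan is to deduce this corollary by combining Lemma~\ref{lem:Pfilt1} with the functoriality of the transfinite spectral sequence construction associated to a $\mc{G}(\omega^{k})$-indexed degreewise finite tower, as reviewed in Appendix~\ref{sec:Hu}. In general, a filtration-compatible (possibly re-indexed) map between two such towers induces a map on the associated transfinite spectral sequences whose effect on the $E^1$-page is computed from the induced maps on the filtration subquotients. So the work is to (i) identify the shift of indices $\mu \mapsto \mu + n\omega^{k-1}$ appearing in Lemma~\ref{lem:Pfilt1} with the CU relabelling $J \mapsto [J,n]$, and (ii) verify that the induced map on subquotient spheres is the canonical identification.

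For step (i), given a CU sequence $J = (j_1, \ldots, j_{k-1})$ of length $k-1$ with $j_{k-1} \ge 2n+1$, the TAHSS index of $J$ in the $L(k-1)_{2n+1}$ tower is $\mu(J) = j_1 + j_2 \omega + \cdots + j_{k-1}\omega^{k-2} \in \mc{G}(\omega^{k-1})$. Since $j_{k-1} \ge 2n+1$, the concatenated sequence $[J, n] = (j_1, \ldots, j_{k-1}, n)$ is CU of length $k$ and excess $n$, so it indexes a filtration stage of the $L(k)_n$ tower with index $\mu([J, n]) = \mu(J) + n\omega^{k-1} \in \mc{G}(\omega^{k})$. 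This is exactly the shift supplied by Lemma~\ref{lem:Pfilt1}; thus that lemma gives a tower map whose re-indexing on CU labels is $J \mapsto [J, n]$, as required.

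For step (ii), the filtration subquotient at $\mu(J)$ in the source tower (after the $\Sigma^n$ suspension) is the sphere $\ul{S}^{n + \norm{J}}$, contributing $\pi_t(\ul{S}^{n+\norm{J}})$ to $E^1_{t,J}$ of the TAHSS for $\Sigma^n L(k-1)_{2n+1}$; the filtration subquotient at $\mu([J,n])$ in the target tower is the sphere $\ul{S}^{\norm{[J,n]}} = \ul{S}^{n + \norm{J}}$, contributing $\pi_t(\ul{S}^{n+\norm{J}})$ to $E^1_{t,[J,n]}$. The inductive cofiber diagram displayed in the proof of Lemma~\ref{lem:Pfilt1} shows that the induced map between these subquotients is (canonically) the identity, because the top row of that diagram is an equality of spectra $\Sigma^{j_{s+1}+\cdots+j_{k-1}+n}L(s)^{j_s-1}_{2j_s+1}$ --- a cell-level statement that also matches the homological formula $P_*[J] = [J,n]$ of Proposition~\ref{prop:HTakayasu}. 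Passing to $\pi_t$ and using that both source and target $E^1$-entries are $\pi^s_{t-n-\norm{J}}$ yields $P_*(\alpha[J]) = \alpha[J, n]$.

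The main obstacle, and the only nontrivial content beyond Lemma~\ref{lem:Pfilt1}, is the bookkeeping of the transfinite indexing: one must check that the shift $\mu \mapsto \mu + n\omega^{k-1}$ sends the transfinite stages with nontrivial $E^1$-contribution to exactly the transfinite stages with nontrivial $E^1$-contribution on the $L(k)_n$ side, and that intermediate (``trivial-quotient'') stages are respected. This is immediate from the definition of $\mu(J)$ but deserves an explicit line in the write-up so that the machinery of Appendix~\ref{sec:Hu} can be invoked cleanly; once that is in place, the identification of the $E^1$-map with the identity on sphere cells is formal.
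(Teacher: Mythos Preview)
Your proposal is correct and is exactly the argument the paper has in mind: the paper states this result as a corollary of Lemma~\ref{lem:Pfilt1} with no explicit proof, and you have spelled out the implicit content --- that a map of towers (with the index shift $\mu \mapsto \mu + n\omega^{k-1}$) induces a map of transfinite spectral sequences, and that on filtration subquotients it is the identity on the relevant spheres. Your bookkeeping of the CU relabelling $J \mapsto [J,n]$ and the identification of subquotients via the equalities in the top row of the diagram in Lemma~\ref{lem:Pfilt1} is precisely what makes this a corollary rather than a separate lemma.
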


\begin{lem}\label{lem:Efilt1}
For all $\mu \in \mc{G}(\omega^{k})$, there are maps of towers
$$
\xymatrix{
L(k)_{n} \ar[r]^E \ar[d] &
L(k)_{n+1} \ar[d] 
\\
[L(k)_{n}]_{\mu} \ar[r] &
[L(k)_{n+1}]_{\mu}  
}
$$
\end{lem}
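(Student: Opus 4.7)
The plan is to mirror the proof of Lemma~\ref{lem:Pfilt1}: construct the compatibility map $[L(k)_n]_\mu \to [L(k)_{n+1}]_\mu$ by downward induction on the depth $s$ of the refinement, so that at each stage we can induce a map on cofibers from the inductive hypothesis. Begin at the coarsest refinement, $\mu = j_k \omega^{k-1}$. Here $[L(k)_n]_\mu$ is $L(k)_{j_k}$ when $j_k > n$ and $L(k)_n$ when $j_k \le n$, and the structure map from $L(k)_n$ is either the iterated suspension $E^{j_k - n}$ or the identity. I would define the required compatibility map to be the identity on $L(k)_{j_k}$ when $j_k \ge n+1$, and the $E$ map itself when $j_k \le n$. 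Commutativity of the resulting square is then immediate from the factorization $E^{j_k - n} = E^{j_k - n - 1} \circ E$.

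Next I would do the downward induction on $s$. Given a compatibility map at depth $s$, I would pass to depth $s-1$ via the defining cofiber sequence
$$\Sigma^{j_s + \cdots + j_k} L(s-1)^{j_{s-1}-1}_{2j_s+1} \to [L(k)_n]_{j_s \omega^{s-1} + \cdots + j_k \omega^{k-1}} \to [L(k)_n]_{j_{s-1} \omega^{s-2} + \cdots + j_k \omega^{k-1}}.$$
When $j_k \ge n+1$, both the $L(k)_n$- and $L(k)_{n+1}$-filtrations at depth $s$ are subquotients of the common spectrum $L(k)_{j_k}$ (independent of the base $n$); the attaching maps coincide, and the inductively constructed depth-$s$ map is the identity, so a map on cofibers exists trivially. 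When $j_k = n$, the filtration on $L(k)_{n+1}$ is already trivial at depth $s-1$ (since the CU condition $j_k \ge n+1$ fails), and the needed factorization reduces to the assertion that $E \circ P$ is nullhomotopic, which is precisely the content of the Takayasu/Kuhn fiber sequence of Corollary~\ref{cor:Takayasu}. With the factorization in hand the induced map on the cofiber provides the depth-$(s-1)$ compatibility map.

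The main technical point is coherence: that the iterated Takayasu/Kuhn attaching maps defining the successive cofibers assemble into commutative squares with $E$, rather than merely homotopy-commutative ones. This is ultimately a naturality statement for the fibrations of Corollary~\ref{cor:Takayasu} as $n$ varies, and it follows by tracing them back to their origin in Corollary~\ref{cor:EHPfiber}, where they arise by applying $\mb{D}_{2^k}$ (respectively $\mb{D}_{2^{k-1}}$) to the EHP sequence of functors $\mr{Id} \xrightarrow{E} \Omega \Sigma \xrightarrow{H} \Omega \Sigma \mr{Sq}$ from $\mr{Top}_* \to \mr{Top}_*$. Since this sequence is a sequence of genuine natural transformations, applying $\mb{D}_{2^k}$ to spheres $S^n \to S^{n+1}$ yields the required strict naturality of the attaching maps. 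Once this is in place, the downward induction on $s$ parallels Lemma~\ref{lem:Pfilt1} and produces the compatibility map $[L(k)_n]_\mu \to [L(k)_{n+1}]_\mu$ for every $\mu \in \mc{G}(\omega^k)$.
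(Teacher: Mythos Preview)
Your approach follows the paper's own hint --- its proof reads in full: ``The proof is similar to Lemma~\ref{lem:Pfilt1}, except simpler.'' The inductive scheme is correct, but you work harder than necessary, and your reduction ``the needed factorization reduces to $E\circ P$ nullhomotopic'' is only literally valid at the first inductive step when $j_k=n$ (where the attached piece $\Sigma^n L(k-1)^{j_{k-1}-1}_{2n+1}$ genuinely maps through $P$). At subsequent steps the attached piece is $\Sigma^{j_{s}+\cdots+n}L(s-1)^{j_{s-1}-1}_{2j_s+1}$, whose map into $[L(k)_n]_\mu$ is not the $P$-map for $L(k)$, so a further word is needed to see why its composite with your previously constructed map to $L(k)_{n+1}$ is null.

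The simplification you are missing --- presumably what the paper means by ``simpler'' --- is that $E$ is already one of the tower structure maps: by definition $[L(k)_n]_{(n+1)\omega^{k-1}} = L(k)_{n+1}$, with structure map $E$ itself. For $\mu$ with $j_k\ge n+1$ the towers $[L(k)_n]_\mu$ and $[L(k)_{n+1}]_\mu$ are literally equal (the defining cofiber conditions coincide); for $\mu$ with $j_k\le n$ the target tower is constant at $L(k)_{n+1}$, and the required map $[L(k)_n]_\mu \to L(k)_{n+1}$ may be taken to be the source tower's own structure map $[L(k)_n]_\mu \to [L(k)_n]_{(n+1)\omega^{k-1}}$. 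All squares then commute by the tower axioms, with no induction, no coherence worries, and no appeal to $E\circ P\simeq\ast$. This observation also fills the gap in your argument: choosing the map to $L(k)_{n+1}$ at each inductive stage to be this structure map makes the required nullity automatic, since that structure map factors through the very cofiber you are trying to pass to.
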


\begin{proof}
The proof is similar to Lemma~\ref{lem:Pfilt1}, except simpler.
\end{proof}

\begin{cor}\label{cor:ETAHSS}
The $E$ map induces a map of TAHSS's
$$
\xymatrix{
E^1_{t,J}(L(k)_{n}) \ar@{=>}[r] \ar[d]_{E_*} &
\pi_t L(k)_{n} \ar[d]_E \\
E^1_{t,J}(L(k)_{n+1}) \ar@{=>}[r] &
\pi_t L(k)_{n+1}
}
$$
which on $E^1$-terms is given by
$$ P_*(\alpha[J]) = 
\begin{cases}
\alpha[J], & e(J) \ge {n+1}, \\
0, & \text{else}.
\end{cases}
$$
\end{cor}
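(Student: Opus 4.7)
The plan is to derive the corollary immediately from Lemma~\ref{lem:Efilt1}, together with the functoriality of the transfinite spectral sequence construction reviewed in Appendix~\ref{sec:Hu}. The formula in the statement should of course have $E_*$ in place of $P_*$; I shall prove it in that form.

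First, Lemma~\ref{lem:Efilt1} provides, for every $\mu \in \mc{G}(\omega^k)$, a compatible map $[L(k)_n]_\mu \to [L(k)_{n+1}]_\mu$ refining $E: L(k)_n \to L(k)_{n+1}$. This is precisely a map of degreewise-finite $\mc{G}(\omega^k)$-indexed towers, so by the general machinery of the appendix it induces a map of the associated transfinite spectral sequences abutting to $\pi_t(E)$. This establishes the commutative square in the statement.

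Next I identify the effect on the $E^1$-page. Recall from Section~\ref{sec:TAHSS} that $E^1_{t,\mu(J)}(L(k)_n) = \pi_t(\ul{S}^{\norm{J}})$ for $J$ CU of length $k$ with $e(J) \ge n$, and vanishes otherwise; the group arises from the fiber sequence
$$ \ul{S}^{\norm{J}} \to [L(k)_n]_{\mu(J)} \to [L(k)_n]_{\mu(J)+1}. $$
The induced map on $E^1_{t,\mu(J)}$ is therefore $\pi_t$ of the map on bottom cells of these cofiber sequences. If $e(J) \ge n+1$, both $L(k)_n$ and $L(k)_{n+1}$ carry a cell indexed by $J$, and the inductive construction in the proof of Lemma~\ref{lem:Efilt1} (the cofiber pieces $\Sigma^{j_{s+1}+\cdots+j_k}L(s)^{j_s-1}_{2j_s+1}$ at each stage) realizes the identity on that bottom cell, yielding $E_*(\alpha[J]) = \alpha[J]$. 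If $e(J) = n$, then $J$ fails the excess condition for $L(k)_{n+1}$, so the target $E^1_{t,\mu(J)}(L(k)_{n+1})$ is zero and the map is forced to vanish.

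The only substantive point is verifying the identity claim in the case $e(J) \ge n+1$; this is essentially bookkeeping, since the inductive construction in Lemma~\ref{lem:Efilt1} produces tower maps whose associated graded pieces depend only on $(s, j_s, j_{s+1})$ and not on whether one is working over $L(k)_n$ or $L(k)_{n+1}$ once $j_k \ge n+1$ holds. As a reality check, the vanishing case is consistent with the fiber sequence $\Sigma^n L(k-1)_{2n+1} \xrightarrow{P} L(k)_n \xrightarrow{E} L(k)_{n+1}$ of Corollary~\ref{cor:Takayasu}: the cells of $L(k)_n$ of excess exactly $n$ come from the $P$-fiber and so die in $L(k)_{n+1}$, which matches Corollary~\ref{cor:PTAHSS} on the nose.
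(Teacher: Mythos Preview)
Your proposal is correct and follows exactly the approach the paper intends: the corollary is an immediate consequence of Lemma~\ref{lem:Efilt1} (map of towers) together with the general functoriality of the transfinite spectral sequence of a tower, paralleling how Corollary~\ref{cor:PTAHSS} follows from Lemma~\ref{lem:Pfilt1}. You also correctly note the typo in the statement ($P_*$ should read $E_*$).
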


\begin{rmk}\label{rmk:TAHSS}
Corollaries~\ref{cor:PTAHSS} and \ref{cor:ETAHSS} give an inductive scheme for computing the TAHSS's $\{ E^\alpha_{*,*}(L(k)_n)\}$.  Suppose we have computed the TAHSS for $L(k) = L(k)_1$.
$$ \{ E^\alpha_{*,*}(L(k)) \}. $$ 
Then the map of spectral sequences induced by $E^{n-1}$
$$ E^{n-1}_*: \{ E^\alpha_{*,*}(L(k)) \} \rightarrow \{ E^\alpha_{*,*}(L(k)_n) \} $$
\emph{determines} the spectral sequence $\{ E^\alpha_{*,*}(L(k)_n) \}$ as it is just a truncation of the spectral sequence $\{ E^\alpha_{*,*}(L(k)) \}$.  i.e., the TAHSS for $L(k)_n$ is obtained from the TAHSS for $L(k)$ by performing the following actions:
\begin{itemize}
\item Set $E^1_{t,J} = 0$ for $e(J) < n$.
\item Set to zero all differentials of the form
$$ d^{L(k)} (\alpha[J]) = \alpha'[J'] $$
with $e(J') < n$.
\end{itemize}
The map $P$ faithfully embeds the TAHSS for $L(k-1)_{2m+1}$ into the TAHSS for $L(k)_m$.  i.e., the differentials
$$ d^{L(k)_m}: E^{*}_{*,[J,m]}(L(k)_m) \rightarrow E^{*}_{*,[J',m]}(L(k)_m) $$
are in bijective correspondence with the differentials
$$ d^{L(k-1)_{2m+1}}: E^{*}_{*,J}(L(k-1)_{2m+1}) \rightarrow E^{*}_{*,J'}(L(k-1)_{2m+1}). $$
Combining this with the suspension truncation, we deduce that the differentials in the TAHSS for $L(k)$ of the form
$$ d^{L(k)}(\alpha[J,m]) = \alpha'[J',m] $$
come from differentials
$$ d^{L(k-1)}(\alpha[J]) = \alpha'[J'] $$
in the TAHSS for $L(k-1)$.

Thus, if we want to compute the TAHSS for $L(k)_n$, we can start by computing the AHSS for $L(1)$.  By truncating this spectral sequence appropriately,  this gives us all of the differentials of the form
$$ d^{L(2)}(\alpha[j_1, j_2]) = \alpha'[j'_1, j_2] $$
in the TAHSS for $L(2)$.  We then must compute only the remaining longer differentials of the form
$$ d^{L(2)}(\alpha[j_1, j_2]) = \alpha'[j'_1, j'_2] $$
for $j'_2 < j_2$.
Having computed the TAHSS for $L(2)$, we now know all of the differentials in the TAHSS for $L(3)$ of the form
$$ d^{L(3)}(\alpha[j_1, j_2, j_3]) = \alpha'[j_1', j'_2, j_3], $$
and we are left with computing the differentials which lower $j_3$.
We continue in this manner until we get up to $L(k)$, and then truncate to get $L(k)_n$.  This procedure will be implemented to compute $\pi_t L(k)$ for $t \lesssim 20$ and $k \le 3$ in Chapter~\ref{sec:calculations}.
\end{rmk}

\section{Behavior of  the E and P maps in the TGSS}\label{sec:EPTGSS}

\begin{lem}\label{lem:Pfilt}
For all $k \ge 1$ and $\mu \in \mc{G}(\omega^{\omega})$, there are maps of towers
$$
\xymatrix{
\Omega^2 S^{2n+1} \ar[r]^P \ar[d] &
S^n \ar[d] 
\\
\Omega^2[S^{2n+1}]_{\mu-(k-1)\omega^\omega} \ar[r] &
[S^{2n+1}]_{\mu+n\omega^{k-1}-k\omega^\omega} 
}
$$
\end{lem}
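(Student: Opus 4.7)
The plan is to mirror the strategy of Lemma~\ref{lem:Pfilt1}, but one level higher: rather than inducting through a single TAHSS on a fixed Goodwillie layer, I would induct through the combined Goodwillie/TAHSS filtration that defines the TGSS tower $\{[S^n]_\mu\}$. Two ingredients are needed. First, that the $P$ map is compatible with the Goodwillie towers themselves, sending $\Omega^2 P_{2^{k-1}}(S^{2n+1})$ into $P_{2^k}(S^n)$. Second, that on each Goodwillie layer it is compatible with the TAHSS refinement, which is precisely Lemma~\ref{lem:Pfilt1} applied to the stable layer map $\Sigma^n L(k-1)_{2n+1} \to L(k)_n$ of Corollary~\ref{cor:Takayasu}.

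To establish the first ingredient, I would apply $\Omega^\infty$ to the map of EHP fiber sequences produced by Lemma~\ref{lem:EHPfiber}, and then invoke the identifications of Lemma~\ref{lem:Sq} and Corollary~\ref{cor:EHPfiber}. Taking $i=2^k$ yields a compatible family of tower maps
\[
\Omega^2 P_{2^{k-1}}(S^{2n+1}) \xrightarrow{P} P_{2^k}(S^n)
\]
sitting over the $P$ map $\Omega^2 S^{2n+1} \to S^n$; the induced map on layers is, after suspension, exactly the stable map of Corollary~\ref{cor:Takayasu}. The doubling $2^{k-1}\mapsto 2^k$ of the Goodwillie filtration is precisely mirrored by the shift $-(k-1)\omega^\omega \mapsto -k\omega^\omega$ appearing in the statement.

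To complete the construction, I would refine inside each Goodwillie layer by feeding the layer map into Lemma~\ref{lem:Pfilt1}, giving
\[
\Sigma^n [L(k-1)_{2n+1}]_\nu \to [L(k)_n]_{\nu + n\omega^{k-1}}
\]
with exactly the ordinal shift $+n\omega^{k-1}$ demanded. Assembling these refinements via the defining fiber sequences
\[
[S^n]_\lambda \to P_{2^{k-1}}(S^n) \to \Omega^\infty \Sigma^{n-k+1}[L(k)_n]_{\lambda+k\omega^\omega}
\]
(for $\lambda$ in the range $-k\omega^\omega \le \lambda < -(k-1)\omega^\omega$), and using the universal property of the fiber together with naturality of $P$, one obtains the required map
\[
\Omega^2 [S^{2n+1}]_{\mu-(k-1)\omega^\omega} \to [S^n]_{\mu + n\omega^{k-1} - k\omega^\omega}
\]
for each $\mu \in \mc{G}(\omega^\omega)$, compatibly with the tower structure.

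The main obstacle will be bookkeeping in the ``gap'' regions of the $\mc{G}(\omega^{\omega+1})$-indexed tower, where $[S^n]_\lambda$ is declared by convention to equal a constant $P_{2^{k-1}}(S^n)$: one must verify that the map constructed at the Goodwillie-tower level (Step~1 alone) agrees on these ranges with the map produced by the assembled TAHSS-level refinements (Step~2), so that the pieces glue into a single coherent map of $\mc{G}(\omega^{\omega+1})$-indexed towers. This is ultimately a naturality statement about the cofiber construction, but verifying it requires spelling out the gap conventions of \cite{Hu} and checking compatibility case by case.
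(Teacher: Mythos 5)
Your proposal matches the paper's proof: the paper likewise lifts $P$ to compatible maps $\Omega^2 P_{2^{k-1}}(S^{2n+1}) \to P_{2^k}(S^n)$ via Corollary~\ref{cor:EHPfiber}, feeds Lemma~\ref{lem:Pfilt1} into the layer maps, and takes the induced maps on the fibers defining $[S^n]_\mu$, handling the indices $\mu \in \mc{G}(\omega^\omega)\setminus\mc{G}(\omega^{k-1})$ exactly by the kind of gap/convention extension you flag (setting $[L(k)_n]_\mu$ to $L(k)_n$ or $\ast$ there), with the $k=1$ case treated as a trivial fringe case.
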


\begin{proof}
Suppose that $k \ge 2$.
Corollary~\ref{cor:EHPfiber} implies that $P$ lifts to give compatible maps
$$ \Omega^2 P_{2^{k-1}}(S^{2n+1}) \rightarrow 
P_{2^k}(S^n). $$
Lemma~\ref{lem:Pfilt1} then produces maps which induce the desired maps on fibers.
$$
\xymatrix{
\Omega^2 [S^{2n+1}]_{\mu-(k-1)\omega^\omega} \ar@{.>}[r] \ar[d] &
[S^n]_{\mu+n\omega^{k-1}-k\omega^{omega}} \ar[d] 
\\
\Omega^2 P_{2^{k-2}}(S^{2n+1}) \ar[r] \ar[d] &
P_{2^{k-1}}(S^n) \ar[d] 
\\
\Omega^\infty \Sigma^{2n-k+1} [L(k-1)_{2n+1}]_{\mu} \ar[r] &
\Omega^\infty \Sigma^{n-k+1} [L(k)_n]_{\mu+n\omega^{k-1}}
}
$$
This argument certainly makes sense for $\mu \in \mc{G}(\omega^{k-1})$, but it also makes sense for $\mu \in \mc{G}(\omega^\omega)$ if we define 
$$ [L(k)_n]_\mu := \begin{cases}
L(k)_n,  & \mu \in \mc{G}(\omega^\omega)\backslash \mc{G}(\omega^{k-1}), \: \mu < 0, \\
\ast,  & \mu \in \mc{G}(\omega^\omega)\backslash \mc{G}(\omega^{k-1}), \: \mu > 0 \\
\end{cases}
$$
for $\mu \in \mc{G}(\omega^\omega) \backslash \mc{G}(\omega^{k-1})$.
For $k = 1$, the lemma is verified in a similar manner, but this fringe case is more trivial.  
\end{proof}

This map of $\mc{G}(\omega^{\omega+1})$-indexed towers of Lemma~\ref{lem:Pfilt} induces a map of TGSS's.

\begin{lem}\label{lem:PTGSS}
The $P$ map induces a map of TGSS's:
$$
\xymatrix{
E^1_{t+2, J}(S^{2n+1}) \ar@{=>}[r] \ar[d]_{P_*} &
\pi_{t+2}(S^{2n+1}) \ar[d]^P\\
E^1_{t, [J,n]}(S^{n}) \ar@{=>}[r] &
\pi_t(S^n)
}
$$
On $E^1$-terms, this map is described by
$$ P_*(\alpha[J]) = \alpha[J,n]. $$
In particular, the $P$ map takes an element of Goodwillie filtration $2^{k-1}$ to an element of Goodwillie filtration at least $2^k$. 
\end{lem}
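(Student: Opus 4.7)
The plan is to derive the map of TGSS's formally from the tower map of Lemma~\ref{lem:Pfilt}, and then pin down its behavior on $E^1$ by cascading down to the Goodwillie-layer level, where it is already controlled by Corollary~\ref{cor:PTAHSS}. Throughout, I will carry the reparametrization $\mu[J] = j_1 + j_2\omega + \cdots + j_{k-1}\omega^{k-2} - (k-1)\omega^\omega$ on the source indexing and $\mu[[J,n]] = \mu[J] + n\omega^{k-1} - \omega^\omega$ on the target.

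First, I would invoke Lemma~\ref{lem:Pfilt}, which for every $k \ge 1$ and every $\mu \in \mc{G}(\omega^\omega)$ provides a commuting square compatible with the tower structure maps.  Under the reparametrization above, these assemble into a morphism of $\mc{G}(\omega^{\omega+1})$-indexed towers from $\{\Omega^2[S^{2n+1}]_{\bullet}\}$ to $\{[S^n]_{\bullet}\}$ under $\Omega^2 S^{2n+1} \xrightarrow{P} S^n$.  The general machinery reviewed in Appendix~\ref{sec:Hu} then converts this tower map mechanically into the stated map of transfinite spectral sequences, with the $\Omega^2$ accounting for the indicated $+2$ shift in total degree.

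Next, I would identify the induced map on $E^1$-terms.  For a CU sequence $J$ with $\abs{J} = k-1$, the source $E^1$-fiber at $\mu[J]$ is $\Omega^2 QS^{2n+1-(k-1)+\norm{J}}$, while the target $E^1$-fiber at $\mu[[J,n]]$ is $QS^{n-k+\norm{J}+n}$; both equal $QS^{2n-k+\norm{J}}$, so the degrees agree.  The map between these fibers factors, by the construction in the proof of Lemma~\ref{lem:Pfilt}, through the map of Goodwillie layers $\Omega^\infty\Sigma^{2n-k+1}[L(k-1)_{2n+1}]_{\mu} \to \Omega^\infty\Sigma^{n-k+1}[L(k)_n]_{\mu+n\omega^{k-1}}$ produced by Lemma~\ref{lem:Pfilt1}.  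Corollary~\ref{cor:PTAHSS} identifies the induced map on TAHSS $E^1$-terms as $\alpha[J] \mapsto \alpha[J,n]$, and since the TGSS $E^1$-terms are literally the fibers of the underlying tower read as stable homotopy groups, this translates directly into $P_*(\alpha[J]) = \alpha[J,n]$.

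The filtration statement is then immediate: if $\beta \in \pi_{t+2}(S^{2n+1})$ has Goodwillie filtration $2^{k-1}$, then by definition it survives to a nonzero detecting element $\alpha[J]$ with $\abs{J} = k-1$ in the TGSS for $S^{2n+1}$; by naturality, $P(\beta)$ is detected by $\alpha[J,n]$, or by something in a strictly deeper filtration if that class were hit by a differential, and since $\abs{[J,n]} = k$ the conclusion follows.  The main obstacle is the bookkeeping: one must verify that the reindexing $\mu \mapsto \mu + n\omega^{k-1} - \omega^\omega$ really does align the source and target towers so that fiber dimensions match on the nose, and that the map between those fibers, once the suspensions and $\Omega^\infty$'s are stripped off, is exactly the TAHSS map of Corollary~\ref{cor:PTAHSS} rather than an unrelated Goodwillie-level map — but this is guaranteed by the explicit factorization used to construct the tower map in the proof of Lemma~\ref{lem:Pfilt}.
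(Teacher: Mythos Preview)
Your proposal is correct and follows the same approach as the paper, which treats the lemma as an immediate consequence of the tower map constructed in Lemma~\ref{lem:Pfilt} (the paper's entire justification is the single sentence preceding the lemma statement). You have simply spelled out in detail what the paper leaves implicit: that the tower map yields a spectral sequence map via the Appendix~\ref{sec:Hu} machinery, and that the $E^1$-level identification reduces to Corollary~\ref{cor:PTAHSS} through the factorization in the proof of Lemma~\ref{lem:Pfilt}.
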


Note that the transfinite indexing of the TGSS shifts under the $P$ map, as does the length of all potential non-trivial differentials. The reader need not let this be an issue of concern: the $P$ map does not change the \emph{order} of differentials.  Explicitly, the $P$ map sends a differential
$$ d^{S^{2n+1}}(\alpha[J]) = \alpha'[J'] $$
to a differential
$$ d^{S^n}(\alpha[J,n]) = \alpha'[J',n] $$

The analysis of the $E$ map is similar, but easier, using Corollary~\ref{cor:EHPfiber}.

\begin{lem}\label{lem:Efilt}
There is a map of $\mc{G}(\omega^{\omega+1})$-indexed towers
$$
\xymatrix{
S^n \ar[r]^E \ar[d] &
\Omega S^{n+1} \ar[d] 
\\
[S^n]_\mu \ar[r] &
\Omega [S^{n+1}]_{\mu}
}
$$
\end{lem}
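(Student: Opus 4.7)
The plan is to follow the proof of Lemma~\ref{lem:Pfilt} verbatim, replacing $P$ by $E$, with the simplification that $E$ preserves Goodwillie filtration, so no shift in the transfinite index occurs and no fringe case analysis analogous to the $k=1$ step in Lemma~\ref{lem:Pfilt} is required.

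First, I invoke Corollary~\ref{cor:EHPfiber}: the natural transformation $E:\Id \to \Omega\Sigma$ yields compatible maps of Goodwillie towers
$$ E: P_{2^{k-1}}(S^n) \to \Omega P_{2^{k-1}}(S^{n+1}) $$
for every $k \ge 1$, and for $k=0$ we have the trivial case $E: P_1(S^n) \simeq QS^n \to \Omega QS^{n+1}$. Second, Lemma~\ref{lem:Efilt1} supplies compatible maps $E: [L(k)_n]_\nu \to [L(k)_{n+1}]_\nu$ for all $\nu \in \mc{G}(\omega^k)$.

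Now fix $k \ge 1$ and let $\mu = j_1 + \cdots + j_k\omega^{k-1} - k\omega^\omega \in \mc{G}(\omega^{\omega+1})$, with $\nu := j_1 + \cdots + j_k\omega^{k-1}$. The defining fiber sequence
$$ [S^n]_\mu \to P_{2^{k-1}}(S^n) \to \Omega^\infty \Sigma^{n-k+1}[L(k)_n]_\nu $$
and the loop of its analogue for $S^{n+1}$,
$$ \Omega[S^{n+1}]_\mu \to \Omega P_{2^{k-1}}(S^{n+1}) \to \Omega^\infty \Sigma^{n-k+1}[L(k)_{n+1}]_\nu $$
(where the extra loop absorbs the suspension shift in the bottom layer), fit together via the two vertical $E$ maps produced above into a commutative diagram of fiber sequences. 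Taking fibers yields the required map $[S^n]_\mu \to \Omega[S^{n+1}]_\mu$. Degenerate indices $\mu \in \mc{G}(\omega^{\omega+1})$ outside the range described above are handled by extending $[L(k)_n]_\nu$ to take the value $L(k)_n$ or $\ast$ as appropriate, exactly as in the proof of Lemma~\ref{lem:Pfilt}.

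Compatibility as $\mu$ varies (so that these maps assemble into a morphism of $\mc{G}(\omega^{\omega+1})$-indexed towers) follows from naturality of the Goodwillie fiber sequence $P_{2^k}(F) \to P_{2^{k-1}}(F) \to \Omega^\infty \DD_{2^k}(F)$ and of the maps in Lemma~\ref{lem:Efilt1}. Since $E$ does not raise Goodwillie filtration, the indexing matches up directly, and I do not expect any substantive obstacle beyond this routine assembly.
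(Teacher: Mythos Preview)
Your proposal is correct and follows precisely the approach the paper indicates: the paper does not give an explicit proof of this lemma, remarking only that ``the analysis of the $E$ map is similar, but easier, using Corollary~\ref{cor:EHPfiber},'' and you have spelled out exactly that argument using Corollary~\ref{cor:EHPfiber} on the Goodwillie layers and Lemma~\ref{lem:Efilt1} on the $[L(k)_n]_\nu$ towers. The absence of a filtration shift (since $E$ preserves Goodwillie filtration) is indeed what makes this case simpler than Lemma~\ref{lem:Pfilt}.
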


\begin{lem}\label{lem:ETGSS}
The $E$ map induces a map of TGSS's:
$$
\xymatrix{
E^1_{t, J}(S^{n}) \ar@{=>}[r] \ar[d]_{E_*} &
\pi_{t}(S^{n}) \ar[d]^E\\
E^1_{t+1, J}(S^{n}) \ar@{=>}[r] &
\pi_{t+1}(S^{n+1})
}
$$
On $E^1$-terms, this map is described by
$$ E_*(\alpha[J]) = 
\begin{cases}
\alpha[J], & e(J) \ge {n+1}, \\
0, & \text{else}. 
\end{cases}
$$
In particular, the $E$ map takes an element of Goodwillie filtration $2^{k}$ to an element of Goodwillie filtration at least $2^k$.
\end{lem}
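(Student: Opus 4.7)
The plan is to deduce Lemma~\ref{lem:ETGSS} as a direct application of the transfinite-spectral-sequence functoriality reviewed in Appendix~\ref{sec:Hu}, applied to the map of $\mc{G}(\omega^{\omega+1})$-indexed towers produced in Lemma~\ref{lem:Efilt}. Since the TGSS for $S^n$ is by definition the transfinite spectral sequence of the tower $\{ [S^n]_\mu \}$, and the TGSS for $S^{n+1}$ is that of $\{\Omega [S^{n+1}]_\mu\}$ (up to the evident reindexing that converts $\pi_t \Omega(-)$ into $\pi_{t+1}(-)$), the map of towers given in Lemma~\ref{lem:Efilt} produces a map of spectral sequences of the asserted shape. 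The degree shift $t \mapsto t+1$ is nothing more than the reindexing forced by $\pi_t(\Omega S^{n+1}) = \pi_{t+1}(S^{n+1})$.

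The next task is to identify the induced map on $E^1$-terms. The $E^1$-term $E^1_{t,\mu[J]}(S^n) = \pi_t(\ul{S}^{n-\abs{J}+\norm{J}})$ is obtained from the filtration quotient $Q S^{n-k+\norm{J}}$ of the tower, which in turn arose from the corresponding quotient in the TAHSS for $L(k)_n$ (reindexed by the suspension $\Sigma^{n-k+1}$ and the $\Omega^\infty$). The analogous quotient in the tower for $\Omega S^{n+1}$ comes from the corresponding TAHSS for $L(k)_{n+1}$ (reindexed by $\Sigma^{n-k+2}$ and then looped once). Therefore the effect of $E_*$ on $E^1$-terms is dictated by the effect of $E_*$ on the TAHSS for $L(k)_n \to L(k)_{n+1}$ computed in Corollary~\ref{cor:ETAHSS}, namely $\alpha[J] \mapsto \alpha[J]$ when $e(J) \ge n+1$ and $\alpha[J]\mapsto 0$ when $e(J) = n$. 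In the latter case one can check directly that $[L(k)_{n+1}]_{\mu[J]} = L(k)_{n+1}$ in the tower defining the TGSS of $S^{n+1}$ (because the position $\mu[J]$ with $e(J) < n+1$ does not correspond to a new cell), so its associated fiber is contractible and the target $E^1$-term is indeed zero.

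Finally, the filtration statement that $E$ sends Goodwillie filtration $2^k$ to filtration $\ge 2^k$ is a formal consequence of having a filtration-preserving map of towers at $\mc{G}(\omega^{\omega+1})$-index $-k\omega^\omega$: the map $E$ carries $P_{2^k}(S^n) = [S^n]_{-k\omega^\omega}$ into $\Omega P_{2^k}(S^{n+1}) = \Omega [S^{n+1}]_{-k\omega^\omega}$, which is immediate from the enrichment argument of Lemma~\ref{lem:EHPfiber}.

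The main obstacle, such as it is, is purely bookkeeping: one has to keep straight the three reindexings (the $\Sigma^{n-k+1}$ in $[S^n]_\bullet$, the $\Omega^\infty$ at the end, and the overall $\Omega$ needed to compare $S^n$ with $\Omega S^{n+1}$) to confirm that the natural map of fibers is precisely the stable suspension corresponding to $E_* \colon L(k)_n \to L(k)_{n+1}$; once this is verified, Corollary~\ref{cor:ETAHSS} finishes the identification and no further homotopy-theoretic input is required.
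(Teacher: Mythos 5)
Your proposal is correct and is essentially the paper's own (largely implicit) argument: the lemma is deduced by functoriality of the transfinite spectral sequence from the tower map of Lemma~\ref{lem:Efilt}, with the $E^1$-term identification coming from the behavior of $E$ on the $[L(k)_n]_\mu$ towers (Lemma~\ref{lem:Efilt1}, Corollary~\ref{cor:ETAHSS}, ultimately Proposition~\ref{prop:HTakayasu}), and the filtration statement from the compatibility of $E$ with the Goodwillie towers in Corollary~\ref{cor:EHPfiber}. The bookkeeping of the loop/suspension reindexings and the vanishing of the target $E^1$-term when $e(J)=n$ are handled exactly as the paper intends.
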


\section{Detection in the TGSS}\label{sec:detectionTGSS}

The relationship between Goodwillie filtration and degree of instability is explained in the following immediate consequence of Lemmas~\ref{lem:PTGSS} and \ref{lem:ETGSS}.

\begin{thm}\label{thm:TGSS}
Suppose that we have a TGSS element
$$ \alpha[j_1, \ldots, j_k] \in E^1_{*, J}(S^n). $$
Then on the level of TGSS $E_1$-terms, we have 
$$ \alpha[j_1, \ldots, j_k] \in E_*^{-\ell_k}P_*E_*^{-\ell_{k-1}}P_* \cdots E_*^{-\ell_1}P_* \alpha  $$
for $\alpha \in E^1_{*,\emptyset}(S^{2j_1+1})$ and
\begin{align*}
\ell_s & = j_s - (2j_{s+1}+1), \quad 1 \le s < k \\
\ell_k & = j_k - n.
\end{align*}
If, for all $s$, the elements 
$$ \alpha[j_1, \ldots, j_s] \in E^1_{*, [j_1,\cdots,j_s]}(S^{2j_s+1}) $$
are permanent cycles in the TGSS, converging to elements $\alpha_s \in \pi_*(S^{2j_s+1})$, then we have
$$ E^{\ell_s} (\alpha_{s+1}) \equiv P(\alpha_s) $$
modulo elements of higher Goodwillie filtration.
\end{thm}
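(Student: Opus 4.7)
The plan is to establish both assertions by iterating Lemmas~\ref{lem:PTGSS} and~\ref{lem:ETGSS}, which describe the $E_1$-level action of $P$ and $E$ on the TGSS.

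For the first assertion, I begin with $\alpha \in E^1_{*,\emptyset}(S^{2j_1+1})$ and alternately apply $P_*$ and take preimages under $E_*$. By Lemma~\ref{lem:PTGSS}, $P_*(\alpha) = \alpha[j_1] \in E^1_{*,[j_1]}(S^{j_1})$. The CU inequality $j_1 \ge 2j_2+1$ supplies the excess condition required by Lemma~\ref{lem:ETGSS} at each intermediate sphere, so the class $\alpha[j_1]$ exists nontrivially in $E^1_{*, [j_1]}(S^{2j_2+1})$ and is carried by $E_*^{\ell_1}$ to $\alpha[j_1] \in E^1_{*, [j_1]}(S^{j_1})$; in particular it lies in the preimage $E_*^{-\ell_1}P_*\alpha$. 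Applying $P_*$ again produces $\alpha[j_1, j_2] \in E^1_{*, [j_1, j_2]}(S^{j_2})$, and the next preimage under $E_*^{\ell_2}$ lands in $E^1_{*, [j_1, j_2]}(S^{2j_3+1})$. Iterating $k$ times yields $\alpha[j_1, \ldots, j_k] \in E^1_{*, J}(S^n)$, which is the first claim.

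For the second assertion, the same lemmas imply that $P$ and $E$ are maps of convergent TGSS's, so they carry permanent cycles to permanent cycles compatibly with convergence. Applied to the assumed permanent cycles $\alpha[j_1, \ldots, j_s]$ converging to $\alpha_s$, the image $E^{\ell_s}(\alpha_{s+1})$ is TGSS-detected on $S^{j_s}$ by $E_*^{\ell_s}(\alpha[j_1, \ldots, j_{s+1}]) = \alpha[j_1, \ldots, j_{s+1}]$, whereas $P(\alpha_s)$ is detected by $P_*(\alpha[j_1, \ldots, j_s]) = \alpha[j_1, \ldots, j_s, j_{s+1}]$. By the first part of the theorem these two names label the same $E_1$-class in the TGSS of $S^{j_s}$, so $E^{\ell_s}(\alpha_{s+1}) - P(\alpha_s)$ vanishes in the associated graded at Goodwillie filtration $2^{s+1}$ and therefore lies in strictly higher Goodwillie filtration.

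The main obstacle is the excess/CU bookkeeping: at each iteration the CU condition $j_s \ge 2j_{s+1}+1$ is exactly what guarantees Lemma~\ref{lem:ETGSS} does not annihilate $\alpha[j_1, \ldots, j_s]$ under $E_*$ as one passes from $S^{2j_{s+1}+1}$ up through $S^{j_s}$. Once this routine check is carried out uniformly in $s$, the theorem follows directly from the functoriality of $P_*$ and $E_*$ on the TGSS supplied by Lemmas~\ref{lem:PTGSS} and~\ref{lem:ETGSS}.
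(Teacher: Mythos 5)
Your argument is correct and coincides with the paper's: Theorem~\ref{thm:TGSS} is presented there as an immediate consequence of Lemmas~\ref{lem:PTGSS} and \ref{lem:ETGSS}, and your proof is exactly the intended one --- iterate the $E^1$-level formulas $P_*(\alpha[J]) = \alpha[J,n]$ and $E_*(\alpha[J]) = \alpha[J]$ (with the CU inequalities supplying the excess hypothesis so $E_*$ does not vanish), then invoke compatibility of detection for maps of strongly convergent spectral sequences for the second assertion. The only blemish is a cosmetic indexing slip in the second part (your $P_*$ appends $j_{s+1}$ where Lemma~\ref{lem:PTGSS} appends the dimension of the bottom sphere of the EHP fibration being used), but the same off-by-one fuzziness is already present in the statement's own indexing of the spheres $S^{2j_s+1}$ and does not affect the substance of the argument.
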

 
Theorem~\ref{thm:TGSS} tells us that, \emph{under favorable circumstances}, if $\beta \in \pi_*(S^n)$ is detected by 
$$ \alpha[j_1, \ldots, j_k], $$
then 
$$ \beta \in E^{-\ell_k}PE^{-\ell_{k-1}}P \cdots E^{-\ell_1}P \td{\alpha} $$
where $\ell_s$ are as in Theorem~\ref{thm:TGSS}, and $\td{\alpha} \in \pi_*(S^{2j_1+1})$ stabilizes to $\alpha \in \pi_*^s$.  \emph{Under favorable circumstances}, the degree of instability of the element $\beta$ is equal to $k$, and $\beta$ has lineage
$$ \beta \in \alpha\bra{j_1, \ldots, j_k}.$$  
However, as we are dealing with maps of spectral sequences in opposing directions, these principles could potentially fail wildly.  

One might conjecture that every element of Goodwillie filtration $2^k$ has degree of instability equal to $k$.  However, this is easily seen to be false, as the following example illustrates.

\begin{ex}\label{ex:stupid}
This example implicitly uses the calculations of Chapter~\ref{sec:calculations}.  Define
$$ x = \br{\alpha_{6/3}[4]} + \br{1[11,5]} \in \pi_{18}(S^4) $$
where $\br{\alpha_{6/3}}[4]\in  \pi_{18}(S^4)$ is the unique element element detected by $\alpha_{6/3}[4]$ in the TGSS for $S^4$ with the property that
$$ E(\br{\alpha_{6/3}[4]}) = 0, $$
and $\br{1[11,5]}\in \pi_{18}(S^4)$ is the unique element detected by $1[11, 5]$ in the TGSS for $S^4$.  Then 
$x$ has Goodwillie filtration $2^1$, but degree of instability $2$.  Indeed 
$$ E(x) = P(P(1))$$
for $1 \in \pi_{23}(S^{23})$ and $x$ has lineage
$$ x \in 1\bra{11,5} $$
yet $x$ is detected by $\alpha_{6/3}[4]$ in the TGSS for $S^4$.
\end{ex}

The reader should rightfully feel that Example~\ref{ex:stupid} fails on a technicality: our choice of $x$ detected by $\alpha_{6/3}[4]$ could have been modified to have degree of instability equal to $1$.
A more reasonable question is therefore the following.

\begin{ques}\label{ques:TGSS}
Suppose that $x \in \pi_*(S^n)$ has Goodwillie filtration $2^k$, and is detected by $\alpha[J]$ in the TGSS for $S^n$.  Then does there exist an $x' \in \pi_*(S^n)$ which is equivalent to $x$ modulo Goodwillie filtration $2^{k+1}$, such that the degree of instability of $x'$ is $k$ and the lineage of $x'$ is given by
$$ x' \in \alpha\bra{J}? $$
\end{ques}

Question~\ref{ques:TGSS} does not always have an affirmative answer, as the following example illustrates.

\begin{ex}\label{ex:DOI}
Let $\br{\eta^2[13]} \in \pi_{27}(S^{13})$ be the unique element detected by $\eta^2[13]$ in the TGSS for $S^{13}$ (see the calculations of Chapter~\ref{sec:calculations}).  Then we have
$$ P(\br{\eta^2[13]}) = \br{\eta^2[13,6]} \in \pi_{25}(S^6) $$
where $\br{\eta^2[13,6]}$ is the (unique) element detected by $\eta^2[13,6]$ in the TGSS for $S^{6}$.  However, in Section~\ref{sec:bad} it is shown that the (unique) desuspension $\br{\alpha_{8/5}[5]}$ of $\br{\eta^2[13,6]}$ to $\pi_{24}(S^5)$ is detected by $\alpha_{8/5}[5]$ in the TGSS for $S^5$.  Thus 
$$ \br{\alpha_{8/5}[5]} \in E^{-1} P P\td{\eta^2}. $$
and 
$$ \br{\alpha_{8/5}[5]} \in \eta^2\bra{13,6}. $$
In particular, every element of $\pi_{24}(S^5)$ detected by $\alpha_{8/5}[5]$ has degree of instability equal to $2$.
\end{ex}

\begin{rmk}
By way of analogy, if $x \in \pi_*(S^n)$ is detected by $\alpha[J]$ in the TGSS for $S^n$, then the element $\alpha[J]$ should be regarded as the ``DNA'' of $x$.  Then the above examples illustrate that DNA tests are not $100\%$ effective at determining the lineage of an element.
\end{rmk}

Counterexamples to Question~\ref{ques:TGSS} are difficult to find, and represent non-trivial EHP phenomena.
Indeed, the calculations of Chapter~\ref{sec:calculations} demonstrate that Example~\ref{ex:DOI} is the \emph{only} counterexample to Question~\ref{ques:TGSS} in the Toda range.

Question~\ref{ques:TGSS} is clearly answered in the affirmative for $k = 0$.  It also holds for $k = 1$ in the metastable range, as the following proposition demonstrates.

\begin{prop}\label{prop:metastableGSS}
Suppose that $0 \ne \beta \in \pi_{t+m}(S^m)$ is detected by $\alpha[n]$ in the TGSS for $S^m$
and that $t \le 3n-2$.  Then 
$$ 0 \ne E^{n-m}\beta = P(\alpha) \in \pi_{t+n}(S^n) $$ 
under the isomorphism
$$ \alpha \in \pi_{t-n+1}^s \cong \pi_{t+n+2}(S^{2n+1}). $$
\end{prop}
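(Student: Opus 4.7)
The plan is to identify both $E^{n-m}\beta$ and $P(\alpha)$ as elements detected by $\alpha[n]$ in the TGSS for $S^n$, and then to exploit that in the metastable range the Goodwillie tower of $S^n$ collapses beyond the second stage, forcing the difference $x := E^{n-m}\beta - P(\alpha)$ to vanish.

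First, by iterated application of Lemma~\ref{lem:ETGSS} (since $e([n]) = n$ is at least each intermediate sphere dimension $m+1, \ldots, n$), the element $E^{n-m}\beta \in \pi_{t+n}(S^n)$ is detected by $\alpha[n]$ in the TGSS for $S^n$. The hypothesis $t \le 3n-2$ rewrites as $t+n+2 \le 2(2n+1)-2$, the Freudenthal stable range, so $\alpha \in \pi^s_{t-n+1}$ lifts unambiguously to an element of $\pi_{t+n+2}(S^{2n+1})$, and Lemma~\ref{lem:PTGSS} then identifies $P(\alpha) \in \pi_{t+n}(S^n)$ as also detected by $\alpha[n]$ in the TGSS for $S^n$.

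Because $\beta$ has TGSS filtration exactly $[n]$, its image in $\DD_2(S^m) = \Sigma^{m-1} P^\infty_m$ lies in the $n$-skeleton $\Sigma^{m-1} P^n_m$. By Corollary~\ref{cor:EHPfiber}, the map $E$ on $\DD_2$-level identifies with the natural quotient $\Sigma^{m-1} P^\infty_m \to \Sigma^{m-1} P^\infty_{m+1}$ killing the bottom cell, so iterated $E^{n-m}$ kills precisely the cells below $[n]$; consequently the image of $E^{n-m}\beta$ in $\DD_2(S^n)$ factors through the bottom cell $\ul{S}^{2n-1}$ with value $\alpha$. The same corollary identifies $P$ on $\DD_2$-level as the bottom cell inclusion $\ul{S}^{2n-1} \hookrightarrow \DD_2(S^n)$, so $P(\alpha)$ also contributes only $\alpha$ at the bottom cell. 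Hence $x$ has zero image in $\pi_{t+n}(\DD_2(S^n))$; combined with vanishing image in $\pi_{t+n}(P_1(S^n)) = \pi^s_t$ (since both summands have Goodwillie filtration at least $2$), the image of $x$ in $\pi_{t+n}(P_2(S^n))$ is zero.

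In the metastable range $t \le 3n-2$, every higher layer $\DD_{2^k}(S^n)$ with $k \ge 2$ has its bottom cell in dimension at least $4n-1 > t+n$, so $\pi_{t+n}(\DD_{2^k}(S^n)) = \pi_{t+n-1}(\DD_{2^k}(S^n)) = 0$. By convergence of the Goodwillie tower of the identity at the connected $2$-complete sphere $S^n$, the map $\pi_{t+n}(S^n) \to \pi_{t+n}(P_2(S^n))$ is therefore an isomorphism in this stem, so $x = 0$ and $E^{n-m}\beta = P(\alpha)$; non-vanishing is immediate from $\beta \ne 0$ (which forces $\alpha[n] \ne 0$). The main obstacle is pinning down that the image of $E^{n-m}\beta$ in $\DD_2(S^n)$ is concentrated at the bottom cell: this hinges on interpreting ``detected by $\alpha[n]$'' as saying TGSS filtration \emph{exactly} $[n]$ (so $\beta$ lives in the $n$-skeleton of $\DD_2(S^m)$), combined with the identification of iterated $E$ on $\DD_2$-level as a skeletal quotient map.
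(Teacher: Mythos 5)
Your overall strategy — compare $E^{n-m}\beta$ and $P(\alpha)$ through the quadratic stage of the tower and then use that the layers $\DD_{2^k}(S^n)$, $k\ge 2$, are $(4n-2)$-connected so that filtration $>2$ vanishes for $t\le 3n-2$ — is the same mechanism the paper uses (there phrased as: both classes are detected by $\alpha[n]$, so they agree modulo Goodwillie filtration $>2$, which is zero in this range). But there are two genuine gaps in your version. The first is the non-vanishing. You assert that $E^{n-m}\beta\ne 0$ is ``immediate from $\beta\ne 0$, which forces $\alpha[n]\ne 0$,'' but $\alpha[n]\ne 0$ on the $E^1$-page of the TGSS for $S^n$ does not prevent $E^{n-m}\beta$ from being zero: in your own language, $i_*b_*\alpha$ (bottom cell into $\DD_2(S^n)$, then into $P_2(S^n)$) could die, either because $\alpha$ on the bottom cell of $\Sigma^{n-1}P_n^\infty$ is absorbed by attaching maps (an AHSS differential from a cell $j>n$ hitting $\alpha[n]$) or because it lies in the image of the boundary from $\pi_{t+n+1}P_1(S^n)$ (a Goodwillie $d_1$ from the $\emptyset$-line). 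Ruling this out is exactly the step the paper supplies: every TGSS differential with target $\alpha[n]$ has source on the $\emptyset$-line or a cell $j>n$, hence pulls back to the TGSS for $S^m$ (Remark~\ref{rmk:TAHSS}), where it would contradict the hypothesis that $\alpha[n]$ detects $\beta\ne 0$. Without this survival argument your proof establishes at best an equality of two possibly zero elements, which is weaker than the proposition.

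The second gap is in the step ``$x$ has zero image in $\pi_{t+n}(\DD_2(S^n))$.'' There is no natural map $S^n\to \DD_2(S^n)$, so $E^{n-m}\beta$ and $P(\alpha)$ only have well-defined images in $\pi_{t+n}P_2(S^n)$, and lifts to $\pi_{t+n}\DD_2(S^n)$ are defined only up to the image of $\partial\colon\pi_{t+n+1}P_1(S^n)\to\pi_{t+n}\DD_2(S^n)$; moreover ``detected by $\alpha[n]$'' pins down the lift of $\beta$ only modulo classes on the $[n]$-line that are boundaries in the TGSS for $S^m$, not exactly as $\alpha$ on the $n$-cell. To conclude that both images in $\pi_{t+n}P_2(S^n)$ equal $i_*b_*\alpha$ on the nose you must show these ambiguities die after passing to $\Sigma^{n-1}P_n^\infty$ and $P_2(S^n)$, and that again uses the truncation compatibility of Remark~\ref{rmk:TAHSS} (boundaries with source $[j]$, $j>n$, remain boundaries for $S^n$, hence vanish under $b_*$; boundaries from the $\emptyset$-line lie in $\mathrm{im}\,\partial$, hence vanish under $i_*$), together with the identification of the lifted $P$-map $\ul{S}^{2n-1}\to\DD_2(S^n)$ with the bottom cell (Proposition~\ref{prop:HTakayasu} plus Corollary~\ref{cor:EHPfiber}), where a map agreeing with $b$ only on $H_{2n-1}$ a priori differs from $b$ by terms of higher skeletal filtration. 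Once these points are supplied, your argument becomes a hands-on unpacking of the paper's proof (which instead quotes Lemmas~\ref{lem:ETGSS} and \ref{lem:PTGSS} to say both classes are detected by the permanent cycle $\alpha[n]$); the final connectivity estimate ($4n-1>t+n$, and only injectivity of $\pi_{t+n}(S^n)\to\pi_{t+n}P_2(S^n)$ is needed) is fine.
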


\begin{proof}
The element $E^{n-m}(\beta)$ is detected by $\alpha[n]$ in the TGSS for $S^n$.  Using Remark~\ref{rmk:TAHSS} we see that every TGSS differential with target $\alpha[n]$ pulls back to the TGSS for $S^m$.  We deduce that $\alpha[n]$ is not the target of any differentials in the TGSS for $S^n$ and therefore $E^{n-m}\beta \ne 0$. 
Our conditions on $t$ ensure $\pi_{t-n+1}^s \cong \pi_{t+n+2}(S^{2n+1})$, and under this isomorphism the stable element $\alpha$ corresponds to an unstable element 
$$ \alpha \in \pi_{t+n+2}(S^{2n+1}) $$ 
detected by $\alpha[\emptyset]$ in the TGSS for $S^{2n+1}$.  In the TGSS for $S^n$, 
$$ P_*(\alpha[\emptyset]) = \alpha[n] $$
and thus $P(\alpha)$ and $E^{n-m}\beta$ agree modulo elements of Goodwillie filtration greater than $2$.  But our assumption on $t$ implies that there are no non-zero elements of $\pi_{t+n}(S^n)$ of Goodwillie filtration greater than $2$.  Thus $P(\alpha) = E^{n-m} \beta$, as desired.  
\end{proof}

\section{Relationship with Whitehead products}\label{sec:Whiteprod}

The $P$ map gets is name because of its close relationship with the Whitehead product, as explained in the following well known proposition (see \cite{Whitehead}).

\begin{prop}\label{prop:PWhitehead}
If $\alpha$ is
an element of 
$\pi_{i+n+1}(S^{2n+1})$, and there is an element $\td{\alpha} \in \pi_i(S^n)$ 
so that 
$\alpha = E^{n+1} \td{\alpha}$, then we have
$$ P(\alpha) = [\iota_n, \td{\alpha}]. $$
\end{prop}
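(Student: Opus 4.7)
The plan is to reduce to the classical James identity $P(\iota_{2n+1}) = [\iota_n, \iota_n]$ and combine it with a standard bilinearity property of Whitehead products.

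First, I would observe that under the loop-space adjunction $\pi_{i+n+1}(S^{2n+1}) \cong \pi_{i+n-1}(\Omega^2 S^{2n+1})$, the element $\alpha = E^{n+1} \td{\alpha} = \Sigma^{n+1} \td{\alpha}$ corresponds to an adjoint $\hat{\alpha}: S^{i+n-1} \to \Omega^2 S^{2n+1}$ which factors as
\[
S^{i+n-1} \xrightarrow{\Sigma^{n-1} \td{\alpha}} S^{2n-1} \xrightarrow{\hat{\iota}_{2n+1}} \Omega^2 S^{2n+1},
\]
where $\hat{\iota}_{2n+1}$ is the adjoint of the identity map of $S^{2n+1}$; this factorization follows from the identification $\Sigma^{n+1} \td{\alpha} = \td{\alpha} \wedge \iota_{n+1}$ and the compatibility of the $\Sigma^2 \dashv \Omega^2$ adjunction with smashing on the right. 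Postcomposing with $P$ and applying James's identity $P_*(\hat{\iota}_{2n+1}) = [\iota_n, \iota_n]$ yields $P(\alpha) = [\iota_n, \iota_n] \circ \Sigma^{n-1} \td{\alpha}$.

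Next, I would invoke the bilinearity identity $[\iota_n, \iota_n] \circ \Sigma^{n-1} \td{\alpha} = [\iota_n, \td{\alpha}]$, a standard property of Whitehead products: writing $[\iota_n, \td{\alpha}]$ as the composition of the universal attaching map $w_{n,i}: S^{n+i-1} \to S^n \vee S^i$ (the top-cell attaching map of $S^n \times S^i$) with $\iota_n \vee \td{\alpha}: S^n \vee S^i \to S^n$, and using the naturality of $w_{n,*}$ with respect to precomposition of $\td{\alpha}$ on the second factor, reduces the product to $[\iota_n, \iota_n]$ precomposed with $\Sigma^{n-1} \td{\alpha}$.

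The main obstacle is the James identity itself, whose proof proceeds via the James model $J(S^n) = \bigcup_k J_k(S^n)$ of $\Omega S^{n+1}$: the crucial input is the identification $J_2(S^n) \simeq S^n \cup_{[\iota_n,\iota_n]} D^{2n}$, which exhibits the Whitehead square as the first attaching map and identifies the boundary map $P$ on the bottom cell of $\Omega^2 S^{2n+1}$ with $[\iota_n, \iota_n]$; a detailed treatment appears in \cite{Whitehead}, Chapter X. The remaining steps (the adjoint factorization of $\hat{\alpha}$ and the bilinearity of Whitehead products) are essentially formal.
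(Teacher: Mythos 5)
Your steps (1) and (2) are sound: since $P$ is induced by an actual map of spaces $\Omega^2 S^{2n+1} \to S^n$ and the $\Omega^2$-adjoint of $E^{n+1}\td{\alpha} = \Sigma^2(\Sigma^{n-1}\td{\alpha})$ factors through the bottom cell $S^{2n-1} \to \Omega^2 S^{2n+1}$, you correctly get $P(\alpha) = P(\iota_{2n+1}) \circ \Sigma^{n-1}\td{\alpha} = [\iota_n,\iota_n] \circ \Sigma^{n-1}\td{\alpha}$. (For the record, the paper gives no proof of this proposition; it is quoted as classical from \cite{Whitehead}, so there is no argument of the paper's to compare against.)

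The gap is the last step, $[\iota_n,\iota_n]\circ\Sigma^{n-1}\td{\alpha} = [\iota_n,\td{\alpha}]$, which you justify by ``naturality of $w_{n,*}$ with respect to precomposition of $\td{\alpha}$ on the second factor.'' Spelled out, that naturality is the assertion $(1_{S^n}\vee\td{\alpha})\circ w_{n,i} \simeq w_{n,n}\circ\Sigma^{n-1}\td{\alpha}$, i.e.\ $[i_1, i_2\circ\td{\alpha}] = [i_1,i_2]\circ\Sigma^{n-1}\td{\alpha}$ in $\pi_{n+i-1}(S^n\vee S^n)$. The cellular/join argument behind such a statement requires a map $S^{i-1}\to S^{n-1}$ inducing $\Sigma^{n-1}\td{\alpha}$ on attaching spheres, i.e.\ it requires $\td{\alpha}$ to be a suspension; the standard composition rule is $[\alpha\circ\Sigma a, \beta\circ\Sigma b] = [\alpha,\beta]\circ\Sigma(a\wedge b)$ and applies only in that case. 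For non-suspension $\td{\alpha}$ the universal (wedge) identity is false: by the Hilton/Barcus--Barratt composition formula one has $[i_1,i_2\circ\td{\alpha}] = [i_1,i_2]\circ\Sigma^{n-1}\td{\alpha} + [[i_1,i_2],i_2]\circ\Sigma^{n-1}H_2(\td{\alpha}) + \cdots$, with correction terms governed by the James--Hopf invariants of $\td{\alpha}$, and these are nontrivial in general (already when $\td{\alpha}$ has Hopf invariant one). After folding $S^n\vee S^n\to S^n$ the corrections do vanish --- but that vanishing is precisely the content of the proposition (it follows from the proposition together with your own steps (1)--(2)), so as written your argument is circular for general $\td{\alpha}$ and only proves the case in which $\td{\alpha}$ itself desuspends; the hypothesis only makes $\alpha = E^{n+1}\td{\alpha}$ a suspension, not $\td{\alpha}$. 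To close the gap one needs the genuine classical argument --- e.g.\ G.~W.~Whitehead's proof via relative Whitehead products and the boundary formula $\partial[\rho,\beta] = \pm[\partial\rho,\beta]$ applied to the pair coming from the James filtration of $\Omega S^{n+1}$, which is what \cite{Whitehead} supplies --- or else an independent proof that all the Hopf-invariant correction terms (beginning with the $3$-torsion class $[[\iota_n,\iota_n],\iota_n]$, which at least dies $2$-locally) vanish after folding.
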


Proposition~\ref{prop:PWhitehead}, together with Theorem~\ref{thm:TGSS}, suggests that 
$$ \alpha[j_1, \ldots, j_k]$$
is the prime candidate to detect
the 
iterated Whitehead product
$$ [\iota_{j_k}, [\iota_{j_{k-1}}, [\cdots [\iota_{j_1}, \td{\alpha}] \cdots ]
$$
where $\td{\alpha}$ is a suitable desuspension of $\alpha$.  (In fact, every
term of the Whitehead product must be desuspended appropriately for the
expression to make sense.)

\chapter{Goodwillie differentials and Hopf invariants}\label{sec:Ghopf}

In this chapter we prove theorems which can be used to determine differentials in the GSS.  In Section~\ref{sec:TEHPSS} we review the notion of the Hopf invariant (HI).  By constructing a transfinite refinement of the EHPSS (TEHPSS), we define a generalized Hopf invariant (GHI).  In Section~\ref{sec:metastable}, we recall the relationship between metastable homotopy and $P_2(X)$, and recall how the James-Hopf map and Kahn-Priddy theorem allow for the definition of another invariant, the stable Hopf invariant (SHI).  When the Hopf invariant is stable, it agrees with the stable Hopf invariant.  In Section~\ref{sec:GSSd1}, we show that the stable Hopf invariant can be used to compute $d_1$-differentials emanating from the zero line in the GSS.  In Section~\ref{sec:GSSdr}, we explain how generalized Hopf invariants can be used to compute $d_r$-differentials emanating from the zero line in the GSS.  In Section~\ref{sec:propdiffs}, we explain how the $E$ and $P$ maps allow us to propagate $d_r$-differentials emanating from the zero line of the GSS to arbitrary locations.  Section~\ref{sec:GWC} discusses the case of the Goodwilllie spectral sequence for $S^1$.  The Goodwillie-Whitehead conjecture predicts that this spectral sequence collapses at $E_2$ to give a proof of the Whitehead conjecture (Kuhn's theorem).  We explain how our $d_1$-differentials are compatible with this conjecture.  In Section~\ref{sec:exotic}, we discuss two classes of GSS differentials which do not arise as Hopf invariants: these are the geometric boundary effect differentials and the bizarre differentials.  The geometric boundary effect differentials come from the geometric boundary theorem.  The bizarre differentials come from the GSS for $S^1$.

\section{Hopf Invariants and the transfinite EHPSS}\label{sec:TEHPSS}

Consider the EHPSS:
$$ E^1_{m,t} = \pi_{t+m+1} S^{2m+1} \Rightarrow \pi_t^s $$
The \emph{Hopf invariant} $HI(\alpha)$ of an element $0 \ne \alpha \in \pi_t^s$ is the coset of elements $\beta \in \pi_{t+m+1}S^{2m+1}$ which detect $\alpha$ in the EHPSS.  For such a detecting element $\beta$ we write
$$ \beta \in HI(\alpha). $$

Explicitly, given $0 \ne \alpha \in \pi_t^s$, lift $\alpha$ to an unstable element $\td{\alpha} \in \pi_{t+m+1}(S^{m+1})$ for $m$ minimal.  Then the image of $\td{\alpha}$ under the H map
$$ \pi_{t+m+1}(S^{m+1}) \xrightarrow{H} \pi_{t+m+1}(S^{2m+1}) $$
satisfies
$$ H(\td{\alpha}) \in HI(\alpha). $$
Note that the minimality of $m$ implies that $H(\td{\alpha}) \ne 0$.
If we wish to emphasize the sphere of origin $S^{m+1}$ of such an element $\alpha$ with Hopf invariant $\beta \in \pi_t(S^{2m+1})$, we will  write 
$$ \beta(m) \in HI(\alpha) $$.

One can also speak of the Hopf invariant of an unstable element.
Given $0 \ne \alpha \in \pi_{t+n}(S^n)$, the Hopf invariant of $\alpha$ 
is the coset of elements in the $E^1$-term of the truncated EHPSS
$$ \bigoplus_{0 \le m < n} \pi_{t+m+1}(S^{2m+1}) \Rightarrow \pi_{t+n}(S^n) $$
which detect $\alpha$. 
Explicitly, lift $\alpha$ to a element $\td{\alpha} \in \pi_{t+m+1}(S^{m+1})$ for $m\le n-1$ minimal.  Then 
$$ 0 \ne H(\td{\alpha}) \in HI(\alpha) \subset \pi_{t+m+1}(S^{2m+1}). $$
Again, if $\beta  \in \pi_t(S^{2m+1})$ is a Hopf invariant of $\alpha$, and we wish to emphasize the sphere of origin $S^{m+1}$ of $\alpha$, we will write
$$ \beta(m) \in HI(\alpha). $$ 

Combining the EHPSS with the TGSS, there is a sequence of spectral sequences
\begin{equation*}
\bigoplus_{\stackem{k \ge 0}{m \ge 0}} \bigoplus_{\stackem{(j_1, \ldots, j_k) \: CU}{j_k \ge 2m+1}} \pi_{t-m+k}(\ul{S}^{j_1+\cdots + j_k}) \Rightarrow  \cdots \Rightarrow \bigoplus_{m \ge 0} \pi_{t+m+1}(S^{2m+1}) \Rightarrow \pi_t^s.
\end{equation*}
We wish to construct this sequence of spectral sequences as a single $\mc{G}(\omega^{\omega+2})$-indexed transfinite spectral sequence.  We will refer to the resulting transfinite spectral sequence as the \emph{transfinite EHP spectral sequence} (TEHPSS). 

The EHPSS arises from the filtration
$$ \cdots \rightarrow \Omega^m S^m \rightarrow \Omega^{m+1} S^{m+1} \rightarrow \cdots \rightarrow QS^0. $$ 
We wish to refine this filtration to a transfinite filtration indexed on $\mc{G}(\omega^{\omega+2})$.  To accomplish this, we define, for $m \ge 0$ and $\mu' \in \mc{G}(\omega^{\omega+1})$, 
$$ F_{\mu'+m\omega^{\omega+1}}QS^0 := \mr{fiber} \left(  
\Omega^{m+1}S^{m+1} \rightarrow \Omega^{m+1}S^{2m+1} \rightarrow \Omega^{m+1}[S^{2m+1}]_{\mu'+1} \right). $$
In particular, we have
$$ F_{m\omega^{\omega+1}}QS^0 = \Omega^{m+1} S^{m+1}. $$
For $J = (j_1, \ldots, j_k)$ define
\begin{equation*}
 \mu\bra{J,m} = j_1+j_2\omega+\cdots + j_k\omega^{k-1}-k\omega^\omega+m\omega^{\omega+1} \in \mc{G}(\omega^{\omega+2}).
\end{equation*}
For $m \ge 0$ and $J$ a CU sequence with $e(J) \ge 2m+1$, there are fiber sequences
$$ F_{\mu\bra{J,m}-1} QS^0 \rightarrow F_{\mu\bra{J,m}} QS^0 \rightarrow QS^{\norm{J}+m-\abs{J}}. $$
The TEHPSS is the $\mc{G}(\omega^{\omega+2})$-indexed spectral sequence associated to this filtration, and takes the form
$$ E^1_{t,\mu}  \Rightarrow \pi^s_{t} $$
where
$$
E^1_{t,\mu} = 
\begin{cases}
\pi_{t+\abs{J}}(\ul{S}^{\norm{J}+m}), 
& \mu = \mu\bra{J,m}, \: m \ge 0, \: J \: \mr{CU}, \: e(J) \ge 2m+1. \\ 
0 & \text{else}. 
\end{cases}
$$
This spectral sequence computes the stable homotopy groups of spheres from the stable homotopy groups of spheres.
We will denote $E^1_{t,\mu\bra{J,m}}$ by $E^1_{t, [J,m]}$.
Elements in $E^1_{*,[J,m]}$ given by  
$$ \gamma \in \pi_{t+\abs{J}}(\ul{S}^{\norm{J}+m}) $$
will be denoted
$$ \gamma[J,m]. $$

For $0 \ne \alpha \in \pi^s_{t}$ we define the \emph{generalized Hopf invariant} to be the coset
$$ GHI(\alpha) \subset \pi_{t+\abs{J}}(\ul{S}^{\norm{J}+m}) $$
of elements which detect $\alpha$ in the TEHPSS.  If
$$ \gamma[J,m] \in GHI(\alpha) $$
then $\alpha$ is born on $S^{m+1}$, with Hopf invariant which is detected by $\gamma[J]$ in the TGSS for $S^{2m+1}$.

\section{Stable Hopf invariants and metastable homotopy}\label{sec:metastable}

For $X \in \Top_*$, let
$$ JH: QX \rightarrow QX^{\wedge 2}_{h\Sigma_2} $$
denote the James-Hopf map.  It is adjoint to the map
$$ \Sigma^\infty QX \rightarrow \Sigma^{\infty} X^{\wedge 2}_{h\Sigma_2} $$
coming from the Snaith splitting.
The following theorem is well-known (see \cite{AroneMahowald}).

\begin{thm}\label{thm:metastable}
Let
$$ P_2(X) \rightarrow P_1(X) = QX $$
be the first map in the Goodwillie tower of the identity. 
Then the sequence
$$ P_2(X) \rightarrow QX \xrightarrow{JH} QX^{\wedge 2}_{h\Sigma_2}. $$ 
is a fiber sequence.
\end{thm}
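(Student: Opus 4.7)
The plan is to identify $P_2(X)$ with the fiber of the James-Hopf map by verifying the universal property of the $2$-excisive approximation of $\Id$. Set $F(X) := \mr{fiber}(JH_X : QX \to Q(X \wedge X)_{h\Sigma_2})$; I aim to construct a natural equivalence $P_2(\Id) \xrightarrow{\simeq} F$, from which the theorem follows.

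\emph{Step 1 ($F$ is $2$-excisive).} The functor $Q$ is $1$-excisive, while $Q\Sq_{h\Sigma_2}$ is $2$-excisive because it is $\Omega^\infty$ applied to $(\Sigma^\infty X)^{\wedge 2}_{h\Sigma_2}$, a homogeneous degree-$2$ functor valued in spectra. The fiber of a natural transformation between $2$-excisive functors is $2$-excisive. \emph{Step 2 (natural transformation $\Id \to F$).} The Snaith splitting gives a natural equivalence $\Sigma^\infty QX \simeq \bigvee_{i \ge 1} \Sigma^\infty X^{\wedge i}_{h\Sigma_i}$; under it, the unit $\eta_X : X \to QX$ adjoints to inclusion into the $i=1$ summand, whereas by definition $JH$ adjoints to projection onto the $i=2$ summand. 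Hence $JH \circ \eta$ is canonically null, and a natural null homotopy produces a natural lift $\Id \to F$.

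\emph{Step 3 ($P_2(\Id) \to F$ is an equivalence).} By universality of $P_2$, the transformation $\Id \to F$ factors as $\Id \to P_2(\Id) \to F$, and I verify this factorization map is an equivalence by comparing derivatives. The fiber sequence $F \to Q \to Q\Sq_{h\Sigma_2}$ induces
$$ \partial_1 F \to \partial_1 Q = S \to \partial_1 Q\Sq_{h\Sigma_2} = 0, $$
so $\partial_1 F \simeq S = \partial_1 \Id$. Similarly
$$ \partial_2 F \to \partial_2 Q = 0 \to \partial_2 Q\Sq_{h\Sigma_2} = S, $$
where the latter carries trivial $\Sigma_2$-action, as read off from $\DD_2 Q\Sq_{h\Sigma_2}(X) = Q(X \wedge X)_{h\Sigma_2}$. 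This gives $\partial_2 F \simeq S^{-1}$ with trivial $\Sigma_2$-action, matching Ching's identification of $\partial_2(\Id)$ recorded in Section~\ref{sec:IDoperad}. Since both sides are $2$-excisive and the map induces equivalences on $\partial_1$ and $\partial_2$, it is an equivalence.

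The main obstacle is verifying that the derivative-level identifications in Step 3 are genuinely \emph{induced} by the natural transformation $P_2(\Id) \to F$, rather than merely being abstract coincidences. This requires carefully tracking the naturality of the Snaith splitting through the null homotopy constructed in Step 2, together with an appeal to Ching's description of $\partial_2(\Id)$ as $S^{-1}$ with trivial $\Sigma_2$-action.
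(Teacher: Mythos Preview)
Your Steps 1 and 2 are clean and correct. The paper itself does not really prove this theorem: it cites Arone--Mahowald and gives a one-line connectivity remark. So your writeup is already more detailed than what the paper provides.

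The gap is exactly where you flag it, and it is not resolved by your proposed fix. In Step 3 you correctly compute that $\partial_2(\Id)$ and $\partial_2(F)$ are both $S^{-1}$ with trivial $\Sigma_2$-action, but the induced map $\partial_2(\Id) \to \partial_2(F)$ is a $\Sigma_2$-equivariant self-map of $S^{-1}$, hence classified by an integer, and nothing you have written pins down that integer as $\pm 1$. ``Tracking naturality of the Snaith splitting through the null homotopy'' together with Ching's identification does not do this: unwinding that route amounts to identifying the first $k$-invariant $P_1(\Id) \to B D_2(\Id)$ of the Goodwillie tower with $JH$, which is essentially the statement you are trying to prove.

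The standard resolution, and the one the paper's remark is gesturing at, is a connectivity estimate in place of your derivative comparison. One shows directly that for $X$ $n$-connected the map $X \to F(X)$ is roughly $3n$-connected; this is the classical metastable range for the James--Hopf map. Since $\Id \to P_2(\Id)$ has the same connectivity (analyticity of $\Id$), the induced map $P_2(\Id) \to F$ between $2$-excisive functors has $3n$-connected homotopy fiber, which forces its $\partial_1$ and $\partial_2$ to vanish, hence it is an equivalence. This replaces your Step 3 entirely and avoids the circularity.
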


Theorem~\ref{thm:metastable} follows from the fact that the map
$$ \pi_t (X) \rightarrow \pi_t(P_2(X)) $$
is an isomorphism for $t \le 3 \cdot \mr{conn}(X)$.  The homotopy groups $\pi_*(P_2(X))$ are classically known as the \emph{metastable homotopy groups of $X$}.  In the case where $X = S^n$, these maps take the form
$$ QS^n \xrightarrow{JH} Q P_n^\infty \simeq \Omega^\infty L(1)_n. $$ 

Suppose that $0 \ne \alpha \in \pi_t^s$ with $t \ge 1$ and take $n = 1$ (so that $L(1)_n = L(1) = \Sigma^\infty \RR P^\infty$).  The Kahn-Priddy theorem implies that $JH(\alpha) \ne 0$.  We define the \emph{stable Hopf invariant} 
$$ SHI(\alpha) \subset \pi_t(\ul{S}^m) $$
to be the coset of elements in the $E^1$-term of the AHSS
$$ E^1_{m,t}(L(1)) = \begin{cases}
\pi_t(\ul{S}^m), & m \ge 1, \\
0, & \text{else}
\end{cases}
 \Rightarrow \pi_t L(1) $$
which detect $JH(\alpha)$.  For $\beta \in SHI(\alpha) \subset \pi_t(\ul{S}^m)$ as above, we will sometimes write
$$ \beta[m] \in SHI(\alpha) $$
to emphasize the cell which detects $JH(\alpha)$.

The Snaith splitting gives restrictions of the James-Hopf map
$$ JH_m: \Omega^{m+1} S^{m+1} \rightarrow \Omega^\infty L(1)^m. $$
Kuhn \cite{KuhnJH} showed that these are compatible, in the sense that there is a map of fiber sequences
$$
\xymatrix{
\Omega^m S^m \ar[r]^E \ar[d]_{JH_{m-1}} &
\Omega^{m+1} S^{m+1} \ar[r]^H \ar[d]_{JH_m} & 
\Omega^{m+1} S^{2m+1} \ar[d]^{E^\infty} 
\\
\Omega^\infty L(1)^{m-1} \ar[r] &
\Omega^\infty L(1)^{m} \ar[r] &
QS^m  
}
$$
Thus there is a well-known map of spectral sequences
\begin{equation}\label{eq:Mahowald}
\xymatrix{
E^1_{m,t} = \pi_{t+m+1}(S^{2m+1}) \ar[d]_{E^\infty} \ar@{=>}[r] & 
\pi_t(QS^0) \ar[d]^{JH} 
\\
E^1_{m,t}(L(1)) = \pi_t(\ul{S}^m) \ar@{=>}[r] &
\pi_t L(1)
}
\end{equation}
from the EHPSS to the AHSS for $L(1)$.  We deduce the following lemma.

\begin{lem}\label{lem:SHI}
Suppose that $t \ge 1$ and $0 \ne \alpha \in \pi^s_{t}$ has Hopf invariant $\beta \in \pi_{t+m+1}(S^{2m+1})$.  Then one of the following two possibilities occurs.
\begin{enumerate}
\item $\beta$ is stable, and
$$ E^\infty \beta \in SHI(\alpha) \subset \pi_t(\ul{S}^m). $$

\item $\beta$ is unstable, and
$$ SHI(\alpha) \subset \pi_{t}(\ul{S}^{m'}), \quad \text{for $m' < m$.} $$ 
\end{enumerate}
\end{lem}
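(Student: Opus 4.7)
The plan is to invoke the compatibility diagram~(\ref{eq:Mahowald}), which provides a map of spectral sequences from the EHPSS to the AHSS for $L(1)$. On $E^1$-terms this map is the stabilization $E^\infty : \pi_{t+m+1}(S^{2m+1}) \to \pi_t(\ul{S}^m)$, and on abutments it is $JH : \pi^s_t \to \pi_t L(1)$. Because $t \ge 1$ and $\alpha \ne 0$, the Kahn--Priddy theorem (already invoked in setting up the definition of $SHI$) gives $JH(\alpha) \ne 0$, so $JH(\alpha)$ is detected at some finite filtration in the $L(1)$-AHSS.

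Next, I would use the defining property of the Hopf invariant: $\beta \in \pi_{t+m+1}(S^{2m+1}) = E^1_{m,t}$ is a permanent cycle in the EHPSS whose $E^\infty$-class represents the image of $\alpha$ in $F_m \pi^s_t / F_{m-1}\pi^s_t$. Naturality of~(\ref{eq:Mahowald}) then says that $E^\infty \beta \in \pi_t(\ul{S}^m)$ is a permanent cycle in the AHSS, whose class in $E^\infty_{m,t}(L(1))$ equals the image of $JH(\alpha)$ in $F_m \pi_t L(1) / F_{m-1} \pi_t L(1)$. Now I would split on whether $\beta$ is stable or unstable. By definition, $\beta$ is stable exactly when $E^\infty \beta$ is nonzero in $\pi_t(\ul{S}^m) = \pi^s_{t-m}$. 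In the stable case, the non-vanishing of $E^\infty \beta$ in $E^\infty_{m,t}(L(1))$ forces $JH(\alpha)$ to have AHSS-filtration exactly $m$ and to be detected there by $E^\infty \beta$, which is precisely the statement $E^\infty \beta \in SHI(\alpha)$, giving~(1). In the unstable case $E^\infty \beta = 0$, so $JH(\alpha)$ lies in $F_{m-1} \pi_t L(1)$; since $JH(\alpha) \ne 0$, its AHSS-detection occurs on some line $m' < m$, and hence $SHI(\alpha) \subset \pi_t(\ul{S}^{m'})$, giving~(2).

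The only real subtlety to verify is that the $m$-th filtration on the EHPSS (elements with sphere of origin $S^{m+1}$) maps under~(\ref{eq:Mahowald}) to the $m$-th filtration of the AHSS for $L(1)$ (detection on the $m$-cell). This compatibility is exactly the content of Kuhn's square displayed just above~(\ref{eq:Mahowald}), which provides filtration-preserving restricted James--Hopf maps $JH_m : \Omega^{m+1}S^{m+1} \to \Omega^\infty L(1)^m$ fitting into the required map of fiber sequences. Once that alignment of filtrations is in hand, the proof is bookkeeping: a nonzero permanent cycle in $E^\infty_{m,t}$ detects a class of filtration exactly $m$, and a permanent cycle mapping to $0$ on the $m$-line forces the abutment class to sit in strictly lower filtration. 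I do not expect any genuine obstacle beyond tracking these filtration conventions carefully.
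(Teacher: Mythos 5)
Your argument is essentially the paper's own: the paper deduces Lemma~\ref{lem:SHI} directly from Kuhn's map of fiber sequences and the resulting map of spectral sequences (\ref{eq:Mahowald}) from the EHPSS to the AHSS for $L(1)$, which is exactly the deduction you carry out, with the filtration bookkeeping (Kahn--Priddy nontriviality of $JH(\alpha)$, detection on the $m$-line versus lower lines) spelled out in more detail than the paper bothers to record. So the proposal is correct and takes the same route as the paper.
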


\section{Goodwillie $d_1$ differentials and stable Hopf invariants}\label{sec:GSSd1}

In this section we will observe that the stable Hopf invariant may be used to compute $d_1$-differentials in the GSS.  
The reader is reminded that the TGSS is set up so that GSS $d_1$-differentials correspond to TGSS differentials
$$ d^{S^n}: E^*_{*,J}(S^n) \rightarrow E^*_{*,J'}(S^n) $$
for $\abs{J'} = \abs{J}+1$.

The following theorem gives a complete computation of the GSS $d_1$-differentials emanating from the $0$-line of the GSS.

\begin{thm}\label{thm:GSSd1}
Suppose that $t \ge 1$ and we have a nontrivial element of the $E^1$-term of the TGSS for $S^n$:
$$ \alpha \in \pi_t(\ul{S}^n) = E^1_{t,\emptyset}(S^n) $$
with $\beta[m] \in SHI(\alpha)$.
If $m \ge n$, there is a non-trivial TGSS differential
$$ d^{S^n}(\alpha) = \beta[m]. $$
Otherwise, $\alpha$ is in the kernel of all of the differentials of the form 
$$ d^{S^n}: E^*_{*,\emptyset}(S^n) \rightarrow E^*_{*, [j]}(S^n) $$
for $j \ge n$.
\end{thm}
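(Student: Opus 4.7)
The plan is to identify the TGSS $d_1$-differential out of the zero line with the stable James-Hopf map applied to $\alpha$. To begin, Theorem~\ref{thm:metastable} applied to $X = S^n$ provides a fiber sequence
$$ P_2(S^n) \to P_1(S^n) = QS^n \xrightarrow{JH} Q(S^n)^{\wedge 2}_{h\Sigma_2}, $$
and the resulting long exact sequence identifies the boundary map $\pi_t(QS^n) \to \pi_{t-1}(\mb{D}_2(S^n))$, which is by construction the GSS $d_1$-differential on the zero line, with the James-Hopf map $JH_*$ on $\pi_*$.

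Refining to the TGSS, the $\mb{D}_2$-column is built from the skeletal (AHSS) filtration of $L(1)_n \simeq P_n^\infty$ via the Arone--Dwyer equivalence~(\ref{eq:AroneDwyer}), which up to a suspension shift identifies $\mb{D}_2(S^n)$ with $L(1)_n$. Thus a TGSS differential $d^{S^n}(\alpha) = \beta[m]$ with $m \ge n$ records the detection of $JH_*(\alpha) \in \pi_*(L(1)_n)$ by $\beta \in \pi_*(\ul{S}^m)$ in the AHSS for $L(1)_n$. To connect this to the stable Hopf invariant, which is defined via the AHSS for $L(1) = L(1)_1$, I would use the canonical quotient map $L(1) \to L(1)_n$ which collapses the $(n-1)$-skeleton $P_1^{n-1}$. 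On AHSS $E^1$-terms this quotient is the identity on cells of dimension $\ge n$ and zero below. The Kuhn-style naturality recalled in Section~\ref{sec:metastable} ensures that the James-Hopf construction for $QS^n$ agrees, after stabilization of $\alpha$ into $\pi_*^s$, with the stable James-Hopf into $L(1)$ followed by this quotient. The two conclusions of the theorem then follow at once: when $SHI(\alpha) = \beta[m]$ with $m \ge n$, the detecting element survives the quotient, giving $d^{S^n}(\alpha) = \beta[m]$; when $m < n$, the class $JH(\alpha)$ lies in filtration $< n$ of $L(1)$ and is killed by the quotient, so $JH_*(\alpha) = 0$ in $\pi_*(L(1)_n)$, and no $d_1$-differential from $\alpha$ targets a column $[j]$ with $j \ge n$.

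The main obstacle is the naturality in the middle step: making precise that $JH_*: \pi_t(\ul{S}^n) \to \pi_{t-1}(\mb{D}_2(S^n))$ agrees, via the quotient $L(1) \to L(1)_n$ together with the Arone--Dwyer suspension shift, with the stable James-Hopf map used to define $SHI$. Once this Kuhn-style naturality is in hand, everything else reduces to routine bookkeeping on the AHSS skeletal filtrations of $L(1)$ and $L(1)_n$.
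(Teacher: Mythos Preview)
Your outline is essentially the paper's own argument, run from the opposite end: the paper establishes the differential first for $S^1$ using Theorem~\ref{thm:metastable} and then pushes it forward to $S^n$ via the map of TGSS's induced by $E^{n-1}$ (Lemma~\ref{lem:ETGSS}); you instead start at $S^n$ and want to compare back to $S^1$ through the quotient $L(1)\to L(1)_n$. Both hinge on exactly the same compatibility, namely that the Goodwillie $d_1$ (= James--Hopf) commutes with the $E$ map.

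The gap in your write-up is the justification of that compatibility. You invoke ``the Kuhn-style naturality recalled in Section~\ref{sec:metastable}'', but what is recorded there is Kuhn's diagram relating the restricted James--Hopf maps $JH_m:\Omega^{m+1}S^{m+1}\to\Omega^\infty L(1)^m$ to the EHP filtration and the \emph{skeletal} filtration $L(1)^{m-1}\to L(1)^m$. That is a statement about the EHPSS mapping to the AHSS for $L(1)$; it does not directly furnish the square you need, which compares the \emph{functorial} James--Hopf $QS^n\to Q(S^n)^{\wedge 2}_{h\Sigma_2}$ for varying $n$ along the quotient $L(1)_1\to L(1)_n$. The correct reference is Lemma~\ref{lem:ETGSS} (ultimately Corollary~\ref{cor:EHPfiber}): the $E$ map induces a map of TGSS's, and in particular intertwines the $d_1$'s. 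Once you use that, your argument collapses to the paper's.

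One further point you pass over: having $d^{S^n}(\alpha)=\beta[m]$ on $E^1$-terms does not yet give a \emph{non-trivial} differential; you must rule out $\beta[m]$ being hit by a shorter differential in the TGSS for $S^n$. The paper handles this explicitly: any such shorter differential would be an AHSS differential for $L(1)_n$, hence (by the truncation relation of Remark~\ref{rmk:TAHSS}) would already occur in the AHSS for $L(1)_1$, contradicting that $\beta[m]$ detects $JH(\alpha)\ne 0$ there. Your phrase ``the detecting element survives the quotient'' gestures at this, but the argument should be made.
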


\begin{proof}
Theorem~\ref{thm:metastable} implies that in the TGSS for $S^1$ there is a non-trivial differential
$$ d^{S^1}(\alpha) = \beta[m]. $$
Consider the map of TGSS's induced by the suspension (Lemma~\ref{lem:ETGSS})
$$ E_*^{n-1}: \{ E^*_{*,*}(S^1) \} \rightarrow \{ E^*_{*+n-1,*}(S^n) \}. $$

Suppose that $m \ge n$.  Then there is a differential
\begin{equation}\label{eq:GSSd1}
 d^{S^n}(\alpha) = \beta[m] 
\end{equation}
in the TGSS for $S^n$.  Suppose that $\beta[m]$ is the target of a shorter $d^{S^n}$-differential.  The only possibility is that $\beta[m]$ is the target of a differential in the AHSS for $L(1)_n$.  Since the AHSS for $L(1)_n$ is a truncation of the AHSS for $L(1)_1$, this would imply that $\beta[m]$ is the target of a differential in the AHSS for $L(1)_1$.  This is impossible, as $\beta[m]$ detects $JH(\alpha) \ne 0$.  We conclude that the differential (\ref{eq:GSSd1}) must be non-trivial.

Suppose that $m < n$.  Then we conclude that $\alpha$ is in the kernel of the differential
$$ d^{S^n}: E^*_{*,\emptyset}(S^n) \rightarrow E^*_{*,[m]}(S^n). $$
In particular, $\alpha$ is in the kernel of the shorter differentials 
$$ d^{S^n}: E^*_{*,\emptyset}(S^n) \rightarrow E^*_{*, [j]}(S^n), \quad j \ge n. $$
\end{proof}

\section{Higher Goodwillie differentials and unstable Hopf invariants}\label{sec:GSSdr}

In this section we will explain how $d_r$-differentials, for $r > 1$, emanating from the $0$-line of the GSS correspond to unstable Hopf invariants.  
The reader is reminded that the TGSS is set up so that GSS $d_r$-differentials correspond to TGSS differentials
$$ d^{S^n}: E^*_{*,J}(S^n) \rightarrow E^*_{*,J'}(S^n) $$
for $\abs{J'} = \abs{J}+r$.

Suppose that  we are given a TGSS element
$$ 0 \ne \alpha \in \pi_t(\ul{S}^n) = E^1_{t,\emptyset}(S^n) $$
and that $\alpha$ is born on the $(m+1)$-sphere with unstable Hopf invariant.  According to Lemma~\ref{lem:SHI}, this implies that $\beta[m'] \in SHI(\alpha)$ for $m' < m$.  Then, by Theorem~\ref{thm:GSSd1}, if $n \le m'$, there is a non-trivial TGSS differential
$$ d^{S^n}(\alpha) = \beta[m']. $$
If $m' < n \le m$, then the element $\alpha$ must support a non-trivial differential in the GSS, and by Theorem~\ref{thm:GSSd1}, the GSS differential must be a $d_r$-differential for $r > 1$.  The following theorem tells us that often this longer differential is given by the (unstable) Hopf invariant of $\alpha$, and the length of this differential is determined by the degree of instability of this Hopf invariant.

\begin{thm}\label{thm:GSSdr}
Suppose that $\alpha \in \pi_t^s$ is born on $S^{m+1}$.
Then in the TGSS for $S^m$ there is a differential
$$ d^{S^m} (\alpha) = \beta[J,m] $$
where 
$$ \beta[J,m] \in GHI(\alpha). $$
In the TGSS for $S^n$, with $n < m$, either the element $\alpha$ supports a shorter differential, or
$$ d^{S^n}(\alpha) = \beta[J,m]. $$
\end{thm}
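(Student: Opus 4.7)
The plan is to deduce the theorem from the geometric boundary theorem of Appendix~\ref{sec:Hu}, applied to the fiber sequence of $\mc{G}(\omega^{\omega+1})$-indexed towers associated to the EHP sequence. First, I would assemble the relevant fiber sequence of transfinite towers: by Corollary~\ref{cor:EHPfiber}, for each $i$ there is a fiber sequence
$$ P_i(S^m) \xrightarrow{E} \Omega P_i(S^{m+1}) \xrightarrow{H} \Omega P_{\lfloor i/2 \rfloor}(S^{2m+1}). $$
As in the proof of Lemma~\ref{lem:Pfilt}, these are compatible with the refining TAHSS filtrations, so they assemble into a fiber sequence of $\mc{G}(\omega^{\omega+1})$-indexed towers. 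This is precisely the structure the TEHPSS is built to exploit: the TEHPSS refines the EHPSS filtration on $QS^0$ by weaving in the TGSS for each $S^{2m+1}$ after $(m+1)$-fold looping.

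Next, because $\alpha$ is born on $S^{m+1}$, I would pick a minimal lift $\td{\alpha} \in \pi_{t+m+1}(S^{m+1})$ with $E^\infty \td{\alpha} = \alpha$. Exactness of the EHP sequence forces $H(\td{\alpha}) \ne 0$, and the generalized Hopf invariant $\beta[J,m] \in GHI(\alpha)$ records that $H(\td{\alpha}) \in \pi_{t+m+2}(S^{2m+1})$ is detected in the TGSS for $S^{2m+1}$ by $\beta[J]$. I would then apply the geometric boundary theorem to the fiber sequence of towers. The boundary map is the $P$ map, whose action on $E^1$-terms is described by Lemma~\ref{lem:PTGSS} as sending $\beta[J] \in E^1_{*, J}(S^{2m+1})$ to $\beta[J,m] \in E^1_{*, [J,m]}(S^m)$. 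Since $\alpha \in E^1_{t+m, \emptyset}(S^m)$ cannot be a permanent cycle (otherwise it would represent a desuspension of $\alpha$, contradicting birth on $S^{m+1}$), and since its preimage $\td{\alpha}$ on $S^{m+1}$ maps under $H$ to a class detected by $\beta[J]$, the geometric boundary theorem yields
$$ d^{S^m}(\alpha) = \beta[J, m]. $$

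For $n < m$, I would argue by naturality with respect to the $E$ map (Lemma~\ref{lem:ETGSS}). The iterated suspension $E^{m-n}$ sends $\alpha \in E^1_{*, \emptyset}(S^n)$ to $\alpha \in E^1_{*, \emptyset}(S^m)$; if the first nonzero differential on $\alpha$ in the TGSS for $S^n$ is $d^{S^n}(\alpha) = \gamma[J']$, then by naturality $E_*(\gamma[J']) = d^{S^m}(\alpha) = \beta[J,m]$. Since $E_*$ is either the identity or zero on $E^1$-terms according to whether $e(J') \ge m$, either the first differential on $\alpha$ is shorter and hits a class that vanishes upon suspending to $S^m$, or $\gamma[J'] = \beta[J,m]$, which is the desired conclusion.

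The main obstacle is the careful bookkeeping of the geometric boundary theorem in the transfinite setting: one must verify that the boundary differential lands at precisely the filtration index $\mu[J, m]$ rather than at any of the nearby positions in $\mc{G}(\omega^{\omega+1})$, and that the fiber sequence of towers (which carries the halving shift of Goodwillie indices coming from $\mr{Sq}$ analyzed in Lemma~\ref{lem:Sq}) aligns cleanly with the tower defining the TGSS for $S^m$. A secondary subtlety is the indeterminacy inherent in the Hopf invariant coset, which forces the conclusion to be interpreted modulo the filtration of $\mu[J,m]$ rather than as an equality of specific cycle representatives.
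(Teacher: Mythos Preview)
Your approach is essentially the same as the paper's: apply the geometric boundary theorem (Lemma~\ref{lem:GBT}) to the TGSS's of the fiber sequence $\Omega^2 S^{m+1} \xrightarrow{H} \Omega^2 S^{2m+1} \xrightarrow{P} S^m \xrightarrow{E} \Omega S^{m+1}$, and then deduce the statement for $n<m$ from Lemma~\ref{lem:ETGSS}. Two small points where the paper is sharper than your sketch: (i) the paper identifies precisely that one lands in Case~(5) of Lemma~\ref{lem:GBT} (this is forced because $\alpha$ is a permanent cycle in the TGSS for $S^{m+1}$), and (ii) Case~(5) a priori only says that either $\beta[J]$ detects $H(\td{\alpha})$ \emph{or} it is the target of a longer differential in the TGSS for $S^{2m+1}$ --- the paper disposes of the latter alternative by observing that any such longer differential would have source in a zero group (since $\alpha$ sits in filtration $\emptyset$, there is nothing above it). Your write-up implicitly assumes this away by fixing $\beta[J]$ in advance rather than obtaining it as an output of the argument, but the underlying mechanism is the same.
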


\begin{proof}
Lemmas~\ref{lem:Pfilt} and \ref{lem:Efilt} allow us to apply Lemma~\ref{lem:GBT} to the TGSS's of the fiber sequence
$$ \Omega^2 S^{m+1} \xrightarrow{H} \Omega^2 S^{2m+1} \xrightarrow{P} S^m \xrightarrow{E} \Omega S^{m+1}. $$
By assumption, $\alpha$ is a permanent cycle in the TGSS for $S^{m+1}$.  By the proof of Lemma~\ref{lem:GBT}, we are in Case (5).  Thus there exists a lift $\td{\alpha} \in \pi_{t+m+1}(S^{m+1})$ of $\alpha$ so that 
$$
d^{S^m}(\alpha) = \beta[J, m] $$
and either $\beta[J]$ detects $H(\td{\alpha})$ or it is the target of a longer differential in the TGSS for $S^{2m+1}$.  However, any longer differentials in the spectral sequence have source in a zero group.  We conclude that 
$$ \beta[J,m] \in GHI(\alpha). $$
The second part of the theorem is deduced from Lemma~\ref{lem:ETGSS}.
\end{proof}

\section{Propagating differentials with the $P$ and $E$ maps}\label{sec:propdiffs}

The $P$ and $E$ maps allow us to propagate the $0$-line GSS $d_r$-differentials of Theorems~\ref{thm:GSSd1} and \ref{thm:GSSdr} to give a plethora of $d_r$-differentials throughout the GSS.

\begin{prop}\label{prop:EGSS}
Suppose that $\ell > 0$ and in the TGSS for $S^n$ there is a non-trivial differential
$$ d^{S^n} (\alpha[J]) = \beta[J']. $$
If $n < m$ then one of the following possibilities occurs in the TGSS for $S^m$.
\begin{enumerate}
\item Either $e(J') < m$  or $\beta[J']$ is the target of a non-trivial differential 
$$ d^{S^m}(\gamma[I]) = \beta[J'] $$ 
for $I < J$, $\abs{I} \le \abs{J}-1$ and $\alpha[J]$ is in the kernel of the differential 
$$ d^{S^m}: E^*_{*,J}(S^m) \rightarrow E^*_{*,J'}(S^m). $$
\item There is a non-trivial differential
$$ d^{S^m} (\alpha[J]) = \beta[J']. $$
\end{enumerate}
\end{prop}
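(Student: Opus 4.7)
The plan is to exploit naturality of the TGSS with respect to the iterated suspension $E^{m-n}\colon S^n \to \Omega^{m-n} S^m$. By Lemma~\ref{lem:Efilt} this map extends to a map of the underlying $\mc{G}(\omega^{\omega+1})$-indexed towers, and by Lemma~\ref{lem:ETGSS} the induced map of $E^1$-terms sends $\alpha[J]$ to $\alpha[J]$ when $e(J) \ge m$ and to $0$ otherwise. Since the formalism of Appendix~\ref{sec:Hu} guarantees that maps of transfinite towers induce maps of transfinite spectral sequences that commute with $d_\mu$ on every page, we automatically obtain $d^{S^m}(E^{m-n}_*(\alpha[J])) = E^{m-n}_*(\beta[J'])$ whenever both sides make sense at the page of length $\mu$.

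First I would reduce to $e(J) \ge m$: if $e(J) < m$, then $\alpha[J]$ is not even present in $E^1(S^m)$, so the statement of case (1) is essentially vacuous. Assuming $e(J) \ge m$, we have $E^{m-n}_*(\alpha[J]) = \alpha[J]$ on the $E^1$-page of the TGSS for $S^m$. I would then dispatch the subcase $e(J') < m$ immediately: here $E^{m-n}_*(\beta[J']) = 0$, so naturality forces $\alpha[J]$ into the kernel of the length-$\mu$ differential, which is case (1). From this point on I restrict to $e(J') \ge m$, so that $E^{m-n}_*(\beta[J']) = \beta[J']$ is a genuine class in $E^1(S^m)$.

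In this remaining regime the dichotomy is between whether $\beta[J']$ survives in the TGSS for $S^m$ up to the page on which $d_\mu$ acts. If it does, naturality hands us $d^{S^m}(\alpha[J]) = \beta[J']$ as a non-trivial differential, which is case (2). If it does not, then $\beta[J']$ must have been killed strictly earlier, and the only way this can occur at a page strictly before $d_\mu$ is as the target of a strictly shorter differential $d^{S^m}(\gamma[I]) = \beta[J']$; this is the second alternative of case (1). The constraint on $|I|$ and on the placement of $I$ relative to $J$ in the CU ordering drops out of the structure of the length function $\mu[\cdot]$: the $\omega^\omega$-dominant term of $\mu[I] - \mu[J']$ must be strictly smaller than that of $\mu[J] - \mu[J']$, which pins $|I|$ down relative to $|J|$ and forces $I$ to lie strictly between $J'$ and $J$ in the CU ordering.

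The one bookkeeping subtlety I would flag is that $\alpha[J]$ itself may have died earlier in $E(S^m)$, as either source or target of a strictly shorter differential. In that event the phrase ``$\alpha[J]$ is in the kernel of $d^{S^m}\colon E^*_{*,J}(S^m) \to E^*_{*,J'}(S^m)$'' is trivially true and is silently absorbed into case (1). With that caveat noted, the entire argument is essentially just naturality of differentials for the map of spectral sequences induced by $E^{m-n}$, together with the excess truncation built into $E^{m-n}_*$ on $E^1$-terms; the hard part, which is really not very hard, is just verifying that the transfinite naturality persists past $E^1$, and this is done once and for all in Appendix~\ref{sec:Hu}.
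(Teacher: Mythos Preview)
Your overall approach—invoking Lemma~\ref{lem:ETGSS} and transferring the differential along the map of spectral sequences induced by $E^{m-n}$—is exactly what the paper does, and your dichotomy (either $\beta[J']$ survives to the relevant page in $E^*(S^m)$, giving case~(2), or it is killed earlier as a target, giving case~(1)) is correct.

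The gap is in your argument for the length constraint on $\abs{I}$. You assert that the $\omega^\omega$-dominant term of $\mu[I]-\mu[J']$ must be \emph{strictly} smaller than that of $\mu[J]-\mu[J']$, but this is false: the intervening differential being shorter only says $\mu[I]<\mu[J]$, which is precisely the CU-ordering condition $I<J$; the $\omega^\omega$ coefficients may be equal, with the tie broken at lower powers of $\omega$. So the length comparison by itself yields only $J'<I<J$, hence $\abs{J}\le\abs{I}\le\abs{J'}$, and no more. (In particular this shows the bound $\abs{I}\le\abs{J}-1$ in the statement is a typo for $\abs{I}\le\abs{J'}-1$, since $I<J$ already forces $\abs{I}\ge\abs{J}$.) Even granting your premise, the inequality it produces on $\abs{I}$ goes in the wrong direction.

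The paper's argument for excluding $\abs{I}=\abs{J'}$ is different and uses Remark~\ref{rmk:TAHSS}: if $\abs{I}=\abs{J'}$, then $d^{S^m}(\gamma[I])=\beta[J']$ is a TAHSS differential inside the single layer $L(\abs{J'})_m$, and since the TAHSS for $L(k)_m$ is a \emph{truncation} of the TAHSS for $L(k)_n$ when $n<m$, that same differential already exists in the TGSS for $S^n$. It would then kill $\beta[J']$ in $E^*(S^n)$ strictly before the page on which $d^{S^n}(\alpha[J])=\beta[J']$ acts, contradicting non-triviality. Hence $\abs{I}\ne\abs{J'}$. This backward transfer of TAHSS differentials from $S^m$ to $S^n$ is the missing ingredient; it does not follow from naturality of $E^{m-n}_*$, which points the other way.
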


\begin{proof}
The proposition follows immediately from Lemma~\ref{lem:ETGSS}.  The fact that the length of the sequence $I$ in Case (1) must be less than $\abs{J'}$ follows from the fact that if $\abs{I} = \abs{J'}$ then there is a differential
$$ d^{S^n}(\gamma[I]) = \beta[J'] $$ 
in the TGSS for $S^n$ (see Remark~\ref{rmk:TAHSS}).  Since $I < J$, this would violate our assumption that the differential 
$$ d^{S^n} (\alpha[J]) = \beta[J']. $$
was non-trivial.
\end{proof}

\begin{prop}\label{prop:-EGSS}
Suppose that $\ell > 0$ and in the TGSS for $S^n$ there is a non-trivial differential
$$ d^{S^n} (\alpha[J]) = \beta[J']. $$
If $m < n$ then one of the following possibilities occurs in the TGSS for $S^m$.
\begin{enumerate}
\item The element $\alpha[J]$ supports a shorter non-trivial $d^{S^m}$ differential
$$ d^{S^m}(\alpha[J]) = \gamma[I] $$
for $I > J'$.
\item There is a non-trivial differential
$$ d^{S^m} (\alpha[J]) = \beta[J']. $$
\end{enumerate}
\end{prop}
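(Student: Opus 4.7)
The plan is to mirror the strategy of Proposition~\ref{prop:EGSS} by exploiting naturality under the iterated suspension $E^{n-m}: S^m \to \Omega^{n-m} S^n$. First I would iterate Lemma~\ref{lem:Efilt} to produce a map of $\mc{G}(\omega^{\omega+1})$-indexed towers, which by Lemma~\ref{lem:ETGSS} induces a map of transfinite spectral sequences
$$ E^{n-m}_* : E^r_{t,\mu}(S^m) \to E^r_{t+n-m,\mu}(S^n) $$
commuting with differentials and acting on $E^1$ as the identity on $\gamma[K]$ when $e(K) \ge n$ and as zero otherwise. Because $\alpha[J] \in E^1_{*,J}(S^n)$ is assumed nonzero, we have $e(J) \ge n$, and hence $E^{n-m}_*(\alpha[J]) = \alpha[J]$.

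Next, I would argue that $\alpha[J]$ cannot be a permanent cycle in the TGSS for $S^m$: if it were, naturality would force $\alpha[J]$ to be a permanent cycle in the TGSS for $S^n$ as well, contradicting the hypothesis $d^{S^n}(\alpha[J]) = \beta[J']$. So $\alpha[J]$ supports a first non-trivial differential $d^{S^m}_{\nu}(\alpha[J]) = \gamma[I]$ in the TGSS for $S^m$, and naturality of the map of spectral sequences delivers
$$ d^{S^n}_{\nu}(\alpha[J]) = E^{n-m}_*(\gamma[I]). $$

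The argument then splits on $\nu$ versus the length $\nu_0 = \mu[J] - \mu[J']$ of the given $S^n$ differential. The case $\nu > \nu_0$ is excluded because it would make $\alpha[J]$ a $d^{S^n}_{\nu_0}$-cycle. When $\nu = \nu_0$, the equation $E^{n-m}_*(\gamma[I]) = \beta[J']$, together with the identity-or-zero description of $E^{n-m}_*$ on $E^1$, forces $\gamma[I] = \beta[J']$, landing in Case~(2). When $\nu < \nu_0$, one obtains $E^{n-m}_*(\gamma[I]) = 0$, hence $e(I) < n$; and the inequality $\nu < \nu_0$ unpacks to $\mu[I] > \mu[J']$, which under the identification of the transfinite indexing with the CU ordering is precisely the condition $I > J'$ of Case~(1).

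I expect the only real friction to be bookkeeping: one must verify carefully, via the definitions of $\mu[J]$ and of the CU ordering in Section~\ref{conventions}, that ``$I > J'$'' corresponds exactly to ``the differential from $\alpha[J]$ to $\gamma[I]$ is shorter than the differential from $\alpha[J]$ to $\beta[J']$.'' Everything else is purely formal naturality, appealing to Lemmas~\ref{lem:Efilt} and \ref{lem:ETGSS} and the general framework for maps of transfinite spectral sequences reviewed in Appendix~\ref{sec:Hu}.
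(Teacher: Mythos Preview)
Your proposal is correct and takes essentially the same approach as the paper, which dispatches the proposition in one line as following immediately from Lemma~\ref{lem:ETGSS}. You have simply spelled out the naturality argument that the paper leaves implicit; the case split on $\nu$ versus $\nu_0$ and the translation between ``shorter differential'' and ``$I > J'$'' via the transfinite indexing are exactly what that one-line invocation entails.
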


\begin{proof}
This proposition follows immediately from Lemma~\ref{lem:ETGSS}.
\end{proof}

\begin{prop}\label{prop:PGSS}
Suppose that $\ell > 0$ and in the TGSS for $S^{2n+1}$ there is a non-trivial differential
$$ d^{S^{2n+1}} (\alpha[J]) = \beta[J']. $$
Then one of the following possibilities occurs in the TGSS for $S^{n}$.
\begin{enumerate}
\item The element $\beta[J',n]$ is the target of a non-trivial differential 
$$ d^{S^n} (\gamma[I]) \rightarrow \beta[J',n] $$
for $I > [J,n]$, $\abs{I} < \abs{J'}+1$
and $\alpha[J,n]$ is in the kernel of the differential
$$ d^{S^n}: E^*_{*,[J,n]} \rightarrow E^*_{*, [J',n]}. $$
\item There is a non-trivial differential
$$ d^{S^n} (\alpha[J, n]) = \beta[J', n]. $$
\end{enumerate}
\end{prop}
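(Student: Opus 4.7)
The proof will be a straightforward application of Lemma~\ref{lem:PTGSS}, which provides a map of transfinite spectral sequences $P_*: \{E^\bullet_{*,*}(S^{2n+1})\} \to \{E^\bullet_{*+2,*}(S^n)\}$ acting on $E^1$-terms by $\alpha[J] \mapsto \alpha[J,n]$ and $\beta[J'] \mapsto \beta[J',n]$. The whole argument rests on the fact that $P_*$ commutes with every page differential, so the hypothesized source differential $d^{S^{2n+1}}(\alpha[J]) = \beta[J']$ produces a relation among the images at the same page of the target TGSS. The plan is to split on the fate of $\alpha[J,n]$ and $\beta[J',n]$ in the target and show one of the two listed possibilities is forced.

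First I would show $\alpha[J,n]$ cannot support a \emph{shorter} nontrivial differential in the target. Indeed, on any earlier page $r'$, naturality gives $d_{r'}(\alpha[J,n]) = d_{r'}(P_*\alpha[J]) = P_*(d_{r'}\alpha[J]) = P_*(0) = 0$, since $\alpha[J]$ supports the longer differential in the source and hence is a cycle on all earlier pages. So either $\alpha[J,n]$ survives all the way to the relevant page, or it is killed by an incoming shorter differential. In the first subcase, naturality yields $d^{S^n}(\alpha[J,n]) = \beta[J',n]$ on that page; if $\beta[J',n]$ is still nonzero there, this differential is nontrivial, giving Case~(2), and otherwise $\beta[J',n]$ must already have been hit by a shorter differential $d^{S^n}(\gamma[I]) = \beta[J',n]$, and $\alpha[J,n]$ lies in the kernel trivially because its target has vanished, giving Case~(1).

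In the remaining subcase, $\alpha[J,n]$ has been killed by a shorter incoming differential before the relevant page. Here I would argue that $\beta[J',n]$ must also be killed by a shorter differential: the equality $[\beta[J',n]] = P_*[\beta[J']] = P_*d^{S^{2n+1}}[\alpha[J]] = d^{S^n}P_*[\alpha[J]] = d^{S^n}[\alpha[J,n]] = d^{S^n}(0) = 0$ on that page of the target forces $\beta[J',n]$ to be zero there, so it must likewise have been hit by a shorter differential $d^{S^n}(\gamma[I]) = \beta[J',n]$. The condition on $I$ being in the ``right direction'' ($I > [J,n]$) comes for free from the direction of differentials in the $\mc{G}(\omega^{\omega+1})$-indexed filtration, and $\alpha[J,n]$ is again trivially in the kernel, so Case~(1) holds.

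Finally, to justify the inequality $|I| < |J'|+1$ in Case~(1), I would mimic the argument given for Prop.~\ref{prop:EGSS}: if instead $|I| = |J'|+1 = |[J',n]|$, then the differential $d^{S^n}(\gamma[I]) = \beta[J',n]$ is a TAHSS-type differential inside a single layer of the Goodwillie tower and, by Remark~\ref{rmk:TAHSS} together with the faithful $P_*$-embedding of the $L(k-1)_{2n+1}$-TAHSS into the $L(k)_n$-TAHSS, lifts back to a differential $d^{S^{2n+1}}(\gamma[I']) = \beta[J']$ in the source TGSS with $I'< J$, contradicting the assumed non-triviality of $d^{S^{2n+1}}(\alpha[J]) = \beta[J']$. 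The main obstacle is really just clean bookkeeping in the transfinite indexing — the substantive content is the naturality of $P_*$ and the traceback argument for the length of $I$, both of which reduce to inputs already assembled in Lemma~\ref{lem:PTGSS} and Remark~\ref{rmk:TAHSS}.
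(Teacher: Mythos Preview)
Your proposal is correct and follows essentially the same approach as the paper: both invoke Lemma~\ref{lem:PTGSS} to get the map of spectral sequences $P_*$, then use naturality to carry the differential over, and both justify the bound $\abs{I} < \abs{J'}+1$ via Remark~\ref{rmk:TAHSS} by arguing that an equal-length differential would pull back to $S^{2n+1}$ and contradict non-triviality. The only difference is that the paper compresses the case analysis into the phrase ``follows immediately from Lemma~\ref{lem:PTGSS},'' whereas you have spelled out the three subcases explicitly --- this extra detail is accurate and does no harm.
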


\begin{proof}
The proposition follows immediately from Lemma~\ref{lem:PTGSS}.  The fact that the sequence $I$ in Case (1) satisfies $\abs{I} < \abs{J'}+1$ follows from the fact that otherwise this differential comes from an differential in the TGSS for $S^{2n+1}$ (see Remark~\ref{rmk:TAHSS}), and this would violate our assumption that the differential 
$$ d^{S^{2n+1}} (\alpha[J]) = \beta[J']. $$
was non-trivial. 
\end{proof}

Combining Propositions~\ref{prop:-EGSS} and \ref{prop:PGSS} gives the following corollary.

\begin{cor}\label{cor:propdiffs}
Suppose that $m \le 2n+1$, $m' \le n$,  and in the TGSS for $S^{m}$ there is a non-trivial differential
$$ d^{S^m} (\alpha[J]) = \beta[J'] $$
for $\abs{J'} > \abs{J}$.
Then in the TGSS for $S^{m'}$, one of the following occurs.
\begin{enumerate}
\item The element $\beta[J']$ is the target of a $d^{S^{2n+1}}$ differential
$$ d^{S^{2n+1}}(\gamma[I]) = \beta[J'] $$
for $I > J$ and $\abs{I} < \abs{J'}$.
\item The element $\alpha[J, n]$ supports a non-trivial $d^{S^{m'}}$ differential 
$$ d^{S^{m'}}(\alpha[J,n]) = \gamma'[I'] $$
for $I' < [J',n]$.
\item There is a differential
$$ d^{S^{m'}}(\alpha[J, n]) = \beta[J',n]. $$
\end{enumerate} 
\end{cor}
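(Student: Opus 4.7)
I would prove the corollary by chaining Propositions \ref{prop:EGSS}, \ref{prop:PGSS}, and \ref{prop:-EGSS} along the sequence of spheres $S^m \rightsquigarrow S^{2n+1} \rightsquigarrow S^n \rightsquigarrow S^{m'}$. The first leg is an iterated suspension (legal because $m \le 2n+1$), the second is the $P$-map, and the third is an iterated desuspension (legal because $m' \le n$). Each leg is governed by a ``propagate-or-truncate'' dichotomy supplied by the corresponding proposition, and the three cases in the corollary are the three possible final resting places of the given differential $d^{S^m}(\alpha[J])=\beta[J']$.

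\textbf{Execution.} First apply Proposition \ref{prop:EGSS} with its ``$n$'' set to our $m$ and its ``$m$'' set to $2n+1$. The resulting alternative is: either $\beta[J']$ is the target of a strictly shorter differential in the TGSS for $S^{2n+1}$---which is precisely Case (1) of the corollary---or the propagated differential $d^{S^{2n+1}}(\alpha[J])=\beta[J']$ persists. In the latter branch, Proposition \ref{prop:PGSS} supplies a dichotomy in $S^n$: either $\beta[J',n]$ is hit by a shorter differential there, or $d^{S^n}(\alpha[J,n])=\beta[J',n]$ holds. Finally Proposition \ref{prop:-EGSS} transports whichever differential has survived from $S^n$ down to $S^{m'}$, where again either the source supports a strictly shorter differential---giving Case (2)---or the full differential $d^{S^{m'}}(\alpha[J,n])=\beta[J',n]$ is realized, giving Case (3).

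\textbf{Anticipated obstacle.} The combinatorics of the ``shorter differential'' subcase from Step 2 must be shown to land in Case (2) of the corollary. Concretely, if in the TGSS for $S^n$ the element $\beta[J',n]$ is killed by some $d^{S^n}(\gamma[I'])=\beta[J',n]$ with $I' > [J,n]$ and $\abs{I'} < \abs{J'}+1$, one reapplies Proposition \ref{prop:-EGSS} to this auxiliary differential to confirm that $\beta[J',n]$ is likewise killed in $S^{m'}$ (possibly by an even shorter differential coming from $\gamma[I']$'s own support), which in turn forces $\alpha[J,n]$---being no longer allowed to hit $\beta[J',n]$ but not a permanent cycle either, by the source being nonzero upstairs---to support its own strictly shorter differential in $S^{m'}$. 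The second delicate point is bookkeeping of the CU-ordering across the transfinite reindexings induced by $E$ and $P$; the key observation is that suspension leaves the sequence label unchanged while $P$ simply appends the basepoint sphere dimension on the right, so the strict inequalities $I > J$, $I' > [J,n]$, and the length bounds $\abs{I}<\abs{J'}$ translate cleanly between the spectral sequences involved.
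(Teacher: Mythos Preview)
Your approach is the paper's: the corollary is recorded there as following directly from Propositions~\ref{prop:-EGSS} and~\ref{prop:PGSS}, and your chain $S^m \to S^{2n+1} \to S^n \to S^{m'}$ via Propositions~\ref{prop:EGSS}, \ref{prop:PGSS}, and~\ref{prop:-EGSS} is exactly how one unpacks that claim --- you are simply more explicit about the initial suspension leg, which the paper leaves implicit. Your ``anticipated obstacle'' paragraph correctly flags the one subcase that needs care and which the paper likewise does not spell out.
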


The significance of Corollary~\ref{cor:propdiffs} is that differentials emanating from the $k$-line of the GSS can be propagated to differentials emanating from the $(k+1)$-line in the GSS.  As Theorems~\ref{thm:GSSd1} and \ref{thm:GSSdr} give us many differentials emanating from the $0$-line, these ``Hopf invariant'' differentials populate the entire GSS through the mechanism of Corollary~\ref{cor:propdiffs}.

\section{Goodwillie-Whitehead conjecture}\label{sec:GWC}

Consider the GSS for $S^1$:
$$ E_1^{k,t}(S^1) \cong \pi_{t+k-1}(L(k)) \Rightarrow \pi_t(S^1). $$
The appearance of a spectral sequence with $E_1$-term given by the homotopy of the $L(k)$-spectra, converging to $\ZZ_{(2)}$, bears a striking resemblance to the the $2$-primary Whitehead conjecture, as originally proved by Kuhn \cite{KuhnWhitehead}.

In his proof of the $2$-primary Whitehead conjecture, Kuhn formed a Kahn-Priddy sequence
$$ S^1 \leftrightarrows \Omega^\infty \Sigma L(0) 
\begin{array}{c} \partial_1 \\ \leftrightarrows \\ \delta_0 \end{array}
\Omega^\infty \Sigma L(1) 
\begin{array}{c} \partial_2 \\ \leftrightarrows \\ \delta_1 \end{array}
\Omega^\infty \Sigma L(2) 
\begin{array}{c} \partial_3 \\ \leftrightarrows \\ \delta_2 \end{array}
\cdots
$$
where the maps $\partial_k$ are the infinite loop space maps induced by the composites
$$ \Sigma L(k) \simeq \Sigma^{-k} \mr{Sp}_{2^k}(S)/\mr{Sp}_{2^{k-1}}(S) \xrightarrow{\partial} \Sigma^{-k+1} \mr{Sp}_{2^{k-1}}(S)/\mr{Sp}_{2^{k-2}}(S) \simeq \Sigma L(k-1)  $$
and the maps $\delta_k$ are given by the composites
$$ \Omega^\infty \Sigma L(k) \rightarrow Q (S^1)^{\wedge 2^k}_{h\Sigma_2^{\wr k}} \xrightarrow{JH} Q (S^1)^{\wedge 2^{k+1}}_{h\Sigma_2^{\wr (k+1)}} \rightarrow \Omega^\infty \Sigma L(k+1). $$
Kuhn showed that the sum
$$ \partial_{k+1}\delta_k+\delta_{k-1}\partial_k $$
is a self-equivalence of $L(k)$.  Paired with the observation that $\partial^2 \simeq 0$, this establishes that the differentials $\partial_k$ give an acyclic chain comlex, where $d_k$ serves as the contracting homotopy.

The $k$-invariants of the Goodwillie tower for $S^n$ give maps
$$ \delta^{S^n}_k: \Omega^\infty \Sigma^{n-k} L(k)_n \rightarrow \Omega^\infty \Sigma^{n-k} L(k+1)_n. $$
The following calculus versions of the Whitehead conjecture have been postulated and studied by Arone, Dwyer, Kuhn, Lesh, and Mahowald (see \cite{AroneLesh}, \cite{AroneDwyerLesh}).

\begin{conj}[Goodwillie-Whitehead conjecture: strong form]\label{conj:SGWC}
We have
\begin{equation}\label{eq:SGWC}
 \Omega^k \delta_k = \delta^{S^1}_k. 
\end{equation}
\end{conj}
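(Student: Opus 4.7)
The plan is to argue by induction on $k$, using Ching's model for $\partial_*(\Id)$ to describe the Goodwillie $k$-invariants explicitly and to match them with Kuhn's construction via James-Hopf maps. I would first reduce to a statement about underlying maps of spectra $\Sigma^{1-k} L(k) \to \Sigma^{1-k} L(k+1)$, noting that the conjecture (\ref{eq:SGWC}) implicitly also asserts that $\delta^{S^1}_k$ is an infinite loop map in the first place; this second point is nontrivial because the Goodwillie tower of the identity is not \emph{a priori} a tower of infinite loop spaces, so part of any proof must produce an infinite loop structure on $\delta^{S^1}_k$ compatible with Kuhn's.

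The base case $k = 1$ is essentially Theorem~\ref{thm:metastable}: the fiber sequence $P_2(S^1) \to QS^1 \xrightarrow{JH} Q(S^1)^{\wedge 2}_{h\Sigma_2}$ identifies the first Goodwillie $k$-invariant with the James-Hopf map, which after projection to the Steinberg summand is Kuhn's $\delta_1$. For the inductive step, I would use Ching's cooperadic model from Section~\ref{sec:cooperad} to produce a sufficiently rigid replacement of $\{P_{2^k}(S^1)\}$ in which the $k$-invariants are expressed directly through the operadic composition $\partial_2(\Id) \wedge \partial_{2^k}(\Id)^{\wedge 2} \to \partial_{2^{k+1}}(\Id)$. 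Since $\partial_2(\Id) \simeq S^{-1}$ (Section~\ref{sec:IDoperad}), this composition realizes a map $\Sigma^{-1} L(k)^{\wedge 2}_{h\Sigma_2} \to L(k+1)$, which on the Kuhn side is the binary piece used inductively to build $\delta_{k+1}$ from $\delta_k$ via an iterated James-Hopf map. Combining Proposition~\ref{prop:HTakayasu}, Theorem~\ref{thm:AroneMahowald}, and Lemma~\ref{lem:Priddy}, both maps should induce the same effect on $H_*(-;\FF_2)$, namely appending an appropriate operation $\bar{Q}^j$ to iterated classes of the form $\bar{Q}^J \iota_1$.

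The main obstacle is promoting the homological agreement into an honest homotopy. Even granting that the two maps of spectra induce the same map on mod $2$ homology, uniqueness of a spectrum-level lift is governed by $\Ext$-groups over the Steenrod algebra acting on $H^*(L(k))$, and this module is sufficiently large and complicated that vanishing of these obstructions does not follow from any general principle. A further obstruction is to extract $\delta^{S^1}_k$ in rigid enough form from the Goodwillie tower for comparison with Kuhn's explicit formula: the $k$-invariants of a tower of fibrations are naturally defined only up to a choice of fibration replacement, and while Ching's operadic model gives the right conceptual description of $\partial_*(\Id)$, it does not automatically endow the tower $\{P_{2^k}(S^1)\}$ with strictly coherent attaching maps. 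Bridging this gap appears to require a substantial strengthening of the Arone-Ching chain rule to the level of coherent towers of infinite loop spaces, which seems to me the real technical heart of the conjecture.
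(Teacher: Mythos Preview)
This statement is a \emph{conjecture} in the paper, not a theorem. The paper does not supply a proof of Conjecture~\ref{conj:SGWC}; it records the statement, observes that Theorem~\ref{thm:metastable} settles the case $k=0$, notes Kuhn's stronger functorial formulation, and cites the Arone--Dwyer--Lesh result that $\delta_k^{S^V}$ admits a $k$-fold delooping as a natural transformation. There is therefore no proof in the paper to compare your proposal against.

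What you have written is not a proof but a research outline, and you are candid about this: your final two paragraphs correctly isolate the genuine obstructions (lifting a mod~$2$ homology identification to a homotopy of spectrum maps, and extracting the Goodwillie $k$-invariants in a form rigid enough for comparison). These are exactly the reasons the statement remains conjectural. Your inductive strategy via Ching's operadic model is a plausible framework, but as you yourself note, the Arone--Ching chain rule as stated does not provide the coherence needed to pin down the attaching maps of the tower up to homotopy, and the $\Ext$-obstructions to uniqueness over $H^*(L(k))$ are not known to vanish.

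One small correction: the case handled by Theorem~\ref{thm:metastable} is $k=0$, not $k=1$. The map $\delta_0^{S^1}$ is the $k$-invariant $QS^1 \to Q\Sigma L(1)$, and the theorem identifies this with the James--Hopf map, giving $\delta_0 = \delta_0^{S^1}$. Your discussion of the ``base case'' has an off-by-one shift in the indexing.
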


\begin{rmk}
Actually, N.~Kuhn has proposed an even stronger conjecture.  Both $\delta_k$ and $\delta^{S^1}_k$ extend to natural transformations between functors from vector spaces to spaces (see the proof of Lemma~\ref{lem:EHPfiber}).  Kuhn conjectures that (\ref{eq:SGWC}) holds on the level of these natural transformations.  Arone, Dwyer, and Lesh have proven that $\delta^{S^V}_k$ admits a $k$-fold delooping as a natural transformation.
\end{rmk}

Theorem~\ref{thm:metastable} affirms the $k = 0$ version of this conjecture.  The compatibility of the Goodwillie tower with $E$ and $P$ given by Corollary~\ref{cor:EHPfiber} implies the following diagrams commute.
$$
\xymatrix{
\Omega^\infty \Sigma^{n-k} L(k)_n \ar[r]^{\delta^{S^n}_k} \ar[d]_E & 
\Omega^\infty \Sigma^{n-k} L(k+1)_n \ar[d]^E 
\\
\Omega^\infty \Sigma^{n-k} L(k)_{n+1} \ar[r]_{\delta^{S^{n+1}}_k} &
\Omega^\infty \Sigma^{n-k} L(k+1)_{n+1}
}
$$
$$
\xymatrix{
\Omega^\infty \Sigma^{2n-k} L(k-1)_{2n+1} \ar[r]^{\delta^{S^{2n+1}}_{k-1}} \ar[d]_P & 
\Omega^\infty \Sigma^{2n-k} L(k)_{2n+1} \ar[d]^P 
\\
\Omega^\infty \Sigma^{n-k} L(k)_{n} \ar[r]_{\delta^{S^{n}}_k} &
\Omega^\infty \Sigma^{n-k} L(k+1)_{n}
}
$$
Together with Theorem~\ref{thm:GSSd1}, these imply the following slight strengthening of Corollary~\ref{cor:propdiffs} in the context of GSS $d_1$-differentials.

\begin{thm}\label{thm:higherd1}
Suppose that $\alpha \in \pi_*^s$ with stable Hopf invariant
$$ \beta[m] \in SHI(\alpha). $$
Suppose that $\alpha[j_1, \ldots, j_k]$ is a permanent cycle in the TAHSS for $L(k)_n$ and $m \ge 2j_1+1$.  Then in the TGSS for $S^n$, we have
$$ d^{S^n}(\alpha[j_1, \ldots, j_k]) = \beta[m, j_1, \ldots, j_k]. $$
\end{thm}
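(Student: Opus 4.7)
The plan is to establish this $d_1$-differential in the TGSS for $S^n$ by iteratively propagating a seed differential along the $P$ and $E$ maps. The seed comes from Theorem~\ref{thm:GSSd1}: since $m \ge 2j_1+1$, it furnishes
\[ d^{S^{2j_1+1}}(\alpha) = \beta[m] \]
in the TGSS for $S^{2j_1+1}$, and at the level of Goodwillie towers this differential is induced by the first $k$-invariant $\delta^{S^{2j_1+1}}_0$.

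From here I would walk the differential through the chain of spheres
\[ S^{2j_1+1} \to S^{j_1} \to S^{2j_2+1} \to S^{j_2} \to \cdots \to S^{2j_k+1} \to S^{j_k} \to S^n, \]
alternately applying the $P$-propagation (Proposition~\ref{prop:PGSS}, equivalently the lower commutative diagram preceding the theorem, which records $\delta^{S^n}_k \circ P = P \circ \delta^{S^{2n+1}}_{k-1}$) and the desuspension propagation (Proposition~\ref{prop:-EGSS}). Each ``$P$ then desuspend'' step transforms a candidate $d^{S^{2j_s+1}}(\alpha[j_1, \ldots, j_{s-1}]) = \beta[m, j_1, \ldots, j_{s-1}]$ into $d^{S^{2j_{s+1}+1}}(\alpha[j_1, \ldots, j_s]) = \beta[m, j_1, \ldots, j_s]$, the desuspension being legitimate because the CU condition gives $j_s \ge 2j_{s+1}+1$. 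After $k$ such iterations, a final desuspension from $S^{j_k}$ down to $S^n$ (valid since $j_k \ge n$ by excess) produces the claimed differential.

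The main obstacle is to rule out, at every stage, the alternative branches of Propositions~\ref{prop:PGSS} and \ref{prop:-EGSS}: either the target $\beta[m, j_1, \ldots, j_s]$ might be hit by a shorter differential, or the source $\alpha[j_1, \ldots, j_s]$ might support one. The permanent-cycle hypothesis on $\alpha[j_1, \ldots, j_k]$ is tailor-made for the source side: by Corollary~\ref{cor:PTAHSS} and Remark~\ref{rmk:TAHSS}, any shorter TAHSS-type differential leaving an intermediate $\alpha[j_1, \ldots, j_s]$ would persist under further $P$-applications to yield a nontrivial TAHSS differential leaving $\alpha[j_1, \ldots, j_k]$ in the TAHSS for $L(k)_n$, contradicting the hypothesis. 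For the target side, the non-triviality of $\beta[m] \in SHI(\alpha)$ ensures that $\beta[m]$ is not a boundary in the AHSS for $L(1)$; Remark~\ref{rmk:TAHSS} then propagates this non-boundary property along $P$ to each $\beta[m, j_1, \ldots, j_s]$ inside its own TAHSS. The trickiest possibility is a shorter hybrid differential (crossing Goodwillie filtrations) striking $\beta[m, j_1, \ldots, j_s]$ at an intermediate sphere, and I expect the delicate bookkeeping across the transfinite filtration indices required to exclude these to be the hardest part of the proof.
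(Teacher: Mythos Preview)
Your approach is essentially the paper's: seed with Theorem~\ref{thm:GSSd1} at $S^{2j_1+1}$ and propagate along $P$ and $E$ through the chain of spheres. The paper's proof is a single sentence, and you have correctly identified all the ingredients it uses.

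Where you diverge slightly is in emphasis. You frame the propagation through Propositions~\ref{prop:PGSS} and \ref{prop:-EGSS}, which are general TGSS statements carrying alternative branches, and then worry about excluding ``shorter hybrid differentials crossing Goodwillie filtrations.'' The paper's point in calling this a \emph{strengthening} of Corollary~\ref{cor:propdiffs} is precisely that those worries evaporate once you work with the $k$-invariant $\delta^{S^n}_k$ directly rather than with the abstract TGSS differential. A GSS $d_1$-differential out of the $k$-line is literally the map $(\delta^{S^n}_k)_*\colon \pi_*\Sigma^{n-k}L(k)_n \to \pi_*\Sigma^{n-k-1}L(k+1)_n$ on honest homotopy groups, not merely on an associated graded. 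The two commuting squares preceding the theorem say this map is intertwined by $P$ and $E$, so the permanent cycle $\alpha[J]$ detects an actual element $\gamma \in \pi_*L(k)_n$ which is the iterated $P$/$E$ image of $\alpha$, and $(\delta^{S^n}_k)_*(\gamma)$ is then the iterated $P$/$E$ image of $(\delta^{S^{2j_1+1}}_0)_*(\alpha)$. Corollaries~\ref{cor:PTAHSS} and \ref{cor:ETAHSS} now tell you that this image is detected by $\beta[m,J]$ in the TAHSS for $L(k+1)_n$, and you are done. No hybrid differentials need to be excluded because you never passed to an associated graded where they could intervene; the permanent-cycle hypothesis is used only to ensure $\alpha[J]$ actually names an element of $\pi_*L(k)_n$.
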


Theorem~\ref{thm:GSSd1} can be summarized by the slogan: 
$$ d^{GSS}_1(\alpha[J]) = SHI(\alpha)[J] + \text{lower terms}. $$
This is consistent with Conjecture~\ref{conj:SGWC}.

Conjecture~\ref{conj:SGWC} implies the following weaker conjecture.

\begin{conj}[Goodwillie-Whitehead conjecture: weak form]\label{conj:WGWC}
The GSS for $S^1$ collapses at the $E_2$-page.
\end{conj}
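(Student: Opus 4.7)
The plan is to reduce the weak conjecture to the strong form (Conjecture~\ref{conj:SGWC}) together with Kuhn's theorem \cite{KuhnWhitehead}. First, observe that the $d_1$-differentials in the GSS for $S^1$ are, by definition, induced by the $k$-invariants
$$ \delta^{S^1}_k : \Omega^\infty \Sigma^{1-k} L(k) \rightarrow \Omega^\infty \Sigma^{1-k} L(k+1) $$
of the Goodwillie tower. The idea is to identify these maps, on the infinite loop space level, with (a desuspension/looping of) the maps $\delta_k$ of Kuhn's Kahn-Priddy sequence.

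Next, I would use Theorem~\ref{thm:GSSd1} together with the propagation result of Theorem~\ref{thm:higherd1} as consistency checks: Theorem~\ref{thm:GSSd1} says that the $d_1$-differential out of the $0$-line sends $\alpha$ to its stable Hopf invariant, and the definition of $\delta_k$ in Kuhn's sequence is essentially the iterated James-Hopf map. So the $d_1$-differentials out of the $0$-line of the GSS for $S^1$ match (on $\pi_*$) the maps $(\delta_k)_*$ on the first group. Theorem~\ref{thm:higherd1} then propagates these to all bidegrees in a pattern consistent with what one gets from a chain complex of $\delta_k$'s; this is evidence, but not proof, that $\delta^{S^1}_k \simeq \Omega^k \delta_k$.

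Assuming Conjecture~\ref{conj:SGWC}, the $E_1$-page of the GSS for $S^1$ together with its $d_1$ becomes the chain complex
$$ \cdots \rightarrow \pi_* \Omega^\infty \Sigma^{1-k} L(k) \xrightarrow{(\Omega^k \delta_k)_*} \pi_* \Omega^\infty \Sigma^{1-k-1} L(k+1) \rightarrow \cdots $$
Kuhn's theorem says that $\partial_{k+1} \delta_k + \delta_{k-1} \partial_k \simeq \mr{Id}$ on $\Sigma L(k)$ and $\partial^2 \simeq 0$, so the associated complex $\{\Omega^\infty \Sigma L(k)\}$ with differentials $\partial_k$ is contractible with homotopy $\delta_k$, and in particular $\pi_*$ applied to the Kahn-Priddy tower has $E_2$ concentrated in a single column abutting to $\pi_*(S^1) = \ZZ_{(2)}[0]$. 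Since the $E_2$-page then already has the correct total rank to be the abutment (with entries only in the column corresponding to $\pi_*(S^1)$ and in the correct stem), there is no room for any $d_r$ with $r \ge 2$: the source and target of every such differential would include at least one zero group. Hence the spectral sequence collapses at $E_2$.

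The main obstacle, and the only serious one, is exactly Conjecture~\ref{conj:SGWC}: identifying the Goodwillie $k$-invariant $\delta^{S^1}_k$ with $\Omega^k \delta_k$. The naturality arguments in Section~\ref{sec:GWC} reduce this to a comparison of two explicit natural transformations of functors of a vector space, and the result of Arone-Dwyer-Lesh that $\delta^{S^V}_k$ admits a $k$-fold delooping suggests that both $\delta^{S^1}_k$ and $\Omega^k \delta_k$ arise from the same bar-construction-level map; making this precise — likely via the Arone-Ching chain rule to pin down both maps in terms of the operad structure on $\partial_*(\mr{Id})$ and then matching them against the Snaith-split James-Hopf construction used by Kuhn — is the heart of the argument and is not something one can expect to finish without a substantial independent calculation.
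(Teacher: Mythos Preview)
The statement you are trying to prove is a \emph{conjecture} in the paper, not a theorem: the paper offers no proof of it. What the paper does say, immediately before stating Conjecture~\ref{conj:WGWC}, is exactly the implication you have sketched --- ``Conjecture~\ref{conj:SGWC} implies the following weaker conjecture'' --- and then remarks only that the low-dimensional calculations of Chapter~\ref{sec:calculations} are consistent with it.

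Your proposal is therefore not a proof of the conjecture but a fleshing-out of that one-line implication. The argument you give for the implication is essentially correct: under Conjecture~\ref{conj:SGWC} the GSS $d_1$ for $S^1$ is identified with $\Omega^k\delta_k$, and Kuhn's relation $\partial_{k+1}\delta_k+\delta_{k-1}\partial_k\simeq\mr{Id}$ shows that $\partial_*$ is a contracting homotopy for the $\delta_*$-complex (the roles of $\partial$ and $\delta$ are symmetric in that relation), so $E_2$ is concentrated in the single entry computing $\pi_*(S^1)$ and there is no room for higher differentials. You are also honest about the real gap: everything hinges on Conjecture~\ref{conj:SGWC}, which you do not establish and which the paper leaves open. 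In short, you have reproduced and expanded the paper's own remark, but neither you nor the paper has a proof of Conjecture~\ref{conj:WGWC}.
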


The low dimensional calculations of the GSS for $S^1$ in Chapter~\ref{sec:calculations} are consistent with Conjecture~\ref{conj:WGWC}.
 
\section{Exotic Goodwillie differentials}\label{sec:exotic}

We shall see in Chapter~\ref{sec:calculations} that 
Theorems~\ref{thm:GSSd1}, \ref{thm:GSSdr}, \ref{thm:higherd1} and Corollary~\ref{cor:propdiffs} account for most, but not all, of the GSS differentials in our sample computations.  We think of these differentials that are given by stable Hopf invariants and Hopf invariants as ``typical''.

The remaining differentials we regard as ``exotic''.  All of the exotic differentials which appear in our low dimensional calculations are instances of two different phenomena:

\begin{itemize}
\item \ul{Geometric Boundary Effect:} such exotic differentials can be deduced from TAHSS differentials and non-exotic GSS differentials when Lemma~\ref{lem:GBT} is applied to the EHP sequence.
\vspace{1em}

\item \ul{Bizarre:} these differentials show up without explanation in the GSS for $S^1$ --- the only reason the author knows they are present is because he already knows $\pi_*(S^1)$, and these differentials are necessary to get the right answer.  These differentials in the GSS for $S^1$ induce bizarre differentials throughout the GSS for $S^n$ for various $n$ through the mechanisms of  Proposition~\ref{prop:EGSS} and Corollary~\ref{cor:propdiffs}. 
\end{itemize}

The geometric boundary effect differentials are a consequence of the following theorem.  

\begin{thm}\label{thm:GBE}
Suppose that there is a non-trivial TGSS differential
$$ d^{S^n}(\alpha[J]) = \beta[J', n] $$
and there are non-trivial differentials
\begin{align*}
d^{S^{2n+1}}(\alpha[J']) &  = \gamma[T'], \\
d^{S^{n}}(\delta[T]) & = -\gamma[T', n].
\end{align*}
Assume the following technical condition:

\begin{tabular}{c p{.7\textwidth}}
$(\ast)$ & 
For all non-trivial differentials
$$ d^{S^{2n+1}} (\tau[I]) = \tau'[I'] $$
with 
$$ T' < I' < I < J' \quad \text{and} \quad \abs{I} < \abs{I}' $$
the induced differential under the $P$ map
$$ d^{S^{n}} (\tau[I,n]) = \tau'[I',n] $$
is non-trivial.
\end{tabular}

Then there is a differential
$$ d^{S^{n+1}}(\alpha[J]) = \delta[T]. $$
\end{thm}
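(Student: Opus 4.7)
The plan is to apply Lemma~\ref{lem:GBT}, the abstract geometric boundary theorem of Appendix~\ref{sec:Hu}, to the rotated EHP fiber sequence
$$ \Omega^2 S^{n+1} \xrightarrow{H} \Omega^2 S^{2n+1} \xrightarrow{P} S^n \xrightarrow{E} \Omega S^{n+1}. $$
Lemmas~\ref{lem:Pfilt} and \ref{lem:Efilt} upgrade this to a fiber sequence of $\mc{G}(\omega^{\omega+1})$-indexed transfinite towers, and hence Lemma~\ref{lem:GBT} produces compatible maps of TGSS's together with a case-by-case recipe relating their differentials.

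First I would translate the hypothesis $d^{S^n}(\alpha[J]) = \beta[J', n]$ into the GBT setup: the target $\beta[J', n] = P_*(\beta[J'])$ lies in the image of $P_*$ from the ``fiber'' TGSS for $S^{2n+1}$, and its image under $E_*$ in the ``base'' TGSS for $\Omega S^{n+1}$ vanishes by Lemma~\ref{lem:ETGSS}, since $e([J', n]) = n < n+1$. Lemma~\ref{lem:GBT} therefore asserts that $E_*(\alpha[J]) = \alpha[J]$ either survives or supports a strictly longer transgressive differential in the TGSS for $S^{n+1}$, with the target of that longer differential controlled by the continued life of $\beta[J']$ (and nearby classes) in the fiber TGSS.

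Next I would use the two auxiliary hypotheses to identify the transgression target. The differential $d^{S^{2n+1}}(\alpha[J']) = \gamma[T']$ in the fiber TGSS, pushed forward via $P$ by Proposition~\ref{prop:PGSS}, yields $d^{S^n}(\alpha[J', n]) = \gamma[T', n]$; combined with the cancelling differential $d^{S^n}(\delta[T]) = -\gamma[T', n]$, the sum $\alpha[J', n] + \delta[T]$ becomes an $E_1$-cycle, leaving $\delta[T]$ as the only candidate for the transgression of $\alpha[J]$ that the GBT can produce. The recipe in Lemma~\ref{lem:GBT} then delivers $d^{S^{n+1}}(\alpha[J]) = \delta[T]$ in the case we expect to land in.

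The main obstacle is to show that we land in that case rather than a competing one. Lemma~\ref{lem:GBT} offers alternative conclusions whenever an intermediate differential in the fiber TGSS furnishes a shorter transgression; the technical hypothesis $(\ast)$ is designed precisely to exclude this. Specifically, any non-trivial differential $d^{S^{2n+1}}(\tau[I]) = \tau'[I']$ with $T' < I' < I < J'$ and $\abs{I} < \abs{I'}$ whose $P$-image failed to be non-trivial in the middle TGSS would reroute the GBT transgression to a class strictly smaller than $\delta[T]$, placing us in a shorter case and destroying the desired conclusion. Hypothesis $(\ast)$ prohibits exactly this, leaving $d^{S^{n+1}}(\alpha[J]) = \delta[T]$ as the only possible output of Lemma~\ref{lem:GBT}. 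The bookkeeping of the $\mc{G}(\omega^{\omega+1})$-indexed filtration shifts induced by $P$ and $E$ across the three TGSS's will be the most delicate part to verify.
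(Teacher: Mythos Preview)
Your plan is essentially the paper's proof: apply Lemma~\ref{lem:GBT} to the fiber sequence $\Omega^2 S^{n+1}\xrightarrow{H}\Omega^2 S^{2n+1}\xrightarrow{P}S^n\xrightarrow{E}\Omega S^{n+1}$ and use condition $(\ast)$ to force the transgression to land on $\delta[T]$. Two places where the paper is sharper than your sketch: first, it explicitly eliminates the competing cases of Lemma~\ref{lem:GBT} --- Case (1) fails since $E_*\beta[J',n]=0$, Case (2) fails since $E_*,P_*$ give short exact sequences on $E^1$-terms so $H_*=0$, and the hypothesis that $\beta[J']$ supports a nontrivial fiber differential then forces Case (3) rather than (4) or (5); second, rather than your ``sum becomes a cycle'' heuristic, the paper works directly with the Case (3) output elements $x''$ and $y''$, and the role of $(\ast)$ is precisely to show $x''=\gamma[T']$ (whence $y''=\delta[T]$) by ruling out any intermediate $\tau'[I']$ that could serve as $x''$.
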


\begin{proof}
Apply Lemma~\ref{lem:GBT} to the sequence 
$$ \Omega^2 S^{n+1} \xrightarrow{H} \Omega^2 S^{2n+1} \xrightarrow{P} S^n \xrightarrow{E} \Omega S^{n+1}. $$
Since
$$ E_* \alpha[J', n] = 0 $$
we are not in Case (1).  Since $E_*$ and $P_*$ induce short exact sequences on the level of GSS $E^1$-terms, $H_* = 0$, and therefore we cannot be in Case (2).  Since $\alpha[J']$ is assumed to support a non-trivial differential, we are in Case (3).  Therefore, Lemma~\ref{lem:GBT}(3) supplies us with elements $x''$ and $y'' = \delta[T]$ so that
\begin{align*}
d^{S^{2n+1}}(\beta[J']) & = x'', \\
d^{S^n}(y'') & = P_* x'', \\
d^{S^{n+1}} (\alpha[J]) & = E_* y''.
\end{align*} 
The theorem would be proven if we knew that $x'' = \gamma[T']$.
Suppose not.  Then $x'' = \tau'[I']$ with $I' > T'$ and $[I',n] < T$, and $\tau'[I']$ must be the target of a non-trivial differential 
$$ d^{S^{2n+1}} (\tau[I]) = \tau'[I'] $$
with $I' < I < J'$.  The non-triviality condition on $x''$ in Lemma~\ref{lem:GBT}(3) implies $[I,n] > T$.  Thus the induced differential under the $P$ map
$$ d^{S^n}(\tau[I,n]) = \tau[I',n] $$
is trivial, since $\tau[I',n]$ was already killed by the shorter $d^{S^n}$ differential supported by $\delta[T]$.  
We must therefore have $\abs{I} < \abs{I'}$, since TGSS differentials $d^{S^{2n+1}}_\alpha$ with $\alpha \in \mc{G}(\omega^\omega)$ are mapped \emph{faithfully} to TGSS $d^{S^n}$ differentials under the $P$ map (see Remark~\ref{rmk:TAHSS}). 
But then we are in violation of condition $(*)$.  We conclude that $x'' = \gamma[T']$, as desired.
\end{proof}

\begin{ex}
We give an example of an exotic GSS differential which comes from the geometric boundary effect.
The element $\epsilon\eta[4]$ is a permanent cycle in the TAHSS for $L(1)$.
We have
$$ \epsilon[1] \in SHI(\epsilon\eta) $$
but this does not help us compute $d^{S^2}(\epsilon\eta[4])$ since the sequence $(1,4)$ is not CU. 
Instead we apply Theorem~\ref{thm:GBE}.  In the TGSS for $S^1$ there is a differential
$$ d^{S^1}(\epsilon \eta[4]) = \alpha_{6/3}[1] $$
coming from a differential in the AHSS for $L(1)$.
In the TGSS for $S^3$ there is a differential
$$ d^{S^3}(\alpha_{6/3}) = 8\sigma[4] $$
coming from the fact that $8\sigma \in HI(\alpha_{6/3})$.  In the TGSS for $S^1$ there is a differential
$$ d^{S^1}(\eta^3[8,2]) = 8\sigma[4,1] $$
coming from a differential in the TAHSS for $L(2)$.  Condition $(\ast)$ of Theorem~\ref{thm:GBE} is verified by checking (see Table~\ref{tab:L(1)AHSS}) that there are no elements of $\pi_{11}(L(1)_3)$ detected on the $3$-cell which could support an intervening differential.
We deduce that there is an exotic TGSS differential
$$ d^{S^2}(\epsilon \eta[4]) = \eta^3[8,2]. $$
Tables~\ref{tab:S2GSS}--\ref{tab:S6GSS} show several exotic GSS $d_1$ differentials which arise from the geometric boundary effect (in these tables, these differentials are labeled with a $(\ast)$).
\end{ex}

\begin{ex}
The computations of Chapter~\ref{sec:calculations} show that there are precisely four bizarre differentials in the GSS for $S^1$ through the $20$-stem (the first three of these appear in Table~\ref{tab:S1GSS}, denoted with dotted arrows, the last is just beyond the range displayed in the table):
\begin{align*}
d^{S^1}(\theta_3[4]) & = 1[15,3], \\
d^{S^1}(\theta_3[4,1]) & = 1[15,3,1], \\
d^{S^1}(\nu \kappa[2]) & = \kappa[4,1], \\
d^{S^1}(\theta_3[8]) & = 1[15,7].
\end{align*}
(Note that the second of these differentials follows from the first by Corollary~\ref{cor:propdiffs}.)  Proposition~\ref{prop:EGSS} can be used to produce bizarre differentials
\begin{align*}
d^{S^n}(\theta_3[4]) & = 1[15,3], \\
d^{S^n}(\theta_3[8]) & = 1[15,7].
\end{align*}
in the GSS for $S^n$ for various $n$.
\end{ex}

We shall see in Chapter~\ref{sec:EHPdiff} that exotic GSS differentials can induce exotic EHPSS differentials.

\chapter{EHPSS differentials}\label{sec:EHPdiff}

In this chapter we explain how to lift differentials from the TGSS to produce EHPSS differentials.  In Section~\ref{sec:TEHPname}, we discuss two naming conventions for elements of the $E^1$-term of the EHPSS: one coming from the TEHPSS, the other coming from the notion of lineage, and explain the relationship between these two naming conventions.  Section~\ref{sec:Hmap} proves some theorems which allow for the computation of the $H$ map by means of the TGSS.  These theorems are then used in Section~\ref{sec:TEHPdiffs} to determine TEHPSS differentials from TGSS differentials.  This method is remarkably robust, and accounts for all but one differential in the EHPSS through the Toda range.  This rogue differential is discussed in Section~\ref{sec:bad}.

\section{EHPSS naming conventions}\label{sec:TEHPname}

As explained in Section~\ref{sec:TEHPSS}, the TEHPSS allows us to refer to elements in the $E^1$-term of the EHPSS with the notation
$$ x[N,n] $$
where $x$ is an element of the stable stems, and $N = (n_1, n_2, \ldots,
n_s)$ is a CU sequence with $e(N) \ge 2n+1$.  
The construction of the TEHPSS gives the following interpretation of this notation.
One can use the TAHSS
followed by the GSS to compute the unstable homotopy groups that give the
$E^1$-term of the EHPSS.
$$ 
\xymatrix@C+1.5em{
\pi_{t-n+s}(\ul{S}^{\norm{N}})
\ar@{=>}[r]_-{TAHSS} &
\pi_{t-n+s}(L(s)_{2n+1}) \ar@{=>}[r]_-{GSS} &
\pi_{t+n+1}(S^{2n+1}) 
}$$
Then an element 
$$ \alpha \in \pi_{t-n+1}(S^{2n+1})$$ 
in the $E^1$-term of the EHPSS corresponds to the TEHPSS element 
$$ x[N,n] \in \pi_{t-n+s}(\ul{S}^{\norm{N}}) $$ 
if $x[N]$ detects $\alpha$ in the TGSS for $S^{2n+1}$.

Recall that the EHPSS may be truncated to give the $S^k$-EHPSS:
$$ E^1_{m,t}(k) = 
\begin{cases}
\pi_{t+m+1}(S^{2m+1}), & 0 \le m < k,  \\
0, & \text{else.}
\end{cases}
\Rightarrow
\pi_{t+k}(S^k)
$$
The notions of lineage in Section~\ref{sec:DOI} give rise to a different naming convention for elements of the EHPSS $E^1$-term which only involves the EHPSS and the $S^k$-EHPSS for various $k$.  In this alternative naming convention we name elements of the EHPSS $E^1$-term with the notation
$$ x\bra{N}[n] \in \pi_{t+n+1}(S^{2n+1}) $$
to indicate an element of $\pi_{t+n+1}(S^{2n+1})$ with lineage $x\bra{N}$. 
The lineage of such an element can be traced using the EHPSS.  If $\abs{N} = 0$, then $x\bra{n}$ has a nontrivial
image in the stable stems, and we have, under the stabilization map
\begin{gather*}
\pi_{k+n+1}(S^{2n+1}) \xrightarrow{E^\infty} \pi^s_{k-n} \\
x[n] \mapsto x
\end{gather*}
Otherwise, $\abs{N} > 0$ and $x\bra{N}[n]$ is unstable, and is detected by an element of the
EHPSS which is the target of a differential.  Then there is a zig-zag
$$
\xymatrix@C+.5em{
x\bra{n_1, \ldots, n_s}[n] &
y_1\bra{J_1}[m_1] \ar@{=>}[l]_-{S^{2n+1}}^-{\mit{EHPSS}}
\\
& x\bra{n_1, \ldots, n_{s-1}}[n_s] \ar[u]_{d^{\mit{EHP}}} &
y_2\bra{J_2}[m_2] \ar@{=>}[l]_-{S^{2n_s+1}}^-{\mit{EHPSS}} &
\\
&& \hole \ar[u]_{d^{\mit{EHP}}} &
\\
& 
&& \hole \ar@{}[ul]|{\ddots} & y_s\bra{J_s} 
\ar@{=>}[l]_-{S^{2n_2+1}}^-{\mit{EHPSS}} &
\\
& 
&&& x[n_1] \ar[u]_{d^{\mit{EHP}}}
} $$
where $x[n_1]$ is stable.  

\begin{rmk}\label{rmk:TEHPname}
Theorem~\ref{thm:TGSS} leads one to expect a correspondence between EHPSS elements
\begin{equation}\label{eq:TEHPname}
 x\bra{N}[n] \leftrightarrow x[N,n].
\end{equation}
In general, the correspondence (\ref{eq:TEHPname}) is more of a slogan than a theorem.  Nevertheless, there is only one example in the Toda range of an an element (modulo higher Goodwillie filtration) whose lineage differs from the name of the element which detects it in the TGSS, in the 19-stem (see Example~\ref{ex:DOI}).  This produces in the EHPSS $E^1$-term a violation of (\ref{eq:TEHPname})
$$ \eta^2\bra{13,6}[2] \leftrightarrow \alpha_{8/5}[5,2]. $$
(This discrepancy lies in the $21$-stem of the EHPSS, and hence lies outside the range of our EHPSS calculations.)  The calculations of Chapter~\ref{sec:calculations} demonstrate that there exists a set of generators of the $E^1$-term of the EHPSS through the $20$-stem for which (\ref{eq:TEHPname}) holds.
Using Proposition~\ref{prop:metastableGSS}, we see that (\ref{eq:TEHPname}) always holds for elements of the EHPSS $E^1$-term $\pi_{t+2n+1}(S^{2n+1})$ for $t \le 6n+1$ (metastable range).
\end{rmk}

\begin{ex}
In $k=15$ there is an element in the EHPSS $E^1$-term
$$\nu\bra{9,4}[1] \in E^1_{1,15} = \pi_{17}(S^3). $$  
In the TGSS for $S^3$ this element is detected by
the element 
$$\nu[9,4] \in \pi_{16}(\ul{S}^{13}).$$ 
On the other hand, there is a
there is the following zig-zag.
$$
\xymatrix@C+1.5em{
\nu[9,4,1] &
\sigma\eta[5,2] \ar@{=>}[l]_-{S^3}^-{\mit{EHPSS}} 
\\
& \nu[9,4] \ar[u]_{d^{\mit{EHP}}} &
\nu^2[5] \ar@{=>}[l]_-{S^9}^-{\mit{EHPSS}}
\\
&& \nu[9] \ar[u]_{d^{\mit{EHP}}}
} $$
Thus $\nu\bra{9,4}[1] \leftrightarrow \nu[9,4,1]$.
\end{ex}

\section{Using the TGSS to compute the $H$ map}\label{sec:Hmap}

As demonstrated by Lemmas~\ref{lem:ETGSS} and \ref{lem:PTGSS}, the $E$ and $P$ maps are typically easy to understand in the TGSS.  In fact, we have seen that the maps of spectral sequences $E_*$ and $P_*$ induces short exact sequences of TGSS $E^1$-terms.  Therefore, the $H$ map \emph{always} is zero on the level of TGSS $E^1$-terms.  The following lemma explains how to use the geometric boundary theorem to compute the $H$ map in terms of TGSS elements.

\begin{thm}\label{thm:Hmap}
Suppose that $\alpha[J] \in E^1_{t,J}(S^{n+1})$ is a non-trivial permanent cycle in the TGSS for $S^{n+1}$.
Suppose there is a TGSS differential
$$  d^{S^n}(\alpha[J]) = \beta[J',n]. $$
Then there exists an element $x \in \pi_t(S^{n+1})$ detected by $\alpha[J]$ so that one of the following two alternatives holds.
\begin{enumerate}
\item The element $\beta[J']$ is the target of a differential in the TGSS for $S^{2n+1}$, and $H(x)$ is either zero or detected in the TGSS by $\gamma[I]$ for $I < J'$.
\item The element $\beta[J']$ detects $H(x)$ in the TGSS for $S^{2n+1}$.
\end{enumerate}
\end{thm}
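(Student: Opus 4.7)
The plan is to apply the geometric boundary theorem, Lemma~\ref{lem:GBT}, to the EHP fiber sequence of transfinite towers
$$ \Omega^2 S^{n+1} \xrightarrow{H} \Omega^2 S^{2n+1} \xrightarrow{P} S^n \xrightarrow{E} \Omega S^{n+1}, $$
just as in the proofs of Theorems~\ref{thm:GSSdr} and \ref{thm:GBE}. Lemmas~\ref{lem:Pfilt} and \ref{lem:Efilt} supply the required maps of $\mc{G}(\omega^{\omega+1})$-indexed towers, and Lemmas~\ref{lem:ETGSS} and \ref{lem:PTGSS} identify the induced maps of $E^1$-terms as $E_*(\alpha[J]) = \alpha[J]$ (when $e(J) \ge n+1$) and $P_*(\beta[J']) = \beta[J', n]$.

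First I would rephrase the hypotheses in the language of the geometric boundary theorem. The class $\alpha[J]$ in TGSS($S^{n+1}$) is a permanent cycle detecting the element $x \in \pi_t(S^{n+1})$, and, since necessarily $e(J) \ge n+1$, it is the $E_*$-image of the class $\alpha[J]$ in $E^1_{*, J}(S^n)$. On the other hand, the hypothesized differential
$$ d^{S^n}(\alpha[J]) = \beta[J', n] = P_*(\beta[J']) $$
says that the $S^n$-level class is not a permanent cycle, and that its differential lies in the image of $P_*$ from TGSS($S^{2n+1}$). This places us exactly in the case of Lemma~\ref{lem:GBT} where a permanent cycle on $\Omega S^{n+1}$ that lifts $\alpha[J]$ is linked via the boundary $H$ to an element of TGSS($\Omega^2 S^{2n+1}$) whose $P_*$-image on $E^1$ is $\beta[J', n]$.

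Next I would read off the conclusion. Since the map $P_*$ on $E^1$-terms is injective (the preimage of $\beta[J', n]$ is just $\beta[J']$), the geometric boundary theorem produces, for a suitable choice of lift $x$ of $\alpha[J]$, the dichotomy: either $\beta[J']$ survives all shorter TGSS($S^{2n+1}$)-differentials and detects $H(x)$ (case (2)), or it is the target of such a shorter differential, and then $H(x)$ must either vanish or be detected by some $\gamma[I]$ with $I < J'$ (case (1)). The main obstacle I anticipate is the bookkeeping involved in translating between the transfinite indexings on $S^n$, $S^{n+1}$, and $S^{2n+1}$ (in particular, tracking the filtration shift $-k\omega^\omega$ from Lemma~\ref{lem:Pfilt}) so that the cases of Lemma~\ref{lem:GBT} collapse cleanly to the two alternatives above. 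A secondary subtlety is that the choice of $x$ within the detection coset of $\alpha[J]$ must be made carefully: different lifts differ by classes of higher TGSS filtration, and I must select $x$ so that the boundary-theorem output genuinely describes $H(x)$ rather than being perturbed by higher-filtration corrections — this is precisely why the conclusion is phrased existentially in $x$.
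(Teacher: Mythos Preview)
Your proposal is correct and follows essentially the same approach as the paper: apply Lemma~\ref{lem:GBT} to the EHP fiber sequence $\Omega^2 S^{n+1} \xrightarrow{H} \Omega^2 S^{2n+1} \xrightarrow{P} S^n \xrightarrow{E} \Omega S^{n+1}$ and observe that the hypothesis that $\alpha[J]$ is a permanent cycle in the TGSS for $S^{n+1}$ forces Case~(5). The paper's proof is two sentences; your additional remarks about the filtration-shift bookkeeping and the existential choice of $x$ are accurate elaborations of what Case~(5) of Lemma~\ref{lem:GBT} actually delivers.
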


\begin{proof}
The theorem follows from applying Lemma~\ref{lem:GBT} to the fiber sequence
$$ \Omega^2 S^{n+1} \xrightarrow{H} \Omega^2 S^{2n+1} \xrightarrow{P} S^n \xrightarrow{E} \Omega S^{n+1}. $$
Since $\alpha[J]$ is assumed to be a permanent cycle in the TGSS for $S^{n+1}$, we are in Case (5).
\end{proof}

The above theorem is good when we want to compute $H(x)$ for some element $x$ detected by $\alpha[J]$ in the TGSS, but is insufficient if we actually need to compute $H(x)$ for a \emph{specific} $x$.  We therefore also present the following variant of the above theorem.

\begin{thm}\label{thm:Hmap2}
Suppose that $x \in \pi_{t+1}(S^{m+1})$ is detected by $\alpha[J,n]$ in the TGSS for $S^{m+1}$, and suppose that $\beta[J']$ detects $H(x)$ in the TGSS for $S^{2m+1}$.  Then one of the following two alternatives holds.
\begin{enumerate}
\item There is a non-trivial TGSS differential
$$ d^{S^m}(\alpha[J,n]) = \beta[J',m]. $$
\item  There is a TGSS differential
$$ d^{S^m}(\gamma[I,\ell]) = \beta[J',m] $$
for $[I,\ell] < [J,n]$, and $\alpha[J,n]$ is in the kernel of the TGSS differential
$$ d^{S^{m}}: E^*_{*, [J,n]} \rightarrow E^*_{*,[J',m]}. $$
\end{enumerate}
\end{thm}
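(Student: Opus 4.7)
The plan is to apply Lemma~\ref{lem:GBT} (the geometric boundary theorem for transfinite towers) to the fiber sequence of $\mc{G}(\omega^{\omega+1})$-indexed towers arising from the EHP fiber sequence
$$ \Omega^2 S^{m+1} \xrightarrow{H} \Omega^2 S^{2m+1} \xrightarrow{P} S^m \xrightarrow{E} \Omega S^{m+1}. $$
That this fiber sequence in spaces lifts to a fiber sequence of towers is the content of Lemmas~\ref{lem:Pfilt} and \ref{lem:Efilt}. Since by hypothesis $\alpha[J,n]$ is a permanent cycle in the TGSS for $S^{m+1}$ detecting $x$, the relevant case of Lemma~\ref{lem:GBT} is the same ``Case~(5)'' invoked in the proof of Theorem~\ref{thm:Hmap}; the point of Theorem~\ref{thm:Hmap2} is to exploit the symmetric direction of that conclusion, deducing a TGSS differential in $S^m$ from a computation of $H$ in the TGSS for $S^{2m+1}$.

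The key observation driving the proof is that $PH \simeq 0$ in the fiber sequence, so $P(H(x)) = 0 \in \pi_\ast(S^m)$. Since $\beta[J']$ is a (necessarily nonzero) permanent cycle in the TGSS for $S^{2m+1}$ detecting $H(x)$, Lemma~\ref{lem:PTGSS} shows that its image $P_\ast(\beta[J']) = \beta[J',m]$ is a nonzero permanent cycle in the TGSS for $S^m$ whose $E_\infty$ class must represent $P(H(x)) = 0$. A permanent cycle that represents zero on $E_\infty$ must be a boundary, so there exists some source $y$ in the TGSS for $S^m$ with
$$ d^{S^m}(y) = \beta[J',m]. $$

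It then remains to identify $y$. The content of Lemma~\ref{lem:GBT} in this setting is that the source of any such boundary must lift, along the fiber map $H_\ast$ of transfinite towers, to the detecting element of some lift of $H(x)$ along $H$---and the relevant lift is precisely $x$, detected by $\alpha[J,n]$. Thus, modulo the usual filtration indeterminacy inherent to spectral sequence detection, $y$ agrees with $\alpha[J,n]$, producing the two alternatives stated in the theorem: either $\alpha[J,n]$ itself supports the differential hitting $\beta[J',m]$, yielding case~(1); or the differential is instead supported by some strictly smaller element $\gamma[I,\ell] < [J,n]$, yielding case~(2). In case~(2), the shorter differential has already killed $\beta[J',m]$ by the time the potential differential $d^{S^m}\colon E^\ast_{\ast,[J,n]} \to E^\ast_{\ast,[J',m]}$ could fire, so $\alpha[J,n]$ is forced to lie in its kernel.

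The main technical obstacle is ensuring that the abstract case analysis of Lemma~\ref{lem:GBT} produces exactly this dichotomy---and in particular that the ``shorter'' alternative corresponds to an index strictly less than $[J,n]$ in the filtration ordering, so that cases (1) and (2) are genuinely mutually exclusive. This is a matter of tracking the $\mc{G}(\omega^{\omega+1})$ index assigned to each TGSS differential, and is handled uniformly by Lemma~\ref{lem:GBT}; the argument here is no more intricate than the parallel one in the proof of Theorem~\ref{thm:Hmap}.
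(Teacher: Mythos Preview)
Your proposal has the right setup (the EHP fiber sequence of towers and the observation that $PH \simeq 0$), but there is a genuine gap in the identification of the source of the differential. You invoke Lemma~\ref{lem:GBT}, whereas the paper invokes Lemma~\ref{lem:GBT2}, and the distinction matters: these two lemmas run in opposite directions. Lemma~\ref{lem:GBT} takes as \emph{input} a differential $d^Y_\alpha(y)=y'$ and produces information about $\bar z\in\pi_*(Z)$ and $j(\bar z)$; this is exactly how Theorem~\ref{thm:Hmap} is proved. Theorem~\ref{thm:Hmap2} needs the reverse implication: starting from $\bar z=x$ (detected by $\alpha[J,n]$) and $j(\bar z)=H(x)$ (detected by $\beta[J']$), deduce a differential in the $S^m$-TGSS with controlled source. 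That is precisely the content of Lemma~\ref{lem:GBT2}.

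Your argument that $\beta[J',m]$ must be a boundary (because it is a permanent cycle, being $P_*$ of one, and detects $P(H(x))=0$) is correct. The gap is in the next step. Applying Lemma~\ref{lem:GBT} (Case~5) to an \emph{arbitrary} differential $d^{S^m}(y)=\beta[J',m]$ only produces some $\bar z$ detected by $E_*(y)$ with $H(\bar z)$ detected by $\beta[J']$; nothing forces this $\bar z$ to equal $x$, nor $y$ to sit at filtration $\le\mu[J,n]$. Your sentence ``the relevant lift is precisely $x$'' is exactly the unjustified step. Without that filtration bound you cannot exclude the possibility that $\beta[J',m]$ is first killed from some filtration strictly above $\mu[J,n]$, a situation covered by neither alternative~(1) nor~(2) of the theorem. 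The claim that this ``is handled uniformly by Lemma~\ref{lem:GBT}'' and is ``no more intricate than the parallel one in the proof of Theorem~\ref{thm:Hmap}'' is not correct: Theorem~\ref{thm:Hmap} runs Lemma~\ref{lem:GBT} forward, while here one must run the construction backward, which is why the paper formulates and applies the separate Lemma~\ref{lem:GBT2}. That lemma builds the differential directly from a lift of $x$ to filtration $\mu[J,n]$, so the source is pinned down at $\mu[J,n]$ (case~(1)) or strictly below it (case~(2)) by construction.
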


\begin{proof}
The theorem follows from applying Lemma~\ref{lem:GBT2} to the fiber sequence
$$ \Omega^2 S^{m+1} \xrightarrow{H} \Omega^2 S^{2m+1} \xrightarrow{P} S^m \xrightarrow{E} S^{m+1}. $$
\end{proof}

\section{TEHPSS differentials from TGSS differentials}\label{sec:TEHPdiffs}

Theorem~\ref{thm:Hmap2} allows us to deduce many TEHPSS differentials from TGSS differentials, as we will now explain.

Differentials in the EHPSS are computed as follows.  Suppose we are given an EHPSS $E^1$-element 
$$ \alpha \in \pi_{t+n+1}(S^{2n+1}) = E^1_{n,t} $$
and we wish to compute its EHPSS differential.  Take $P(\alpha) \in \pi_{t+n-1}(S^n)$ and desuspend it to its sphere of origin
$$ \gamma \in \pi_{t+m}(S^{m+1}), \quad E^{n-m-1}\gamma = P(\alpha). $$
Then
$$ d^{\mit{EHP}}_{n-m}(\alpha) = H(\gamma) \in \pi_{t+m}(S^{2m+1}) $$

Differentials in the TEHPSS take the form
\begin{equation}\label{eq:TEHPdiffs}
d^{\mit{TEHP}}: E^*_{*,[J,n]} \rightarrow E^*_{*,[J',m]} 
\end{equation}
for $m < n$, or $n = m$ and $J' < J$.
Differentials (\ref{eq:TEHPdiffs}) the case of $n = m$ all arise from TGSS differentials: the presence of a TEHPSS differential 
$$ d^{\mit{TEHP}}(\alpha[J,n]) = \beta[J',n] $$
is equivalent to the presence of a TGSS differential
$$ d^{S^{2n+1}}(\alpha[J]) = \beta[J']. $$

Differentials (\ref{eq:TEHPdiffs}) for $m < n$ are essentially EHPSS differentials: the presence of a TEHPSS differential
$$ d^{\mit{TEHP}}(\alpha[J,n]) = \beta[J',m], \quad m < n $$
is equivalent to the presence of an EHPSS differential
$$ d^{\mit{EHP}} (x) = y, $$
where $\alpha[J]$ detects $x$ in the TGSS for $S^{2n+1}$ and $\beta[J']$ detects $y$ in the TGSS for $S^{2m+1}$.

\begin{thm}\label{thm:TEHPdiffs}
Suppose the following conditions hold. 
\begin{enumerate}
\item The element $\alpha[J]$ is permanent cycle in the TGSS for $S^{2n+1}$.
\item For $m < n$, there is a TGSS differential
$$ d^{S^m}(\alpha[J,n]) = \beta[J',m]. $$
\item The element $\alpha[J,n]$ is a permanent cycle in the TGSS for $S^{m+1}$. 
\item For $m \le \ell < n$ there do not exist any non-trivial TGSS differentials
$$ d^{S^{\ell}}(\gamma'[I', \ell']) = \gamma[I,\ell] $$
for $\ell < \ell'$ and $[J',m]< [I,\ell] < [I',\ell'] < [J,n]$ such that $\gamma'[I',\ell']$ is a not a permanent cycle in the TGSS for $S^{\ell+1}$.
\end{enumerate}
Then there is a TEHPSS differential
$$ d^{\mit{TEHP}} \alpha[J,n] = \beta[J',m]. $$
\end{thm}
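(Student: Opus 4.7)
The plan is to apply the general geometric boundary theorem (Lemma~\ref{lem:GBT}) to the tower of EHP fiber sequences linking the spheres $S^{m+1}, S^{m+2}, \ldots, S^{n+1}$, combined with Theorem~\ref{thm:Hmap2} to convert TGSS differentials into Hopf invariant information at each intermediate stage. The TEHPSS is by construction the transfinite spectral sequence of a filtration of $QS^0$ whose successive strata are the TGSS towers of the various $S^{2\ell+1}$, glued along the EHP fiber sequences; the geometric boundary machinery will convert the TGSS differential of condition (2) into the asserted TEHPSS differential, with conditions (1), (3), and (4) controlling the ambiguities introduced at each intermediate stage.

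Concretely, I would proceed as follows. By condition (1), fix $x \in \pi_*(S^{2n+1})$ detected by $\alpha[J]$; Lemma~\ref{lem:PTGSS} then identifies $P(x) \in \pi_*(S^n)$ as detected by $\alpha[J,n]$ in the TGSS for $S^n$. By condition (3), fix $x_{m+1} \in \pi_*(S^{m+1})$ detected by $\alpha[J,n]$, and set $x_{\ell+1} := E^{\ell-m}(x_{m+1})$ for $m \le \ell \le n-1$. By Lemma~\ref{lem:ETGSS}, each $x_{\ell+1}$ is detected by $\alpha[J,n]$ in the TGSS for $S^{\ell+1}$, since $e([J,n]) = n \ge \ell+1$. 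Two things then need to be verified: first, that $P(x)$ agrees with $E^{n-m-1}(x_{m+1})$ after possibly modifying $x_{m+1}$ within its TGSS filtration coset, so that $P(x)$ genuinely desuspends to $S^{m+1}$; second, that $H(x_{m+1})$ is detected by $\beta[J']$ in the TGSS for $S^{2m+1}$. Together these produce the EHPSS differential encoding the asserted TEHPSS differential.

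The core of the argument is a downward induction on $\ell$ from $n-1$ to $m$: at each stage one checks that $H(x_{\ell+1}) = 0$ for $\ell > m$ so that $x_{\ell+1}$ lifts through the EHP sequence to $x_\ell$. If instead $H(x_{\ell+1}) \ne 0$ at some intermediate $\ell$, with Hopf invariant detected by $\tau[I]$ in the TGSS for $S^{2\ell+1}$, then Theorem~\ref{thm:Hmap2} forces either a non-trivial TGSS differential $d^{S^\ell}(\alpha[J,n]) = \tau[I,\ell]$ with $[J',m] < [I,\ell] < [J,n]$, or a shorter differential $d^{S^\ell}(\gamma'[I',\ell']) = \tau[I,\ell]$ with $\gamma'[I',\ell']$ not a permanent cycle on $S^{\ell+1}$. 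The first possibility is ruled out by condition (2) together with Remark~\ref{rmk:TAHSS} (which shows that $E$ can only truncate differentials, not create them, so such a $d^{S^\ell}$-differential would persist to $S^m$ and compete with the given target $\beta[J',m]$); the second is exactly what condition (4) forbids. At $\ell = m$, the same analysis with the source differential of condition (2) in place forces $H(x_{m+1})$ to be detected by $\beta[J']$.

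The main obstacle is the careful bookkeeping of condition (4): the theorem is essentially a specialization of Lemma~\ref{lem:GBT} to the iterated EHP tower, but (4) is phrased to match exactly the hypothesis of \emph{no shorter competing differential} required to prevent the intermediate Hopf invariants from landing on an $[I,\ell]$ strictly between $[J',m]$ and $[J,n]$ in the transfinite ordering. Verifying the correspondence between (4) and the hypothesis of Lemma~\ref{lem:GBT} in this setting, along with the technical point of promoting $E^{n-m-1}(x_{m+1}) = P(x)$ from an equality modulo higher Goodwillie filtration to an equality on the nose by adjusting $x_{m+1}$ inside its detection coset, is where the substance of the proof lies.
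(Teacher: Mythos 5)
Your top-level skeleton agrees with the paper's (fix $x$ detected by $\alpha[J]$ via (1), note $P(x)$ is detected by $\alpha[J,n]$, desuspend to an element $y \in \pi_*(S^{m+1})$ detected by $\alpha[J,n]$ via (3), then compute $H(y)$ with Theorem~\ref{thm:Hmap2}), but you are missing the paper's essential structural device: the whole statement is proved by \emph{induction on the length of the differential} in condition (2), and that induction is what closes the two steps you leave open. For the desuspension step, your definition $x_{\ell+1} := E^{\ell-m}(x_{m+1})$ presupposes exactly the desuspension that has to be produced (and makes $H(x_{\ell+1})=0$ automatic rather than a check); what must be shown is that $P(x)$, which lives on $S^n$, desuspends to $S^{m+1}$, and the paper obtains this not from a stage-by-stage Hopf-invariant computation but from the inductive hypothesis applied with trivial ``$\beta$'': there are no TEHPSS differentials $d^{\mit{TEHP}}(\alpha[J,n]) = \beta'[J'',m']$ with $m' > m$, which is precisely the statement that $P(x)$ is born at or below $S^{m+1}$.

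The more serious gap is your claim that the unwanted alternative of Theorem~\ref{thm:Hmap2} ``is exactly what condition (4) forbids.'' It is not: condition (4) only excludes competing shorter differentials $d^{S^m}(\gamma'[I',\ell']) = \gamma[I,m]$ whose source is \emph{not} a permanent cycle in the TGSS for $S^{m+1}$, and only when $\ell' > m$. Two cases survive (4) and each needs its own argument in the paper. If $\ell' = m$, the competing differential pulls back under $P$ to a differential $d^{S^{2m+1}}(\gamma'[I']) = \gamma[I]$, so $\gamma[I]$ detects nothing in $\pi_*(S^{2m+1})$, in particular not $H(y)$; you never address this case. If $\ell' > m$ and the source \emph{is} a permanent cycle on $S^{m+1}$, the paper invokes the inductive hypothesis on this shorter differential to produce a TEHPSS differential $d^{\mit{TEHP}}(\gamma'[I',\ell']) = \gamma[I,m]$, so $\gamma[I,m]$ is killed in the TEHPSS and cannot detect a Hopf invariant; without the induction on differential length your argument has no mechanism to rule this out, and it is the crux of the proof. (Your exclusion of the first alternative of Theorem~\ref{thm:Hmap2} at intermediate stages using condition (2) is essentially sound, though the precise justification is Lemma~\ref{lem:ETGSS} and Proposition~\ref{prop:-EGSS} rather than Remark~\ref{rmk:TAHSS}; that point is minor.)
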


\begin{proof}
We prove the theorem by induction on the length of the differential in condition (2).  By (1), there exists an element $x \in \pi_*(S^{2n+1})$ which is detected by $\alpha[J]$ in the TGSS.  The element $P(x) \in \pi_*(S^n)$ is detected by $\alpha[J,n]$ in the TGSS for $S^n$.  By our inductive hypothesis (applied by allowing the ``$\beta$'' in (2) to be trivial) there no non-trivial TEHPSS differentials
$$ d^{\mit{TEHP}}(\alpha[J,n]) = \beta'[J'', m'] $$
for $m' > m$, and so there exists a desuspension $y$ of $P(x)$:
$$ y \in \pi_*(S^{m+1}) \xrightarrow{E^{n-m+1}} \pi_{*+n-m+1}(S^n) \ni P(x). $$
By condition (3), $y$ is detected by $\alpha[J,n]$ in the TGSS for $S^{m+1}$.
We now use Theorem~\ref{thm:Hmap2} to compute $H(y)$.  Suppose that we are in Case (2) of Theorem~\ref{thm:Hmap2}.  Then $H(y)$ is detected in the TGSS for $S^{2m+1}$ by $\gamma[I]$ for $I > J'$, and there is a non-trivial TGSS differential
$$ d^{S^m}(\gamma'[I',\ell]) = \gamma[I,m]. $$
for $[I',\ell] < [J,n]$.  Now, if $\ell = m$, then the differential pulls back under $P$ to give a differential
$$ d^{S^{2m+1}}(\gamma'[I']) = \gamma[I] $$
and this means $\gamma[I]$ does not detect anything in the TGSS for $S^{2m+1}$.  Otherwise, $\ell > m$, and (4) implies that $\gamma'[I', \ell]$ is a permanent cycle in the TGSS for $S^{m+1}$.  Then the inductive hypothesis implies that there is a differential
$$ d^{\mit{TEHP}}(\gamma'[I',\ell]) = \gamma[I,m]. $$
In particular, $\gamma[I,m]$ cannot detect a Hopf invariant.  We conclude that we must be in Case (1) of Theorem~\ref{thm:Hmap2}.  Then $H(y)$ is detected by $\beta[J,n]$, and there is a TEHPSS differential
$$ d^{\mit{TEHP}}: \alpha[J,n] = \beta[J',m] $$
or $\beta[J',m]$ is the target of a shorter TGSS differential, and $\alpha[J,n]$ is in the kernel of the TEHPSS differential
$$ d^{\mit{TEHP}}: E^*_{*,[J,n]} \rightarrow  E^*_{*, [J',m]}. $$
\end{proof}

\begin{rmk}
Condition (4) of Theorem~\ref{thm:TEHPdiffs} can be labor intensive to verify, but there is a shortcut to checking it.  Namely, if (\ref{eq:TEHPname}) holds for all of the relevant TEHPSS elements, then suppose that there is a TGSS differential
$$ d^{S^\ell}(\gamma'[I',\ell']) = \gamma[I,\ell] $$
where $\gamma'[I',\ell']$ is not a permanent cycle in the TGSS for $S^{\ell+1}$.  Then the differential cannot lift to a TEHPSS differential
$$ d^{\mit{TEHP}}(\gamma'[I',\ell']) = \gamma[I,\ell], $$
for if it did, then $\gamma[I,\ell]$ would be a non-trivial permanent cycle in the $S^{\ell+1}$-EHPSS, and converge to an element detected by $\gamma'[I',\ell']$ in the TGSS for $S^{\ell+1}$.  So the set of intervening TGSS differentials that violate condition (4) is a subset of the set of TGSS differentials as above, with $\ell < \ell'$ which do not lift to the TEHPSS. 
\end{rmk}

\begin{ex}
The most typical manner in which Theorem~\ref{thm:TEHPdiffs} is used is to ``lift'' differentials from the TAHSS's for the $L(k)$ spectra in a manner which generalizes the manner Mahowald and others lift differentials from the AHSS to the EHPSS (using (\ref{eq:Mahowald})).   
For example, there is a differential in the $S^1$-TGSS (see
Table~\ref{tab:L(2)AHSS}, $k=7$)
$$ d^{S^1}(1[5,2]) = \eta[4,1]. $$
Theorem~\ref{thm:TEHPdiffs} can be used to deduce the TEHPSS differential
$$ d^{\mit{TEHP}}(1[5,2]) = \eta[4,1]. $$
\end{ex}

\begin{ex}
Theorem~\ref{thm:TEHPdiffs} can also be used to obtain TEHPSS differentials from exotic TGSS differentials coming from the geometric boundary effect (see Section~\ref{sec:exotic}).  
Consider the geometric boundary effect TGSS differential
$$ d^{S^2}(\eta\epsilon[4]) = \eta^3[8,2]. $$
This differential arises from applying Theorem~\ref{thm:GBE} to the system of TGSS differentials:
$$
\xymatrix@C+1em{ 
\eta\epsilon[4] \ar@{|->}[d]_{d^{S^1}} &
\eta^3[8,2] \ar@{|->}[d]^{d^{S^1}} 
\\
\alpha_{6/3}[1]  &
8\sigma[4,1] 
\\
\alpha_{6/3} \ar@{|->}[r]_{d^{S^3}} \ar@{|->}[u]^{P_*} &
8\sigma[4] \ar@{|->}[u]_{P_*}
} $$
Theorem~\ref{thm:TEHPdiffs} gives a TEHPSS differential
$$ d^{\mit{TEHP}}(\eta\epsilon[4]) = \eta^3[8,2]. $$
\end{ex}

\begin{ex}
Theorem~\ref{thm:TEHPdiffs} even produces bizarre TEHPSS differentials from bizarre TGSS differentials (see Section~\ref{sec:exotic}).
Consider the bizarre differential
$$ d^{GSS}(\theta_3[4]) = 1[15,3] $$
in the TGSS for $S^1$.
Theorem~\ref{thm:TEHPdiffs} gives a corresponding 
TEHPSS differential
$$ d^{\mit{TEHP}}(\theta_3[4]) = 1[15,3]. $$
\end{ex}

\section{A bad differential}\label{sec:bad}

The calculations of Chapter~\ref{sec:calculations} demonstrate that through the $20$-stem, \emph{with one exception}, all of the differentials in the TEHPSS can be obtained by lifting TGSS differentials through the application of Theorem~\ref{thm:TEHPdiffs}.  We discuss this exception, which is the root of the counterexamples in Example~\ref{ex:DOI} and Remark~\ref{rmk:TEHPname}.  This ``rogue differential'' is given by the following lemma. 

\begin{lem}\label{lem:bad}
There is a non-trivial TEHPSS differential
$$ d^{\mit{TEHP}}(\eta^2[13,6]) = \eta\alpha_{8/5}[3] $$
\end{lem}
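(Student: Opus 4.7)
The plan is to interpret the asserted TEHPSS differential as an EHPSS differential and compute it by the standard route through the EHP sequence. By the correspondence of Section~\ref{sec:TEHPdiffs} between TEHPSS differentials and EHPSS differentials, the claimed differential is equivalent to the EHPSS $d_3$-differential
$$ d^{\mit{EHP}}_3(\overline{\eta^2[13]}) = \eta\alpha_{8/5} \in \pi_{23}(S^7), $$
where $\overline{\eta^2[13]} \in \pi_{27}(S^{13})$ is the unique element detected by $\eta^2[13]$ in the TGSS for $S^{13}$, and the target is the stable class $\eta\alpha_{8/5}$ viewed on $S^7$ via the TGSS detector $\eta\alpha_{8/5}[\emptyset]$.

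First I would apply $P$: by Lemma~\ref{lem:PTGSS}, $P(\overline{\eta^2[13]}) \in \pi_{25}(S^6)$ is detected by $\eta^2[13,6]$ in the TGSS for $S^6$, so by the computations of Chapter~\ref{sec:calculations} it coincides with the element $\overline{\eta^2[13,6]}$ of Example~\ref{ex:DOI}. I would then desuspend through the EHP sequence. As recorded in Example~\ref{ex:DOI}, $\overline{\eta^2[13,6]}$ desuspends uniquely to $\overline{\alpha_{8/5}[5]} \in \pi_{24}(S^5)$. To see that the length of the EHPSS differential is $3$ rather than $2$, I would show that $\overline{\alpha_{8/5}[5]}$ itself desuspends to some $\gamma \in \pi_{23}(S^4)$; equivalently, that $H(\overline{\alpha_{8/5}[5]}) = 0$ in $\pi_{24}(S^9)$. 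This vanishing would be read off from the TGSS computation of $\pi_{24}(S^9)$ in Chapter~\ref{sec:calculations}, which exhibits no nonzero permanent cycle capable of serving as a Hopf invariant for $\overline{\alpha_{8/5}[5]}$.

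Having produced $\gamma \in \pi_{23}(S^4)$ with $E(\gamma) = \overline{\alpha_{8/5}[5]}$, the final task is to compute $H(\gamma) \in \pi_{23}(S^7)$. The tool here is Theorem~\ref{thm:Hmap2}: I would first identify the TGSS detector of $\gamma$ in the TGSS for $S^4$, then locate a TGSS differential hitting $\eta\alpha_{8/5}[3]$ in the TGSS for $S^3$ whose source pulls back to this detector; Theorem~\ref{thm:Hmap2} will then identify $H(\gamma)$ as the element of $\pi_{23}(S^7)$ detected by $\eta\alpha_{8/5}$, completing the computation.

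The main obstacle is precisely what makes this differential ``bad''. Theorem~\ref{thm:TEHPdiffs} cannot be invoked to produce the differential, because hypothesis (4) of that theorem is violated by an intervening TGSS differential in this stem, and because the lineage of $\overline{\alpha_{8/5}[5]}$ (see Example~\ref{ex:DOI} and Remark~\ref{rmk:TEHPname}) disagrees with the naive TGSS reading. Consequently, both the TGSS detector of $\gamma$ in the TGSS for $S^4$ and the intervening differential in the TGSS for $S^3$ must be pinned down by direct inspection of the TAHSS and TGSS computations of Chapter~\ref{sec:calculations}, rather than by the general lifting machinery of Section~\ref{sec:TEHPdiffs}.
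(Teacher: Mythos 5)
Your outline shares the paper's skeleton (apply $P$, desuspend, extract the Hopf invariant from the TGSS differential $d^{S^3}(\alpha_{8/5}[5]) = \eta\alpha_{8/5}[3]$), but the steps you defer to citations or to ``direct inspection'' are exactly where the content of the lemma lies, and two of your steps do not go through as written. Citing Example~\ref{ex:DOI} for the fact that $\br{\eta^2[13,6]}$ desuspends and is detected by $\alpha_{8/5}[5]$ is circular: that example explicitly attributes these facts to Section~\ref{sec:bad}, i.e.\ to the proof you are supposed to be giving. The paper has to establish them: it shows that $P(\br{\eta^2[13]})$ desuspends all the way to $S^4$ by applying Theorem~\ref{thm:TEHPdiffs} to the differentials $d^{S^5}(\eta^2[13,6]) = 4\nu[12,5]$ and $d^{S^{11}}(4\nu[12]) = 0$ (after first noting that $\eta^2[13,6]$ is a nontrivial permanent cycle in the TGSS for $S^6$, so $P(\br{\eta^2[13]})\neq 0$), and it then identifies the detector of a desuspension $x \in \pi_{23}(S^4)$ by a filtration argument: the only candidates are $\eta\alpha_{8/5}[4]$ and $\alpha_{8/5}[5]$, and $\eta\alpha_{8/5}[4]$ is eliminated because $E_*(\eta\alpha_{8/5}[4]) = 0$ would force $E(x)$ to be detected in lower transfinite filtration on $S^5$, where (Table~\ref{tab:S5GSS}) every such class is stable, contradicting the instability of $E(x)$. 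Your substitute for the desuspension step --- reading off $H(\br{\alpha_{8/5}[5]}) = 0$ from ``the TGSS computation of $\pi_{24}(S^9)$'' --- does not work: the paper computes no TGSS beyond $S^6$, and $\pi_{24}(S^9)$ is a nontrivial $15$-stem group, so the vanishing of this Hopf invariant is not visible from the target group; ruling out such intermediate targets is precisely what the Theorem~\ref{thm:TEHPdiffs} argument accomplishes.

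Your final step also misuses Theorem~\ref{thm:Hmap2}: that result passes from a known Hopf invariant to a TGSS differential, not the reverse, so it cannot ``identify $H(\gamma)$'' for your specific $\gamma$. The paper instead uses Theorem~\ref{thm:Hmap}, which only produces \emph{some} element $x'$ detected by $\alpha_{8/5}[5]$ whose Hopf invariant is detected by $\eta\alpha_{8/5}$; because of this indeterminacy it must then check from Tables~\ref{tab:S4GSS} and \ref{tab:S5GSS} that $E(x) = E(x')$, so that $x'$ is itself a desuspension of $P(\br{\eta^2[13]})$ and the EHPSS differential on $\br{\eta^2[13]}$ is genuinely $H(x')$. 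This representative-matching step, together with the detector identification above, is missing from your proposal, and without them the argument does not close.
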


\begin{proof}
The differentials
\begin{align*}
d^{S^5}(\eta^2[13,6]) & = 4\nu[12,5], \\
d^{S^{11}}(4 \nu[12]) & = 0
\end{align*}
imply, using Theorem~\ref{thm:TEHPdiffs}, that $\eta^2[13,6]$ is in the kernel of the TEHPSS differentials
$$ d^{\mit{TEHP}}: E^*_{*,[6,2]} \rightarrow E^*_{*,[J,5]} $$
for all $J$.  This implies that if $\br{\eta^2[13]} \in \pi_{27}(S^{13})$ is the unique element which is detected by $\eta^2[13]$ in the TGSS for $S^{13}$, then $P(\br{\eta^2[13]})$ must desuspend to $S^4$.

Now, on the level of TGSS $E^1$-terms, we have
$$ P_*(\eta^2[13]) = \eta^2[13,6] \in E^1_{25,[13,6]}(S^6). $$
Consulting Table~\ref{tab:S6GSS}, we see that $\eta^2[13,6]$ is a non-trivial permanent cycle in the TGSS for $S^6$.  Therefore, we conclude that $P(\br{\eta^2[13]}) \ne 0$.  Therefore, there must exist a non-zero element $x \in \pi_{23}(S^4)$ whose double suspension is $P(\br{\eta^2[13]})$.  In the TGSS the element $x$ must be detected by an element with transfinite Goodwillie filtration greater than or equal to $\mu{[13,6]}$, but $x$ must have Goodwillie filtration greater than $1$ since $E^3(x) = 0$.  Consulting Table~\ref{tab:S4GSS}, there are two possibilities of TGSS elements which can detect $x$:
$$ \eta \alpha_{8/5}[4] \quad \text{and} \quad \alpha_{8/5}[5]. $$

Now, if $x$ were detected by $\eta \alpha_{8/5}[4]$, then since
$$ E_*(\eta \alpha_{8/5}[4]) = 0 \in E^1_{24,[4]}(S^5), $$
the element $E(x) \in \pi_{24}S^{5}$ must be detected by an element of transfinite Goodwillie filtration less than $\mu{[4]}$.  Consulting Table~\ref{tab:S5GSS}, we see that the only such elements in the TGSS for $S^5$ are stable.  Since $E(x)$ is not stable, we have arrived at a contradiction, and we deduce that $x$ must be detected by $\alpha_{8/5}[5]$ in the TGSS for $S^4$.

We now compute $H(x)$.  Using Theorem~\ref{thm:Hmap}, the TGSS differential
$$ d^{S^3}(\alpha_{8/5}[5]) = \eta \alpha_{8/5}[3] $$
allows us to conclude that there exists an element $x' \in \pi_{23}(S^4)$ detected by $\alpha_{8/5}[5]$ in the TGSS for $S^4$ such that $H(x)$ is detected by $\eta \alpha_{8/5}$ in the TGSS for $S^7$.  Tables~\ref{tab:S4GSS} and \ref{tab:S5GSS} reveal that $E(x) = E(x')$.  Thus $x'$ is also a desuspension of $P\br{\eta^2[13]}$.  We have therefore shown that there is an EHPSS differential
$$ d^{\mit{EHP}}(\br{\eta^2[13]}) = H(x'). $$
Since $\br{\eta^2[13]}$ is detected in the TGSS by $\eta^2[13]$, and $H(x')$ is detected in the TGSS by $\eta\alpha_{8/5}$, there is a TEHPSS differential
$$ d^{\mit{TEHP}}(\eta^2[13,6]) = \eta\alpha_{8/5}[3]. $$
\end{proof}

\chapter{Calculations in the $2$-primary Toda range}\label{sec:calculations}

In this chapter we illustrate the computational effectiveness of our methods by completely computing the TAHSS for $L(k)$ for $1 \le k \le 3$ (Tables~\ref{tab:L(1)AHSS}--\ref{tab:L(3)AHSS}), the TGSS for $S^n$ for $1 \le n \le 6$ (Tables~\ref{tab:S1GSS}--\ref{tab:S6GSS}), and the TEHPSS (Table~\ref{tab:EHPSS}), in the Toda range (i.e. up to and including the 19-stem).  

We summarize the methods and notations of our computations in the first several sections of this chapter.
Section~\ref{sec:AHSScalc} describes the inductive low dimensional computations of the TAHSS's for the $L(k)$ spectra.  This data will
be used to give the $E_1$-terms of the TGSS's, and to give the differentials
in the EHPSS.  The TAHSS computations feed into Sections~\ref{sec:Kuhncalc} and \ref{sec:GSScalc}, where we compute the TGSS for $S^n$, $1 \le n \le 6$ in the Toda range.
In Section~\ref{sec:EHPSScalc} we describe the computation the TEHPSS in the Toda range.  Note that since $\pi_{n+*}(S^n)$ is metastable for $* \le 19$ and $n \ge 7$, we do not bother to compute the TGSS for $S^n$ for $n \ge 7$: in the Toda range it can be completely read off of the TEHPSS.

The methodology of these computations can be summarized as follows.
\begin{enumerate}
\item Inductively compute the TAHSS for $L(k)$, facilitated by an understanding of the dual action of the Steenrod algebra on the homology of the $L(k)$-spectra, and an excellent understanding of the structure of the stable homotopy ring in low dimensions.
\item Feed the TAHSS computations into the TGSS for $S^n$ for various $n$, starting with $n = 1$.  Use stable Hopf invariant computations to fill in most of the differentials for the TGSS for $S^1$, and then deduce bizarre differentials using our knowledge of $\pi_*(S^1)$.  Then fill in the differentials for the TGSS for $S^n$ inductively on $n$, using the $E$ map to map differentials from the TGSS for $S^{n-1}$ to the TGSS for $S^n$.  The remaining differentials come from the geometric boundary effect (Theorem~\ref{thm:GBE}) or unstable Hopf invariants (Theorem~\ref{thm:GSSdr}).
\item The unstable Hopf invariants, and to some degree the stable Hopf invariants, require a concurrent computation of the TEHPSS.  Differentials in the TEHPSS are lifted from the TAHSS and TGSS using Theorem~\ref{thm:TEHPdiffs}.   
\end{enumerate}

\section{AHSS calculations}\label{sec:AHSScalc}
The TAHSS's for $\pi_*(L(s))$ in the Toda range for $1 \le s \le 3$ are given in
Tables~\ref{tab:L(1)AHSS}, \ref{tab:L(2)AHSS}, and \ref{tab:L(3)AHSS}.
As the TAHSS for $L(k)$ completely determines the $E^\alpha$-pages of the TAHSS for $L(k+1)$ for $\alpha \in \mc{G}(\omega^k)$, we really only need to start the TAHSS for $L(k)$ at the $E^{\omega^{k}}$-pages of the TAHSS's for $L(k)_{2m+1}$ for various $m$ (see Remark~\ref{rmk:TAHSS}).

The terms are given in terms of an $\FF_2$-basis of the associated graded
with respect to the $2$-adic filtration.  The notation 
$$ x(2^m)[n_1, \ldots ,n_s]$$ 
represents the span of the elements 
$$ \{x[N], 2x[N], 4x[N], \ldots, 2^{m-1}[N]\}$$
where $N$ is a CU sequence of length $s$, and $x$ is
an element of the stable stems.  
If $m = 1$, then the ``$(2^m)$'' in the notation is omitted.

We list all elements which are
not the targets of differentials.  If an element supports a non-trivial 
differential,
then the differential is represented by an arrow which points toward the
target.

We use standard terminology for the generators of the stable stems, most of
which originates with Toda \cite{Toda}.  The generators of the
$v_1$-periodic homotopy groups are given using a variation on the notation
of Ravenel \cite{Ravenel}.  The element $\alpha_{k/l}$ is a $v_1$-periodic
element of order $2^l$ in the $(2k-1)$-stem.

The most difficult of these computations is the computation of
$\pi_*(L(1))$ given in Table~\ref{tab:L(1)AHSS}.  These computations are effectively contained in \cite{Mahowaldmeta}.  The differentials on the
$v_1$-periodic elements are mostly deduced from the $J$-homology AHSS for
$P^\infty$, as computed in \cite{MahowaldJ}.  

The remaining differentials
may be deduced in this range from the dual action of the Steenrod algebra on 
$H^*(L(1))$.  
For instance, in the $k=5$ list, the formula
$$ \Sq^2_* \bar{Q}^4 = \bar{Q}^2 $$
implies the AHSS differential
$$ d_2(\eta[4]) = \eta\cdot\eta[2] = \eta^2[2] $$
and in $k=12$, the formula
$$ \Sq^2_*Sq^1_* \bar{Q}^6 = \bar{Q}^3 $$
implies the AHSS differential
$$ d_3(\nu^2[6]) = \bra{\eta, 2, \nu^2}[3] = \epsilon[3]. $$

The TAHSS's for $L(k)$, $k = 2,3$ in the Toda range are actually much less complicated.  The Steenrod
operations on $H_*(L(s))$ are given by the Nishida relations.  There are
subtleties associated with the transfinite AHSS filtration.  
We give an example that nicely illustrates how to interpret things.
We look at the TAHSS for $L(2)$, $k=19$. The Nishida relations, together with the relations amongst the $\bar{Q}^j$-operations, give
$$ \Sq^4_* \bar{Q}^9\bar{Q}^4 = \bar{Q}^7\bar{Q}^2 + \bar{Q}^5\bar{Q}^4 = \bar{Q}^7\bar{Q}^2. $$
We would initially think that this should give an AHSS differential
$$ d^{L(2)}(\nu^2[9,4]) = \nu^3 [7,2] = (\sigma\eta^2+\epsilon\eta)[7,2] $$
except that $(\sigma\eta^2+\epsilon\eta)[7]$ is null in $\pi_*(L(1)_7)$.  
In the AHSS for $L(1)$,
$(\sigma\eta^2+\epsilon\eta)[7]$ is killed by the differential $d^{L(1)}((\sigma\eta+\epsilon)[9])$ induced
from
$$ Sq^2_*\bar{Q}^9 = \bar{Q}^7. $$
However, something more is going on in $L(2)$.  The Nishida relations give
$$ Sq^2_*\bar{Q}^9\bar{Q}^2 = \bar{Q}^8\bar{Q}^1 + \bar{Q}^7\bar{Q}^2 $$
which translate into the fact that in $L(2)$, we have
$$ d^{L(2)}((\sigma\eta+\epsilon)[9,2]) = (\sigma\eta^2+\epsilon\eta)[8,1] + (\sigma\eta^2+\epsilon\eta)[7,2] $$
so the terms in the right-hand side are equated.  Thus we actually have
$$ d^{L(2)}(\nu^2[9,4]) = (\sigma\eta^2+\epsilon\eta)[7,2] = (\sigma\eta^2+\epsilon\eta)[8,1]. $$

\section{Calculation of the GSS for $S^1$}\label{sec:Kuhncalc}  The TGSS for $\pi_*(S^1)$ is given in
Table~\ref{tab:S1GSS}.  
We know that $\pi_*(S^1)$ is concentrated in degree $1$, and
consistent with the Goodwillie-Whitehead conjecture, the only differentials it has
are are GSS $d_1$'s.

The $E^\alpha$ terms for $\alpha \in \mc{G}(\omega^\omega)$ may be read off of Tables~\ref{tab:L(1)AHSS},
\ref{tab:L(2)AHSS} and \ref{tab:L(3)AHSS}.  The other $L(k)$'s are too
highly connected to contribute anything in our range of computation.
The $d_1$'s originating in the first column of Table~\ref{tab:S1GSS} are
exactly the stable Hopf invariants, by Theorem~\ref{thm:GSSd1}.  For example, we have $SHI(\eta) = 1$
and 
$$d_1(\eta) = 1[1].$$  
Most of the other $d_1$'s are also given by stable Hopf invariants, using Theorem~\ref{thm:higherd1}.  The
ones that are not are the bizarre differentials: these are indicated by dashed arrows in Table~\ref{tab:S1GSS}.
They are necessary to make the
spectral sequence acyclic.

Actually, in this range, these stable Hopf invariants are rather easy to
deduce from the Hopf invariants of $\eta$, $\nu$, and $\sigma$ and the
attaching map structure in $L(1)$.  We also used the relation of Hopf
invariants to root invariants \cite{MahowaldRavenel}.

\section{GSS calculations}\label{sec:GSScalc}
Tables~\ref{tab:S2GSS}, \ref{tab:S3GSS}, \ref{tab:S4GSS}, \ref{tab:S5GSS}, and \ref{tab:S6GSS} display calculations of the TGSS for $S^n$ for $2 \le n \le 6$.  In the Toda range, the TGSS for $S^n$ can be completely read off of the TEHPSS for $n \ge 7$ (metastable range). 

As in Section~\ref{sec:Kuhncalc}, the $E^\alpha$ terms for $\alpha \in \mc{G}(\omega^\omega)$ may be read off of Tables~\ref{tab:L(1)AHSS},
\ref{tab:L(2)AHSS} and \ref{tab:L(3)AHSS}.
The unmarked differentials all arise from the application of
Theorems~\ref{thm:higherd1} and \ref{thm:GSSdr}.  The geometric boundary effect differentials deduced from Theorem~\ref{thm:GBE} are denoted with a $(*)$. The bizarre differentials induced from the TGSS for $S^1$ (using Proposition~\ref{prop:EGSS}) are denoted with a $(**)$.

Note that there are precisely two longer differentials in the TGSS for $S^2$ obtained from Theorem~\ref{thm:GSSdr}: we must use our TEHPSS calculations to compute these generalized Hopf invariants.
For example, in the TGSS for $S^2$
in $n=16$ there is a TGSS differential
$$ d^{S^2}(\eta\alpha_{8/5}) = \eta^2[13,2]. $$
This arises from the fact that the unstable Hopf invariant
$HI(\eta\alpha_{8/5})$ is detected by $\eta^2[13]$ in the
TGSS for $S^5$.  
The stable element $\eta\alpha_{8/5}$ has an \emph{unstable}
Hopf invariant, and this is precisely the mechanism which results in such a longer
TGSS differential.

\section{Calculation of the EHPSS}\label{sec:EHPSScalc}
Table~\ref{tab:EHPSS} displays the TEHPSS in the Toda range.
The $E^\alpha$-page of the TEHPSS can be read off of Tables~\ref{tab:L(1)AHSS}, \ref{tab:L(2)AHSS}, \ref{tab:L(3)AHSS}, \ref{tab:S1GSS}, \ref{tab:S3GSS}, and \ref{tab:S5GSS}.
Table~\ref{tab:EHPSS} consists of lists of the non-trivial permanent cycles in the $E^{\omega^{\omega+1}}$-pages of the TGSS's for the various $S^{2m+1}$.
If an element is the target of a
TEHPSS differential, then this is displayed with an left arrow followed by the
source of the differential.  If an element is not the target of a
differential, then it is boxed, and a double right arrow gives the name of
the element of the stable stems that it detects in the TEHPSS.  One can read
read off the (unstable) 
Hopf invariants of the elements of the stable stems.

The elements in the $E^1_{t,N}$-term of the TEHPSS are given with the notation
$$ x(2^m)[n_1, \ldots, n_s,n] $$
where $x$ is an element of the stable stems, and $N = (n_1, n_2, \ldots,
n_s,n)$ is a CU sequence.  The parenthetical
$(2^m)$ is the order of the element, and this notation is omitted if $m=1$.
The meaning of this notation is completely explained in Section~\ref{sec:TEHPname}.

With the exception of the one bad differential (marked with a $(**)$) described in Section~\ref{sec:bad}, all of the TEHPSS differentials in the Toda range come from lifting a TGSS differential using Theorem~\ref{thm:TEHPdiffs}.
The differentials with unmarked arrows all arise from lifting differentials from the TAHSS's in Tables~\ref{tab:L(1)AHSS}, \ref{tab:L(2)AHSS}, and \ref{tab:L(3)AHSS}.  Differentials obtained from lifting geometric boundary effect differentials are labeled with a $(*)$, and differentials obtained from lifting bizarre differentials are labeled with a $(***)$.

\section{Tables of computations}

This section consists of the actual tables containing the $2$-primary Toda range computations discussed in the previous sections.
\begin{verse}
Tables~\ref{tab:L(1)AHSS}--\ref{tab:L(3)AHSS}: The AHSS for $\pi_k(L(k))$, $1 \le k \le 3$ \\
Table~\ref{tab:EHPSS}: The EHPSS \\
Tables~\ref{tab:S1GSS}--\ref{tab:S6GSS}: The GSS for $\pi_{t+n}(S^n)$, $1 \le n \le 6$ 
\end{verse}


\subsection{The AHSS for $\pi_k(L(1))$}\label{tab:L(1)AHSS} $\quad$

\begin{tabular}{lll}
$\begin{array}{l}
\underline{k=1} \\
1(\infty)[1] 
\end{array}$
&
$\begin{array}{l}
\underline{k=2} \\
\eta[1] \\
1(\infty)[2] \rightarrow 2(\infty)[1]
\end{array}$
&
$\begin{array}{l}
\underline{k=3} \\
\eta^2[1] \\
\eta[2] \\
1[3]
\end{array}$
\\ \\
$\begin{array}{l}
\underline{k=4} \\
\nu[1] \\
1(\infty)[4] \rightarrow 2(\infty)[3]
\end{array}$
&
$\begin{array}{l}
\underline{k=5} \\
\nu(4)[2] \rightarrow 2\nu(4)[1] \\
\eta[4] \rightarrow \eta^2[2] \\
1[5] \rightarrow \eta[3]
\end{array}$
&
$\begin{array}{l}
\underline{k=6} \\
\nu[3] \\
\eta^2[4] \rightarrow 4\nu[2] \\
\eta[5] \rightarrow \eta^2[3] \\
1(\infty)[6] \rightarrow 2(\infty)[5]
\end{array}$
\\ \\
$\begin{array}{l}
\underline{k=7} \\
\nu^2[1] \\
\nu(4)[4] \rightarrow 2\nu(4)[3] \\
4\nu[4] \\
\eta^2[5] \\
\eta [6] \\
1[7] 
\end{array}$
&
$\begin{array}{l}
\underline{k=8} \\
\sigma[1] \\
\nu^2[2] \\
\nu[5] \\
1(\infty)[8] \rightarrow 2(\infty)[7]
\end{array}$
&
$\begin{array}{l}
\underline{k=9} \\
\sigma\eta[1] \\
\epsilon[1] \\
\sigma(8)[2] \rightarrow 2\sigma(8)[1] \\
8\sigma[2] \\
\nu^2[3] \\
\nu(4)[6] \rightarrow 2\nu(4)[5] \\
\eta[8] \rightarrow \eta^2[6] \\
1[9] \rightarrow \eta[7]
\end{array}$
\\ \\
$\begin{array}{l}
\underline{k=10} \\
\sigma\eta^2[1] \\
\alpha_5[1] \\
\sigma\eta[2] \\
\sigma[3] \\
\eta^2[8] \rightarrow 4\nu[6] \\
\eta[9] \rightarrow \eta^2[7] \\
1(\infty)[10] \rightarrow 2(\infty)[9]
\end{array}$
&
$\begin{array}{l}
\underline{k=11} \\
\eta\alpha_5[1] \\
\alpha_5[2] \\
\sigma(8)[4] \rightarrow 2\sigma(8)[3] \\
8\sigma[4] \\
\nu(4)[8] \rightarrow 2\nu(4)[7] \\
4\nu[8] \rightarrow \epsilon\eta[1] \\
\eta^2[9] \rightarrow \epsilon[2] \\
\eta[10] \rightarrow \nu^2[4] \\
1[11] \rightarrow \nu[7]
\end{array}$
&
$\begin{array}{l}
\underline{k=12} \\
\sigma\eta[4] \rightarrow \sigma\eta^2[2] \\
\epsilon[4] \rightarrow \epsilon\eta[2] \\
\sigma[5] \rightarrow \sigma\eta[3] \\
\nu^2[6] \rightarrow \epsilon[3] \\
\nu[9] \rightarrow \nu^2[5] \\
1(\infty)[12] \rightarrow 2(\infty)[11]
\end{array}$
\\ \\
$\begin{array}{l}
\underline{k=13} \\
\alpha_{6/3}(4)[2] \rightarrow \alpha_{6/2}(4)[1] \\
\stackem{\epsilon\eta[4]}{\sigma\eta^2[4]} \bigg\} \rightarrow \alpha_{6/3}[1] \\
\alpha_5[4] \rightarrow \alpha_5\eta[2] \\
\sigma\eta[5] \rightarrow \sigma\eta^2[3] \\
\epsilon[5] \rightarrow \epsilon\eta[3] \\
\sigma(8)[6] \rightarrow 2\sigma(8)[5] \\
8\sigma[6] \rightarrow \alpha_5[3] \\
\nu(4)[10] \rightarrow 2\nu(4)[9] \\
\eta[12] \rightarrow \eta^2[10] \\
1[13] \rightarrow \eta[11]
\end{array}$
&
$\begin{array}{l}
\underline{k=14} \\
\alpha_5\eta[4] \rightarrow \alpha_6[2] \\
\alpha_5[5] \rightarrow \alpha_5\eta[3] \\
\epsilon[6] \\
\sigma[7] \\
\nu^2[8] \rightarrow (\sigma\eta^2+\epsilon\eta)[4] \\
\nu[11] \rightarrow \nu^2[7] \\
\eta^2[12] \rightarrow 4\nu[10] \\
\eta[13] \rightarrow \eta^2[11] \\
1(\infty)[14] \rightarrow 2(\infty)[13] 
\end{array}$
&
$\begin{array}{l}
\underline{k=15} \\
\theta_3[1] \\
\kappa[1] \\
\alpha_{6/3}(4)[4] \rightarrow \alpha_{6/2}(4)[3] \\
\alpha_6[4] \\
\alpha_5\eta[5] \\
\alpha_5[6] \\
\sigma(8)[8] \rightarrow 2\sigma(8)[7] \\
8\sigma[8] \\
\nu^2[9] \rightarrow (\sigma\eta^2+\epsilon\eta)[5] \\
\nu(4)[12] \rightarrow 2\nu(4)[11] \\
4\nu[12] \\
\eta^2[13] \rightarrow \alpha_{6/3}[3] \\
\eta[14] \rightarrow \epsilon\eta[5] \\
1[15] \rightarrow \sigma\eta[6]
\end{array}$
\\
\end{tabular}

\begin{tabular}{lll}
{\it Table~\ref{tab:L(1)AHSS}, cont'd}
\\ \\
$\begin{array}{l}
\underline{k=16} \\
\alpha_{8/5}[1] \\
\theta_3[2] \\
\kappa[2] \\
\sigma\eta[8] \rightarrow \sigma\eta^2[6] \\
\epsilon[8] \rightarrow \epsilon\eta[6] \\
\sigma[9] \rightarrow \sigma\eta[7] \\
\nu^2[10] \rightarrow \epsilon[7] \\
\nu[13] \\
1(\infty)[16] \rightarrow 2(\infty)[15]
\end{array}$
&
$\begin{array}{l}
\underline{k=17} \\
\eta_4[1] \\
\alpha_{8/5}\eta[1] \\
\alpha_{8/5}(16)[2] \rightarrow \alpha_{8/4}(16)[1] \\
\alpha_{8}[2] \\
\theta_3[3] \\
\kappa[3] \\
\alpha_{6/3}(4)[6] \rightarrow \alpha_{6/2}(4)[5] \\
\stackem{\sigma\eta^2[8]}{\epsilon\eta[8]} \bigg\} \rightarrow
\alpha_{6/3}[5] \\
(\sigma\eta^2+\epsilon\eta)[8] \rightarrow \kappa\eta[1] \\
\alpha_5[8] \rightarrow \alpha_5\eta[6] \\
\sigma\eta[9] \rightarrow \sigma\eta^2[7] \\
\epsilon[9] \rightarrow \epsilon\eta[7] \\
\sigma(8)[10] \rightarrow 2\sigma(8)[9] \\
8\sigma[10] \rightarrow \alpha_5[7] \\
\nu^2[11] \\
\nu(4)[14] \rightarrow 2\nu(4)[13] \\
\eta[16] \rightarrow \eta^2[14] \\
1[17] \rightarrow \eta[15]
\end{array}$
&
$\begin{array}{l}
\underline{k=18} \\
\eta_4\eta[1] \\
\alpha_9[1] \\
\eta_4[2] \\
\theta_3[4] \\
\kappa[4] \rightarrow \kappa\eta[2] \\
\alpha_5\eta[8] \rightarrow \alpha_6[6] \\
\sigma\eta^2[9] \\
\alpha_5[9] \rightarrow \alpha_5\eta[7] \\
\sigma\eta[10] \\
\sigma[11] \\
\eta^2[16] \rightarrow 4\nu[14] \\
\eta[17] \rightarrow \eta^2[15] \\
1(\infty)[18] \rightarrow 2(\infty)[17] 
\end{array}$
\\ \\
$\begin{array}{l}
\underline{k=19} \\
\nu^*[1] \\
\alpha_9\eta[1] \\
\kappa\nu[2] \\
\alpha_9[2] \\
\alpha_{8/5}(16)[4] \rightarrow \alpha_{8/4}(16)[3] \\
16\alpha_{8/5}[4] \\
\kappa\eta[4] \rightarrow \kappa\nu[1] \\
\theta_3[5] \\
\kappa[5] \rightarrow \kappa\eta[3] \\
\alpha_{6/3}(4)[8] \rightarrow \alpha_{6/2}(4)[7] \\
\alpha_6[8] \rightarrow \alpha_{8/5}\eta^2[1] \\
\alpha_5\eta[9] \rightarrow \alpha_{8/5}\eta[2] \\
\alpha_5[10] \rightarrow \alpha_{8/5}[3] \\
\sigma(8)[12] \rightarrow 2\sigma(8)[11] \\
8\sigma[12] \rightarrow \alpha_{6/3}[7] \\
\nu(4)[16] \rightarrow 2\nu(4)[15] \\
4\nu[16] \rightarrow \epsilon\eta[9] \\
\eta^2[17] \rightarrow \epsilon[10] \\
\eta[18] \rightarrow \nu^2[12] \\
1[19] \rightarrow \nu[15]
\end{array}$
&
$\begin{array}{l}
\underline{k=20} \\
\br{\sigma}[1] \\
\nu^*(4)[2] \rightarrow 2\nu^*(4)[1] \\
\kappa\nu[3] \\
\eta_4[4] \rightarrow \eta_4\eta[2] \\
\alpha_{8/5}\eta[4] \rightarrow \alpha_{8/5}\eta^2[2] \\
\alpha_{8/5}[5] \rightarrow \alpha_{8/5}\eta[3] \\
\kappa\eta[5] \\
\theta_3[6] \rightarrow \eta_4[3] \\
\kappa[6] \\
\sigma\eta[12] \rightarrow \sigma\eta^2[10] \\
\epsilon[12] \rightarrow \epsilon\eta[10] \\
\sigma[13] \rightarrow \sigma\eta[11] \\
\nu^2[14] \rightarrow \epsilon[11] \\
\nu[17] \rightarrow \nu^2[13] \\
1(\infty)[20] \rightarrow 2(\infty)[19]
\end{array}$
&
$\begin{array}{l} 
\underline{k=21} \\
\br{\kappa}[1] \\
\br{\sigma}[2] \\
\alpha_{10/3}(4)[2] \rightarrow \alpha_{10/2}(4)[1] \\
\nu^*[3] \\
\eta_4\eta[4] \rightarrow 4\nu^*[2] \\
\alpha_{8/5}\eta^2[4] \rightarrow \alpha_{10/3}[1] \\
\kappa\nu[4] \\
\alpha_9[4] \rightarrow \alpha_9\eta[2] \\
\eta_4[5] \rightarrow \eta_4\eta[3] \\
\alpha_{8/5}\eta[5] \rightarrow \alpha_{8/5}\eta^2[3] \\
\alpha_{8/5}(16)[6] \rightarrow \alpha_{8/4}(16)[5] \\
\alpha_8[6] \rightarrow \alpha_9[3] \\
\theta_3[7] \\
\alpha_{6/3}(4)[10] \rightarrow \alpha_{6/2}(4)[9] \\
\stackem{\sigma\eta^2[12]}{\epsilon\eta[12]}\bigg\} \rightarrow
\alpha_{6/3}[9] \\
\alpha_5[12] \rightarrow \alpha_5\eta[10] \\
\sigma\eta[13] \rightarrow \sigma\eta^2[11] \\
\epsilon[13] \rightarrow \epsilon\eta[11] \\
\sigma(8)[14] \rightarrow 2\sigma(8)[13] \\
8\sigma[14] \rightarrow \alpha_5[11] \\
\nu(4)[18] \rightarrow 2\nu(4)[17] \\
\eta[20] \rightarrow \eta^2[18] \\
1[21] \rightarrow \eta[19] 
\end{array}$
\\
\end{tabular}

\begin{tabular}{ll}
{\it Table~\ref{tab:L(1)AHSS}, cont'd} \\
\\ \\
$\begin{array}{l}
\underline{k=22} \\
\sigma^3[1] \\
\br{\kappa}\eta[1] \\
\br{\kappa}(4)[2] \rightarrow 2\br{\kappa}(4)[1] \\
4\br{\kappa}[2] \\
\br{\sigma}[3] \\
\nu^*(4)[4] \rightarrow 2\nu^*(4)[3] \\
4\nu^*[4] \\
\alpha_9\eta[4] \rightarrow \alpha_{10}[2] \\
\eta_4\eta[5] \\
\alpha_9[5] \rightarrow \alpha_9\eta[3] \\
\eta_4[6] \\
\theta_3[8] \\
\kappa[8] \rightarrow \kappa\eta[6] \\
\alpha_5\eta[12] \rightarrow \alpha_6[10] \\
\alpha_5[13] \rightarrow \alpha_5\eta[11] \\
\epsilon[14] \rightarrow \kappa[7] \\
\nu^2[16] \rightarrow (\sigma\eta^2+\epsilon\eta)[12] \\
\nu[19] \rightarrow \nu^2[15] \\
\eta^2[20] \rightarrow 4\nu[18] \\
\eta[21] \rightarrow \eta^2[19] \\
1(\infty)[22] \rightarrow 2(\infty)[21] 
\end{array}$
&
$\begin{array}{l}
\underline{k=23\text{(outgoing diffs only)}} \\
\alpha_{10/3}(4)[4] \rightarrow \alpha_{10/2}(4)[3] \\
\alpha_{8/5}(16)[8] \rightarrow \alpha_{8/4}(16)[7] \\
\kappa\eta[8] \rightarrow \kappa\nu[5] \\
\kappa[9] \rightarrow \kappa\eta[7] \\
\alpha_{6/3}(4)[12] \rightarrow \alpha_{6/2}(4)[11] \\
\alpha_6[12] \rightarrow \alpha_{10/3}[3] \\
\alpha_5\eta[13] \rightarrow \alpha_{8/5}\eta^2[5] \\
\alpha_5[14] \rightarrow \alpha_{8/5}\eta[6] \\
\sigma(8)[16] \rightarrow 2\sigma(8)[15] \\
8\sigma[16] \rightarrow \alpha_{8/5}[7] \\
\nu^2[17] \rightarrow \sigma\eta^2[13] \\
\nu(4)[20] \rightarrow 2\nu(4)[19] \\
4\nu[20] \rightarrow \alpha_{6/3}[11] \\
\eta^2[21] \rightarrow \epsilon\eta[13] \\
\eta[22] \rightarrow \sigma\eta[14] \\
1[23] \rightarrow \sigma[15] 
\end{array}$
\\
\end{tabular}

\subsection{The AHSS for $\pi_k(L(2))$}\label{tab:L(2)AHSS} $\quad$

\begin{tabular}{lll}
$\begin{array}{l}
\underline{k=4} \\
1[3,1]
\end{array}$
&
$\begin{array}{l}
\underline{k=7} \\
\nu[3,1] \\
1[5,2] \rightarrow \eta[4,1]
\end{array}$
&
$\begin{array}{l}
\underline{k=8} \\
1[7,1] \\
\eta[5,2] \rightarrow \eta^2[4,1]
\end{array}$
\\ \\
$\begin{array}{l}
\underline{k=9} \\
\nu[5,1] \\
\eta^2[5,2] \rightarrow \eta^3[4,1] \\
\eta[6,2] \rightarrow \eta^2[5,1] \\
1[7,2] \rightarrow \eta[6,1]
\end{array}$
&
$\begin{array}{l} 
\underline{k=10} \\
\nu^2[3,1] \\
\nu[5,2] \\
1[7,3] 
\end{array}$
&
$\begin{array}{l}
\underline{k=11} \\
\sigma[3,1]
\end{array}$
\\ \\
$\begin{array}{l}
\underline{k=13} \\
\eta^3[8,2] \rightarrow 8\sigma[4,1] \\
\eta^2[9,2] \rightarrow \eta^3[8,1] \\
\eta[10,2] \rightarrow \eta^2[9,1] \\
1[9,4] \rightarrow \eta[8,3]
\end{array}$
&
$\begin{array}{l}
\underline{k=14} \\
\sigma[5,2] \rightarrow \sigma\eta[4,1] \\
\nu^2[6,2] \rightarrow \epsilon[4,1] \\
\eta[9,4] \rightarrow \eta^2[8,3] 
\end{array}$
&
$\begin{array}{l} 
\underline{k = 15} \\
\sigma[7,1] \\
\epsilon[5,2] \rightarrow \epsilon\eta[4,1] \\
8\sigma[6,2] \rightarrow \alpha_5[4,1] \\
\eta^2[9,4] \rightarrow \eta^3[8,3] \\
\eta[10,4] \rightarrow \eta^2[9,3] \\
1[11,4] \rightarrow \eta[10,3]
\end{array}$
\\
\end{tabular}

\begin{tabular}{lll}
{\it Table~\ref{tab:L(2)AHSS}, cont'd} 
\\ \\
$\begin{array}{l}
\underline{k=16} \\
\alpha_5[5,2] \rightarrow \eta\alpha_5[4,1] \\
\epsilon[6,2] \\
\sigma[7,2] \\
\nu^2[8,2] \rightarrow \epsilon[6,1] \\
\nu[9,4] \rightarrow \sigma\eta[5,2] \\
1[11,5] \rightarrow \nu[9,3] 
\end{array}$
&
$\begin{array}{l}
\underline{k=17} \\
\nu[13,1] \\
\eta\alpha_5[5,2] \rightarrow \alpha_6[4,1] \\
\alpha_5[6,2] \rightarrow \eta\alpha_5[5,1] \\
8\sigma[8,2] \rightarrow \alpha_5[6,1] \\
\eta^3[12,2] \rightarrow 8\sigma[8,1] \\
\eta^2[13,2] \rightarrow \eta^3[12,1] \\
\sigma[7,3]
\end{array}$
&
$\begin{array}{l}
\underline{k=18} \\
\theta_3[3,1] \\
\nu^2[11,1] \\
\nu[13,2] \\
1[15,3] \\
\nu[11,4] \rightarrow \nu^2[8,3]
\end{array}$
\\ \\
$\begin{array}{l}
\underline{k=19} \\
\theta_3[4,1] \\
\kappa[4,1] \\
\sigma[11,1] \\
(\sigma\eta^2+\epsilon\eta)[8,2] \rightarrow \kappa[3,1] \\
\nu^2[9,4] \rightarrow (\sigma\eta^2+\epsilon\eta)[8,1] \\
\eta^3[12,4] \rightarrow 8\sigma[8,3] \\
\eta^2[13,4] \rightarrow \eta^3[12,3] \\
\eta[14,4] \rightarrow \eta^2[13,3] \\
1[15,4] \rightarrow \eta[14,3] \\
\nu[11,5] \rightarrow \nu^2[9,3] \\
1[13,6] \rightarrow \eta[12,5]
\end{array}$
&
$\begin{array}{l}
\underline{k=20} \\
\theta_3[5,1] \\
\sigma\eta[10,2] \rightarrow \sigma\eta^2[9,1] \\
\sigma[11,2] \rightarrow \sigma\eta[10,1] \\
\sigma[9,4] \rightarrow \sigma\eta[8,3] \\
\nu^2[10,4] \rightarrow \epsilon[8,3] \\
\nu[13,4] \rightarrow \nu^2[11,2] \\
1[15,5] \rightarrow \nu[13,3] \\
\eta[13,6] \rightarrow \eta^2[12,5]
\end{array}$
&
$\begin{array}{l}
\underline{k=21} \\
\kappa[6,1] \\
\theta_3[5,2] \\
\kappa[5,2] \rightarrow \kappa\eta[4,1] \\
\alpha_6[8,2] \rightarrow \alpha_8[4,1] \\
\eta\alpha_5[9,2] \rightarrow \alpha_6[8,1] \\
\alpha_5[10,2] \rightarrow \eta\alpha_5[9,1] \\
\sigma[11,3] \\
\sigma\eta[9,4] \rightarrow \sigma\eta^2[8,3] \\
\epsilon[9,4] \rightarrow \epsilon\eta[8,3] \\
8\sigma[10,4] \rightarrow \alpha_5[8,3] \\
\nu^2[11,4] \rightarrow \sigma\eta^2[9,2] \\
\nu[13,5] \rightarrow \nu^2[11,3] \\
\eta^2[13,6] \rightarrow 4\nu[12,5] \\
\eta[14,6] \rightarrow \eta^2[13,5] \\
1[15,6] \rightarrow \eta[14,5]
\end{array}$
\\ \\
$\begin{array}{l}
\underline{k=22} \\
\nu^*[3,1] \\
\nu\kappa[4,1] \\
\theta_3[7,1] \\
\alpha_{8/5}[5,2] \rightarrow \eta\alpha_{8/5}[4,1] \\
\kappa\eta[5,2] \rightarrow \nu\kappa[3,1] \\
\theta_3[6,2] \rightarrow \eta_4[4,1] \\
\kappa[6,2] \rightarrow \kappa\eta[5,1] \\
\sigma\eta^2[9,4] \\
\alpha_5[9,4] \rightarrow \eta\alpha_5[8,3] \\
\sigma\eta[10,4] \rightarrow \sigma\eta^2[9,3] \\
\sigma[11,4] \rightarrow \sigma\eta[10,3] \\
\nu^2[11,5] \\
\nu[13,6] \\
1[15,7]
\end{array}$
&
$\begin{array}{l}
\underline{k=23 \text{(outgoing diffs only)}} \\
\eta_4[5,2] \rightarrow \eta\eta_4[4,1] \\
\eta\alpha_{8/5}[5,2] \rightarrow \eta^2\alpha_{8/5}[4,1] \\
\alpha_8[6,2] \rightarrow \alpha_9[4,1] \\
\eta\alpha_5[9,4] \rightarrow \alpha_6[8,3] \\
\alpha_5[10,4] \rightarrow \eta\alpha_5[9,3] \\
8\sigma[12,4] \rightarrow \alpha_5[10,3]
\end{array}$
\\
\end{tabular}
\vfill

\subsection{The AHSS for $\pi_k(L(3))$}\label{tab:L(3)AHSS} $\quad$

\begin{tabular}{lll}
$\begin{array}{l}
\underline{k=11} \\
1[7,3,1]
\end{array}$
&
$\begin{array}{l}
\underline{k=18} \\
\sigma[7,3,1] \\
1[11,5,2] \rightarrow \nu[9,4,1]
\end{array}$
&
$\begin{array}{l}
\underline{k=19} \\
1[15,3,1] 
\end{array}$
\\ \\
$\begin{array}{l}
\underline{k=21} \\
\nu[11,5,2] \rightarrow \nu^2[9,4,1]
\end{array}$
&
$\begin{array}{l}
\underline{k=22} \\
\sigma[11,3,1] \\
1[15,5,2] \rightarrow \nu[13,4,1] 
\end{array}$
&
$\begin{array}{l} 
\underline{k=23} \\
1[15,7,1] \\
\nu[13,5,2] \rightarrow \nu^2[11,4,1] 
\end{array}$
\\
\end{tabular}

\subsection{The EHPSS}\label{tab:EHPSS} $\quad$

\begin{tabular}{lll}
$\begin{array}{l}
\underline{k=0} \\
\boxed{1(\infty)[0]} \Rightarrow 1(\infty) 
\end{array}$
&
$\begin{array}{l}
\underline{k=1} \\
\boxed{1[1]} \Rightarrow \eta \\
2(\infty)[1] \leftarrow 1(\infty)[2] 
\end{array}$
&
$\begin{array}{l}
\underline{k=2} \\
\boxed{\eta[1]} \Rightarrow \eta^2
\end{array}$
\\ \\
$\begin{array}{l}
\underline{k=3} \\
\boxed{\eta^2[1]} \Rightarrow 4\nu \\
\boxed{\eta[2]} \Rightarrow 2\nu \\
\boxed{1[3]} \Rightarrow \nu \\
2(\infty)[3] \leftarrow 1(\infty)[4]
\end{array}$
&
$\begin{array}{l}
\underline{k=4} \\
2\nu(4)[1] \leftarrow \nu(4)[2] \\
\eta^2[2] \leftarrow \eta[4] \\
\eta[3] \leftarrow 1[5]
\end{array}$
&
$\begin{array}{l}
\underline{k=5} \\
\eta[4,1] \leftarrow 1[5,2] \\
4\nu[2] \leftarrow \eta^2[4] \\
\eta^2[3] \leftarrow \eta[5] \\
2(\infty)[5] \leftarrow 1(\infty)[6]
\end{array}$
\\ \\
$\begin{array}{l}
\underline{k=6} \\
\eta^2[4,1] \leftarrow \eta[5,2] \\
\boxed{\nu[3]} \Rightarrow \nu^2 \\
2\nu(4)[3] \leftarrow \nu(4)[4]
\end{array}$
&
$\begin{array}{l}
\underline{k=7} \\
\boxed{4\nu[4]} \Rightarrow 8\sigma \\
\boxed{\eta^2[5]} \Rightarrow 4\sigma \\
\boxed{\eta[6]} \Rightarrow 2\sigma \\
\boxed{1[7]} \Rightarrow \sigma \\
2(\infty)[7] \leftarrow 1(\infty)[8] 
\end{array}$
&
$\begin{array}{l}
\underline{k=8} \\
\boxed{\nu^2[2]} \Rightarrow \epsilon \\
\boxed{\nu[5]} \Rightarrow \sigma\eta \\
2\nu(4)[5] \leftarrow \nu(4)[6] \\
\eta^2[6] \leftarrow \eta[8] \\
\eta[7] \leftarrow 1[9]
\end{array}$
\\ \\
$\begin{array}{l}
\underline{k=9} \\
\boxed{\epsilon[1]} \Rightarrow \epsilon\eta \\
\boxed{8\sigma[2]} \Rightarrow \alpha_5 \\
\boxed{\nu^2[3]} \Rightarrow \sigma\eta^2 \\
4\nu[6] \leftarrow \eta^2[8] \\
\eta^2[7] \leftarrow \eta[9] \\
2(\infty)[9] \leftarrow 1(\infty)[10] 
\end{array}$
&
$\begin{array}{l}
\underline{k=10} \\
\eta\epsilon[1] \leftarrow 4\nu[8] \\
\boxed{\alpha_5[1]} \Rightarrow \eta\alpha_5 \\
\epsilon[2] \leftarrow \eta^2[9] \\
2\sigma(8)[3] \leftarrow \sigma(8)[4] \\
\nu^2[4] \leftarrow \eta[10] \\
\nu[7] \leftarrow 1[11] \\
2\nu(4)[7] \leftarrow \nu(4)[8]
\end{array}$
&
$\begin{array}{l}
\underline{k=11} \\
\boxed{\eta\alpha_5[1]} \Rightarrow \alpha_{6} \\
\eta^3[8,1] \leftarrow \eta^2[9,2] \\
\eta^2[9,1] \leftarrow \eta[10,2] \\
\epsilon\eta[2] \leftarrow \epsilon[4] \\
\boxed{\alpha_5[2]} \Rightarrow \alpha_{6/2} \\
\sigma\eta^2[2] \leftarrow \sigma\eta[4] \\
\epsilon[3] \leftarrow \nu^2[6] \\
\sigma\eta[3] \leftarrow \sigma[5] \\
\eta[8,3] \leftarrow 1[9,4] \\
\boxed{8\sigma[4]} \Rightarrow \alpha_{6/3} \\
\nu^2[5] \leftarrow \nu[9] \\
2(\infty)[11] \leftarrow 1(\infty)[12] 
\end{array}$
\\
\end{tabular}

\begin{tabular}{lll}
{\it Table~\ref{tab:EHPSS}, cont'd}
\\ \\
$\begin{array}{l}
\underline{k=12} \\
\alpha_{6/2}(4)[1] \leftarrow \alpha_{6/3}(4)[2] \\
\epsilon[4,1] \leftarrow \nu^2[6,2] \\
\sigma\eta[4,1] \leftarrow \sigma[5,2] \\
\eta^3[8,2] \leftarrow \bigg\{ \stackem{\sigma\eta^2[4]}{\epsilon\eta[4]} \: (*) \\
\eta\alpha_5[2] \leftarrow \alpha_5[4] \\
\epsilon\eta[3] \leftarrow \epsilon[5] \\
\alpha_5[3] \leftarrow 8\sigma[6] \\
\sigma\eta^2[3] \leftarrow \sigma\eta[5] \\
\eta^2[8,3] \leftarrow \eta[9,4] \\
2\sigma(8)[5] \leftarrow \sigma(8)[6] \\
2\nu(4)[9] \leftarrow \nu(4)[10] \\
\eta^2[10] \leftarrow \eta[12] \\
\eta[11] \leftarrow 1[13]
\end{array}$
&
$\begin{array}{l}
\underline{k=13} \\
\epsilon\eta[4,1] \leftarrow \epsilon[5,2] \\
\alpha_5[4,1] \leftarrow 8\sigma[6,2] \\
\alpha_6[2] \leftarrow \eta\alpha_5[4] \\
\eta^3[8,3] \leftarrow \eta^2[9,4] \\
\eta\alpha_5[3] \leftarrow \alpha_5[5] \\
\eta^2[9,3] \leftarrow \eta[10,4] \\
\eta[10,3] \leftarrow 1[11,4] \\
(\sigma\eta^2+\epsilon\eta)[4] \leftarrow \nu^2[8] \\
\nu^2[7] \leftarrow \nu[11] \\
4\nu[10] \leftarrow \eta^2[12] \\
\eta^2[11] \leftarrow \eta[13] \\
2(\infty)[13] \leftarrow 1(\infty)[14]
\end{array}$
&
$\begin{array}{l}
\underline{k=14} \\
\eta\alpha_5[4,1] \leftarrow \alpha_5[5,2] \\
\sigma\eta[5,2] \leftarrow \nu[9,4] \\
\alpha_{6/3}[3] \leftarrow \eta^2[13] \\
\alpha_{6/2}(4)[3] \leftarrow \alpha_{6/3}(4)[4] \\
\nu[9,3] \leftarrow 1[11,5] \\
(\sigma\eta^2+\epsilon\eta)[5] \leftarrow \nu^2[9] \\
\epsilon\eta[5] \leftarrow \eta[14] \\
\sigma\eta[6] \leftarrow 1[15] \\
\boxed{\epsilon[6]} \Rightarrow \kappa \\
\boxed{\sigma[7]} \Rightarrow \theta_3 \\
2\sigma(8)[7] \leftarrow \sigma(8)[8] \\
2\nu(4)[11] \leftarrow \nu(4)[12]
\end{array}$
\\ \\
$\begin{array}{l}
\underline{k=15} \\
\nu[9,4,1] \leftarrow 1[11,5,2] \\
\boxed{\nu^2[8,2]} \Rightarrow \kappa\eta \\
\boxed{\alpha_6[4]} \Rightarrow \alpha_8 \\
\boxed{\eta\alpha_5[5]} \Rightarrow \alpha_{8/2} \\
\sigma\eta^2[6] \leftarrow \sigma\eta[8] \\
\epsilon\eta[6] \leftarrow \epsilon[8] \\
\boxed{\alpha_5[6]} \Rightarrow \alpha_{8/3} \\
\sigma\eta[7] \leftarrow \sigma[9] \\
\epsilon[7] \leftarrow \nu^2[10] \\
\boxed{8\sigma[8]} \Rightarrow \alpha_{8/4} \\
\boxed{4\nu[12]} \Rightarrow \alpha_{8/5} \\
2(\infty)[15] \leftarrow 1(\infty)[16] 
\end{array}$
&
$\begin{array}{l}
\underline{k=16} \\
\kappa\eta[1] \leftarrow (\sigma\eta^2+\epsilon\eta)[8] \\
\boxed{\eta^2[13,2]} \Rightarrow \eta\alpha_{8/5} \\
\nu^2[8,3] \leftarrow \nu[11,4] \\
\alpha_{6/3}[5] \leftarrow \bigg\{
\stackem{\sigma\eta^2[8]}{\epsilon\eta[8]} \\
\eta\alpha_5[6] \leftarrow \alpha_5[8] \\
\sigma\eta^2[7] \leftarrow \sigma\eta[9] \\
\epsilon\eta[7] \leftarrow \epsilon[9] \\
\alpha_5[7] \leftarrow 8\sigma[10] \\
2\sigma(8)[9] \leftarrow \sigma(8)[10] \\
\boxed{\nu[13]} \Rightarrow \eta_4 \\
2\nu(4)[13] \leftarrow \nu(4)[14] \\
\eta^2[14] \leftarrow \eta[16] \\
\eta[15] \leftarrow 1[17] 
\end{array}$
&
$\begin{array}{l} 
\underline{k=17} \\
\boxed{\eta\alpha_{8/5}[1]} \Rightarrow \eta^2\alpha_{8/5} \\
(\sigma\eta^2+\epsilon\eta)[8,1] \leftarrow \nu^2[9,4] \\
\kappa\eta[2] \leftarrow \kappa[4] \\
\boxed{\alpha_8[2]} \Rightarrow \alpha_9 \\
\eta^2[13,3] \leftarrow \eta[14,4] \\
\nu^2[9,3] \leftarrow \nu[11,5] \\
\eta[14.3] \leftarrow 1[15,4] \\
1[15,3] \leftarrow \theta_3[4] \: (***) \\
\boxed{\kappa[3]} \Rightarrow \kappa\nu \\
\eta[12,5] \leftarrow 1[13,6] \\
\alpha_6[6] \leftarrow \eta\alpha_5[8] \\
\eta\alpha_5[7] \leftarrow \alpha_5[9] \\
\boxed{\nu^2[11]} \Rightarrow \eta\eta_4 \\
4\nu[14] \leftarrow \eta^2[16] \\
\eta^2[15] \leftarrow \eta[17] \\
2(\infty)[17] \leftarrow 1(\infty)[18] 
\end{array}$
\\
\end{tabular}

\begin{tabular}{ll}
{\it Table~\ref{tab:EHPSS}, cont'd} 
\\ \\
$\begin{array}{l}
\underline{k=18} \\
\eta^2\alpha_{8/5}[1] \leftarrow \alpha_6[8] \\
\boxed{\alpha_9[1]} \Rightarrow \eta\alpha_9 \\
\nu^2[9,4,1] \leftarrow \nu[11,5,2] \\
\kappa[4,1] \leftarrow \kappa\nu[2] \: (***) \\
(\sigma\eta^2+\epsilon\eta)[8,2] \leftarrow \kappa\eta[4] \: (*) \\
\eta\alpha_{8/5}[2] \leftarrow \eta\alpha_5[9] \\
\kappa\eta[3] \leftarrow \kappa[5] \\
\alpha_{8/3}(8)[3] \leftarrow \alpha_{8/4}(8)[4] \\
\sigma\eta[8,3] \leftarrow \sigma[9,4] \\
\epsilon[8,3] \leftarrow \nu^2[10,4] \\
\eta^2[13,4] \leftarrow \alpha_5[10] \: (*) \\
\eta^2[12,5] \leftarrow \eta[13,6] \\
\alpha_{6/3}[7] \leftarrow 8\sigma[12] \\
\alpha_{6/2}(4)[7] \leftarrow \alpha_{6/3}(4)[8] \\
\boxed{\sigma\eta^2[9]} \Rightarrow 4\nu^* \\
\epsilon\eta[9] \leftarrow 4\nu[16] \\
\boxed{\sigma\eta[10]} \Rightarrow 2\nu^* \\
\epsilon[10] \leftarrow \eta^2[17] \\
\boxed{\sigma[11]} \Rightarrow \nu^* \\
\nu^2[12] \leftarrow \eta[18] \\
\nu[15] \leftarrow 1[19] \\
2\nu(4)[15] \leftarrow \nu(4)[16]
\end{array}$
&
$\begin{array}{l}
\underline{k=19} \\
\boxed{\eta\alpha_9[1]} \Rightarrow \alpha_{10} \\
\kappa\eta[4,1] \leftarrow \kappa[5,2] \\
\alpha_6[8,1] \leftarrow \eta\alpha_5[9,2] \\
\eta\alpha_5[9,1] \leftarrow \alpha_5[10,2] \\
\eta^2\alpha_{8/5}[2] \leftarrow \eta\alpha_{8/5}[4] \\
\boxed{\alpha_9[2]} \Rightarrow \alpha_{10/2} \\
\alpha_5[8,3] \leftarrow 8\sigma[10,4] \\
\eta\alpha_{8/5}[3] \leftarrow \eta^2[13,6] \: (**) \\
\sigma\eta^2[8,3] \leftarrow \sigma\eta[9,4] \\
\epsilon\eta[8,3] \leftarrow \epsilon[9,4] \\
\boxed{\alpha_8[4]} \Rightarrow \alpha_{10/3} \\
\boxed{\theta_3[5]} \Rightarrow \br{\sigma} \\
\eta^2[13,5] \leftarrow \eta[14,6] \\
\eta[14,5] \leftarrow 1[15,6] \\
1[15,5] \leftarrow \theta_3[6] \: (*) \\
\sigma\eta^2[10] \leftarrow \sigma\eta[12] \\
\epsilon\eta[10] \leftarrow \epsilon[12] \\
\sigma\eta[11] \leftarrow \sigma[13] \\
\epsilon[11] \leftarrow \nu^2[14] \\
\nu^2[13] \leftarrow \nu[17] \\
2(\infty)[19] \leftarrow 1(\infty)[20]
\end{array}$
\\
\end{tabular}
\vfill

\subsection{The GSS for $\pi_{n+1}(S^1)$}\label{tab:S1GSS}  $\quad$

$$
\xymatrix@R-2.7em{
\underline{n} & 
\underline{\pi_n(L(0))} &
\underline{\pi_{n-1}(L(1))} &
\underline{\pi_{n-2}(L(2))} &
\underline{\pi_{n-3}(L(3))} 
\\
0 &
1(\infty) & 1[1] && \\
\ar@{.}[rrrrr] &&&&&
\\
1 & \eta \ar '[]+R [uur]+L & \eta[1] && \\
\ar@{.}[rrrrr] &&&&&
\\
2 & \eta^2 \ar '[]+R [uur]+L & \eta^2[1] & & \\
  && \eta[2] && \\
  && 1[3] & 1[3,1] & \\
\ar@{.}[rrrrr] &&&&&
\\
3 & 4\nu \ar '[]+R [uuuur]+L & \nu[1] \ar '[]+R [ruu]+L && \\
  & 2\nu \ar '[]+R [uuuur]+L &&& \\
  & \nu \ar '[]+R [uuuur]+L &&& \\
\ar@{.}[rrrrr] &&&&&
\\
4 &&&& \\
\ar@{.}[rrrrr] &&&&& 
\\
5 & & \nu[3] & \nu[3,1] & \\
\ar@{.}[rrrrr] &&&&&
\\
6 & \nu^2 \ar '[]+R [uur]+L & \nu^2[1] \ar '[]+R [uur]+L && \\
  && \eta^3[4] && \\
  && \eta^2[5] && \\
  && \eta[6] && \\
  && 1[7] & 1[7,1] & \\
\ar@{.}[rrrrr] &&&&& 
\\
7 & 8\sigma \ar '[]+R [uuuuur]+L & \sigma[1] \ar '[]+R [uur]+L & &\\
  & 4\sigma \ar '[]+R [uuuuur]+L &&& \\
  & 2\sigma \ar '[]+R [uuuuur]+L & \nu^2[2] && \\
  & \sigma \ar '[]+R [uuuuur]+L & \nu[5] & \nu[5,1] & \\
\ar@{.}[rrrrr] &&&&&
\\
8 & \epsilon \ar '[]+R [ruuu]+L & \sigma\eta[1] \ar '[]+R [ruu]+L && \\
  & \sigma\eta \ar '[]+R [ruuu]+L & \epsilon[1] & \nu^2[3,1] & \\
  && \nu^2[3] & \nu[5,2] & \\
  && 8\sigma[2] & 1[7,3] & 1[7,3,1] \\
\ar@{.}[rrrrr] &&&&&
\\
9 & \epsilon\eta \ar '[]+R [ruuuu]+L & \sigma\eta^2[1] \ar '[]+R
  [ruuuu]+L & \sigma[3,1] \ar '[]+R [ruu]+L \\
  & \sigma\eta^2 \ar '[]+R [ruuuu]+L & \sigma\eta[2] \ar '[]+R [ruuuu]+L &&
  \\
  & \alpha_5 \ar '[]+R [ruuuu]+L & \sigma[3] \ar '[]+R [ruuuu]+L && \\
  && \alpha_5[1] && \\
\ar@{.}[rrrrr] &&&&&
\\
10 & \eta\alpha_5 \ar '[]+R [ruu]+L & \eta\alpha_5[1] && \\
   && \alpha_5[2] && \\
   && 8\sigma[4] && \\
\ar@{.}[rrrrr] &&&&&
\\
11 & \alpha_6 \ar '[]+R [ruuuu]+L &&& \\
   & \alpha_{6/2} \ar '[]+R [ruuuu]+L &&& \\
   & \alpha_{6/3} \ar '[]+R [ruuuu]+L &&& \\
\ar@{.}[rrrrr] &&&&&
\\
12 &&&& \\
\ar@{.}[rrrrr] &&&&&
\\
13 && \epsilon[6] & \sigma[7,1] & \\
   && \sigma[7] && \\
\ar@{.}[rrrrr] &&&&&
\\
   & \kappa \ar '[]+R [ruuu]+L & \theta_3[1] \ar '[]+R [ruuu]+L && \\
   & \theta_3 \ar '[]+R [ruuu]+L &&& \\
} $$ 

$$
\xymatrix@R-2.7em{
\underline{n} & 
\underline{\pi_n(L(0))} &
\underline{\pi_{n-1}(L(1))} &
\underline{\pi_{n-2}(L(2))} &
\underline{\pi_{n-3}(L(3))} 
\\
   && \epsilon[6] & & \\
   && \sigma[7] & \sigma[7,1] & \\
\ar@{.}[rrrrr] &&&&&
\\
14 & \kappa \ar '[]+R [ruuu]+L & \theta_3[1] \ar '[]+R [ruu]+L & & \\
   & \theta_3 \ar'[]+R [ruuu]+L & \kappa[1] & & \\
   && \alpha_6[4] && \\
   && \eta\alpha_5[5] && \\
   && \alpha_5[6] && \\
   && 8\sigma[8] & \sigma[7,2] & \\
   && \eta^3[12] & \epsilon[6,2] & \\
\ar@{.}[rrrrr] &&&&&
\\
15 & \eta\kappa \ar '[]+R [ruuuuuuu]+L & \theta_3[2] \ar '[]+R [ruuu]+L & & \\
   & \alpha_8 \ar '[]+R [ruuuuuuu]+L & \kappa[2] \ar '[]+R [ruuu]+L & & \\
   & \alpha_{8/2} \ar '[]+R [ruuuuuuu]+L & && \\
   & \alpha_{8/3} \ar '[]+R [ruuuuuuu]+L & && \\
   & \alpha_{8/4} \ar '[]+R [ruuuuuuu]+L & \nu[13] & \sigma[7,3] & \\
   & \alpha_{8/5} \ar '[]+R [ruuuuuuu]+L & \alpha_{8/5}[1] & \nu[13,1] & 
   \sigma[7,3,1] \\
\ar@{.}[rrrrr] &&&&&
\\
16 & \eta_4 \ar '[]+R [ruuu]+L & \theta_3[3] \ar '[]+R [ruuu]+L & 
    \theta_3[3,1] \ar '[]+R [ruu]+L & \\
   & \eta\alpha_{8/5} \ar '[]+R [ruuu]+L & \eta_4[1] \ar '[]+R [ruuu]+L & & \\
   && \nu^2[11] & & \\
   && \eta\alpha_{8/5}[1] & \nu^2[11,1] & \\
   && \kappa[3] & \nu[13,2] & \\
   && \alpha_8[2] & 1[15,3] & 1[15,3,1] \\
\ar@{.}[rrrrr] &&&&&
\\
17 & \eta\eta_4 \ar '[]+R [ruuuuu]+L & \eta\eta_4[1] \ar '[]+R [ruuuu]+L &
    \theta_3[4,1] \ar@{-->} '[]+R [ruu]+L & \\
   & \eta^2\alpha_{8/5} \ar '[]+R [ruuuuu]+L & \eta_4[2] \ar '[]+R [ruuuu]+L 
    && \\
   & \nu\kappa \ar '[]+R [ruuuuu]+L & \theta_3[4] \ar@{-->} '[]+R [ruuuu]+L&& \\
   & \alpha_9 \ar '[]+R [ruuuuu]+L & \sigma\eta^2[9] && \\
   && \sigma\eta[10] && \\
   && \sigma[11] & \kappa[4,1] & \\
   && \alpha_9[1] & \sigma[11,1] & \\
\ar@{.}[rrrrr] &&&&&
\\
18 & 4\nu^* \ar '[]+R [ruuuuu]+L & \nu\kappa[2] \ar@{-->} '[]+R [ruuu]+L && \\
   & 2\nu^* \ar '[]+R [ruuuuu]+L & \nu^*[1] \ar '[]+R [ruuu]+L && \\
   & \nu^* \ar '[]+R [ruuuuu]+L & \theta_3[5] && \\
   & \eta\alpha_9 \ar '[]+R [ruuuuu]+L & \eta\alpha_9[1] && \\
   && \alpha_9[2] && \\
   && \alpha_8[4] & \theta_3[5,1] & \\
\ar@{.}[rrrrr] &&&&&
\\
19 & \br{\sigma} \ar '[]+R [ruuuuu]+L & \br{\sigma}[1] \ar '[]+R [ruu]+L && \\
   & \alpha_{10} \ar '[]+R [ruuuuu]+L & \kappa\nu[3] & \kappa[6,1] & \\
   & \alpha_{10/2} \ar '[]+R [ruuuuu]+L & \kappa\eta[5] & \theta_3[5,2] & \\
   & \alpha_{10/3} \ar '[]+R [ruuuuu]+L & \kappa[6] & \sigma[11,3] &
   \sigma[11,3,1] \\
\ar@{.}[rrrrr] &&&&&
\\
   & 4\br{\kappa} \ar '[]+R [ruuuu]+L & \br{\kappa}[1] \ar '[]+R [ruuuu]+L
   & \nu^*[3,1] \ar '[]+R [ruu]+L & \\
   & 2\br{\kappa} \ar '[]+R [ruuuu]+L & \br{\sigma}[2] \ar '[]+R [ruuuu]+L && \\
   & \br{\kappa} \ar '[]+R [ruuuu]+L & \nu^*[3]\ar '[]+R [ruuuu]+L &&
}$$

\subsection{The GSS for $\pi_{n+2}(S^2)$}\label{tab:S2GSS} $\quad$

$$
\xymatrix@R-2.7em{
\underline{n} & 
\underline{\pi_n(L(0)_2)} &
\underline{\pi_{n-1}(L(1)_2)} &
\underline{\pi_{n-2}(L(2)_2)} &
\underline{\pi_{n-3}(L(3)_2)} 
\\
0 & 1(\infty) &&& \\
\ar@{.}[rrrrr] &&&&&
\\
1 & \eta & 1(\infty)[2] && \\
\ar@{.}[rrrrr] &&&&&
\\
2 && \eta[2] && \\
  & \eta^2 & 1[3] && \\
\ar@{.}[rrrrr] &&&&& 
\\
3 & 2\nu \ar '[]+R [ruuu]+L &&& \\
  & \nu \ar'[]+R [ruuu]+L &&& \\
  & 4\nu &&& \\
\ar@{.}[rrrrr] &&&&&
\\
4 && 2\nu[2] && \\
  && \nu[2] && \\
\ar@{.}[rrrrr] &&&&&
\\
5 && \nu[3] & 1[5,2] & \\
\ar@{.}[rrrrr] &&&&&
\\
6 & \nu^2 \ar '[]+R [ruu] & \eta^3[4] && \\
  && \eta^2[5] && \\
  && \eta[6] && \\
  && 1[7] & \eta[5,2] & \\
\ar@{.}[rrrrr] &&&&&
\\
7 & 8\sigma \ar '[]+R [ruuuuu]+L &&& \\
  & 4\sigma \ar '[]+R [ruuuuu]+L && \eta^2[5,2] & \\
  & 2\sigma \ar '[]+R [ruuuuu]+L & \nu^2[2] & \eta[6,2] & \\
  & \sigma \ar '[]+R [ruuuuu]+L & \nu[5] & 1[7,2] & \\
\ar@{.}[rrrrr] &&&&&
\\
8 & \epsilon \ar '[]+R [ruuu]+L & 4\sigma[2] \ar '[]+R [ruuuu]+L && \\
  & \sigma\eta \ar '[]+R [ruuu]+L & 2\sigma[2] \ar '[]+R [ruuuu]+L && \\
  && \sigma[2] \ar '[]+R [ruuuu]+L && \\
  && 8\sigma[2] & \nu[5,2] & \\
  && \nu^2[3] & 1[7,3] & \\
\ar@{.}[rrrrr] &&&&&
\\
9 & \alpha_5 \ar '[]+R [ruuu]+L & \sigma\eta[2] \ar '[]+R [ruuu]+L && \\
  & \sigma\eta^2 \ar '[]+R [ruuu]+L & \sigma[3] \ar '[]+R [ruuu]+L && \\
  & \epsilon\eta &&& \\
\ar@{.}[rrrrr] &&&&&
\\
10 && 4\nu[8] && \\
   && \alpha_5[2] && \\
   & \eta\alpha_5 & 8\sigma[4] && \\
\ar@{.}[rrrrr] &&&&&
\\
11 & \alpha_{6/2} \ar '[]+R [ruuu]+L && \eta^2[9,2] & \\
   & \alpha_{6/3} \ar '[]+R [ruuu]+L && \eta[10,2] & \\
   & \alpha_6 && \eta^3[8,2] & \\
\ar@{.}[rrrrr] &&&&&
\\
12 && \epsilon\eta[4] \ar '[]+R [ruu]+L|{(*)} && \\
   && \alpha_{6/2}[2] & \sigma[5,2] & \\
   && \alpha_{6/3}[2] & \nu^2[6,2] & \\
} $$ 

$$
\xymatrix@R-2.7em{
\ar@{.}[rrrrr] &&&&&
\\
13 && \epsilon[6] & \epsilon[5,2] & \\
   && \sigma[7] & 8\sigma[6,2] & \\
\ar@{.}[rrrrr] &&&&&
\\
14 & \kappa \ar '[]+R [ruuu]+L & \alpha_6[4] & \alpha_5[5,2] & \\
   & \theta_3 \ar '[]+R [ruuu]+L & \eta\alpha_5[5] && \\
   && \alpha_5[6] & \epsilon[6,2] & \\
   && 8\sigma[8] & \sigma[7,2] & \\
   && 4\nu[12] & \nu^2[8,2] & \\
\ar@{.}[rrrrr] &&&&&
\\
15 & \alpha_8 \ar '[]+R [ruuuuuu]+L & \kappa[2] \ar '[]+R [ruuuu]+L &
    \eta\alpha_5[5,2] & \\
   & \alpha_{8/2} \ar '[]+R [ruuuuuu]+L & \theta_3[2] \ar '[]+R [ruuuu]+L &
    \alpha_5[6,2] & \\
   & \alpha_{8/3} \ar '[]+R [ruuuuuu]+L && 8\sigma[8,2] & \\
   & \alpha_{8/4} \ar '[]+R [ruuuuuu]+L && \eta^3[12,2] & \\
   & \alpha_{8/5} \ar '[]+R [ruuuuuu]+L && \sigma[7,3] & \\
   & \kappa\eta \ar '[]+R [rruuuuuuu]+L & \nu[13] & \eta^2[13,2] & 
    1[11,5,2] \\
\ar@{.}[rrrrr] &&&&&
\\
16 & \eta_4 \ar '[]+R [ruu]+L & \alpha_{8/2}[2] \ar '[]+R [ruuuuuuu]+L && \\
   & \eta\alpha_{8/5} \ar '[]+R
    [rruuu]+L
    & \alpha_{8/3}[2] \ar '[]+R [ruuuuuuu]+L && \\
   && \alpha_{8/4}[2] \ar '[]+R [ruuuuuuu]+L && \\
   && \alpha_{8/5}[2] \ar '[]+R [ruuuuuuu]+L && \\
   && \theta_3[3] \ar '[]+R [ruuuuuuu]+L && \\
   && (\sigma\eta^2+\epsilon\eta)[8] && \\
   && \alpha_8[2] && \\
   && \kappa[3] & \nu[13,2] & \\
   && \nu^2[11] & 1[15,3] & \\
\ar@{.}[rrrrr] &&&&&
\\
17 & \alpha_9 \ar '[]+R [ruuuu]+L & \eta_4[2] \ar '[]+R [ruuu]+L &
    \nu^2[9,4] & \\
   & \nu\kappa \ar '[]+R [ruuuu]+L & \theta_3[4] \ar '[]+R [ruuu]+L|{(**)} && \\
   & \eta\eta_4 \ar '[]+R [ruuuu]+L & \sigma\eta^2[9] && \\
   && \sigma\eta[10] && \\
   & \eta^2\alpha_{8/5} & \sigma[11] & (\sigma\eta^2+\epsilon\eta)[8,2] & \\
\ar@{.}[rrrrr] &&&&&
\\
18 & 4\nu^* \ar '[]+R [ruuuu]+L & \kappa\eta[4] \ar '[]+R [ruu]+L|{(*)} && \\
   & 2\nu^* \ar '[]+R [ruuuu]+L & \nu\kappa[2] && \\
   & \nu^* \ar '[]+R [ruuuu]+L & \alpha_6[8] && \\
   && \alpha_9[2] && \\
   && \alpha_8[4] & \sigma\eta[10,2] & \\
   & \eta\alpha_9 & \theta_3[5] & \sigma[11,2] & \nu[11,5,2] \\
\ar@{.}[rrrrr] &&&&&
\\
19 & \alpha_{10/2} \ar '[]+R [ruuuu]+L & 2\nu^*[2] \ar '[]+R [ruuu]+L &
    \kappa[5,2] & \\
   & \alpha_{10/3} \ar '[]+R [ruuuu]+L & \nu^*[2] \ar '[]+R [ruuu]+L &
    \eta\alpha_5[9,2] & \\
   & \br{\sigma} \ar '[]+R [ruuuu]+L & \nu\kappa[3] & \alpha_5[10,2] & \\
   && \kappa\eta[5] & \alpha_6[8,2] & \\
   & \alpha_{10} & \kappa[6] & \sigma[11,3] & 1[15,5,2] \\
\ar@{.}[rrrrr] &&&&&
\\
   & 4\br{\kappa} \ar '[]+R [ruuuu]+L & \eta^2\alpha_{8/5}[4] \ar '[]+R 
    [ruuu]+L|{(*)} & \theta_3[6,2] \ar '[]+R [ruu]+L|{(*)} & \\
   & 2\br{\kappa} \ar '[]+R [ruuuu]+L & \nu^*[3] \ar '[]+R [ruuu]+L && \\
   & \br{\kappa} \ar '[]+R [ruuuu]+L &&&
}$$

\subsection{The GSS for $\pi_{n+3}(S^3)$}\label{tab:S3GSS} $\quad$

$$
\xymatrix@R-2.7em{
\underline{n} & 
\underline{\pi_n(L(0)_3)} &
\underline{\pi_{n-1}(L(1)_3)} &
\underline{\pi_{n-2}(L(2)_3)} 
\\
0 & 1(\infty) && \\
\ar@{.}[rrrr] &&&&
\\
1 & \eta & & \\
\ar@{.}[rrrr] &&&&
\\
2 & \eta^2 & 1[3] && \\
\ar@{.}[rrrr] &&&& 
\\
3 & \nu \ar '[]+R [ruu]+L && \\
  & 4\nu  && \\
  & 2\nu && \\
\ar@{.}[rrrr] &&&&
\\
4 && \eta[4] & \\
\ar@{.}[rrrr] &&&&
\\
5 && \eta^2[4] &  \\
  & & \nu[3] & \\
\ar@{.}[rrrr] &&&&
\\
6 & \nu^2 \ar '[]+R [ruu] & \eta^3[4] & \\
  && \eta^2[5] & \\
  && \eta[6] & \\
  && 1[7] & \\
\ar@{.}[rrrr] &&&&
\\
7 & 8\sigma \ar '[]+R [ruuuuu]+L && \\
  & 4\sigma \ar '[]+R [ruuuuu]+L && \\
  & 2\sigma \ar '[]+R [ruuuuu]+L && \\
  & \sigma \ar '[]+R [ruuuuu]+L & \nu[5] & \\
\ar@{.}[rrrr] &&&&
\\
8 & \sigma\eta \ar '[]+R [ruu]+L && \\
  & \epsilon  & \nu^2[3] & 1[7,3]  \\
\ar@{.}[rrrr] &&&&
\\
9 & \sigma\eta^2 \ar '[]+R [ruu]+L & \sigma[3] \ar '[]+R [ruu]+L & \\
  &  \alpha_5 && \\
  & \epsilon\eta && \\
\ar@{.}[rrrr] &&&&
\\
10 && 4\nu[8] & \\
   && \eta^2[9] & \\
   & \eta\alpha_5 & 8\sigma[4] & \\
\ar@{.}[rrrr] &&&&
\\
11 & \alpha_{6/3} \ar '[]+R [ruu]+L && \\
   & \alpha_{6} & \sigma\eta[4] &  \\
   & \alpha_{6/2} & \epsilon[4] &  \\
\ar@{.}[rrrr] &&&&
\\
12 && \epsilon\eta[4] & \\
   && \alpha_{5}[4] & \\
} $$ 
\vspace{3in}

$$
\xymatrix@R-2.7em{
\ar@{.}[rrrr] &&&&
\\
13 
   && \eta\alpha_5[4] \\
   && \epsilon[6] & & \\
   && \sigma[7] & & \\
\ar@{.}[rrrr] &&&&
\\
14 & \kappa \ar '[]+R [ruuu]+L & \alpha_6[4] & \nu[9,4]  \\
   & \theta_3 \ar '[]+R [ruuu]+L & \eta\alpha_5[5] & \\
   && \alpha_5[6] &  \\
   && 8\sigma[8] &  \\
   && 4\nu[12] &  \\
\ar@{.}[rrrr] &&&&
\\
15 & \alpha_8 \ar '[]+R [ruuuuuu]+L &  & \\
   & \alpha_{8/2} \ar '[]+R [ruuuuuu]+L & & \\
   & \alpha_{8/3} \ar '[]+R [ruuuuuu]+L &&  \\
   & \alpha_{8/4} \ar '[]+R [ruuuuuu]+L &&  \\
   & \alpha_{8/5} \ar '[]+R [ruuuuuu]+L &&  \\
   & \kappa\eta  & \nu[13] & \sigma[7,3]  \\
\ar@{.}[rrrr] &&&&
\\
16 & \eta_4 \ar '[]+R [ruu]+L & \theta_3[3] \ar '[]+R [ruu]+L & \\
   & \eta\alpha_{8/5} & (\sigma\eta^2+\epsilon\eta)[8] & \\
   && \kappa[3] &  \\
   && \nu^2[11] & 1[15,3]  \\
\ar@{.}[rrrr] &&&&
\\
17 & \nu\kappa \ar '[]+R [ruuu]+L & \theta_3[4] \ar '[]+R [ruu]+L|{(**)} & \nu^2[9,4]  \\
   & \eta\eta_4 \ar '[]+R [ruuu]+L & \kappa[4] & \\
   &  & \sigma\eta^2[9] & \\
   &\alpha_9 & \sigma\eta[10] & \\
   & \eta^2\alpha_{8/5} & \sigma[11] &  \\
\ar@{.}[rrrr] &&&&
\\
18 & 4\nu^* \ar '[]+R [ruuuu]+L & \kappa\eta[4] & \\
   & 2\nu^* \ar '[]+R [ruuuu]+L & \alpha_6[8] & \\
   & \nu^* \ar '[]+R [ruuuu]+L & \eta\alpha_5 [9] & \\
   && \alpha_8[4] & \\
   & \eta\alpha_9 & \theta_3[5] & \nu[13,4] \\
\ar@{.}[rrrr] &&&&
\\
19 & \alpha_{10/3} \ar '[]+R [ruuu]+L & \eta_4[4] \ar '[]+R [ruu]+L & \\
   & \br{\sigma} \ar '[]+R [ruuu]+L & \eta\alpha_{8/5}[4] & \\
   && \nu\kappa[3] &  \\
   & \alpha_{10} & \kappa\eta[5] & \sigma[11,3] \\
   & \alpha_{10/2} & \kappa[6] & \nu^2[11,4] \\
\ar@{.}[rrrr] &&&&
\\
   & 4\br{\kappa} \ar '[]+R [ruuuu]+L & \nu^*[3] \ar '[]+R  [ruuu]+L &  \\
   & 2\br{\kappa} \ar '[]+R [ruuuu]+L &  \eta\eta_4[4] \ar '[]+R [ruuu]+L & \\
   & \br{\kappa} \ar '[]+R [ruuuu]+L &&
}$$
\vfill

\subsection{The GSS for $\pi_{n+4}(S^4)$}\label{tab:S4GSS} $\quad$

$$
\xymatrix@R-2.7em{
\underline{n} & 
\underline{\pi_n(L(0)_4)} &
\underline{\pi_{n-1}(L(1)_4)} &
\underline{\pi_{n-2}(L(2)_4)} 
\\
0 & 1(\infty) && \\
\ar@{.}[rrrr] &&&&
\\
1 & \eta & & \\
\ar@{.}[rrrr] &&&&
\\
2 & \eta^2 & && \\
\ar@{.}[rrrr] &&&& 
\\
3 & 4\nu  && \\
  & 2\nu  & 1(\infty)[4] & \\
  & \nu && \\
\ar@{.}[rrrr] &&&&
\\
4 && \eta[4] & \\
 && 1[5] & \\
\ar@{.}[rrrr] &&&&
\\
5 && \eta^2[4] &  \\
  & & \eta[5] & \\
\ar@{.}[rrrr] &&&&
\\
6 & \nu^2 & 2\nu[4] & \\
&& \nu[4] & \\
&& \eta^3[4] & \\
  && \eta^2[5] & \\
  && \eta[6] & \\
  && 1[7] & \\
\ar@{.}[rrrr] &&&&
\\
7 & 8\sigma \ar '[]+R [ruuuuu]+L && \\
  & 4\sigma \ar '[]+R [ruuuuu]+L && \\
  & 2\sigma \ar '[]+R [ruuuuu]+L && \\
  & \sigma \ar '[]+R [ruuuuu]+L & \nu[5] & \\
\ar@{.}[rrrr] &&&&
\\
8 & \sigma\eta \ar '[]+R [ruu]+L && \\
  & \epsilon  & &   \\
\ar@{.}[rrrr] &&&&
\\
9 & \sigma\eta^2  &  & \\
  &  \alpha_5 && \\
  & \epsilon\eta && \\
\ar@{.}[rrrr] &&&&
\\
10 && 4\sigma[4] & \\
&& 2\sigma[4] & \\
&& \sigma[4] & \\
&& 4\nu[8] & \\
   && \eta^2[9] & \\
   & \eta\alpha_5 & 8\sigma[4] & \\
\ar@{.}[rrrr] &&&&
\\
11 & \alpha_{6/3} \ar '[]+R [ruu]+L & \sigma\eta[4] & \\
  && \epsilon[4] & \\
   & \alpha_{6} & \sigma[5] &  \\
   & \alpha_{6/2} & \nu^2[6] &  \\
\ar@{.}[rrrr] &&&&
\\
12 && \epsilon\eta[4] & \eta[9,4] \\
   && \alpha_{5}[4] & \\
  && \sigma\eta[5] & \\
&& \epsilon[5] & \\
&& 8\sigma[6] & \\
} $$ 
\vspace{0in}

$$
\xymatrix@R-2.7em{
\ar@{.}[rrrr] &&&&
\\
13 
   && \eta\alpha_5[4] & \eta^2[9,4] \\
  && \alpha_5[5] & \eta[10,4] \\
   && \epsilon[6] &  1[11,4] \\
   && \sigma[7] &  \\
\ar@{.}[rrrr] &&&&
\\
14 & \kappa \ar '[]+R [ruuu]+L & \alpha_{6/2}[4] & \nu[9,4]  \\
   & \theta_3 \ar '[]+R [ruuu]+L &  \alpha_{6/3}[4] & 1[11,5] \\
&& \eta^2[13] & \\
&&  \alpha_6[4] & \\
&& \eta\alpha_5[5] & \\
   && \alpha_5[6] &  \\
   && 8\sigma[8] &  \\
   && 4\nu[12] &  \\
\ar@{.}[rrrr] &&&&
\\
15 & \alpha_8 \ar '[]+R [ruuuuuu]+L &  & \\
   & \alpha_{8/2} \ar '[]+R [ruuuuuu]+L & & \\
   & \alpha_{8/3} \ar '[]+R [ruuuuuu]+L &&  \\
   & \alpha_{8/4} \ar '[]+R [ruuuuuu]+L &&  \\
   & \alpha_{8/5} \ar '[]+R [ruuuuuu]+L &&  \\
   & \kappa\eta  & \nu[13] &  \\
\ar@{.}[rrrr] &&&&
\\
16 & \eta_4 \ar '[]+R [ruu]+L & (\sigma\eta^2+\epsilon\eta)[8] & \nu[11,4] \\
   & \eta\alpha_{8/5} & \nu^2[11] & \\
\ar@{.}[rrrr] &&&&
\\
17 & \eta\eta_4 \ar '[]+R [ruu]+L & \theta_3[4]  & \nu^2[9,4]  \\
   &   & \kappa[4] & \eta[14,4] \\
&&& 1[15,4] \\
   &  \nu\kappa & \sigma\eta^2[9] & \nu[11,5] \\
   &\alpha_9 & \sigma\eta[10] &  \eta^3[12,4] \\
   & \eta^2\alpha_{8/5} & \sigma[11] &  \eta^2[13,4] \\
\ar@{.}[rrrr] &&&&
\\
18 & 4\nu^* \ar '[]+R [ruuuu]+L & \alpha_{8/5}[4] \ar '[]+R [ruuu]+L & \sigma[9,4] \\
   & 2\nu^* \ar '[]+R [ruuuu]+L & \alpha_5[10] \ar '[]+R [ruuu]+L|{(*)} & \nu^2[10,4] \\
   & \nu^* \ar '[]+R [ruuuu]+L & \alpha_{8/4}(8)[4] & \\
&& \kappa\eta[4] & \\
&& \kappa[5] & \\
&& \alpha_6[8] & \\
&& \eta\alpha_5[9] & \\
   && \alpha_8[4] & \nu[13,4] \\
   & \eta\alpha_9 & \theta_3[5] & 1[15,5] \\
\ar@{.}[rrrr] &&&&
\\
19 & \alpha_{10/3} \ar '[]+R [ruuu]+L & \eta_4[4] \ar '[]+R [ruuu]+L & \sigma\eta[9,4] \\
   & \br{\sigma} \ar '[]+R [ruuu]+L & \theta_3[6] \ar '[]+R [ruuu]+L|{(*)} & \epsilon[9,4] \\
   &&  \eta\alpha_{8/5}[4] &  8\sigma[10,4] \\
&& \alpha_{8/5}[5] \\
   & \alpha_{10} & \kappa\eta[5] & \nu^2[11,4] \\
   & \alpha_{10/2} & \kappa[6] & \nu[13,5] \\
\ar@{.}[rrrr] &&&&
\\
   & 2\br{\kappa} \ar '[]+R [ruuu]+L & \eta\eta_4[4] \ar '[]+R  [ruuu]+L &  \\
   & \br{\kappa} \ar '[]+R [ruuu]+L &  \eta_4[5] \ar '[]+R [ruuu]+L &
}$$

\subsection{The GSS for $\pi_{n+5}(S^5)$}\label{tab:S5GSS} $\quad$

$$
\xymatrix@R-2.7em{
\underline{n} & 
\underline{\pi_n(L(0)_5)} &
\underline{\pi_{n-1}(L(1)_5)} &
\underline{\pi_{n-2}(L(2)_5)} 
\\
0 & 1(\infty) && \\
\ar@{.}[rrrr] &&&&
\\
1 & \eta & & \\
\ar@{.}[rrrr] &&&&
\\
2 & \eta^2 & && \\
\ar@{.}[rrrr] &&&& 
\\
3 & 4\nu  && \\
  & 2\nu  &  & \\
  & \nu && \\
\ar@{.}[rrrr] &&&&
\\
4 && 1[5] & \\
\ar@{.}[rrrr] &&&&
\\
5 && \eta[5] &  \\
\ar@{.}[rrrr] &&&&
\\
6  & \nu^2 & \eta^2[5] & \\
  && \eta[6] & \\
  && 1[7] & \\
\ar@{.}[rrrr] &&&&
\\
7 & 8\sigma  && \\
  & 4\sigma \ar '[]+R [ruuuuu]+L && \\
  & 2\sigma \ar '[]+R [ruuuuu]+L && \\
  & \sigma \ar '[]+R [ruuuuu]+L & \nu[5] & \\
\ar@{.}[rrrr] &&&&
\\
8 & \sigma\eta \ar '[]+R [ruu]+L && \\
  & \epsilon  & &   \\
\ar@{.}[rrrr] &&&&
\\
9 & \sigma\eta^2  &  & \\
  &  \alpha_5 && \\
  & \epsilon\eta && \\
\ar@{.}[rrrr] &&&&
\\
10 && 4\nu[8] & \\
   && \eta^2[9] & \\
   & \eta\alpha_5 & \eta[10] & \\
\ar@{.}[rrrr] &&&&
\\
11   & \alpha_{6} & \sigma[5] &  \\
   & \alpha_{6/2} & \nu^2[6] &  \\
&\alpha_{6/3} && \\
\ar@{.}[rrrr] &&&&
\\
12  && \sigma\eta[5] & \\
&& \epsilon[5] & \\
&& 8\sigma[6] & \\
} $$ 
\vspace{2in}

$$
\xymatrix@R-2.7em{
\ar@{.}[rrrr] &&&&
\\
13 
   && \alpha_5[5] &  \\
  && \nu^2[8] &  \\
   && \epsilon[6] &   \\
   && \sigma[7] &  \\
\ar@{.}[rrrr] &&&&
\\
14 & \kappa \ar '[]+R [ruuu]+L & \eta^2[13] &  \\
   & \theta_3 \ar '[]+R [ruuu]+L &  \eta\alpha_5[5] & 1[11,5] \\
   && \alpha_5[6] &  \\
   && 8\sigma[8] &  \\
   && 4\nu[12] &  \\
\ar@{.}[rrrr] &&&&
\\
15 & \alpha_{8/2} \ar '[]+R [ruuuuu]+L & & \\
   & \alpha_{8/3} \ar '[]+R [ruuuuu]+L &&  \\
   & \alpha_{8/4} \ar '[]+R [ruuuuu]+L &&  \\
   & \alpha_{8/5} \ar '[]+R [ruuuuu]+L &&  \\
&\alpha_8 && \\
   & \kappa\eta  & \nu[13] &  \\
\ar@{.}[rrrr] &&&&
\\
16 & \eta_4 \ar '[]+R [ruu]+L & (\sigma\eta^2+\epsilon\eta)[8] &  \\
   & \eta\alpha_{8/5} & \nu^2[11] & \\
\ar@{.}[rrrr] &&&&
\\
17 & \eta\eta_4 \ar '[]+R [ruu]+L &   &   \\
   &  \nu\kappa & \sigma\eta^2[9] & \nu[11,5] \\
   &\alpha_9 & \sigma\eta[10] &   \\
   & \eta^2\alpha_{8/5} & \sigma[11] &   \\
\ar@{.}[rrrr] &&&&
\\
18 & 4\nu^* \ar '[]+R [ruuuu]+L & \kappa[5] &  \\
   & 2\nu^* \ar '[]+R [ruuuu]+L & \alpha_6[8]  &  \\
   & \nu^* \ar '[]+R [ruuuu]+L & \eta\alpha_5[9] & \\
&& \alpha_5[10] & \\
   & \eta\alpha_9 & \theta_3[5] & 1[15,5] \\
\ar@{.}[rrrr] &&&&
\\
19 & \br{\sigma} \ar '[]+R [ruu]+L & \theta_3[6] \ar '[]+R [ruu]+L|{(*)}  \\
   & \alpha_{10} & \alpha_{8/5}[5] &  \\
   & \alpha_{10/2} & \kappa\eta[5] &  \\
 & \alpha_{10/3} & \kappa[6] & \nu[13,5] \\
\ar@{.}[rrrr] &&&&
\\
   & 2\br{\kappa} \ar '[]+R [ruuu]+L & \eta_4[5] \ar '[]+R  [ruu]+L &  \\
   & \br{\kappa} \ar '[]+R [ruuu]+L &    &
}$$
\vfill

\subsection{The GSS for $\pi_{n+6}(S^6)$}\label{tab:S6GSS} $\quad$

$$
\xymatrix@R-2.7em{
\underline{n} & 
\underline{\pi_n(L(0)_6)} &
\underline{\pi_{n-1}(L(1)_6)} &
\underline{\pi_{n-2}(L(2)_6)} 
\\
0 & 1(\infty) && \\
\ar@{.}[rrrr] &&&&
\\
1 & \eta & & \\
\ar@{.}[rrrr] &&&&
\\
2 & \eta^2 & && \\
\ar@{.}[rrrr] &&&& 
\\
3 & 4\nu  && \\
  & 2\nu  &  & \\
  & \nu && \\
\ar@{.}[rrrr] &&&&
\\
4 &&& \\
\ar@{.}[rrrr] &&&&
\\
5 && 1(\infty)[6] &  \\
\ar@{.}[rrrr] &&&&
\\
6  & \nu^2 & \eta[6] & \\
  && 1[7] & \\
\ar@{.}[rrrr] &&&&
\\
7 & 8\sigma  && \\
  & 4\sigma  && \\
  & 2\sigma \ar '[]+R [ruuuuu]+L && \\
  & \sigma \ar '[]+R [ruuuuu]+L &  & \\
\ar@{.}[rrrr] &&&&
\\
8 & \sigma\eta  & 2\nu[6] & \\
  & \epsilon  & \nu[6] &   \\
\ar@{.}[rrrr] &&&&
\\
9 & \sigma\eta^2  &  & \\
  &  \alpha_5 && \\
  & \epsilon\eta && \\
\ar@{.}[rrrr] &&&&
\\
10 && 4\nu[8] & \\
   && \eta^2[9] & \\
   & \eta\alpha_5 & \eta[10] & \\
\ar@{.}[rrrr] &&&&
\\
11   & \alpha_{6} & \nu^2[6] &  \\
   & \alpha_{6/2} & \nu[9] &  \\
&\alpha_{6/3} && \\
\ar@{.}[rrrr] &&&&
\\
12  && 8\sigma[6] & \\
&& 4\sigma[6] & \\
&& 2\sigma[6] & \\
&& \sigma[6] & \\
} $$ 
\vspace{3in}

$$
\xymatrix@R-2.7em{
\ar@{.}[rrrr] &&&&
\\
13 
  && \nu^2[8] &  \\
   && \epsilon[6] &   \\
   && \sigma[7] &  \\
\ar@{.}[rrrr] &&&&
\\
14 & \kappa \ar '[]+R [ruuu]+L & \nu^2[9] & \\
   & \theta_3 \ar '[]+R [ruuu]+L &  \eta^2[13] & \\
  && \eta[14] & \\
   && \alpha_5[6] &  \\
   && 8\sigma[8] &  \\
   && 4\nu[12] &  \\
\ar@{.}[rrrr] &&&&
\\
15 
&\alpha_8 && \\
& \alpha_{8/2} & & \\
   & \alpha_{8/3} \ar '[]+R [ruuuuuu]+L &&  \\
   & \alpha_{8/4} \ar '[]+R [ruuuuuu]+L &&  \\
   & \alpha_{8/5} \ar '[]+R [ruuuuuu]+L &&  \\
   & \kappa\eta  & \nu[13] &  \\
\ar@{.}[rrrr] &&&&
\\
16 & \eta_4 \ar '[]+R [ruu]+L & \alpha_{6/2}[6] & \\
&& \alpha_{6/3}[6] & \\
&& \sigma\eta^2[8] &  \\
&& \epsilon\eta[8] & \\
   & \eta\alpha_{8/5} & \nu^2[11] & \\
\ar@{.}[rrrr] &&&&
\\
17 & \eta\eta_4 \ar '[]+R [ruu]+L &   &   \\
   &  \nu\kappa & \sigma\eta^2[9] & 1[13,6] \\
   &\alpha_9 & \sigma\eta[10] &   \\
   & \eta^2\alpha_{8/5} & \sigma[11] &   \\
\ar@{.}[rrrr] &&&&
\\
18 & 4\nu^* \ar '[]+R [ruuuu]+L & &  \\
   & 2\nu^* \ar '[]+R [ruuuu]+L & \alpha_6[8]  &  \\
   & \nu^* \ar '[]+R [ruuuu]+L & \eta\alpha_5[9] & \\
   & \eta\alpha_9 & \alpha_5[10] & \eta[13,6] \\
\ar@{.}[rrrr] &&&&
\\
19 & \br{\sigma}  & \theta_3[6] & \eta^2[13,6] \\
   & \alpha_{10} &  & \eta[14,6] \\
   & \alpha_{10/2} &  & 1[15,6] \\
 & \alpha_{10/3} & \kappa[6] &  \\
\ar@{.}[rrrr] &&&&
\\
   & \br{\kappa} \ar '[]+R [ruu]+L &    &
}$$

\appendix
\chapter{Transfinite spectral sequences associated to towers}\label{sec:Hu}

In this appendix we review Hu's notion of a transfinite spectral sequence \cite{Hu}, specialized to the situation of a space or spectrum which is the inverse limit of a transfinite tower.  We restrict our attention to \emph{degreewise finite} towers, as these give rise to spectral sequences with particularly good convergence properties.  
In Section~\ref{sec:Groth} we review the Grothendieck group of ordinals, which serves as the indexing set for transfinite spectral sequences.  In Section~\ref{sec:TSS} we associate a transfinite spectral sequence to a transfinite tower of spaces or spectra, following Bousfield and Kan.    In Section~\ref{sec:GBT}
 prove some general ``geometric boundary'' type theorems for such transfinite spectral sequences.

Throughout this appendix we work in either the category of pointed spaces or spectra.

\section{The Grothendieck group of ordinals}\label{sec:Groth}

Fix a countable ordinal $\nu = \omega^\beta$ (where $\beta < \nu$ is an ordinal) closed under addition.  We consider transfinite spectral sequences indexed on the Grothendieck group $\mc{\nu}$ of ordinals less than $\nu$ with respect to maximal addition (see \cite[Sec.~1]{Hu}).  A general ordinal less than $\nu$ takes the form
$$ j_1 \omega^\alpha_1+\cdots + j_k\omega^{\alpha_k}, \quad j_s \in \mb{N}, \: \alpha_1 < \cdots < \alpha_k < \beta. $$
A general element of the Grothendieck group $\mc{G}(\nu)$ takes the form 
$$ j_1 \omega^\alpha_1+\cdots + j_k\omega^{\alpha_k}, \quad j_s \in \ZZ, \: \alpha_1 < \cdots < \alpha_k < \beta. $$
Addition in $\mc{G}(\nu)$ is defined by
$$ \left( j_1 \omega^\alpha_1+\cdots + j_k\omega^{\alpha_k}\right)+
\left( j'_1 \omega^\alpha_1+\cdots + j'_k\omega^{\alpha_k}\right)
=  (j_1+j'_1) \omega^\alpha_1+\cdots +( j_k+j'_k)\omega^{\alpha_k}. $$
The elements of $\mc{G}$ are ordered via right lexicographical ordering.

\section{Towers}\label{sec:towers}

A \emph{$\mc{G}(\nu)$-indexed tower under $X$} consists of a set  
$$ \{ X \rightarrow X_\mu \}_{\mu \in \mc{G}} $$  
of objects under $X$, and compatible maps 
$$
\xymatrix{
& X \ar[dl] \ar[dr] \\
X_{\mu} \ar[rr] && X_{\mu'} 
}
$$
for $\mu < \mu'$.  
We shall say that the tower $\{ X_{\mu} \}$ is \emph{degreewise finite} if the following conditions hold.
\begin{enumerate}
\item For each $t$, and each $(\alpha,\beta)$-gap in $\mc{G}(\nu)$, with left subset $\mc{L}$ and right subset $\mc{R}$ (see \cite[Def.~1]{Hu}), there exist $\gamma \in \mc{L}$ and $\gamma' \in \mc{R}$ such that for all $\gamma \le \delta \le \gamma'$ the maps
$$ \pi_t(X_{\gamma}) \rightarrow \pi_t(X_\delta) \rightarrow \pi_t(X_{\gamma'}) $$
are isomorphisms.
\item For a fixed $t$, the maps
$$ \pi_t(X_{\mu}) \rightarrow \pi_t(X_{\mu+1}) $$
are isomorphisms for all but finitely many $\mu \in \mc{G}(\nu)$.
\item For a fixed $t$, there exist $\beta, \beta' \in \mc{G}(\nu)$ such that the map
$$ \pi_t(X) \rightarrow \pi_t(X_\mu) $$ is an isomorphism for $\mu \le \beta$, and $$\pi_t(X_\mu) = 0 $$ for $\mu \ge \beta'$.
\end{enumerate}

\section{The transfinite homotopy spectral sequence of a tower}\label{sec:TSS}

Given a degreewise finite $\mc{G}(\nu)$-indexed tower under $X$, 
define fibers
$$ A_\mu \rightarrow X_\mu \rightarrow X_{\mu+1}. $$
Applying $\pi_*$ gives rise to a $\mc{G}(\nu)$-indexed exact couple, and we get a $\mc{G}(\nu)$-indexed spectral sequence
\begin{equation}\label{eq:towerSS}
 E^1_{t, \mu} = \pi_t(A_\mu) \Rightarrow \pi_t(X).
\end{equation}

\begin{rmk}
In the case where we are working in the category $\Top_*$, some care must be taken in the handling of $\pi_1$ and $\pi_0$ (analogous to the treatment of \cite[IX.4]{BousfieldKan}).  In the cases we are considering in this book, each of the spaces $X_\mu$ and $A_\mu$ are actually connected with abelian fundamental group.
\end{rmk}

\begin{rmk}
Our degreewise finite assumption on the tower $\{ X_\mu\}$ under $X$ implies that (in the language of \cite{Hu}) the transfinite spectral sequence (\ref{eq:towerSS}) is locally of finite type, eventually constant, and converges strongly.  
\end{rmk}
 
We have chosen a different indexing scheme from that used in \cite{Hu}.  In our indexing scheme, $d_\alpha$ differentials take the form
$$ d_\alpha: E^\alpha_{t, \mu} \rightarrow E^\alpha_{t-1, \mu-\alpha}. $$
One way of thinking of these differentials is to consider the associated ``increasing filtration'' $\{ X^\mu \}$ on $X$ defined by the fiber sequences
$$ X^\mu \rightarrow X \rightarrow X_{\mu+1}. $$
Using Verdier's axiom (see \cite[Prop.~6.3.6]{Hovey} in the $\Top_*$ case) we have fiber sequences
$$ X^{\mu-1} \rightarrow X^{\mu} \rightarrow A_\mu. $$ 
The differentials $d_\alpha$ are then given by the zigzag
$$ \pi_tA_\mu \xrightarrow{\partial} \pi_{t-1} X_{\mu-1} \leftarrow \pi_{t-1} X_{\mu-\alpha} \rightarrow \pi_{t-1} A_{\mu-\alpha}. $$
Convergence means that with respect to the $\mc{G}(\nu)$-indexed filtration
$$ F_\mu \pi_t X := \ker \left( \pi_t X \rightarrow \pi_t X_{\mu+1} \right) $$
we have
$$ E^\nu_{t,\mu} \cong \frac{F_\mu \pi_t X}{F_{\mu-1} \pi_t X}. $$

In Section~\ref{sec:GBT}, we shall make use of the following notation.  For $\mu \le \mu'$, define $X^{\mu'}_{\mu}$ to be the fiber
$$ X^{\mu'}_{\mu} \rightarrow X_{\mu} \rightarrow X_{\mu'+1}. $$
Then we have
$$ A_\mu = X^{\mu}_\mu. $$
Our degreewise finite assumption implies that we have
\begin{align*}
X_\mu^\nu & := \varinjlim_{\mu' \ge \mu} X_{\mu}^{\mu'} \simeq X_\mu, \\
X_{-\nu}^{\mu} & := \varprojlim_{\mu' \ge \mu} X_{\mu}^{\mu'} \simeq X^{\mu'}, \\
X^\nu_{-\nu} & := \varprojlim_{\mu \to -\nu} \varinjlim_{\mu' \ge \mu} X_{\mu}^{\mu'} \simeq \varinjlim_{\mu' \to \nu} \varprojlim_{\mu' \ge \mu} X_{\mu}^{\mu'} \simeq X.
\end{align*}
With this notation, there are fiber sequences
\begin{gather*}
X_{\mu}^{\mu'} \rightarrow X_{\mu}^{\mu''} \rightarrow X^{\mu''}_{\mu'+1}, \qquad -\nu \le \mu \le \mu' < \mu'' \le \nu. \\
\end{gather*}
An alternative way to view a $d_\alpha$ differential is to lift $x \in \pi_t X^\mu_{\mu} = E^1_{t,\mu}$ to $\td{x} \in \pi_t X^\mu_{\mu-\alpha+1}$.  Then $d_\alpha(x)$ is detected by the image of $\td{x}$ under the composite
$$ \pi_t X^\mu_{\mu-\alpha+1} \xrightarrow{\partial} \pi_{t-1} X^{\mu-\alpha} \rightarrow \pi_{t-1} X^{\mu-\alpha}_{\mu-\alpha} = E^1_{t-1, \mu-\alpha}. $$

\section{Geometric boundary theorem}\label{sec:GBT}

Suppose that 
$$ \Omega Z \xrightarrow{j} X \xrightarrow{i} Y \xrightarrow{p} Z $$
is a fiber sequence, and that $\{X_\mu\}$, $\{Y_\mu\}$ and $\{ Z_\mu \}$ are compatible locally finite $\mc{G}(\nu)$-indexed towers under $X$, $Y$, and $Z$, such that each of the sequences
$$ X_\mu \rightarrow Y_\mu \rightarrow Z_\mu $$
is also a fiber sequence.  Let 
$$ E^{\alpha}_{t+1,\mu}(Z) \xrightarrow{j_*} E^\alpha_{t,\mu}(X) \xrightarrow{i_*} E^\alpha_{t,\mu}(Y) \xrightarrow{p_*} E^\alpha_{t,\mu}(Z) $$ denote the corresponding maps of spectral sequences.  Let $d^X_\alpha$, $d^Y_\alpha$, and $d^Z_\alpha$ denote the differentials in each of these spectral sequences.

The following lemma is a kind of technical generalization of the ``geometric boundary theorem.''  The classical geometric boundary theorem \cite[Thm.~2.3.4]{Ravenel} is closely related to Case (5) when $\alpha = 1$ in this lemma.

\begin{lem}\label{lem:GBT}
Suppose that $y \in E^\alpha_{t,\mu}(Y)$ and that there is a non-trivial differential
$$ d_\alpha (y) = y' \ne 0 \in E^\alpha_{t-1, \mu-\alpha}(Y). $$
Then one of the following five possibilities occurs.

\begin{enumerate}
\item  There is a non-trivial differential
$$ d^Z_\alpha(p_* y) = p_* y' $$

\item There exists $\alpha' \ge 1$ and classes 
\begin{align*}
x & \in E^1_{t-1, \mu-\alpha}(X), \\
0 \ne z & \in E^{1}_{t-1, \mu-\alpha-\alpha'}(Z), \quad \text{(non-zero in $E^{\alpha'+1}$)} \\
0 \ne x' & \in E^1_{t-2, \mu-\alpha-\alpha'}(X),
\end{align*}
so that
\begin{align*}
& i_*(x) = y', \\
& d^X_{\alpha'}(x) = x', \\
& j_* z = x', \\
& d^Z_{\alpha+\alpha'}(p_* y) = z.
\end{align*}

\item There exists $\alpha', \alpha'' \ge 1$ and classes
\begin{align*}
x  & \in E^1_{t-1, \mu-\alpha}(X), \\
0 \ne x'' & \in E^{1}_{t-2,\mu - \alpha-\alpha'-\alpha''}(X), \quad \text{(non-zero in $E^{\alpha''+1}$)} \\
0 \ne z & \in E^{1}_{t-1, \mu-\alpha-\alpha'}(Z), \quad \text{(non-zero in $E^{\alpha'+1}$)} \\
y'' & \in E^1_{t-1, \mu-\alpha-\alpha'}(Y)
\end{align*}
so that
\begin{align*}
& i_*(x) = y', \\
& d^X_{\alpha'+\alpha''}(x) = x'', \\
& d^Y_{\alpha''}(y'') = -i_*(x''), \\
& p_*(y'') = z \\
& d^Z_{\alpha+\alpha'}(p_* y) = z.
\end{align*}

\item There exists $\alpha' \ge 1$ and classes
\begin{align*}
x & \in E^1_{t-1, \mu-\alpha}(X), \\
0 \ne y'' & \in E^{\alpha'+1}_{t-1, \mu-\alpha-\alpha'}(Y), \\
0 \ne z & \in E^{\alpha'+1}_{t-1, \mu-\alpha-\alpha'}(Z), \\
\bar{x} & \in \pi_{t-1}(X), 
\end{align*}
so that
\begin{align*}
& i_*(x) = y', \\
& \bar{x} \: \text{is} \: \begin{cases}
\text{detected by} \: x, & \text{$x$ is not the target of a differential in $E^*_{*,*}(X)$}, \\
\text{in $F_{\mu-\alpha-1}\pi_{t-1}(X)$}, & \text{$x$ is the target of a differential in $E^*_{*,*}(X)$},
\end{cases} \\
& i(\bar{x})  \: \text{is} \: \begin{cases}
\text{detected by} \: -y'', & \text{$y''$ is not the target of a differential in $E^*_{*,*}(Y)$}, \\
\text{in $F_{\mu-\alpha-\alpha'-1}\pi_{t-1}(Y)$}, & \text{$y''$ is the target of a differential in $E^*_{*,*}(Y)$},
\end{cases} \\
& p_*(y'') = z, \\
& d^Z_{\alpha+\alpha'}(p_* y) = z.
\end{align*}

\item There exist classes 
\begin{align*}
\bar{z} & \in \pi_{t}(Z), \\
0 \ne x & \in E^\alpha_{t-1,\mu-\alpha}(X)
\end{align*}
so that
\begin{align*}
& \bar{z} \: \text{is} \: \begin{cases}
\text{detected by} \: p_*(y), & \text{$p_*(y)$ is not the target of a differential in $E^*_{*,*}(Z)$}, \\
\text{in $F_{\mu-1}\pi_{t}(Z)$}, & \text{$p_*(y)$ is the target of a differential in $E^*_{*,*}(Z)$},
\end{cases} \\
& i_*(x) = y', \\
& j(\bar{z}) \: \text{is} \: \begin{cases}
\text{detected by} \: x, & \text{$x$ is not the target of a differential in $E^*_{*,*}(X)$}, \\
\text{in $F_{\mu-\alpha-1}\pi_{t-1}(X)$}, & \text{$x$ is the target of a differential in $E^{\gamma}_{*,*}(X)$, $\gamma > \alpha$.}
\end{cases}
\end{align*}

\end{enumerate}
\end{lem}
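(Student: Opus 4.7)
The plan is to perform a structured diagram chase through the commutative grid of fiber sequences $X^{\mu'}_\mu \to Y^{\mu'}_\mu \to Z^{\mu'}_\mu$ (one for each pair $-\nu \le \mu \le \mu' \le \nu$), using the lifting description of $d_\alpha$ explained at the end of Section~\ref{sec:TSS}: a class $w \in E^1_{t,\mu}(W) = \pi_t W^\mu_\mu$ supports $d_\alpha(w) = w'$ precisely when $w$ lifts to $\tilde{w} \in \pi_t W^\mu_{\mu-\alpha+1}$ and $\partial\tilde{w} \in \pi_{t-1} W^{\mu-\alpha}$ maps to $w'$ in $\pi_{t-1} W^{\mu-\alpha}_{\mu-\alpha}$. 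First I would choose a representative $\tilde{y} \in \pi_t Y^\mu_{\mu-\alpha+1}$ of $y$. Pushing forward by $p$ gives $p_*\tilde{y} \in \pi_t Z^\mu_{\mu-\alpha+1}$, a representative of $p_*y$, and naturality of the boundary identifies $d^Z_\alpha(p_*y)$ with the image of $p_*y'$ in $E^1_{t-1,\mu-\alpha}(Z)$. If this image is non-zero we land in Case~(1). Otherwise the long exact sequence of $X^{\mu-\alpha}_{\mu-\alpha} \to Y^{\mu-\alpha}_{\mu-\alpha} \to Z^{\mu-\alpha}_{\mu-\alpha}$ produces $x \in E^1_{t-1,\mu-\alpha}(X)$ with $i_*(x) = y'$, and the remaining cases turn on (i) how far $p_*\tilde{y}$ can be lifted down the $Z$-tower, and (ii) whether $x$ is a permanent cycle, a differential source, or a differential target in $E^*_{*,*}(X)$.

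Second, I would push the lifting as far as possible. Since $p_*y' = 0$, one can enlarge $\tilde{y}$ to $\tilde{y}_1 \in \pi_t Y^\mu_{\mu-\alpha}$ and then $p_*\tilde{y}_1 \in \pi_t Z^\mu_{\mu-\alpha}$. Iterating, let $\alpha' \ge 1$ be maximal (possibly $\infty$, corresponding to $p_*y$ being a permanent cycle) such that $p_*\tilde{y}$ lifts to some $\tilde{z} \in \pi_t Z^\mu_{\mu-\alpha-\alpha'+1}$. By construction $z := d^Z_{\alpha+\alpha'}(p_*y)$ is represented by $\partial\tilde{z} \in \pi_{t-1} Z^{\mu-\alpha-\alpha'}$, and the obstruction to each successive lift records an element of $\pi_{t-1} X^{\mu-\alpha}$ whose image in $E^1_{t-1,\mu-\alpha}(X)$ coincides with the previously found $x$; in particular one obtains a distinguished lift $\hat{x} \in \pi_{t-1} X^{\mu-\alpha}_{\mu-\alpha-\alpha'+1}$ with $j_*z = \partial_*\hat{x}$ in $\pi_{t-2} X^{\mu-\alpha-\alpha'}$.

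Third, I would analyze the fate of $\hat{x}$ and of $z$ in the $X$-spectral sequence. There are three mutually exclusive possibilities: either $\hat{x}$ lifts no further in the $X$-tower, in which case $x$ supports a non-trivial differential of length $\alpha'$ hitting $j_*z$ (this is Case~(2)); or $\hat{x}$ does lift further in $X$ but its obstruction is cancelled by an element $y'' \in E^*_{*,*}(Y)$ supporting a differential $d^Y_{\alpha''}(y'') = -i_*(x'')$ (Case~(3)); or $\hat{x}$ lifts further because a permanent cycle $y''$ in $Y$ projects to $z$ (Case~(4)). The cases are distinguished by tracking, at filtration $\mu - \alpha - \alpha'$, how the element $z$ fails to lift in the $Z$-tower: the long exact sequence at that filtration pushes the obstruction either into $E^*_{*,*}(X)$ (Cases~(2),(3)) or realizes it as $p_*$ of a non-trivial element of $Y$ (Case~(4)). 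In each situation the signs and the identifications of the three subordinate classes follow from tracing $\tilde{z}$ back through the grid using Verdier's axiom.

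Finally, if $\alpha' = \infty$ then $p_*\tilde{y}$ lifts all the way to $\pi_t Z = \pi_t Z^\nu_{-\nu}$, producing $\bar{z} \in \pi_t Z$ with the stated detection properties; the fiber sequence $\Omega Z \xrightarrow{j} X \xrightarrow{i} Y$ then identifies $j(\bar{z})$ with a class $\bar{x} \in \pi_{t-1} X$ produced by the same iterated lift, and $\bar{x}$ is detected by $x$ modulo the corresponding filtration, giving Case~(5). The main obstacle is the combinatorial separation of Cases~(2), (3), (4): each records a different \emph{reason} that $p_*y$ supports a differential of length exactly $\alpha + \alpha'$ rather than being a permanent cycle, and converting ``the obstruction to extending the lift by one more step'' into an honest element of $E^*_{*,*}(X)$ or $E^*_{*,*}(Y)$ at the correct transfinite filtration requires careful use of the long exact sequences at several levels of the grid simultaneously. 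The transfinite indexing itself adds no genuine difficulty over the classical geometric boundary theorem, because the degreewise finite hypothesis guarantees that each lifting process either stabilizes or terminates across any gap after finitely many steps.
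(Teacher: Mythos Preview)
Your overall strategy---lift as far as possible and read off the obstruction---is the right shape, but there is a concrete error in the second step. You claim that since $p_*y' = 0$ one can enlarge $\tilde{y} \in \pi_t Y^\mu_{\mu-\alpha+1}$ to $\tilde{y}_1 \in \pi_t Y^\mu_{\mu-\alpha}$. This is false: the obstruction to lifting $\tilde{y}$ one more step in the $Y$-tower is precisely the image of $\partial\tilde{y}$ in $\pi_{t-1} Y^{\mu-\alpha}_{\mu-\alpha}$, and that image is $y'$, which is nonzero by hypothesis. What vanishes is $p_*y'$, so you can lift $p_*\tilde{y}$ further in the $Z$-tower---but such a $Z$-lift does not come equipped with any compatible class in $Y$ or $X$. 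In particular your ``distinguished lift $\hat{x} \in \pi_{t-1} X^{\mu-\alpha}_{\mu-\alpha-\alpha'+1}$'' has no source: nothing in the $Z$-tower alone produces it, and the claimed relation $j_*z = \partial_*\hat{x}$ is unsupported.

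The paper repairs exactly this gap by introducing auxiliary ``mixed'' objects $Y^{\mu,\mu-\alpha}$, defined as the fiber of $Y^\mu \to Z^\mu_{\mu-\alpha+1}$, together with their relative versions $Y^\mu_{\mu-\alpha+1,\,\mu-\alpha-\alpha'+1}$. These track the $Y$-filtration and the $Z$-filtration simultaneously: one lifts $\tilde{y}$ as far as possible in \emph{this} tower, where the obstruction is governed by $Z$ rather than $Y$, so the lift really does go through once $p_*y' = 0$. A single lift $\tdd{y}$ in the mixed tower then projects, via one large commutative diagram, to the element $z$ in the $Z$-column, to $x$ in $\pi_{t-1}X^{\mu'}_{\mu'}$, and to the boundary data in $X$ needed for the later cases. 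The case split then runs as follows: if the maximal $\alpha'$ is $\nu$ we are in Case~(5); otherwise one reads off $x' \in E^1_{t-2,\mu-\alpha-\alpha'}(X)$, landing in Case~(2) if $x' \ne 0$, and if $x' = 0$ one lifts $x$ further in the $X$-tower, landing in Case~(3) or Case~(4) according to whether that lift terminates at some finite $\alpha''$. Your informal description of the trichotomy (2)/(3)/(4) is close in spirit, but without the mixed objects there is no mechanism for transporting information between the $Z$-lift and the $X$-lift, and the argument cannot be made rigorous as written.
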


\begin{proof}
We will need to introduce some more notation.  For $\alpha \in \mc{G}(\nu)$, define fibers $Y^{\mu, \mu-\alpha}$ by
$$ Y^{\mu, \mu-\alpha} \rightarrow Y^\mu \rightarrow Z^\mu_{\mu-\alpha+1}. $$
Verdier's axiom gives a diagram of fiber sequences
$$
\xymatrix@C-2em@R-1em{
\\
X^\mu \ar@/^1.7pc/[rr] \ar[dr] &&
Y^\mu \ar@/^1.7pc/[rr] \ar[dr] &&
Z^{\mu}_{\mu-\alpha+1}
\\
& Y^{\mu, \mu-\alpha} \ar[ur] \ar[dr] &&
Z^\mu \ar[ur] 
\\
&& Z^{\mu-\alpha} \ar[ur]
}
$$
We then define $\td{Y}^\mu_{\mu-\alpha, \mu-\alpha -\alpha'}$ , for $\alpha, \alpha' \ge 0$, by fibre sequences
$$ \td{Y}^\mu_{\mu-\alpha, \mu-\alpha - \alpha'} \rightarrow Y^{\mu-\alpha-1, \mu-\alpha - \alpha' - 1} \rightarrow Y^\mu. $$
Although, in the $\Top_*$ case, $\td{Y}^\mu_{\mu-\alpha, \mu-\alpha-\alpha'}$ is not necessarily a loop space, we \emph{define}
$$ \pi_t Y^\mu_{\mu-\alpha, \mu-\alpha - \alpha'} := \pi_{t-1}(\td{Y}^\mu_{\mu-\alpha, \mu-\alpha-\alpha'}). $$
so that 
$$ \td{Y}^\mu_{\mu-\alpha, \mu-\alpha-\alpha'} =  ``{\Omega Y^\mu_{\mu-\alpha, \mu-\alpha-\alpha'}}".$$

Having established the notation above, we are now ready to begin proving the lemma.
Since $p$ induces a map of spectral sequences, we always have
$$ d_\alpha(p_* y) = p_*(y'). $$
If $p_*(y') \ne 0$ in the $E^\alpha$-page, then we are in Case (1).

Otherwise $p_*(y') = 0$ in $E^\alpha_{t, \mu-\alpha}(Z)$.  This, together with the fact that $y$ persists to $E^\alpha_{t,\mu}(Y)$, implies that there exists a lift
$$ \td{y} \in \pi_tY^\mu_{\mu-\alpha+1, \mu-\alpha} \rightarrow \pi_t Y^\mu_\mu \ni y. $$
Let $\alpha' \le \nu$ be maximal such that there exists a lift 
$$ \tdd{y} \in \pi_tY^\mu_{\mu-\alpha+1, \mu-\alpha - \alpha'+1} \rightarrow \pi_t Y^\mu_\mu \ni y. $$
By the above, $\alpha' \ge 1$. Suppose that $\alpha' < \nu$.  
Fix a lift $\tdd{y}$ as above.  Consider the following commutative diagram, where $\mu' := \mu-\alpha$ and $\mu'' := \mu-\alpha-\alpha'$.
\begin{equation}\label{eq:diag1}
\xymatrix{
&&&&& \\
&& \pi_t Z^\mu_{\mu''+1} \ar[rrr] \ar[dl]_\partial
&&& \pi_tZ^\mu_\mu 
\ar@{~>} `u[lllll] `[lllllddd]_{d^Z_{\alpha+\alpha'}} `[llllddd]  [llllddd]
\\
&\pi_{t-1} Z^{\mu''} \ar[dd] \ar[ddr]_\partial  & 
\pi_t Y^\mu_{\mu'+1, \mu''+1} \ar[rr] \ar[d]^\partial \ar[u] &&
\pi_tY^\mu_{\mu'+1} \ar[d]_\partial \ar[r] &
\pi_t Y^\mu_\mu \ar[u]_{p_*}  \ar@{~>}[ddl]^{d^Y_\alpha}
\\
&& \pi_{t-1} Y^{\mu', \mu''} \ar[r] \ar[ul] &
\pi_{t-1}X^{\mu'}_{\mu''+1} \ar[d] \ar[dl]^\partial &
\pi_{t-1}Y^{\mu'} \ar[d] 
\\
&\pi_{t-1} Z^{\mu''}_{\mu''} \ar[dr]_{j_*} &
\pi_{t-2}X^{\mu''} \ar[d] &
\pi_{t-1} X^{\mu'}_{\mu'} \ar[r]_{i_*} \ar@{~>}[dl]^{d^X_{\alpha'}} &
\pi_{t-1} Y^{\mu'}_{\mu'}
\\
&& \pi_{t-2} X^{\mu''}_{\mu''}
}
\end{equation}
The lift $\tdd{y}$ gives compatible elements in all entries of the diagram.  Let 
\begin{align*}
x & \in \pi_{t-1}X^{\mu'}_{\mu'} \\
x' & \in \pi_{t-2}X^{\mu''}_{\mu''} \\
z & \in \pi_{t-1}Z^{\mu''}_{\mu''}
\end{align*}
The maximality of $\alpha'$ guarantees that $z$ is non-trivial in the $E^{\alpha'+1}$-page.  Assume that $x'$ is non-trivial on the $E^1$-page.
Then we are in Case (2), and the consequence of this is obtained by chasing the perimeter of the diagram.

Suppose now that we are in the situation in the previous paragraph, except that $x' = 0 \in E^1_{t-1, \mu''}$.  Let $\td{x} \in \pi_{t-1}X^{\mu'}_{\mu''+1}$ be the image of $\tdd{y}$.  Let $\alpha'' \le \nu$ be maximal so that $\td{x}$ lifts to an element
$$ \tdd{x} \in \pi_{t-1}X^{\mu'}_{\mu''-\alpha''+1}. $$
Fix such a lift.
Because of our assumption on the triviality of $x'$, we have $\alpha'' \ge 1$.  Suppose that $\alpha'' < \nu$.  Then we are in Case (3), as we will now demonstrate.  Define $\mu''' := \mu - \alpha-\alpha'-\alpha''$.  Since the relative homotopy class
$$ \partial \tdd{y} - \tdd{x} \in \pi_{t-1}(Y^{\mu', \mu''}, X^{\mu'''}) $$
maps to $0 \in \pi_{t-1}(X^{\mu'}_{\mu''+1})$, it lifts to give a relative homotopy class
$$ \tdd{y''} \in \pi_{t-1}(Y^{\mu''}, X^{\mu'''}) = \pi_{t-1}(Y^{\mu''}_{\mu'''+1, -\nu}). $$
Consider the following commutative diagram.
\begin{equation}\label{eq:diag2}
\xymatrix@C-1.83em{
&&&&& \\
&& \pi_t Z^\mu_{\mu''+1} \ar[rrr] \ar[dl]_\partial
&&& \pi_tZ^\mu_\mu 
\ar@{~>} `u[lllll] `[llllld]_{d^Z_{\alpha+\alpha'}} [llllld]
\\
\pi_{t-1} Z^{\mu''}_{\mu''} & 
\pi_{t-1} Z^{\mu''} \ar[l]  & 
\pi_t Y^\mu_{\mu'+1, \mu''+1} \ar[rr] \ar[d]^\partial \ar[u] &&
\pi_tY^\mu_{\mu'+1} \ar[d]_\partial \ar[r] &
\pi_t Y^\mu_\mu \ar[u]_{p_*}  \ar@{~>}[ddl]^{d^Y_\alpha}
\\
\pi_{t-1}Y^{\mu''}_{\mu''} \ar[u]^{p_*} 
\ar@{~>} `l[ddd] `[ddd]_{d^{Y}_{\alpha''}} [ddd] && 
\pi_{t-1} Y^{\mu', \mu''} \ar[r] \ar[ul] &
\pi_{t-1}X^{\mu'}_{\mu''+1} \ar[d] &
\pi_{t-1}Y^{\mu'} \ar[d] 
\\
\pi_{t-1}Y^{\mu''}_{\mu'''+1} \ar[u] \ar[d]_\partial &
\pi_{t-1}Y^{\mu''}_{\mu'''+1, -\nu} \ar[l] \ar[uu] \ar[dr]_\partial  &
\pi_{t-1}X^{\mu'}_{\mu'''+1} \ar[d]^\partial \ar[ur] &
\pi_{t-1} X^{\mu'}_{\mu'} \ar[r]_{i_*} \ar@{~>}[ddl]^{d^X_{\alpha'+\alpha''}} &
\pi_{t-1} Y^{\mu'}_{\mu'}
\\
\pi_{t-2}Y^{\mu'''} \ar[d] &&
\pi_{t-2}X^{\mu'''} \ar[d] \ar[ll]
\\
\pi_{t-2}Y^{\mu'''}_{\mu'''} &&
\pi_{t-2} X^{\mu'''}_{\mu'''}  \ar[ll]^{i_*}
}
\end{equation}
The elements $\tdd{y}$, $\tdd{x}$, and $\tdd{y''}$ induce a compatible collection of elements in each of the entries of the above diagram, except that the images of $\tdd{x}$ and $\tdd{y''}$ in the entries 
$$ \pi_{t-2}X^{\mu'''}, \quad \pi_{t-2}X^{\mu'''}_{\mu'''}, \quad \pi_{t-2}Y^{\mu'''}, \quad \text{and} \quad 
\pi_{t-2}Y^{\mu'''}_{\mu'''} $$ 
differ by a sign.
We get images
\begin{align*}
x & \in \pi_{t-1}X^{\mu'}_{\mu'}, \\
x'' & \in \pi_{t-2}X^{\mu'''}_{\mu'''}, \quad \text{(Image of $\tdd{x}$)} \\
z & \in \pi_{t-1}Z^{\mu''}_{\mu''}, \\
y'' & \in \pi_{t-1}Y^{\mu''}_{\mu''}, \quad \text{(Image of $\tdd{y''}$)}. 
\end{align*}
Note that the maximality of $\alpha'$ implies that $z$ is non-zero in the $E^{\alpha'+1}$-page, and the maximality of $\alpha''$ implies that $x''$ is non-zero in the $E^{\alpha''+1}$-page.
The conclusion of Case (3) is deduced from chasing around the perimeter of the diagram.  

Assume that we are in the situation of Case (3), except that $\alpha'' = \nu$.  Then we are in Case (4).  Since $\alpha'' = \nu$, the element $\td{x}$ lifts to an element
$$ \tdd{x} \in \pi_{t-1}X^{\mu'}. $$
Fix such a lift.
Since the homotopy class
$$ \partial \tdd{y} - \tdd{x} \in \pi_{t-1}(Y^{\mu', \mu''}) $$
maps to $0 \in \pi_{t-1}(X^{\mu'}_{\mu''+1})$, it lifts to give a homotopy class
$$ \tdd{y''} \in \pi_{t-1}(Y^{\mu''}). $$
Consider the following commutative diagram.
\begin{equation}\label{eq:diag3}
\xymatrix@C-1.83em{
&&&&& \\
&& \pi_t Z^\mu_{\mu''+1} \ar[rrr] \ar[dl]_\partial
&&& \pi_tZ^\mu_\mu 
\ar@{~>} `u[lllll] `[llllld]_{d^Z_{\alpha+\alpha'}} [llllld]
\\
\pi_{t-1} Z^{\mu''}_{\mu''} & 
\pi_{t-1} Z^{\mu''} \ar[l]  & 
\pi_t Y^\mu_{\mu'+1, \mu''+1} \ar[rr] \ar[d]^\partial \ar[u] &&
\pi_tY^\mu_{\mu'+1} \ar[d]_\partial \ar[r] &
\pi_t Y^\mu_\mu \ar[u]_{p_*}  \ar@{~>}[ddl]^{d^Y_\alpha}
\\
\pi_{t-1}Y^{\mu''}_{\mu''} \ar[u]^{p_*} \ar@{=>}[ddr] &
\pi_{t-1}Y^{\mu''} \ar[l] \ar[d] \ar[u] & 
\pi_{t-1} Y^{\mu', \mu''} \ar[r] \ar[ul] &
\pi_{t-1}X^{\mu'}_{\mu''+1} \ar[d] &
\pi_{t-1}Y^{\mu'} \ar[d] 
\\
& \pi_{t-1}Y^{\mu'} \ar[d]  &
\pi_{t-1}X^{\mu'} \ar[d] \ar[ur] \ar[l] &
\pi_{t-1} X^{\mu'}_{\mu'} \ar[r]_{i_*} \ar@{=>}[dl] &
\pi_{t-1} Y^{\mu'}_{\mu'}
\\
& \pi_{t-1} Y &
\pi_{t-1} X \ar[l]^i
}
\end{equation}
The elements $\tdd{y}$, $\tdd{x}$, and $\tdd{y''}$ induce a compatible collection of elements in each of the entries of the above diagram, except that the images of $\tdd{x}$ and $\tdd{y''}$ in the entries 
$$  \pi_{t-1} Y^{\mu'} \quad \text{and} \quad 
\pi_{t-1}Y $$ 
differ by a sign.
We get images
\begin{align*}
x & \in \pi_{t-1}X^{\mu'}_{\mu'}, \\
\bar{x} & \in \pi_{t-1}X, \quad \text{(Image of $\tdd{x}$)} \\
z & \in \pi_{t-1}Z^{\mu''}_{\mu''}, \\
y'' & \in \pi_{t-1}Y^{\mu''}_{\mu''}.  
\end{align*}
Note that the maximality of $\alpha'$ implies that $z$ is non-zero in the $E^{\alpha'+1}$-page.  As $p_*(y'') = z$, the element $y''$ must also be non-zero in the $E^{\alpha'+1}$-page. 
The conclusion of Case (4) is deduced from chasing around the perimeter of the diagram.

Assume that we are in the situation of Case (2), except that $\alpha' = \nu$.  Then we are in Case (5).  Since $\alpha' = \nu$, there exists a lift 
$$ \tdd{y} \in \pi_tY^\mu_{\mu-\alpha, -\nu} \rightarrow \pi_t Y^\mu_\mu \ni y. $$
Consider the following diagram.
$$
\xymatrix@C-1em{
\\
\pi_tZ \ar[d]_j &&
\pi_tZ^\mu \ar[ll] \ar[rr] \ar[dl]_\partial &&
\pi_tZ^\mu_\mu \ar@{=>} @/_2pc/ [llll]
\\
\pi_{t-1}X &
\pi_{t-1}X^\mu \ar[l] &
\pi_{t} Y^\mu_{\mu'+1, -\nu} \ar[u] \ar[r] \ar[d]^\partial &
\pi_t Y^{\mu}_{\mu'+1} \ar[d]_\partial \ar[r] &
\pi_t Y^\mu_\mu \ar@{~>}[ddl]^{d^Y_\alpha} \ar[u]_{p_*}
\\
&& \pi_{t-1} X^{\mu'} \ar[r] \ar[ul] \ar[d] &
\pi_{t-1}Y^{\mu'} \ar[d] 
\\
&& \pi_{t-1} X^{\mu'}_{\mu'} \ar[r]_{i_*} \ar@{=>}[uull] &
\pi_{t-1} Y^{\mu'}_{\mu'} 
}
$$
The images of $\tdd{y}$ give compatible elements of each of the entries in the diagram above.  In particular, we get
\begin{align*}
x & \in \pi_{t-1}X^{\mu'}_{\mu'}, \\
\bar{z} & \in \pi_t(Z).
\end{align*}
Chasing the perimeter gives the desired conclusions.  Note that since $y'$ was assumed to be non-zero in the $E^\alpha$-page, and $i_*(x) = y'$, the element $x$ must also be nonzero in the $E^\alpha$-page. 
\end{proof}

\begin{rmk}
Lemma~\ref{lem:GBT} and its proof, while technical, have a geometric interpretation that may help the reader.  Suppose that $X$ is a subspace of $Y$, that $Y$ is the union of an increasing sequence of subspaces $Y^\mu$, and let
$$ X^\mu := Y^\mu \cap X $$
be the induced filtration on $X$.  Then we have
\begin{align*}
\pi_t(Z) & = \pi_t(Y,X), \\
\pi_t(Z^\mu) & = \pi_t(Y^\mu, X^\mu).
\end{align*} 
Then an element of $\pi_tY^{\mu}_{\mu'+1, \mu''+1}$ can be represented as a map of a disk
$$ D^{t} \rightarrow Y^\mu $$
whose boundary lies in $X^{\mu'} \cup Y^{\mu''}$, and an element of $\pi_t(Y^{\mu})$ can be represented by a map
$$ S^t \rightarrow X^{\mu'} \cup Y^{\mu''} $$
(see Figure~\ref{fig:GBTY}).  Lemma~\ref{lem:GBT} can be regarded as a kind of moving lemma for a boundary.  Specifically, the differential
$$ d^Y_\alpha y = y' $$
corresponds to a map
$$ y: D^t \rightarrow Y^\mu $$
such that $y(\partial D^t) \subseteq Y^{\mu'}$, giving
$$ y' = y\vert_{\partial D^t}: S^{t-1} \rightarrow Y^{\mu'}. $$
Then Figure~\ref{fig:GBT} displays the geometric situation represented by each of the cases of Lemma~\ref{lem:GBT}, as described below.
\begin{description}
\item[Case (1)] The element $p_*(y') \in \pi_t(Y^\mu, X^\mu)$ is non-trivial.
\item[Case (2)] The element $y$ can be represented by a map of a disk whose boundary maps into $Y^{\mu',\mu''}$, with $\mu''$ minimal, and so that the relative class $y' \in \pi_{t-1}(Y^{\mu',\mu''}, X^{\mu'} \cup Y^{\mu''-1})$ is non-zero.
\item[Case (3)] As in Case (2), except that the relative class $y' \in \pi_{t-1}(Y^{\mu',\mu''}, X^{\mu'} \cup Y^{\mu''-1})$ is zero, and there exists a relative homotopy class
$$ x' \in \pi_{t-1}(X^{\mu''},X^{\mu'''})  $$
with essential boundary such that the images of $x'$ and $y'$ in the relative homotopy group
$\pi_{t-1}(X^\mu, X^{\mu''})$ are equal.
\item[Case (4)] As in Case (3), except that the boundary of the relative homotopy class $x'$ is null.
\item[Case (5)] As in Case (2), except that the relative homotopy class $y' \in \pi_{t-1}(Y^{\mu'}, X^{\mu'})$ is zero.
\end{description}
\end{rmk}

\pagebreak
\vfill

\begin{figure}
\begin{center}
\includegraphics[width=0.4\textwidth]{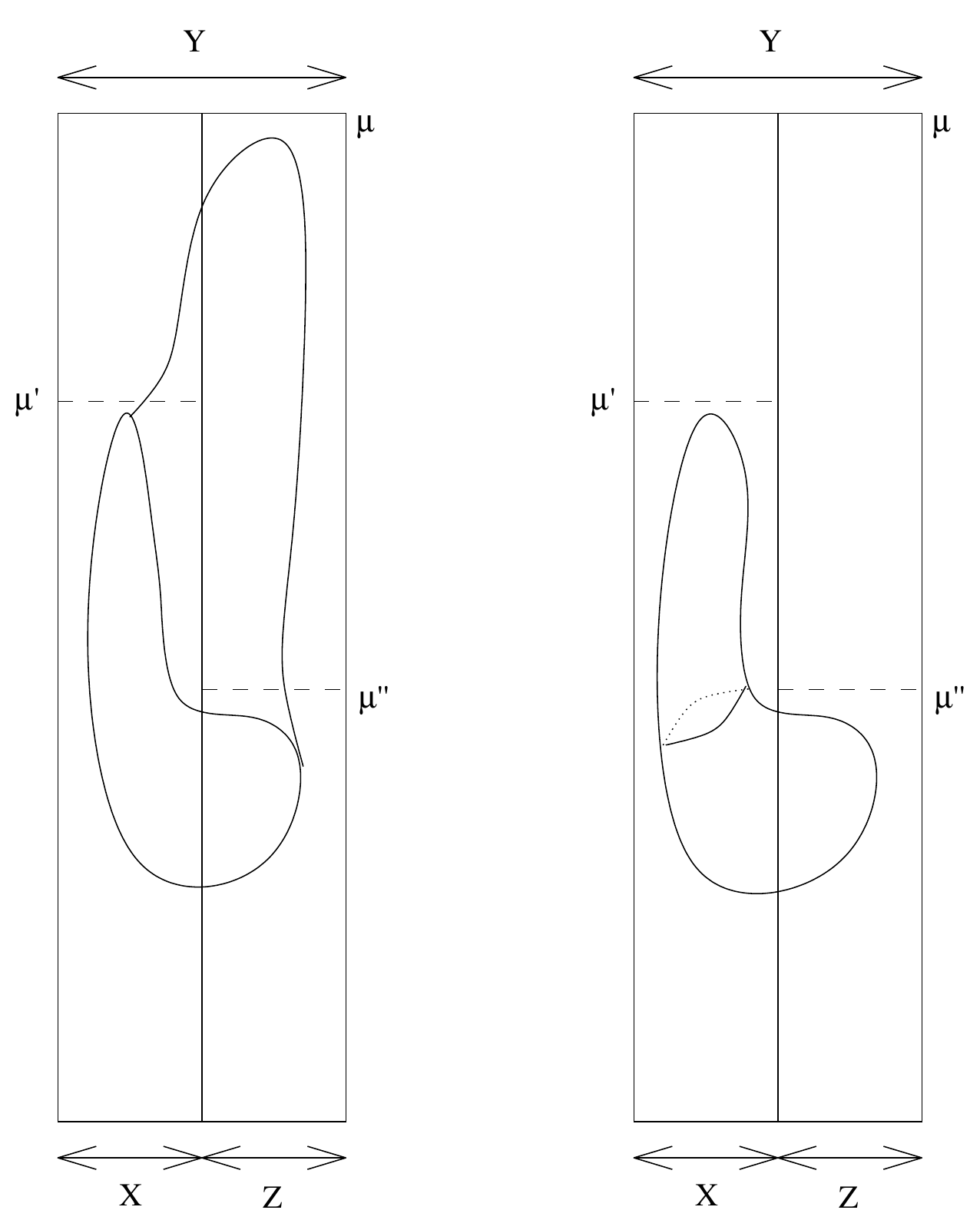}
\end{center}
\caption{Geometric interpretation of an element of $\pi_tY^\mu_{\mu'+1,\mu''+1}$ (left) and an element of $\pi_tY^{\mu',\mu''}$ (right)}
\label{fig:GBTY}
 \end{figure}

\begin{figure}
\begin{center}
\includegraphics[width=\textwidth]{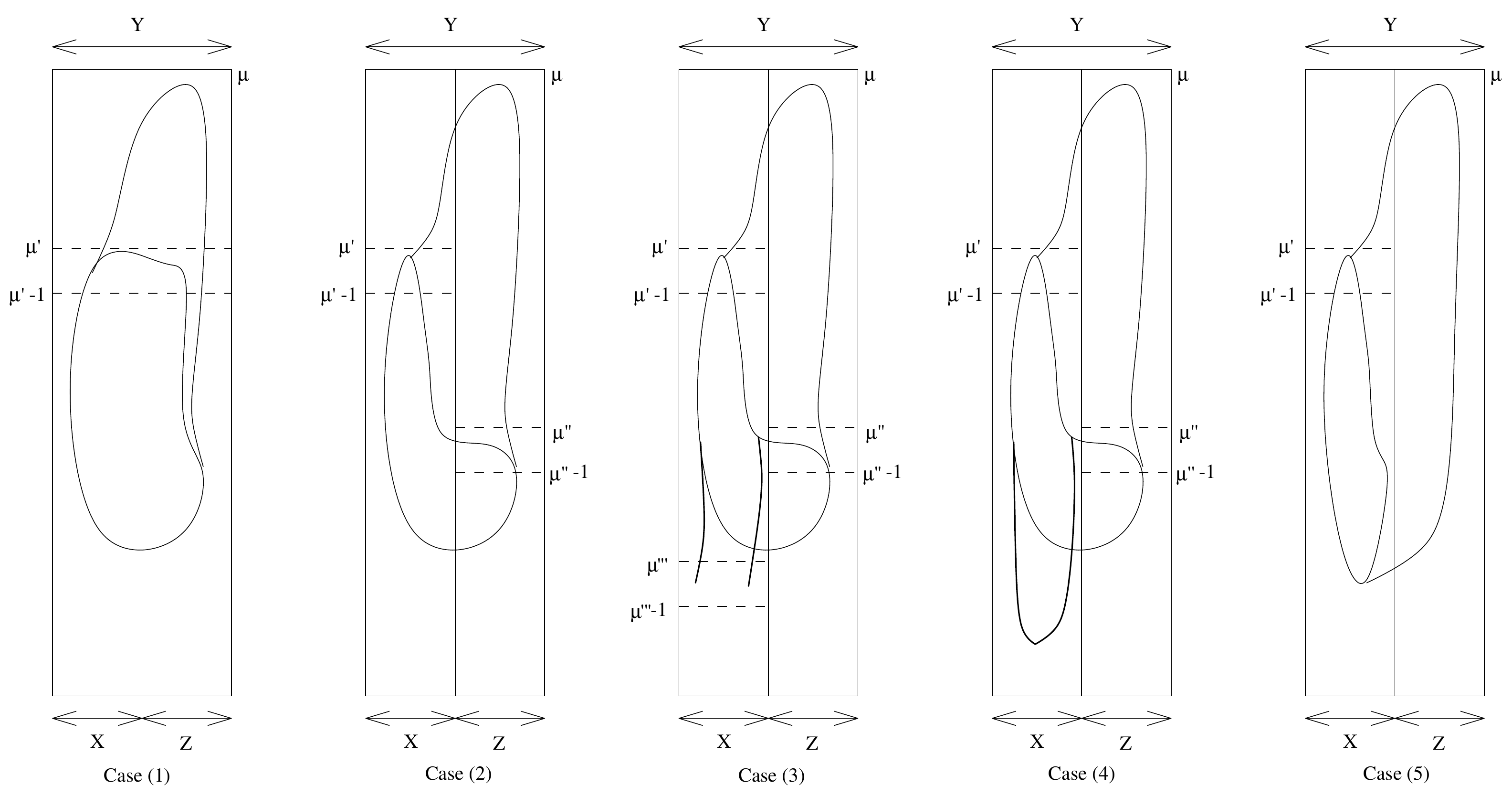}
\end{center}
\caption{Geometric interpretation of the various cases of Lemma~\ref{lem:GBT}}
\label{fig:GBT}
 \end{figure}

$\quad$
\pagebreak
\vfill

Lemma~\ref{lem:GBT}, Case (5), gives a method of computing the map $j$, but it is insufficient if one wants to compute $j(\bar{z})$ for a \emph{particular} choice of $\bar{z}$.  We therefore offer the following variant, whose proof is almost identical to that of Lemma~\ref{lem:GBT}, Case (5).

\begin{lem}\label{lem:GBT2}
Suppose that $\bar{z} \in \pi_t(Z)$ is detected by $z \in E^1_{t,\mu}(Z)$ where $j_*(z) = 0$, and suppose that $x \in E^1_{t-1, \mu-\alpha}$ detects $j(\bar{z})$.  Then one of the following two alternatives holds.
\begin{enumerate}
\item There is a $y \in E^1_{t, \mu}(Y)$ and a non-trivial differential
$$ d^Y_\alpha(y) = i_*(x). $$
 \item Either $i_*(x) = 0$, or there is a non-trivial differential
$$ d^Y_{\alpha'}(y') = i_*(x) $$
for $\alpha' < \alpha$.
\end{enumerate}
\end{lem}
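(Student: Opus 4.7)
The plan is to run the diagram (\ref{eq:diag3}) from the proof of Case (5) of Lemma~\ref{lem:GBT} in reverse: instead of starting from a class $y$ supporting a differential on the $Y$-side and producing the compatible pair $(\bar z, x)$, I begin with the data $(\bar z, x)$ and construct $y$ together with the required differential $d^Y_\alpha(y) = i_*(x)$. The hypothesis $j_*(z) = 0$ at $E^1$-level (i.e., the exactness of $E^1_{t,\mu}(Y) \xrightarrow{p_*} E^1_{t,\mu}(Z) \xrightarrow{j_*} E^1_{t-1,\mu}(X)$) is what enables the first lift.

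Concretely, fix a lift $\bar z^\mu \in \pi_t Z^\mu$ of $\bar z$ whose image in $E^1_{t,\mu}(Z)$ is $z$; this exists because $z$ detects $\bar z$. The fiber-sequence boundary $\partial_{FS}\bar z^\mu \in \pi_{t-1}X^\mu$ represents $j(\bar z)$ in $\pi_{t-1}X$, and since $x$ detects $j(\bar z)$ at filtration $\mu-\alpha$, this boundary admits a tower-theoretic refinement to $\hat x \in \pi_{t-1}X^{\mu-\alpha}$ whose image in $E^1_{t-1,\mu-\alpha}(X)$ equals $x$. The hypothesis $j_*(z)=0$ lets me correct $\bar z^\mu$ by an element of lower $Z$-filtration (which does not alter $z$) so that $\bar z^\mu$ and $\hat x$ assemble, through the fibration-of-towers $X^{\mu-\alpha} \to Y^{\mu-\alpha} \to Z^{\mu-\alpha}$, into a class $\tdd{y} \in \pi_t Y^\mu_{\mu-\alpha+1,-\nu}$. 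Tracing $\tdd{y}$ through the relevant corners of (\ref{eq:diag3}) then produces $y \in E^1_{t,\mu}(Y)$ with $p_*(y) = z$ and $d^Y_\alpha(y) = i_*(x)$, giving alternative (1).

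If in the construction above $\hat x$ cannot be chosen to have filtration exactly $\mu-\alpha$---for instance because $j(\bar z) = 0$, or because the only available refinement sits at strictly lower filtration so that $x$ must itself be a $d^X$-boundary---then either $i_*(x) = 0$ in $E^1$, or else $i_*(x)$ is hit by a shorter $d^Y_{\alpha'}$-differential obtained by pushing the corresponding $d^X$-differential across the fiber sequence via the map $i_*$. Either way, we land in alternative (2). The hard part will be the assembly step in the second paragraph: matching the boundaries of $\bar z^\mu$ and $\hat x$ inside the long exact sequence of the fibration $X \to Y \to Z$, and showing that the indeterminacy of $\hat x$ modulo lower filtration can be absorbed into a correction of $\tdd{y}$ without disturbing either $z$ or $x$ at $E^1$-level. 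This is precisely the verification needed to identify which of the two alternatives occurs.
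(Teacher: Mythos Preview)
Your overall strategy---run the diagram from Case~(5) of Lemma~\ref{lem:GBT} in reverse, building $\tdd{y}\in\pi_t Y^\mu_{\mu'+1,-\nu}$ from the pair $(\bar z^\mu,\hat x)$ and reading off $y$ and the differential---is exactly what the paper intends: it says the proof ``is almost identical to that of Lemma~\ref{lem:GBT}, Case~(5).''  Your identification of the assembly step as the crux, and of the role of $j_*(z)=0$ in enabling the first lift of $z$ to $E^1_{t,\mu}(Y)$, is on target.

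However, your account of alternative~(2) is confused.  The scenarios you list---$j(\bar z)=0$, or $\hat x$ only existing at strictly lower filtration, or $x$ being a $d^X$-boundary---are all \emph{ruled out} by the hypothesis that $x$ detects $j(\bar z)$: detection forces $j(\bar z)\neq 0$, places $\hat x$ at filtration exactly $\mu-\alpha$, and ensures $x$ survives to $E^\infty(X)$ and so is never a boundary.  The dichotomy between (1) and (2) is much simpler than you describe.  Once the assembly produces $y\in E^1_{t,\mu}(Y)$ with the relation $d^Y_\alpha(y)=i_*(x)$, case~(1) is the statement that this differential is \emph{non-trivial}; case~(2) is precisely the failure of non-triviality, namely $i_*(x)$ is already zero in $E^\alpha_{t-1,\mu-\alpha}(Y)$, which means either $i_*(x)=0$ in $E^1$ or it was killed by a shorter $d^Y_{\alpha'}$.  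No failure of the $\hat x$ lift enters.  You should rewrite the final paragraph to reflect this: the construction goes through uniformly, and the case split is governed entirely by whether $i_*(x)$ persists to the $E^\alpha$-page of the $Y$-spectral sequence.
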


\backmatter

\bibliographystyle{amsalpha}
\nocite{*}
\bibliography{GoodEHPmem}

\end{document}